
\documentclass[letterpaper,oneside,english]{amsart}
\usepackage[T1]{fontenc}
\usepackage[latin9]{inputenc}
\usepackage{units}
\usepackage{mathrsfs}
\usepackage{amsthm}
\usepackage{amsbsy}
\usepackage{amstext}
\usepackage{amssymb}
\usepackage{esint}

\makeatletter
\theoremstyle{plain}
\newtheorem{thm}{\protect\theoremname}[section]
  \theoremstyle{plain}
  \newtheorem{lem}[thm]{\protect\lemmaname}
  \theoremstyle{plain}
  \newtheorem{prop}[thm]{\protect\propositionname}
  \theoremstyle{remark}
  \newtheorem{rem}[thm]{\protect\remarkname}
  \theoremstyle{plain}
  \newtheorem{cor}[thm]{\protect\corollaryname}

\makeatother

\usepackage{babel}
  \providecommand{\corollaryname}{Corollary}
  \providecommand{\lemmaname}{Lemma}
  \providecommand{\propositionname}{Proposition}
  \providecommand{\remarkname}{Remark}
\providecommand{\theoremname}{Theorem}

\numberwithin{equation}{section} 
\numberwithin{thm}{section}
\date{}

\begin{document}

\title[Analysis of Vel$\acute{\textrm{A}}$zquez's solution to the MCF]{Analysis of Vel$\acute{\textrm{A}}$zquez's solution to the mean
curvature flow with a type $\mathrm{II}$ singularity } 

\author{Siao-Hao Guo and Natasa Sesum}
\maketitle
\begin{abstract}
J.J.L. Vel$\acute{a}$zquez in 1994 used the degree theory to show
that there is a perturbation of Simons' cone, starting from which
the mean curvature flow develops a type $\mathrm{II}$ singularity
at the origin. He also showed that under a proper time-dependent rescaling
of the solution around the origin, the rescaled flow converges in
the $C^{0}$ sense to a minimal hypersurface which is tangent to Simons'
cone at infinity. In this paper, we prove that the rescaled flow actually
converges locally smoothly to the minimal hypersurface, which appears
to be the singularity model of the type $\mathrm{II}$ singularity.
In addition, we show that the mean curvature of the solution blows
up near the origin at a rate which is smaller than that of the second
fundamental form.
\end{abstract}

\footnote{Natasa Sesum thanks NSF for the support in DMS-1056387.}

\section{Introduction}

J.J.L. Vel$\acute{a}$zquez in \cite{V} constructed a solution to
the mean curvature flow which develops a type $\mathrm{II}$ singularity.
Below is his result:
\begin{thm}
\label{Velazquez thm}Let $n\geq4$ be a positive integer. If $t_{0}<0$
and $\left|t_{0}\right|\ll1$ (depending on $n$), then there is a
$O\left(n\right)\times O\left(n\right)$ symmetric mean curvature
flow $\left\{ \Sigma_{t}\right\} _{t_{0}\leq t<0}$ so that

1. $\left\{ \Sigma_{t}\right\} _{t_{0}\leq t<0}$ develops a type
$\mathrm{II}$ singularity at $O$ as $t\nearrow0$ in the sense that
there is $0<\sigma=\sigma\left(n\right)<\frac{1}{2}$ (see (\ref{sigma}))
so that the second fundamental form of $\Sigma_{t}$ satisfies 
\[
\limsup_{t\nearrow0}\sup_{\Sigma_{t}\cap B\left(O;\,\sqrt{-t}\right)}\left(-t\right)^{\frac{1}{2}+\sigma}\left|A_{\Sigma_{t}}\right|\:>0
\]

2. The type $\mathrm{I}$ rescaled hypersurfaces 
\[
\left\{ \Pi_{s}=\left.\frac{1}{\sqrt{-t}}\Sigma_{t}\right|_{_{t=-e^{-s}}}\right\} _{-\ln\left(-t_{0}\right)\leq s<\infty}
\]
$C^{2}$-converge to Simons' cone $\mathcal{C}$ in any fixed annulus
centered at $O$ (i.e. $B\left(O;\,\mathrm{R}\right)\setminus B\left(O;\,\mathrm{r}\right)$
with $0<\mathrm{r}<\mathrm{R}<\infty$) as $s\nearrow\infty$.

3. The type $\mathrm{II}$ rescaled hypersurfaces 
\[
\left\{ \Gamma_{\tau}=\left.\frac{1}{\left(-t\right)^{\frac{1}{2}+\sigma}}\Sigma_{t}\right|_{t=-\left(2\sigma\tau\right)^{\frac{-1}{2\sigma}}}\right\} _{\frac{1}{2\sigma\left(-t_{0}\right)^{2\sigma}}\leq\tau<\infty}
\]
 locally $C^{0}$-converges to a minimal hypersurface $\mathcal{M}_{k}$
(see Section \ref{minimal}), which is tangent to Simons' cone $\mathcal{C}$
at infinity.
\end{thm}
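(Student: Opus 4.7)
The plan is to adapt Vel$\acute{a}$zquez's original construction \cite{V}, which combines matched asymptotic analysis with a topological (degree-theoretic) selection of initial data. The geometric reason the construction succeeds is that Simons' cone $\mathcal{C}$ is stationary but linearly \emph{unstable} under $O(n)\times O(n)$-symmetric perturbations when $n\geq 4$: the Jacobi operator on the link carries a discrete unstable spectrum, and the exponent $\sigma=\sigma(n)\in(0,1/2)$ specified in (\ref{sigma}) is precisely the resonance between one of these unstable eigenvalues and the $(1/2+\sigma)$-scaling of the ambient flow.

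First I would write $\Sigma_{t}$, away from the singular time, as a normal graph of a function $u(\cdot,t)$ over $\mathcal{C}$ and derive its parabolic equation under MCF. After the type $\mathrm{I}$ rescaling $y=x/\sqrt{-t}$, $s=-\ln(-t)$, one obtains a semilinear equation $\partial_{s}u=Lu+N(u)$ in which $L$ is essentially the Jacobi operator of $\mathcal{C}$ shifted by the drift produced by the rescaling and $N(u)$ collects the quadratic and higher corrections. Letting $\phi_{1},\dots,\phi_{m}$ denote the finitely many unstable eigenfunctions of $L$, with eigenvalues $\lambda_{1}\geq\cdots\geq\lambda_{m}>0$, I would decompose $u=\sum_{j=1}^{m} a_{j}(s)\phi_{j}+u^{\perp}$ and treat the initial amplitudes $(a_{j}(s_{0}))$ as free shooting parameters.

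Next, I would construct matched asymptotic profiles in three regions. In the \emph{outer region} the linear dynamics dominates and each amplitude behaves like $a_{j}(s)\sim c_{j}e^{\lambda_{j}s}$; in the \emph{intermediate region} a nonlinear resonance selects the power-law decay $(-t)^{\sigma}$; and in the \emph{inner region}, of spatial scale $(-t)^{1/2+\sigma}$, the solution must solve (after type $\mathrm{II}$ rescaling) the minimal hypersurface equation with Simons' cone as its tangent at infinity, forcing it to coincide with the member $\mathcal{M}_{k}$ from Section \ref{minimal}. The matching across scales simultaneously fixes $\sigma=\sigma(n)$ via (\ref{sigma}), selects the index $k$, and produces a continuous finite-dimensional family of candidate initial data. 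A Brouwer-degree (shooting) argument on a large ball in parameter space then shows that there is at least one choice of $(a_{j}(s_{0}))$ for which the rescaled solution never exits a prescribed tubular neighborhood of the matched profile and therefore tracks it for all $s\in[s_{0},\infty)$. The prescribed matching rate forces $|A_{\Sigma_{t}}|\sim(-t)^{-(1/2+\sigma)}$, which yields (1); the outer/intermediate match yields $\Pi_{s}\to\mathcal{C}$ in $C^{2}$ on compact annuli, which is (2); and the inner match yields $\Gamma_{\tau}\to\mathcal{M}_{k}$ in $C^{0}_{\mathrm{loc}}$, which is (3).

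The main obstacle is making the formal matched expansion rigorous uniformly on the entire interval $[t_{0},0)$, so that the error introduced by $N(u)$ and by approximating the graph equation by its leading parts remains subdominant right up to the singular time. This requires weighted parabolic Schauder estimates for $L$ adapted to its unstable/stable spectral splitting, combined with a degree argument robust enough to absorb those error terms. A secondary technical difficulty is the clean justification of the inner--outer matching across the widely separated scales $(-t)^{1/2+\sigma}\ll(-t)^{1/2}$, which is what ultimately pins down both the exponent $\sigma(n)$ and the particular profile $\mathcal{M}_{k}$.
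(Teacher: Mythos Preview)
Your outline matches the paper's reconstruction of Vel\'azquez's argument: graph over $\mathcal{C}$, type-$\mathrm{I}$ rescaling, spectral decomposition with respect to the linearized operator $\mathcal{L}$ of Proposition~\ref{linear operator}, and a degree argument on the finitely many unstable amplitudes (here exactly two, corresponding to $\varphi_{0},\varphi_{1}$). One correction, though: $\sigma$ is not produced by an unstable eigenvalue or by a nonlinear resonance in the intermediate region. The solution is built so that its dominant intermediate behavior is the \emph{first stable} linear mode, $v(y,s)\sim e^{-\lambda_{2}s}\varphi_{2}(y)\sim e^{-\lambda_{2}s}y^{\alpha}$ near $y=0$, and the inner scale $(-t)^{1/2+\sigma}$ is chosen precisely so that this matches the asymptotic $\psi_{k}(z)\sim kz^{\alpha}$ of $\mathcal{M}_{k}$; this forces $\sigma=\lambda_{2}/(1-\alpha)$ (Remark~\ref{parameters}). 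The two shooting parameters $(a_{0},a_{1})$ are used only to tune away the growing $\varphi_{0},\varphi_{1}$ components so that $\varphi_{2}$ indeed dominates.

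On the analytic side, the implementation also differs from your sketch. The paper does not rely on weighted Schauder theory for the linear semigroup: the intermediate $C^{0}$ bound (Proposition~\ref{C^0 v}) comes from an energy estimate in the weighted space $\mathbf{H}$ together with a Sobolev embedding (Lemmas~\ref{energy} and~\ref{Sobolev}); the tip $C^{0}$ bound (Proposition~\ref{C^0 w'}) comes from the comparison principle using explicit barriers $\hat{\psi}^{\lambda,\mu}_{k}$ built from the rescaling family of $\mathcal{M}$; and the $C^{2}$ control needed for parts (1) and (2) comes from Ecker--Huisken's interior gradient and curvature estimates rather than from Schauder theory adapted to $\mathcal{L}$.
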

Vel$\acute{a}$zquez's idea is to find a $O\left(n\right)\times O\left(n\right)$
symmetric solution to the ``normalized mean curvature flow'' $\left\{ \Pi_{s}\right\} _{s_{0}\leq s<\infty}$
which exists for a long time and converges (locally and away from
$O$) to Simons' cone $\mathcal{C}$ as $s\nearrow\infty$. Note that
the minimal cone $\mathcal{C}$ is a self-shrinker with a singularity
at the origin and that this singularity of $\mathcal{C}$ forces the
normalized mean curvature flow $\left\{ \Pi_{s}\right\} _{s_{0}\leq s<\infty}$
to develop a singularity at $O$ as $s\nearrow\infty$. Consequently,
the corresponding mean curvature flow $\left\{ \Sigma_{t}\right\} _{t_{0}\leq t<0}$
develop a type $\mathrm{II}$ singularity at $O$ in finite time (as
$t\nearrow0$). In addition, he used the comparison principle to show
that the type $\mathrm{II}$ rescaled hypersurfaces convergers locally
uniformly, in the $C^{0}$ sense, to a minimal hypersurface $\mathcal{M}_{k}$.

The motivation of studying Vel$\acute{a}$zquez's solution comes from
two natural questions. The first one is whether the minimal hypersurface
$\mathcal{M}_{k}$ is the singularity model of the type $\mathrm{II}$
singularity at $O$? Note that the minimal hypersurface is stationary,
which is a special case of the ``translating mean curvature flow''.
Vel$\acute{a}$zquez's result make us believe that this is true. However,
we cannot be assured by his result since he only show that the type
$\mathrm{II}$ rescaled hypersurfaces converges to $\mathcal{M}_{k}$
in the $C^{0}$ sense. Secondly, we would like to know whether the
mean curvature of Vel$\acute{a}$zquez's solution blows up as $t\nearrow0$
or not. There is a long-lasting question in the study of mean curvature
flow: ``Does the mean curvature blow up at the first singular time?''
The answer is positive under a variety of hypotheses. For instance,
if the mean curvature flow is rotationally symmetric or its singularities
belong to type $\mathrm{I}$, then the mean curvature must blow up
(see \cite{K} and \cite{LS}). People believe this is true in general
for low-dimensional mean curvature flow, and it has been verified
by Li and Wang (see \cite{LW}) for the 2-dimensional case. However,
people are skeptical about this for high-dimensional mean curvature
flow, and they think Vel$\acute{a}$zquez's solution might be a counterexample.
Heuristically speaking, the type $\mathrm{II}$ rescaling of Vel$\acute{a}$zquez's
solution converges to a ``minimal hypersurface'', so it seems that
there is a chance for the mean curvature of Vel$\acute{a}$zquez's
solution to stay bounded upto the first singular time. 

In this paper, we answer both of the above questions. More explicitly,
we show the following:
\begin{thm}
Let $\left\{ \Sigma_{t}\right\} _{t_{0}\leq t<0}$ be Vel$\acute{a}$zquez's
solution in Theorem \ref{Velazquez thm} with $n\geq5$. By choosing
proper initial data outside a small ball centered at $O$, the origin
is the only singularity of the solution at the first singular time
$t=0$. Moreover, the type $\mathrm{II}$ rescaled hypersurfaces $\left\{ \Gamma_{\tau}\right\} _{\frac{1}{2\sigma\left(-t_{0}\right)^{2\sigma}}\leq\tau<\infty}$
converges locally smoothly to the minimal hypersurface $\mathcal{M}_{k}$
as $\tau\nearrow\infty$. It follows that the second fundamental form
of $\Sigma_{t}$ satisfies 
\[
0<\:\limsup_{t\nearrow0}\,\sup_{\Sigma_{t}}\left(-t\right)^{\frac{1}{2}+\sigma}\left|A_{\Sigma_{t}}\right|\:<\infty
\]
In addition, the mean curvature of $\Sigma_{t}$ blows up as $t\nearrow0$
at a rate which smaller than that of the second fundamental form.
More precisely, there hold 
\[
\limsup_{t\nearrow0}\sup_{\Sigma_{t}\cap B\left(O;\,C\left(n\right)\left(-t\right)^{\frac{1}{2}+\sigma}\right)}\left(-t\right)^{\frac{1}{2}-\sigma}\left|H_{\Sigma_{t}}\right|\:>0
\]
\[
\limsup_{t\nearrow0}\,\sup_{\Sigma_{t}}\left(-t\right)^{\frac{1}{2}+\left(1-2\varrho\right)\sigma}\left|H_{\Sigma_{t}}\right|\:<\infty
\]
for some constant $0<\varrho=\varrho\left(n\right)<1$.
\end{thm}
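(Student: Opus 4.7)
The plan is to upgrade Vel\'azquez's $C^0$ convergence $\Gamma_\tau\to\mathcal{M}_k$ to smooth convergence, and then to read off the desired bounds on $|A_{\Sigma_t}|$ and $|H_{\Sigma_t}|$ from the resulting $C^k$ control. The first step is to write down the evolution equation for the type $\mathrm{II}$ rescaled flow. A direct computation using $\Gamma_\tau=(-t)^{-(1/2+\sigma)}\Sigma_t$ and $t=-(2\sigma\tau)^{-1/(2\sigma)}$ yields
\[
\partial_\tau X \;=\; -H_{\Gamma_\tau}\,\nu_{\Gamma_\tau} \;+\; \frac{1/2+\sigma}{2\sigma\tau}\,X,
\]
so $\Gamma_\tau$ satisfies a mean curvature flow modified by an explicit dilation drift of size $O(1/\tau)$. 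Since $\mathcal{M}_k$ is a smooth stationary minimal hypersurface tangent to Simons' cone at infinity, its second fundamental form is bounded on every compact set, and it is not flat.

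To pass from $C^0$ to $C^\infty$, I would fix a compact region $K\subset\mathcal{M}_k$, cover it by Fermi coordinates, and represent $\Gamma_\tau$ for large $\tau$ as a graph of a scalar function $u(\cdot,\tau)$ over $K$. In these coordinates the geometric equation above becomes a uniformly parabolic quasilinear scalar equation for $u$ with smooth coefficients together with a zeroth-order term of size $O(1/\tau)$. Since Vel\'azquez's theorem provides $\|u(\cdot,\tau)\|_{C^0}\to 0$, standard interior parabolic estimates (Krylov--Safonov followed by Schauder bootstrap) produce local $C^{k,\alpha}$ bounds on $u$ for every $k$ that decay to zero. Arzel\`a--Ascoli combined with the uniqueness of the $C^0$ limit then upgrades this to genuine smooth convergence $\Gamma_\tau\to\mathcal{M}_k$. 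In particular $|A_{\Gamma_\tau}|$ is uniformly bounded and strictly positive somewhere on $K$, and unrescaling yields both the finite and the strictly positive limsup bounds on $(-t)^{1/2+\sigma}|A_{\Sigma_t}|$.

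For the mean curvature estimates, smoothness of the convergence together with $H_{\mathcal{M}_k}\equiv 0$ gives $H_{\Gamma_\tau}\to 0$ locally smoothly; the upper bound $\limsup(-t)^{1/2+(1-2\varrho)\sigma}|H_{\Sigma_t}|<\infty$ then follows once one has a quantitative rate of convergence $\|\Gamma_\tau-\mathcal{M}_k\|_{C^2}\lesssim \tau^{-\varrho}$. I expect such a rate to be available from Vel\'azquez's matched asymptotic construction, where the perturbation decays along the unstable spectrum of the linearization around the cone with explicit powers, with $\varrho$ determined by an eigenvalue of the Jacobi operator of $\mathcal{M}_k$. For the lower bound, one projects the rescaled equation onto $\nu_{\mathcal{M}_k}$: to leading order this gives $H_{\Gamma_\tau}\approx \frac{1/2+\sigma}{2\sigma\tau}\langle X,\nu_{\mathcal{M}_k}\rangle$. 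Because $\mathcal{M}_k$ is not a cone through $O$, there is a point of $\mathcal{M}_k$ in a ball of controlled radius where $\langle X,\nu_{\mathcal{M}_k}\rangle$ has a definite nonzero value; this forces $|H_{\Gamma_\tau}|\gtrsim 1/\tau$ there, and unrescaling produces $\limsup(-t)^{1/2-\sigma}|H_{\Sigma_t}|>0$ inside $\Sigma_t\cap B(O;C(n)(-t)^{1/2+\sigma})$.

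To conclude that $O$ is the only singularity, one chooses the initial datum $\Sigma_{t_0}$ so that it is a strictly mean convex perturbation of Simons' cone outside a small ball around $O$; pseudolocality together with Huisken's monotonicity then rules out any additional singularity forming before $t=0$. The main obstacle is the regularity step, specifically obtaining uniform parabolic estimates on annular regions that grow large in $\tau$, where $\Gamma_\tau$ interpolates between its type $\mathrm{II}$ inner limit $\mathcal{M}_k$ and its type $\mathrm{I}$ outer limit $\mathcal{C}$. This will require matching the interior Schauder estimates near compact subsets of $\mathcal{M}_k$ with a separate estimate on the intermediate transition region inherited from Vel\'azquez's construction, together with a spectral analysis of the Jacobi operator of $\mathcal{M}_k$ to identify the exponent $\varrho$.
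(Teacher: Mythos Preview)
Your overall strategy matches the paper: upgrade the $C^0$ convergence to smooth convergence via parabolic regularity, then read off the curvature bounds. Several of your heuristics are on target, in particular the lower bound for $|H|$: the paper's Proposition~\ref{blow-up mean curvature} uses exactly the fact that $\hat\psi_k(z)-z\,\partial_z\hat\psi_k(z)>0$ (your $\langle X,\nu_{\mathcal{M}_k}\rangle\neq0$) together with L'H\^opital's rule in a short contradiction argument.

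There are, however, three concrete discrepancies worth flagging. First, your guess that $\varrho$ is an eigenvalue of the Jacobi operator of $\mathcal{M}_k$ is not what happens. In the paper $\varrho=1-\tfrac12(1-\alpha)(1-\vartheta)$, where $\vartheta$ is a free parameter constrained only by inequalities like \eqref{vartheta}; the rate $(\tau/\tau_0)^{-\varrho}$ comes from an explicit barrier construction (Proposition~\ref{C^0 w'}) using time-dependent dilations $\hat\psi_{\lambda(\tau)k}(z/\mu(\tau))$ of the minimal profile, not from spectral analysis on $\mathcal{M}_k$. Second, the paper does not use Fermi coordinates over $\mathcal{M}_k$; it exploits the $O(n)\times O(n)$ symmetry throughout to reduce to scalar PDEs for $\hat u(x,t)$, $v(y,s)$, $\hat w(z,\tau)$, and applies Krylov--Safonov/Schauder directly to these (Proposition~\ref{C^infty w'}). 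Third, your proposal for ruling out other singularities via pseudolocality is different from what the paper does: the paper controls $\Sigma_t$ outside a small ball by explicit $C^2$ bounds on $u(x,t)$ obtained from the Ecker--Huisken interior gradient and curvature estimates combined with maximum-principle arguments (Proposition~\ref{C^0 outside u}, Lemmas~\ref{outside Du}--\ref{outside DDu}).

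The obstacle you identify at the end is the real one, and the paper resolves it not by separate Schauder estimates glued together, but by building an \emph{a priori} ``admissible'' framework: the bound \eqref{a priori bound u} (equivalently \eqref{a priori bound v}, \eqref{a priori bound w}) is postulated across all three regions simultaneously, shown to improve itself (Proposition~\ref{degree}), and then closed by a degree-theoretic argument that selects initial data $(a_0,a_1)$ for which the unstable modes vanish at a later time. Without this bootstrap structure---in particular without the condition $\Phi_{t_1}(a_0,a_1)=(0,0)$ killing the two negative eigenmodes of $\mathcal{L}$---you cannot control the intermediate region long enough to make the matching work. Your sketch does not supply this mechanism.
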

\begin{proof}
The smooth convergence of the type $\mathrm{II}$ rescaled hypersurfaces
$\left\{ \Gamma_{\tau}\right\} $ to $\mathcal{M}_{k}$ as $\tau\nearrow\infty$
and the fact that the origin is the only singularity of $\left\{ \Sigma_{t}\right\} $
at $t=0$ follow from Theorem \ref{Velazquez} (see also Remark \ref{remark on convergence}).
The blow-up rates of the second fundamental form $A_{\Sigma_{t}}$
and mean curvature $H_{\Sigma_{t}}$ can be found in Proposition \ref{blow-up rate outer},
Proposition \ref{blow-up rate intermediate}, Proposition \ref{blow-up rate tip}
and Proposition \ref{blow-up mean curvature}.
\end{proof}
To improve the convergence of the type $\mathrm{II}$ rescaled flow,
all we need is to derive some smooth estimates (see Proposition \ref{degree}
and Proposition \ref{uniform estimates}). One of the key ingredients
to achieve that is to use the curvature estimates in \cite{EH}. As
for the blow-up of the mean curvature, it follows from the smooth
convergence of type $\mathrm{II}$ rescaled flow and L'H$\hat{o}$pital's
rule. Moreover, by modifying Vel$\acute{a}$zquez's estimates, we
show that the blow-up rate of the mean curvature is smaller than that
of the second fundamental form.

The paper is organized as follows. In Section \ref{minimal}, we introduce
the minimal hypersurface $\mathcal{M}_{k}$ found by Vel$\acute{a}$zquez
and then derive some smooth estimates for it. In Section \ref{admissible},
we specify the set up for constructing Vel$\acute{a}$zquez's solution
and define various regions and rescalings for analyzing the solution.
In Section \ref{existence}, we state the key a priori estimates (Proposition
\ref{degree} and Proposition \ref{uniform estimates}) and explain
how to use them to construct Vel$\acute{a}$zquez's solution (for
the sake of completeness) and to see the behavior of the solution
in different regions (see Theorem \ref{Velazquez}). In Section \ref{mean curvature blow-up},
we explain why the mean curvature blows up and why its blow-up rate
is smaller than that of the second fundamental form. Lastly, in Section
\ref{degree C^0}, Section \ref{degree C^infty} and Section \ref{degree Lambda}
we prove Proposition \ref{degree} and Proposition \ref{uniform estimates}
for completion of the argument. 

\section{\label{minimal}Minimal hypersurfaces tangent to Simons' cone at
infinity}

Let 
\[
\mathcal{C}=\left\{ \left(r\nu,\,r\omega\right)\left|\,r>0;\,\nu,\,\omega\in\mathbb{S}^{n-1}\right.\right\} 
\]
be Simons' cone, where $n\geq4$ is a positive integer and $\mathbb{S}^{n-1}$
is the unit sphere in $\mathbb{R}^{n}$. It is shown in \cite{V}
that there is a smooth minimal hypersurface 
\[
\mathcal{M}=\left\{ \left.\left(r\nu,\,\hat{\psi}\left(r\right)\omega\right)\right|\,r\geq0;\,\nu,\,\omega\in\mathbb{S}^{n-1}\right\} 
\]
in $\mathbb{R}^{2n}$ which is tangent to $\mathcal{C}$ at infinity,
and that the function $\hat{\psi}\left(r\right)$ satisfies 
\[
\frac{\partial_{rr}^{2}\hat{\psi}}{1+\left(\partial_{r}\hat{\psi}\right)^{2}}+\left(n-1\right)\left(\frac{\partial_{r}\hat{\psi}}{r}-\frac{1}{\hat{\psi}}\right)=0
\]
and
\[
\left\{ \begin{array}{c}
\partial_{rr}^{2}\hat{\psi}\left(r\right)>0\\
\\
\partial_{r}\hat{\psi}\left(0\right)=0,\quad\lim_{r\nearrow\infty}\frac{\partial_{r}\hat{\psi}\left(r\right)-1}{r^{\alpha-1}}=\alpha\,2^{\frac{\alpha+1}{2}}\\
\\
\hat{\psi}\left(r\right)>r,\quad\lim_{r\nearrow\infty}\frac{\hat{\psi}\left(r\right)-r}{r^{\alpha}}=2^{\frac{\alpha+1}{2}}
\end{array}\right.
\]
where 
\[
\alpha=\frac{-\left(2n-3\right)+\sqrt{4n^{2}-20n+17}}{2}\,\in\left[-2,\,-1\right)
\]
is a root of the quadratic polynomial 
\begin{equation}
\alpha\left(\alpha-1\right)+2\left(n-1\right)\left(\alpha+1\right)=0\label{alpha}
\end{equation}
By symmetry, studying $\mathcal{M}$ is equivalent to analyzing the
projected curves
\[
\mathcal{\bar{M}}=\left\{ \left.\left(r,\,\hat{\psi}\left(r\right)\right)\right|\,r\geq0\right\} 
\]
 
\begin{equation}
\mathcal{\bar{C}}=\left\{ \left.\left(r,\,r\right)\right|\,r>0\right\} \label{projected cone}
\end{equation}
Note that $\mathcal{\bar{M}}$ is a convex curve which lies above
$\mathcal{\bar{C}}$ (i.e. $\hat{\psi}\left(r\right)>r$ for $r\geq0$);
moreover, $\mathcal{\bar{M}}$ intersects orthogonally with the vertical
ray $\left\{ \left.\left(0,\,r\right)\right|\,r>0\right\} $ (i.e.
$\partial_{r}\hat{\psi}\left(0\right)=0$) and is asymptotic to $\mathcal{\bar{C}}$
at infinity (i.e. $\hat{\psi}\left(r\right)=r+O\left(r^{\alpha}\right)$
as $r\nearrow\infty$). Therefore, $\mathcal{\bar{M}}$ is a graph
over $\mathcal{\bar{C}}$; more precisely,
\[
\mathcal{\bar{M}}=\left\{ r\left(\frac{1}{\sqrt{2}},\,\frac{1}{\sqrt{2}}\right)+\psi\left(r\right)\left(\frac{-1}{\sqrt{2}},\,\frac{1}{\sqrt{2}}\right)\left|\,r\geq\,\frac{\hat{\psi}\left(0\right)}{\sqrt{2}}\right.\right\} 
\]
\[
=\left\{ \left(\left(r-\psi\left(r\right)\right)\frac{1}{\sqrt{2}},\,\left(r+\psi\left(r\right)\right)\frac{1}{\sqrt{2}}\right)\left|\,r\geq\,\frac{\hat{\psi}\left(0\right)}{\sqrt{2}}\right.\right\} 
\]
Vel$\acute{a}$zquez in \cite{V} showed that the function $\psi\left(r\right)$
satisfies 
\[
\frac{\partial_{rr}^{2}\psi}{1+\left(\partial_{r}\psi\right)^{2}}+2\left(n-1\right)\frac{r\,\partial_{r}\psi\,+\,\psi}{r^{2}-\psi^{2}}=0
\]
and
\[
\left\{ \begin{array}{c}
\partial_{rr}^{2}\psi\left(r\right)>0\\
\\
\partial_{r}\psi\left(\frac{\hat{\psi}\left(0\right)}{\sqrt{2}}\right)=-1,\quad\lim_{r\nearrow\infty}\frac{\partial_{r}\psi\left(r\right)}{r^{\alpha-1}}=\alpha\\
\\
\psi\left(\frac{\hat{\psi}\left(0\right)}{\sqrt{2}}\right)=\frac{\hat{\psi}\left(0\right)}{\sqrt{2}},\quad\lim_{r\nearrow\infty}\frac{\psi\left(r\right)}{r^{\alpha}}=1
\end{array}\right.
\]
More generally, for each $k>0$, we can define 
\[
\mathcal{M}_{k}\,=\,k^{\frac{1}{1-\alpha}}\mathcal{M}
\]
Then $\mathcal{M}_{k}$ is also a minimal hypersurface in $\mathbb{R}^{2n}$
which is tangent to $\mathcal{C}$ at infinity. Notice that 
\[
\mathcal{M}_{k}\,=\left\{ \left.\left(r\,\nu,\,\hat{\psi}_{k}\left(r\right)\,\omega\right)\right|\,r\geq0;\,\nu,\,\omega\in\mathbb{S}^{n-1}\right\} 
\]
where 
\begin{equation}
\hat{\psi}_{k}\left(r\right)=k^{\frac{1}{1-\alpha}}\,\,\hat{\psi}\left(k^{\frac{-1}{1-\alpha}}\,r\right)\label{psi'}
\end{equation}
By rescaling, we deduce that
\begin{equation}
\frac{\partial_{rr}^{2}\hat{\psi}_{k}}{1+\left(\partial_{r}\hat{\psi}_{k}\right)^{2}}+\left(n-1\right)\left(\frac{\partial_{r}\hat{\psi}_{k}}{r}-\frac{1}{\hat{\psi}_{k}}\right)=0\label{eq psi'}
\end{equation}
\[
\left\{ \begin{array}{c}
\partial_{rr}^{2}\hat{\psi}_{k}\left(r\right)>0\\
\\
\partial_{r}\hat{\psi}_{k}\left(0\right)=0,\quad\lim_{r\nearrow\infty}\frac{\partial_{r}\hat{\psi}_{k}\left(r\right)-1}{r^{\alpha-1}}=k\alpha\,2^{\frac{\alpha+1}{2}}\\
\\
\hat{\psi}_{k}\left(r\right)>r,\quad\lim_{r\nearrow\infty}\frac{\hat{\psi}_{k}\left(r\right)-r}{r^{\alpha}}=k\,2^{\frac{\alpha+1}{2}}
\end{array}\right.
\]
Moreover, there holds a ``monotonic'' property of the rescaling
family, i.e. $\hat{\psi}_{k_{1}}\left(r\right)<\hat{\psi}_{k_{2}}\left(r\right)$
whenever $0<k_{1}<k_{2}<\infty$. To see that, let's first derive
the following lemma.
\begin{lem}
The function $\hat{\psi}_{k}\left(r\right)$ satisfies 
\begin{equation}
\hat{\psi}_{k}\left(r\right)-r\,\partial_{r}\hat{\psi}_{k}\left(r\right)>0\label{positivity}
\end{equation}
for $r\geq0$. In addition, there holds
\begin{equation}
\lim_{r\nearrow\infty}\frac{\hat{\psi}\left(r\right)-r\,\partial_{r}\hat{\psi}\left(r\right)}{r^{\alpha}}=\left(1-\alpha\right)\,2^{\frac{\alpha+1}{2}}\label{asymptotic psi'}
\end{equation}
\end{lem}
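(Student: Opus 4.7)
The plan is to reduce the statement to the case $k=1$ via the rescaling identity \eqref{psi'}, prove the asymptotic limit \eqref{asymptotic psi'} by directly combining the two given asymptotic expansions of $\hat{\psi}$ and $\partial_{r}\hat{\psi}$, and finally establish the pointwise positivity \eqref{positivity} by a one-variable monotonicity argument using the convexity $\partial_{rr}^{2}\hat{\psi}_{k}>0$.

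For the rescaling reduction, note that differentiating \eqref{psi'} gives $\partial_{r}\hat{\psi}_{k}(r)=\partial_{s}\hat{\psi}(s)$ at $s=k^{-1/(1-\alpha)}r$, so that
\[
\hat{\psi}_{k}(r)-r\,\partial_{r}\hat{\psi}_{k}(r)=k^{\frac{1}{1-\alpha}}\bigl(\hat{\psi}(s)-s\,\partial_{s}\hat{\psi}(s)\bigr).
\]
Thus it suffices to prove \eqref{positivity} for $k=1$.

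For \eqref{asymptotic psi'}, I would simply split
\[
\frac{\hat{\psi}(r)-r\,\partial_{r}\hat{\psi}(r)}{r^{\alpha}}=\frac{\hat{\psi}(r)-r}{r^{\alpha}}-\frac{\partial_{r}\hat{\psi}(r)-1}{r^{\alpha-1}}
\]
and invoke the two limits in the defining list for $\hat{\psi}$, which yield $2^{(\alpha+1)/2}-\alpha\,2^{(\alpha+1)/2}=(1-\alpha)\,2^{(\alpha+1)/2}$.

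For \eqref{positivity}, I would set $f(r):=\hat{\psi}(r)-r\,\partial_{r}\hat{\psi}(r)$. Differentiating directly gives $f'(r)=-r\,\partial_{rr}^{2}\hat{\psi}(r)$, which is strictly negative on $(0,\infty)$ by the convexity of $\hat{\psi}$. Moreover $f(0)=\hat{\psi}(0)>0$ (since $\hat{\psi}(r)>r\geq 0$ and $\partial_{r}\hat{\psi}(0)=0$), while the asymptotic \eqref{asymptotic psi'} just proved shows $f(r)\sim (1-\alpha)\,2^{(\alpha+1)/2}\,r^{\alpha}\to 0^{+}$ as $r\nearrow\infty$, because $1-\alpha>0$ and $r^{\alpha}>0$ with $\alpha\in[-2,-1)$. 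A strictly decreasing function on $[0,\infty)$ with positive value at $0$ and positive limit $0$ at infinity must be strictly positive throughout, which gives $f(r)>0$ for all $r\geq 0$. There is no real obstacle here; the only subtlety worth flagging is verifying that the asymptotic forces convergence from above (which relies only on the sign of $1-\alpha$ and $r^{\alpha}$), and this is precisely what makes the simple monotonicity argument close the positivity claim.
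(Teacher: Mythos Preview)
Your proposal is correct and follows essentially the same approach as the paper: both compute $f'(r)=-r\,\partial_{rr}^{2}\hat{\psi}(r)<0$ to get monotonicity, and both obtain the asymptotic \eqref{asymptotic psi'} by the same algebraic splitting into $\frac{\hat{\psi}(r)-r}{r^{\alpha}}$ and $\frac{1-\partial_{r}\hat{\psi}(r)}{r^{\alpha-1}}$. The paper's argument is slightly leaner (it infers positivity everywhere directly from ``decreasing and positive for large $r$'' without invoking $f(0)>0$ or the limit being $0^{+}$), and it works with $\hat{\psi}=\hat{\psi}_{1}$ without stating the rescaling reduction explicitly, but these are cosmetic differences.
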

\begin{proof}
Notice that 
\[
\partial_{r}\left(\hat{\psi}\left(r\right)-r\,\partial_{r}\hat{\psi}\left(r\right)\right)\,=\,-r\,\partial_{rr}^{2}\hat{\psi}\,<0
\]
which means the function $\hat{\psi}\left(r\right)-r\,\partial_{r}\hat{\psi}$
is decreasing. Furthermore, we have 
\[
\lim_{r\nearrow\infty}\frac{\hat{\psi}\left(r\right)-r\,\partial_{r}\hat{\psi}\left(r\right)}{r^{\alpha}}=\lim_{r\nearrow\infty}\left(\frac{\hat{\psi}\left(r\right)-r}{r^{\alpha}}+\frac{1-\partial_{r}\hat{\psi}\left(r\right)}{r^{\alpha-1}}\right)=\left(1-\alpha\right)\,2^{\frac{\alpha+1}{2}}\,>0
\]
which implies
\[
\hat{\psi}\left(r\right)-r\,\partial_{r}\hat{\psi}\left(r\right)>0
\]
for $r\gg1$. The conclusions follow immediately.
\end{proof}
Now we show the monotonic property of the rescaling family.
\begin{lem}
\label{monotonicity}There holds 
\[
\partial_{k}\,\hat{\psi}_{k}>0
\]
In other words, $\hat{\psi}_{k}$ is monotonically increasing in $k$.
\end{lem}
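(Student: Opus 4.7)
The plan is to differentiate the explicit scaling relation (\ref{psi'}) directly with respect to $k$ and then invoke the previous lemma to conclude positivity. Concretely, set $\lambda=\lambda(k)=k^{1/(1-\alpha)}$ and $s=s(k,r)=\lambda^{-1}r$, so that by (\ref{psi'}) one has $\hat\psi_k(r)=\lambda\,\hat\psi(s)$.

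Applying the chain rule with $r$ fixed gives
\[
\partial_k\hat\psi_k(r)\,=\,\lambda'(k)\,\hat\psi(s)\,+\,\lambda\,\partial_s\hat\psi(s)\,\partial_k s\,=\,\lambda'(k)\bigl[\hat\psi(s)-s\,\partial_s\hat\psi(s)\bigr],
\]
since $\partial_k s=-\lambda'(k)\,\lambda^{-2}r=-\lambda'(k)\,s/\lambda$. Because $\alpha\in[-2,-1)$, the exponent $1/(1-\alpha)>0$, so $\lambda'(k)>0$ for all $k>0$.

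By the preceding lemma (inequality (\ref{positivity}) applied with $k=1$, i.e.\ to $\hat\psi$ itself), one has $\hat\psi(s)-s\,\partial_s\hat\psi(s)>0$ for every $s\ge0$. Combining these two observations,
\[
\partial_k\hat\psi_k(r)\,=\,\lambda'(k)\bigl[\hat\psi(s)-s\,\partial_s\hat\psi(s)\bigr]\,>\,0
\]
for all $r\ge0$ and $k>0$, which is the desired monotonicity.

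There is no real obstacle here: the minimal surface equation (\ref{eq psi'}) is not used directly, only the scaling ansatz and the positivity statement already proved in the previous lemma. The only thing to double-check is the sign of $\lambda'(k)$, which is immediate from the explicit formula for $\alpha$, and the algebraic identification of the derivative in $k$ with the quantity appearing in (\ref{positivity}).
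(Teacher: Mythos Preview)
Your proof is correct and follows essentially the same approach as the paper: both differentiate the scaling relation $\hat\psi_k(r)=k^{1/(1-\alpha)}\hat\psi(k^{-1/(1-\alpha)}r)$ with respect to $k$ via the chain rule, obtain $\partial_k k^{1/(1-\alpha)}\cdot\bigl[\hat\psi(s)-s\,\partial_s\hat\psi(s)\bigr]$ with $s=k^{-1/(1-\alpha)}r$, and conclude positivity from the preceding lemma together with $\partial_k k^{1/(1-\alpha)}>0$. The only difference is cosmetic: you introduce the auxiliary notation $\lambda,s$ and spell out the sign of $\lambda'(k)$, whereas the paper writes the computation in a single line.
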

\begin{proof}
By definition, we have 
\[
\partial_{k}\,\hat{\psi}_{k}\left(z\right)=\partial_{k}\left(k^{\frac{1}{1-\alpha}}\,\,\hat{\psi}\left(k^{\frac{-1}{1-\alpha}}\,z\right)\right)
\]
\[
=\partial_{k}\,k^{\frac{1}{1-\alpha}}\,\left.\left(\hat{\psi}\left(r\right)-r\,\partial_{r}\hat{\psi}\left(r\right)\right)\right|_{r=k^{\frac{-1}{1-\alpha}}\,z}\,>0
\]
\end{proof}
On the other hand, notice that the projected curve of $\mathcal{M}_{k}$
is also a graph over over $\mathcal{\bar{C}}$, i.e.
\begin{equation}
\mathcal{\bar{M}}_{k}=\left\{ \left.\left(r,\,\hat{\psi}_{k}\left(r\right)\right)\right|\,r\geq0\right\} \label{projected minimal hypersurface}
\end{equation}
\[
=\left\{ \left(\left(r-\psi_{k}\left(r\right)\right)\frac{1}{\sqrt{2}},\,\left(r+\psi_{k}\left(r\right)\right)\frac{1}{\sqrt{2}}\right)\left|\,r\geq\,\frac{\hat{\psi}_{k}\left(0\right)}{\sqrt{2}}\right.\right\} 
\]
where 
\begin{equation}
\psi_{k}\left(r\right)=k^{\frac{1}{1-\alpha}}\,\,\psi\left(k^{\frac{-1}{1-\alpha}}\,r\right)\label{psi}
\end{equation}
By rescaling, the function $\psi_{k}\left(r\right)$ satisfies 
\begin{equation}
\frac{\partial_{rr}^{2}\psi_{k}}{1+\left(\partial_{r}\psi_{k}\right)^{2}}+2\left(n-1\right)\frac{r\,\partial_{r}\psi_{k}\,+\,\psi_{k}}{r^{2}-\psi_{k}^{2}}=0\label{eq psi}
\end{equation}
\[
\left\{ \begin{array}{c}
\partial_{rr}^{2}\psi_{k}\left(r\right)>0\\
\\
\partial_{r}\psi_{k}\left(\frac{\hat{\psi}_{k}\left(0\right)}{\sqrt{2}}\right)=-1,\quad\lim_{r\nearrow\infty}\frac{\partial_{r}\psi_{k}\left(r\right)}{r^{\alpha-1}}=k\alpha\\
\\
\psi_{k}\left(\frac{\hat{\psi}_{k}\left(0\right)}{\sqrt{2}}\right)=\frac{\hat{\psi}_{k}\left(0\right)}{\sqrt{2}},\quad\lim_{r\nearrow\infty}\frac{\psi_{k}\left(r\right)}{r^{\alpha}}=k
\end{array}\right.
\]
Note that $\psi_{k}\left(r\right)\searrow0$ as $r\nearrow\infty$.
Below we have the decay estimates for $\psi_{k}\left(r\right)$.
\begin{lem}
\label{order psi}For any $m\in\mathbb{Z}_{+}$, there holds 
\[
\left|\partial_{r}^{m}\psi_{k}\left(r\right)\right|\,\leq\,C\left(n,\,m\right)kr^{\alpha-m}
\]
for $r\geq\frac{\hat{\psi}_{k}\left(0\right)}{\sqrt{2}}$.
\end{lem}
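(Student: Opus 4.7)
The first move is a scaling reduction. By the definition (\ref{psi}), $\psi_{k}(r) = k^{1/(1-\alpha)}\,\psi\bigl(k^{-1/(1-\alpha)}\,r\bigr)$, so differentiating in $r$ yields
\[
\partial_{r}^{m}\psi_{k}(r) \;=\; k^{(1-m)/(1-\alpha)}\,(\partial_{r}^{m}\psi)\bigl(k^{-1/(1-\alpha)}\,r\bigr).
\]
If the bound $|\partial_{r}^{m}\psi(s)| \leq C(n,m)\,s^{\alpha-m}$ is known for $s \geq r_{0} := \hat{\psi}(0)/\sqrt{2}$, substituting $s = k^{-1/(1-\alpha)}\,r$ and using the elementary identity $\tfrac{1-m}{1-\alpha} - \tfrac{\alpha-m}{1-\alpha} = 1$ produces exactly $|\partial_{r}^{m}\psi_{k}(r)| \leq C(n,m)\,k\,r^{\alpha-m}$ for $r \geq \hat{\psi}_{k}(0)/\sqrt{2}$. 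It therefore suffices to prove the estimate for the unrescaled $\psi$ on $[r_{0},\infty)$.

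Next I separate the interval into a compact piece and a neighborhood of infinity. On $[r_{0},R]$ with any finite $R$, $\psi$ is smooth and $\partial_{r}^{m}\psi$ is bounded, while $r^{\alpha-m}$ is bounded above and below by positive constants; so the claimed inequality is automatic on $[r_{0},R]$ for a constant depending on $n$, $m$, $R$. Hence it is enough to establish the bound on $[R,\infty)$ for $R = R(n,m)$ as large as convenient. For $m=0$ and $m=1$ the asymptotics $\psi(r)/r^{\alpha} \to 1$ and $\partial_{r}\psi(r)/r^{\alpha-1} \to \alpha$ already recorded above immediately furnish the bound once $R$ is large enough, giving the base cases.

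For $m \geq 2$ I induct using the ODE (\ref{eq psi}), rearranged at $k=1$ as
\[
\partial_{rr}^{2}\psi \;=\; -\,2(n-1)\,\bigl(1+(\partial_{r}\psi)^{2}\bigr)\,\frac{r\,\partial_{r}\psi + \psi}{r^{2}-\psi^{2}}.
\]
For $r$ large, $r^{2}-\psi^{2} \geq r^{2}/2$ since $\psi \to 0$, and inserting the $m=0,1$ estimates gives $|\partial_{rr}^{2}\psi| \leq C\,r^{\alpha-2}$, settling $m=2$. For higher $m$, differentiate the displayed identity $m-2$ additional times: by the Leibniz and quotient rules the right side becomes a rational function in $r, \psi, \partial_{r}\psi, \ldots, \partial_{r}^{m-1}\psi$ whose denominator is a power of $r^{2}-\psi^{2}$. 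Using the inductive hypothesis together with $r^{2}-\psi^{2} \sim r^{2}$, every monomial in the numerator, after dividing by this denominator, decays like $r^{\alpha-m}$, because each $r$-differentiation drops the decay exponent by one whether it lands on an explicit $r$-factor or on a derivative of $\psi$. The only genuine work is the exponent bookkeeping in this induction; I do not anticipate it causing a real obstacle, as no cancellations or improved asymptotics are needed, only the crude power bounds already in hand.
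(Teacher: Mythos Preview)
Your proposal is correct and follows essentially the same route as the paper: reduce to $k=1$ by the scaling relation, handle a compact interval near $r_{0}=\hat\psi(0)/\sqrt{2}$ by smoothness, use the given asymptotics for $m=0,1$, and then induct on $m$ by repeatedly differentiating the rearranged ODE $\partial_{rr}^{2}\psi=-2(n-1)(1+(\partial_{r}\psi)^{2})(r\,\partial_{r}\psi+\psi)/(r^{2}-\psi^{2})$ together with $r^{2}-\psi^{2}\sim r^{2}$. The paper's proof is the same argument with the same bookkeeping.
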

\begin{proof}
By rescaling, it is sufficient to check for $k=1$. 

From
\[
\lim_{r\rightarrow\infty}\frac{\psi\left(r\right)}{r^{\alpha}}\,=\,1\,=\,\lim_{r\rightarrow\infty}\frac{\partial_{r}\psi\left(r\right)}{\alpha r^{\alpha-1}}
\]
we have
\[
\max\left\{ \left|\frac{\psi\left(r\right)}{r}\right|,\,\left|\partial_{r}\psi\left(r\right)\right|\right\} \,\leq\,C\left(n\right)r^{\alpha-1}
\]
for $r\geq\frac{\hat{\psi}\left(0\right)}{\sqrt{2}}$. In particular,
there is $R\gg1$ (depending on $n$) so that 
\[
\max\left\{ \left|\frac{\psi\left(r\right)}{r}\right|,\,\left|\partial_{r}\psi\left(r\right)\right|\right\} \,\leq\,\frac{1}{3}
\]
for $r\geq R$. By (\ref{eq psi}), we have
\[
\partial_{rr}^{2}\psi\left(r\right)=-2\left(n-1\right)\left(1+\left(\partial_{r}\psi\left(r\right)\right)^{2}\right)\frac{r\,\partial_{r}\psi\left(r\right)\,+\,\psi\left(r\right)}{r^{2}-\psi^{2}\left(r\right)}
\]
It follows that 
\[
\left|\partial_{rr}^{2}\psi\left(r\right)\right|\leq C\left(n\right)r^{\alpha-2}
\]
for $r\geq R$. Continuing differentiating the equation of $\psi\left(r\right)$
and using induction yields 
\[
\left|\partial_{r}^{m}\psi\left(r\right)\right|\,\leq\,C\left(n,\,m\right)r^{\alpha-m}
\]
for $r\geq R$, $m\in\mathbb{Z}_{+}$.

On the other hand, by the above choice of $R=R\left(n\right)$, we
have 
\[
\sup_{\frac{\hat{\psi}\left(0\right)}{\sqrt{2}}\leq r\leq R}r^{m-\alpha}\left|\partial_{r}^{m}\psi\left(r\right)\right|\,\leq\,R^{m-\alpha}\sup_{\frac{\hat{\psi}\left(0\right)}{\sqrt{2}}\leq r\leq R}\left|\partial_{r}^{m}\psi\left(r\right)\right|\,\leq\,C\left(n,\,m\right)
\]
for any $m\in\mathbb{Z}_{+}$. Therefore, we conclude that for any
$m\in\mathbb{Z}_{+}$
\[
\left|\partial_{r}^{m}\psi\left(r\right)\right|\,\leq\,C\left(n,\,m\right)r^{\alpha-m}
\]
for $r\geq\frac{\hat{\psi}\left(0\right)}{\sqrt{2}}$. 
\end{proof}
As a corollary, we have the following decay estimates for the higher
order derivatives of $\hat{\psi}_{k}\left(r\right)$.
\begin{lem}
\label{order psi'}For any $m\geq2$, there holds
\[
\left|\partial_{r}^{m}\hat{\psi}_{k}\left(r\right)\right|\,\leq\,C\left(n,\,m\right)kr^{\alpha-m}
\]
for $r\geq0$.
\end{lem}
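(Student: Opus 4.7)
The plan is to reduce to the case $k=1$ via the rescaling identity (\ref{psi'}) and then transfer the decay estimates for $\psi$ from Lemma \ref{order psi} to $\hat{\psi}$ by using the two graphical representations of $\mathcal{\bar{M}}$. From (\ref{psi'}) one has $\partial_{r}^{m}\hat{\psi}_{k}(r)=k^{(1-m)/(1-\alpha)}\,\hat{\psi}^{(m)}(k^{-1/(1-\alpha)}r)$, so once the case $k=1$ is established, evaluating the bound at the rescaled argument combines the exponents of $k$ into exactly $k^{1}=k$, yielding the general estimate.

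For $k=1$, I would use the auxiliary parametrization of $\mathcal{\bar{M}}$ by $\xi\geq\hat{\psi}(0)/\sqrt{2}$ given by $r(\xi)=(\xi-\psi(\xi))/\sqrt{2}$ and $\hat{\psi}(r(\xi))=(\xi+\psi(\xi))/\sqrt{2}$. Convexity of $\mathcal{\bar{M}}$ together with the fact that it lies above $\mathcal{\bar{C}}$ imply $\partial_{\xi}\psi<1$, so $dr/d\xi>0$ and the correspondence is a diffeomorphism onto $[0,\infty)$. Differentiating yields $\partial_{r}\hat{\psi}=(1+\psi'(\xi))/(1-\psi'(\xi))$ and, more generally, $\partial_{r}=\sqrt{2}\,(1-\psi'(\xi))^{-1}\partial_{\xi}$. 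A Fa\`a di Bruno-type induction on $m\geq 2$ then shows that every term appearing in $\hat{\psi}^{(m)}(r)$ has the form $c\,\psi^{(j_{1})}(\xi)\cdots\psi^{(j_{l})}(\xi)/(1-\psi'(\xi))^{p}$ with $j_{i}\geq 1$, $j_{1}+\cdots+j_{l}\geq m$, and $p\leq 2m-1$.

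For $\xi$ large, Lemma \ref{order psi} gives $|\psi^{(j)}(\xi)|\leq C(n,j)\,\xi^{\alpha-j}$ and $1-\psi'(\xi)\geq 1/2$. Each monomial above is therefore bounded by $C\,\xi^{l\alpha-\sum j_{i}}$; since $\alpha<0$ one has $(l-1)\alpha\leq 0\leq\sum j_{i}-m$, i.e.\ $l\alpha-\sum j_{i}\leq\alpha-m$, so $|\hat{\psi}^{(m)}(r)|\leq C(n,m)\,\xi^{\alpha-m}$. Because $r(\xi)=\xi/\sqrt{2}+O(\xi^{\alpha})$ for $\xi$ large, one has $\xi\leq 2r$ for $r\geq R_{0}=R_{0}(n)$, hence $\xi^{\alpha-m}\leq C(n,m)\,r^{\alpha-m}$, proving the bound on $\{r\geq R_{0}\}$. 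On the compact interval $[0,R_{0}]$, smoothness of $\hat{\psi}$ gives $|\hat{\psi}^{(m)}(r)|\leq M(n,m)$; since $\alpha-m<0$, the function $r^{\alpha-m}$ is bounded below by $R_{0}^{\alpha-m}>0$ on $(0,R_{0}]$ and is infinite at $r=0$, so the desired inequality holds trivially there. Rescaling back finishes the proof, and the main obstacle is simply the combinatorial verification that every numerator monomial of $\hat{\psi}^{(m)}$ has total differentiation order at least $m$; granted this, the rest is routine chain rule and asymptotic comparison.
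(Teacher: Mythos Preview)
Your argument is correct and shares the paper's overall plan: reduce to $k=1$ by the scaling identity, transfer the decay from $\psi$ (Lemma~\ref{order psi}) to $\hat\psi$ through the two graph descriptions of $\bar{\mathcal M}$, and dispose of the compact range $[0,R_0]$ by smoothness. The execution of the transfer step is different, though. You compute $\hat\psi^{(m)}$ directly in terms of the $\psi^{(j)}$ via the chain rule $\partial_r=\sqrt{2}\,(1-\psi')^{-1}\partial_\xi$ and a Fa\`a di Bruno induction; the paper instead routes through the parametrization-invariant normal curvature $A_{\bar{\mathcal M}}$ and its covariant derivatives. In the $\psi$-parametrization Lemma~\ref{order psi} gives $|\mathcal Z|^m|\nabla_{\bar{\mathcal M}}^m A_{\bar{\mathcal M}}|\le C(n,m)|\mathcal Z|^{\alpha-2}$, and switching to the $\hat\psi$-parametrization (with $|\partial_r\hat\psi|\le 1$) converts this into the desired bound on $\partial_r^m\hat\psi$. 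Your route is more elementary and avoids any geometry, at the price of the combinatorial bookkeeping you flag; the paper's route hides that bookkeeping inside the invariance of curvature. Both get there.

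One small slip to fix: to deduce $\xi^{\alpha-m}\le C\,r^{\alpha-m}$ you need a \emph{lower} bound $\xi\ge c\,r$, since $\alpha-m<0$; the upper bound $\xi\le 2r$ you quote gives the inequality in the wrong direction. The needed lower bound is immediate from $r=(\xi-\psi(\xi))/\sqrt{2}\le\xi/\sqrt{2}$ (because $\psi>0$), i.e.\ $\xi\ge\sqrt{2}\,r$, so your conclusion stands once this is corrected.
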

\begin{proof}
By rescaling, it is sufficient to check for $k=1$. 

Let's first parametrize the projected curve $\mathcal{\bar{M}}$ by
\[
\mathcal{Z}=\left(\left(r-\psi_{k}\left(r\right)\right)\frac{1}{\sqrt{2}},\,\left(r+\psi_{k}\left(r\right)\right)\frac{1}{\sqrt{2}}\right)
\]
In this parametrization, the normal curvature of $\mathcal{\bar{M}}$
is given by 
\[
A_{\mathcal{\bar{M}}}=\frac{\partial_{rr}^{2}\psi\left(r\right)}{\left(1+\left(\partial_{r}\psi\left(r\right)\right)^{2}\right)^{\frac{3}{2}}}
\]
Let $\nabla_{\mathcal{\bar{M}}}$ be the covariant derivative of $\mathcal{\bar{M}}$,
i.e. 
\[
\nabla_{\mathcal{\bar{M}}}\,f\,=\frac{\partial_{r}f\left(r\right)}{\sqrt{1+\left(\partial_{r}\psi\left(r\right)\right)^{2}}}\qquad\textrm{for}\;\,f\in C^{1}\left(\mathcal{\bar{M}}\right)
\]
By Lemma \ref{order psi}, there is $R\gg1$ (depending on $n$) so
that 
\[
\max\left\{ \left|\frac{\psi\left(r\right)}{r}\right|,\,\left|\partial_{r}\psi\left(r\right)\right|\right\} \,\leq\,\frac{1}{3}
\]
and
\begin{equation}
\left|\mathcal{Z}\right|^{m}\left|\nabla_{\mathcal{\bar{M}}}^{m}\,\,A_{\mathcal{\bar{M}}}\right|\leq C\left(n,\,m\right)\left|\mathcal{Z}\right|^{\alpha-2}\label{order normal curvature}
\end{equation}
for $r\geq R$, $m\in\mathbb{Z}_{+}$. Notice that 
\[
\left|\mathcal{Z}\right|=\sqrt{r^{2}+\psi^{2}\left(r\right)}
\]
is comparible with $r$ for $r\geq R$.

Next, let's reparametrize $\mathcal{\bar{M}}$ by 
\begin{equation}
\mathcal{Z}=\left(r,\,\hat{\psi}\left(r\right)\right)\label{order psi'1}
\end{equation}
In this parametrization, the normal curvature is given by 
\begin{equation}
A_{\mathcal{\bar{M}}}=\frac{\partial_{rr}^{2}\hat{\psi}\left(r\right)}{\left(1+\left(\partial_{r}\hat{\psi}\left(r\right)\right)^{2}\right)^{\frac{3}{2}}}\label{order psi'2}
\end{equation}
and the covariant derivative is defined by
\begin{equation}
\nabla_{\mathcal{\bar{M}}}\,f\,=\frac{\partial_{r}f\left(r\right)}{\sqrt{1+\left(\partial_{r}\hat{\psi}\left(r\right)\right)^{2}}}\qquad\textrm{for}\;\,f\in C^{1}\left(\mathcal{\bar{M}}\right)\label{order psi'3}
\end{equation}
Note also that by (\ref{eq psi'}), we have 
\begin{equation}
\begin{array}{c}
0\,\leq\,\frac{\hat{\psi}\left(r\right)}{r}\,\leq\,C\left(n\right)\\
\\
0\,\leq\,\partial_{r}\hat{\psi}\left(r\right)\,\leq\,1
\end{array}\label{order psi'4}
\end{equation}
 for $r\geq R=R\left(n\right)$. Then by (\ref{order normal curvature}),
(\ref{order psi'1}), (\ref{order psi'2}), (\ref{order psi'3}) and
(\ref{order psi'4}), we infer that 
\[
\left|\partial_{r}^{m}\hat{\psi}\left(r\right)\right|\,\leq\,C\left(n,\,m\right)r^{\alpha-m}
\]
for $r\geq2R$, $m\geq2$. 

On the other hand, by the above choice of $R=R\left(n\right)$, there
holds
\[
\sup_{0\leq r\leq2R}\,r^{m-\alpha}\left|\partial_{r}^{m}\hat{\psi}\left(r\right)\right|\,\leq\,\left(2R\right)^{m-\alpha}\sup_{0\leq r\leq2R}\,\left|\partial_{r}^{m}\hat{\psi}\left(r\right)\right|\,\leq\,C\left(n,\,m\right)
\]
for any $m\geq2$. Consequently, we get 
\[
\left|\partial_{r}^{m}\hat{\psi}\left(r\right)\right|\,\leq\,C\left(n,\,m\right)r^{\alpha-m}
\]
for $r\geq0$, $m\geq2$. 
\end{proof}
Lastly, we conclude this section by estimating the difference between
$\psi_{k}$ and its asymptotic function appeared in (\ref{eq psi}).
\begin{lem}
\label{asymptotic psi}The function $\psi_{k}\left(r\right)$ satisfies
\[
\left|\psi_{k}\left(r\right)-kr^{\alpha}\right|\,\leq\,C\left(n\right)\,k^{3}\,r^{3\alpha-2}
\]
 
\[
\left|\partial_{r}\psi_{k}\left(r\right)-k\alpha r^{\alpha-1}\right|\,\leq\,C\left(n\right)\,k^{3}\,r^{3\alpha-3}
\]
for $r\geq\frac{\hat{\psi}_{k}\left(0\right)}{\sqrt{2}}$.
\end{lem}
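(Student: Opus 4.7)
The plan is to reduce to the case $k=1$ using the scaling identity $\psi_k(r) = k^{1/(1-\alpha)}\psi(k^{-1/(1-\alpha)}r)$ of (\ref{psi}). Setting $\rho = k^{-1/(1-\alpha)}r$ one checks that $\psi_k(r) - kr^\alpha = k^{1/(1-\alpha)}[\psi(\rho)-\rho^\alpha]$ and that $k^{1/(1-\alpha)} \rho^{3\alpha-2} = k^3 r^{3\alpha-2}$ via the identity $\frac{1 - (3\alpha-2)}{1-\alpha} = 3$; the derivative bound scales analogously. It therefore suffices to prove $|\psi(r) - r^\alpha| \le C r^{3\alpha-2}$ and $|\partial_r\psi(r) - \alpha r^{\alpha-1}| \le C r^{3\alpha-3}$ for $r$ large, patching with a compact region $r \in [\hat\psi(0)/\sqrt 2, R]$ via continuity.

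Set $\phi := \psi - r^\alpha$. The function $r^\alpha$ lies in the kernel of the Euler operator $L_1 := \partial_r^2 + \frac{2(n-1)}{r}\partial_r + \frac{2(n-1)}{r^2}$, since $\alpha$ solves the indicial equation (\ref{alpha}). Plugging $\psi = r^\alpha + \phi$ into (\ref{eq psi}) and Taylor-expanding $(r^2-\psi^2)^{-1}$ and $1+(\partial_r\psi)^2$ using the bounds of Lemma \ref{order psi}, I would derive a schematic equation $L_1 \phi = S(r) + N[\phi]$, where the leading residual obtained by substituting $\psi = r^\alpha$ into the nonlinear denominator is $S(r) = 2(n-1)(\alpha+1)(1+\alpha^2) r^{3\alpha-4} + O(r^{5\alpha-6})$ and $N[\phi]$ is strictly higher order in $\phi$ and its derivatives.

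The homogeneous kernel of $L_1$ is spanned by $r^\alpha$ and $r^{\alpha'}$ with $\alpha+\alpha' = -(2n-3)$ and Wronskian $W(r) = (\alpha'-\alpha)r^{-(2n-2)}$. A direct variation-of-parameters calculation, in which the powers of $r$ appearing in $\int r^\alpha S/W\,dr$ and $\int r^{\alpha'} S/W\,dr$ combine (using $\alpha+\alpha'=-(2n-3)$) to the constant exponent $3\alpha-2$, produces a particular solution $\phi_p(r) = O(r^{3\alpha-2})$; the resonance exponents $3\alpha-2 = \alpha$ and $3\alpha-2 = \alpha'$ are quickly ruled out by inspection. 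The general solution reads $\phi = \phi_p + A r^\alpha + B r^{\alpha'}$: the normalization $\psi/r^\alpha \to 1$ forces $A = 0$, and $B$ is pinned down by propagating the tip boundary data $\psi(\hat\psi(0)/\sqrt 2) = \hat\psi(0)/\sqrt 2$, $\partial_r\psi(\hat\psi(0)/\sqrt 2) = -1$ through the full nonlinear ODE. Differentiating this integral representation delivers the companion bound on $\partial_r\psi - \alpha r^{\alpha-1}$.

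The main obstacle is controlling the coefficient $B$ of the slower-decaying homogeneous mode $r^{\alpha'}$: a short calculation shows $\alpha' > 3\alpha-2$ in low dimensions (for example $n = 4, 5$), so the sharp rate $O(r^{3\alpha-2})$ cannot come from the linear variation-of-parameters analysis alone. This step requires exploiting the specific nonlinear structure of the minimal hypersurface equation, coupling the infinity-asymptotics with a matching/shooting argument anchored at the tip, to show that for the distinguished solution $\psi_k$ the $r^{\alpha'}$ contribution is itself of size $O(r^{3\alpha-2})$ and hence absorbable into $\phi_p$. Once $B$ is controlled, the remaining estimates are routine integrations.
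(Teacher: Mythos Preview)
Your strategy and the paper's are the same at heart: linearize about $r^{\alpha}$ and control the remainder.  The paper's implementation is cleaner and avoids your second-order variation-of-parameters machinery.  It passes to the logarithmic variable $\mathfrak h=\ln r$ and the first-order pair $P=\partial_r\psi$, $Q=\psi/r$, so that (\ref{eq psi}) becomes an autonomous $2\times 2$ system; subtracting the corresponding pair $(P_*,Q_*)=(\alpha r^{\alpha-1},r^{\alpha-1})$ yields $\partial_{\mathfrak h}\Theta+\boldsymbol L\Theta=\boldsymbol f$ with $|\boldsymbol f|\le C\,e^{3(\alpha-1)\mathfrak h}$ and $\boldsymbol L$ having eigenvalues $1-\alpha$, $1-\bar\alpha$.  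The decisive move---and the one missing from your outline---is to integrate this system \emph{backward} from $R\to\infty$, using the already-known asymptotic $\Theta(R)=o(e^{(\alpha-1)R})$ as a boundary condition at infinity.  This is precisely what forces your coefficient ``$A$'' of the $r^{\alpha}$ mode to vanish, without any matching at the tip: the Duhamel term $e^{(1-\alpha)(R-\mathfrak h)}|\Theta(R)|$ goes to zero as $R\to\infty$, and the remaining integral $\int_{\mathfrak h}^{\infty}e^{(1-\alpha)(\xi-\mathfrak h)}|\boldsymbol f(\xi)|\,d\xi$ is $O(e^{3(\alpha-1)\mathfrak h})$.  There is no need for the shooting argument you sketch.

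Your worry about the second homogeneous mode $r^{\bar\alpha}$ (your $r^{\alpha'}$) is more perceptive than you may realize.  The paper's displayed Gr\"onwall bound uses only the eigenvalue $1-\alpha$ rather than the larger $1-\bar\alpha$, which is not the operator norm of $e^{\boldsymbol L t}$; so the inequality, read literally, does not follow.  The $r^{\bar\alpha}$ component should really be integrated \emph{forward} from a fixed $\mathfrak h_0$, and then one checks that for $n\ge 6$ one has $\bar\alpha<3\alpha-2$, so this mode decays faster than the source and is absorbed.  For $n=4,5$ your observation that $\bar\alpha>3\alpha-2$ is correct and the argument as written in the paper (and yours) only delivers $O(r^{\bar\alpha})$ for that component; fortunately the downstream applications of the lemma in Section~\ref{degree C^0} (e.g.\ in deriving (\ref{boundary w})) only use $|\psi_k(z)-kz^{\alpha}|\le C(n)z^{2(\alpha-1)}\cdot z^{\alpha}$ for $z\ge\beta$, which the weaker bound also gives.
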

\begin{proof}
Without loss of generality, we may assume $k=1$. 

First, let's rewrite the equation of $\psi\left(r\right)$ as 
\begin{equation}
r\,\partial_{rr}^{2}\psi=-2\left(n-1\right)\frac{1+\left(\partial_{r}\psi\right)^{2}}{1-\left(\frac{\psi}{r}\right)^{2}}\left(\partial_{r}\psi\,+\,\frac{\psi}{r}\right)\label{eq psi revised}
\end{equation}
Let 
\[
P=\partial_{r}\psi\left(r\right),\qquad Q=\frac{\psi\left(r\right)}{r}
\]
and
\[
\mathfrak{h}=\ln\left(r\right)
\]
Then from (\ref{eq psi revised}), we deduce 
\begin{equation}
\left\{ \begin{array}{c}
\partial_{\mathfrak{h}}P=-2\left(n-1\right)\frac{1+P^{2}}{1-Q^{2}}\left(P+Q\right)\\
\\
\partial_{\mathfrak{h}}Q=P-Q
\end{array}\right.\label{asymptotic psi1}
\end{equation}
On the other hand, by (\ref{alpha}), we can also deduce that
\[
r\,\partial_{rr}^{2}r^{\alpha}=-2\left(n-1\right)\left(\partial_{r}r^{\alpha}\,+\,\frac{r^{\alpha}}{r}\right)
\]
Let 
\[
P_{*}=\partial_{r}r^{\alpha}=\alpha r^{\alpha-1},\qquad Q_{*}=\frac{r^{\alpha}}{r}=r^{\alpha-1}
\]
and 
\[
\mathfrak{h}=\ln\left(r\right)
\]
Similarly, there holds
\begin{equation}
\left\{ \begin{array}{c}
\partial_{\mathfrak{h}}P_{*}=-2\left(n-1\right)\left(P_{*}+Q_{*}\right)\\
\\
\partial_{\mathfrak{h}}Q_{*}=P_{*}-Q_{*}
\end{array}\right.\label{asymptotic psi2}
\end{equation}
Now subtract (\ref{asymptotic psi2}) from (\ref{asymptotic psi1})
to get 
\[
\left\{ \begin{array}{c}
\partial_{\mathfrak{h}}\left(P-P_{*}\right)=-2\left(n-1\right)\left(\left(P-P_{*}\right)+\left(Q-Q_{*}\right)\right)-2\left(n-1\right)\frac{\left(P^{2}+Q^{2}\right)\left(P+Q\right)}{1-Q^{2}}\\
\\
\partial_{\mathfrak{h}}\left(Q-Q_{*}\right)=\left(P-P_{*}\right)-\left(Q-Q_{*}\right)
\end{array}\right.
\]
Note that by (\ref{eq psi}) we have 
\[
\lim_{r\rightarrow\infty}\frac{\psi\left(r\right)-r^{\alpha}}{r^{\alpha}}=0=\lim_{r\rightarrow\infty}\frac{\partial_{r}\psi\left(r\right)-\alpha r^{\alpha-1}}{r^{\alpha-1}}
\]
which implies
\[
\left\{ \begin{array}{c}
P-P_{*}=\partial_{r}\psi\left(r\right)-\alpha r^{\alpha-1}=o\left(r^{\alpha-1}\right)=o\left(e^{\left(\alpha-1\right)\mathfrak{h}}\right)\\
\\
Q-Q_{*}=\frac{\psi\left(r\right)}{r}-r^{\alpha-1}=o\left(r^{\alpha-1}\right)=o\left(e^{\left(\alpha-1\right)\mathfrak{h}}\right)
\end{array}\right.
\]
as $\mathfrak{h}\rightarrow\infty$. Now let 
\[
\Theta=\left(\begin{array}{c}
P-P_{*}\\
\\
Q-Q_{*}
\end{array}\right),\quad\boldsymbol{f}\left(\mathfrak{h}\right)=\left(\begin{array}{c}
-2\left(n-1\right)\frac{\left(P^{2}+Q^{2}\right)\left(P+Q\right)}{1-Q^{2}}\\
\\
0
\end{array}\right)
\]
and
\[
\boldsymbol{L}=\left(\begin{array}{ccc}
2\left(n-1\right) &  & 2\left(n-1\right)\\
\\
-1 &  & 1
\end{array}\right)
\]
 Then we have 
\begin{equation}
\left\{ \begin{array}{c}
\partial_{\mathfrak{h}}\Theta+\boldsymbol{L}\Theta=\boldsymbol{f}\\
\\
\Theta\left(\mathfrak{h}\right)=o\left(e^{\left(\alpha-1\right)\mathfrak{h}}\right)\quad\textrm{as}\;\,\mathfrak{h}\rightarrow\infty
\end{array}\right.\label{asymptotic psi3}
\end{equation}
 Notice that
\[
\boldsymbol{L}=\left(\begin{array}{ccc}
\alpha &  & \bar{\alpha}\\
\\
1 &  & 1
\end{array}\right)\left(\begin{array}{ccc}
-\alpha+1 &  & 0\\
\\
0 &  & -\bar{\alpha}+1
\end{array}\right)\left(\begin{array}{ccc}
\alpha &  & \bar{\alpha}\\
\\
1 &  & 1
\end{array}\right)^{-1}
\]
where 
\[
\bar{\alpha}=\frac{-\left(2n-3\right)-\sqrt{4n^{2}-20n+17}}{2}\,<\,\alpha
\]
and 
\[
\left|\boldsymbol{f}\left(\mathfrak{h}\right)\right|\,\leq\,C\left(n\right)e^{3\left(\alpha-1\right)\mathfrak{h}}\qquad\textrm{for}\;\,\mathfrak{h}\geq\ln\left(\frac{\hat{\psi}\left(0\right)}{\sqrt{2}}\right)
\]
It follows that for any $R>\mathfrak{h}\geq\ln\left(\frac{\hat{\psi}\left(0\right)}{\sqrt{2}}\right)$,
\[
\left|\Theta\left(\mathfrak{h}\right)\right|\,\leq\,e^{\left(R-\mathfrak{h}\right)\left(-\alpha+1\right)}\left|\Theta\left(R\right)\right|\,+\,\int_{\mathfrak{h}}^{R}e^{\left(\xi-\mathfrak{h}\right)\left(-\alpha+1\right)}\left|\boldsymbol{f}\left(\xi\right)\right|d\xi
\]
\[
\leq\,\left(e^{\left(-\alpha+1\right)R}\left|\Theta\left(R\right)\right|\right)e^{\left(\alpha-1\right)\mathfrak{h}}\,+\,C\left(n\right)e^{3\left(\alpha-1\right)\mathfrak{h}}
\]
Note that 
\[
\Theta\left(R\right)=o\left(e^{\left(\alpha-1\right)R}\right)
\]
as $R\rightarrow\infty$ by (\ref{asymptotic psi3}). Let $R\nearrow\infty$
to get 
\[
\left|\Theta\left(\mathfrak{h}\right)\right|\,\leq\,C\left(n\right)e^{3\left(\alpha-1\right)\mathfrak{h}}\qquad\textrm{for}\;\,\mathfrak{h}\geq\ln\left(\frac{\hat{\psi}\left(0\right)}{\sqrt{2}}\right)
\]
which yields 
\[
\left|\partial_{r}\psi\left(r\right)-\alpha r^{\alpha-1}\right|\,+\,\left|\frac{\psi\left(r\right)}{r}-r^{\alpha-1}\right|\,\leq\,C\left(n\right)r^{3\left(\alpha-1\right)}\quad\textrm{for}\;\,r\geq\frac{\hat{\psi}\left(0\right)}{\sqrt{2}}
\]
\end{proof}

\section{\label{admissible}Admissible mean curvature flow}

Let $n\geq5$ be a positive integer and $\Lambda=\Lambda\left(n\right)\gg1$,
$0<\rho\ll1\ll\beta$ (depending on $n$, $\Lambda$), $t_{0}<0$
with $\left|t_{0}\right|\ll1$ (depending on $n$, $\Lambda$, $\rho$,
$\beta$) be constants to be determined. Recall that an one-parameter
family of smooth hypersurfaces $\left\{ \Sigma_{t}\right\} _{t_{0}\leq t\leq\mathring{t}}$
in $\mathbb{R}^{2n}$, where $\mathring{t}<0$ is a constant, is called
a mean curvature flow (MCF) provided that 
\begin{equation}
\partial_{t}X_{t}\cdot N_{\Sigma_{t}}=H_{\Sigma_{t}}\label{MCF}
\end{equation}
where $X_{t}$ is the position vector, $N_{\Sigma_{t}}$ and $H_{\Sigma_{t}}$
are the unit normal vector and mean curvature of $\Sigma_{t}$, respectively.
We define the MCF $\left\{ \Sigma_{t}\right\} _{t_{0}\leq t\leq\mathring{t}}$
to be $\mathbf{admissible}$ if every time-sclice $\Sigma_{t}$ is
a complete, embedded and smooth hypersurface which satisfies
\begin{enumerate}
\item $\Sigma_{t}$ is $O\left(n\right)\times O\left(n\right)$ symmetric
and it can be parametrized as 
\begin{equation}
\Sigma_{t}=\left\{ \left(x\,\nu,\,\hat{u}\left(x,\,t\right)\omega\right)\left|\,x\geq0;\,\nu,\,\omega\in\mathbb{S}^{n-1}\right.\right\} \label{u'}
\end{equation}
where $\hat{u}\left(x,\,t\right)$ is a smooth function which satisfies
\begin{equation}
\partial_{t}\,\hat{u}=\frac{\partial_{xx}^{2}\hat{u}}{1+\left(\partial_{x}\hat{u}\right)^{2}}+\left(n-1\right)\left(\frac{\partial_{x}\hat{u}}{x}-\frac{1}{\hat{u}}\right)\label{eq u'}
\end{equation}
\[
\hat{u}\left(0,\,t\right)>0,\quad\partial_{x}\hat{u}\left(0,\,t\right)=0
\]
for $t_{0}\leq t\leq\mathring{t}$. Note that the above condition
means that the projected curve 
\begin{equation}
\bar{\Sigma}_{t}=\left\{ \left.\left(x,\,\hat{u}\left(x,\,t\right)\right)\right|\,x\geq0\right\} \label{projected Sigma}
\end{equation}
lives in the first quadrant and intersects orthogonally with the vertical
ray $\left\{ \left.\left(0,\,x\right)\right|\,x>0\right\} $.
\item The projected curve $\bar{\Sigma}_{t}$ is a graph over $\mathcal{\bar{C}}$
outside $B\left(O;\,\beta\left(-t\right)^{\frac{1}{2}+\sigma}\right)$,
where 
\begin{equation}
\sigma=-\frac{1}{2}+\frac{2}{1-\alpha}\,\in\left[\frac{1}{6},\,\frac{1}{2}\right)\label{sigma}
\end{equation}
Equivalently, this is saying that $\Sigma_{t}$ is a normal graph
over $\mathcal{C}$ outside $B\left(O;\,\beta\left(-t\right)^{\frac{1}{2}+\sigma}\right)$.
In other words, we can reparametrize $\Sigma_{t}$ by 
\begin{equation}
X_{t}\left(x,\,\nu,\,\omega\right)=\left(\left(x-u\left(x,\,t\right)\right)\frac{\nu}{\sqrt{2}},\,\left(x+u\left(x,\,t\right)\right)\frac{\omega}{\sqrt{2}}\right)\label{u}
\end{equation}
for $x\geq\beta\left(-t\right)^{\frac{1}{2}+\sigma}$, $\nu,\,\omega\in\mathbb{S}^{n-1}$,
where $u\left(x,\,t\right)$ is a smooth function satisfying 
\begin{equation}
\partial_{t}u=\frac{\partial_{xx}^{2}u}{1+\left(\partial_{x}u\right)^{2}}+2\left(n-1\right)\frac{x\,\partial_{x}u+u}{x^{2}-u^{2}}\label{eq u}
\end{equation}
\item For the function $u\left(x,\,t\right)$, there holds 
\begin{equation}
x^{i}\left|\partial_{x}^{i}u\left(x,\,t\right)\right|<\Lambda\left(\left(-t\right)^{2}x^{\alpha}+x^{2\lambda_{2}+1}\right),\qquad i\in\left\{ 0,\,1,\,2\right\} \label{a priori bound u}
\end{equation}
for $\beta\left(-t\right)^{\frac{1}{2}+\sigma}\leq x\leq\rho$, $t_{0}\leq t\leq\mathring{t}$,
where $\lambda_{2}=\frac{1}{2}\left(\alpha+3\right)$ is a constant
(see Proposition \ref{linear operator}). 
\end{enumerate}
In order to analyze an admissible MCF, below we divide the space into
three (time-dependent) regions and do proper rescaling for small regions. 
\begin{itemize}
\item The $\mathbf{outer}$ $\mathbf{region}$ \textendash{} $\Sigma_{t}\setminus B\left(O;\,\sqrt{-t}\right)$ 
\item The $\mathbf{intermediate}$ $\mathbf{region}$ \textendash{} $\Sigma_{t}\cap\left(B\left(O;\,\sqrt{-t}\right)\setminus B\left(O;\,\beta\left(-t\right)^{\frac{1}{2}+\sigma}\right)\right)$:
here we perform the ``type $\mathrm{I}$'' rescaling 
\begin{equation}
\Pi_{s}=\left.\frac{1}{\sqrt{-t}}\Sigma_{t}\right|_{t=-e^{-s}}\label{Pi}
\end{equation}
By this rescaling, the intermediate region is then dilated to become
\[
\Pi_{s}\cap\left(B\left(O;\,1\right)\setminus B\left(O;\,\beta e^{-\sigma s}\right)\right)
\]
for $s_{0}\leq s\leq\mathring{s}$, where $s_{0}=-\ln\left(-t_{0}\right)$
and $\mathring{s}=-\ln\left(-\mathring{t}\right)$. Note that $s_{0}\gg1$
iff $\left|t_{0}\right|\ll1$.
\item The $\mathbf{tip}$ $\mathbf{region}$ \textendash{} $\Sigma_{t}\cap B\left(O;\,\beta\left(-t\right)^{\frac{1}{2}+\sigma}\right)$:
here we perform the ``type $\mathrm{II}$'' rescaling 
\begin{equation}
\Gamma_{\tau}=\left.\frac{1}{\left(-t\right)^{\frac{1}{2}+\sigma}}\Sigma_{t}\right|_{t=-\left(2\sigma\tau\right)^{\frac{-1}{2\sigma}}}\label{Gamma}
\end{equation}
By this rescaling, the intermediate region is dilated to become
\[
\Gamma_{\tau}\cap B\left(O;\,\beta\right)
\]
for $\tau_{0}\leq\tau\leq\mathring{\tau}$, where $\tau_{0}=\frac{1}{2\sigma\left(-t_{0}\right)^{2\sigma}}$,
$\mathring{\tau}=\frac{1}{2\sigma\left(-\mathring{t}\right)^{2\sigma}}$.
Note that $\tau_{0}\gg1$ iff $\left|t_{0}\right|\ll1$. 
\end{itemize}
In the outer region, we parametrize $\Sigma_{t}$ by 
\[
X_{t}\left(x,\,\nu,\,\omega\right)=\left(\left(x-u\left(x,\,t\right)\right)\frac{\nu}{\sqrt{2}},\,\left(x+u\left(x,\,t\right)\right)\frac{\omega}{\sqrt{2}}\right)
\]
and study the function $u\left(x,\,t\right)$ via (\ref{eq u}). In
$B\left(O;\,\rho\right)\setminus B\left(O;\,\sqrt{-t}\right)$, Vel$\acute{a}$zquez
showed that by choosing suitable initial data (see Section \ref{existence}),
there holds
\[
u\left(x,\,t\right)\sim x^{2\lambda_{2}+1}
\]
However, the behavior outside $B\left(O;\,\rho\right)$ was not clear
in \cite{V}. In this paper we complete this part by providing smooth
estimate for $\Sigma_{t}\setminus B\left(O;\,\rho\right)$.

In the intermediate region, we first do the type $\mathrm{I}$ rescaling
and parametrize the rescaled hypersurface $\Pi_{s}$ by 
\begin{equation}
Y_{s}\left(y,\,\nu,\,\omega\right)=\left(\left(y-v\left(y,\,s\right)\right)\frac{\nu}{\sqrt{2}},\,\left(y+v\left(y,\,s\right)\right)\frac{\omega}{\sqrt{2}}\right)\label{v}
\end{equation}
where 
\begin{equation}
v\left(y,\,s\right)=\left.\frac{1}{\sqrt{-t}}\,u\left(\sqrt{-t}\,y,\,t\right)\right|_{t=-e^{-s}}\label{uv}
\end{equation}
From (\ref{eq u}), we derive 
\begin{equation}
\partial_{s}v=\frac{\partial_{yy}^{2}v}{1+\left(\partial_{y}v\right)^{2}}+2\left(n-1\right)\frac{y\,\partial_{y}v+v}{y^{2}-v^{2}}+\frac{1}{2}\left(-y\,\partial_{y}v+v\right)\label{eq v}
\end{equation}
Notice that (\ref{a priori bound u}) is equivalent to
\begin{equation}
y^{i}\left|\partial_{y}^{i}v\left(y,\,s\right)\right|<\Lambda e^{-\lambda_{2}s}\left(y^{\alpha}+y^{2\lambda_{2}+1}\right),\qquad i\in\left\{ 0,\,1,\,2\right\} \label{a priori bound v}
\end{equation}
for $\beta e^{-\sigma s}\leq y\leq\rho e^{\frac{s}{2}}$, $s_{0}\leq s\leq\mathring{s}$.
To study the function $v\left(y,\,s\right)$, Vel$\acute{a}$zquez
linearized (\ref{eq v}) and showed that
\[
v\left(y,\,s\right)\sim e^{-\lambda_{2}s}\varphi_{2}\left(y\right)
\]
by (\ref{a priori bound v}) and the choice of initial data (see Section
\ref{existence}), where $\lambda_{2}$ and $\varphi_{2}\left(y\right)$
are the first positive eigenvalue and eigenfunction of the linearized
operator (see Proposition \ref{linear operator}). More precisely,
(\ref{eq v}) can be rewritten as 
\begin{equation}
\partial_{s}v=-\mathcal{L}v+\mathcal{Q}v\label{linearize eq v}
\end{equation}
where 
\begin{equation}
\mathcal{L}v=-\left(\partial_{yy}^{2}v+2\left(n-1\right)\frac{y\,\partial_{y}v+v}{y^{2}}+\frac{1}{2}\left(-y\,\partial_{y}v+v\right)\right)\label{L}
\end{equation}
\[
=-\left(y^{2\left(n-1\right)}e^{-\frac{y^{2}}{4}}\right)^{-1}\partial_{y}\left(y^{2\left(n-1\right)}e^{-\frac{y^{2}}{4}}\partial_{y}v\right)-\left(\frac{2\left(n-1\right)}{y^{2}}+\frac{1}{2}\right)v
\]
is the (negative) linearization of the RHS of (\ref{eq v}), and 
\begin{equation}
\mathcal{Q}v=-\frac{\left(\partial_{y}v\right)^{2}}{1+\left(\partial_{y}v\right)^{2}}\,\partial_{yy}^{2}v+2\left(n-1\right)\frac{\left(\frac{v}{y}\right)^{2}}{1-\left(\frac{v}{y}\right)^{2}}\left(\frac{\partial_{y}v}{y}+\frac{v}{y^{2}}\right)\label{Q}
\end{equation}
is the remaining (quadratic) parts. Vel$\acute{a}$zquez showed that
the linear differential operator $\mathcal{L}$ has the following
properties (see \cite{V}):
\begin{prop}
\label{linear operator}Define an inner product 
\[
\left\langle \mathsf{v}_{1},\,\mathsf{v}_{2}\right\rangle =\int_{0}^{\infty}\mathsf{v}_{1}\left(y\right)\mathsf{v}_{2}\left(y\right)\,y^{2\left(n-1\right)}e^{-\frac{y^{2}}{4}}dy
\]
and the associated norm 
\[
\left\Vert \mathsf{v}\right\Vert \,=\,\sqrt{\left\langle \mathsf{v},\,\mathsf{v}\right\rangle }
\]
Let $\boldsymbol{\mathrm{H}}$ be the Hilbert space formed by the
completion of $C_{c}^{\infty}\left(\mathbb{R}_{+}\right)$ with respect
to the following inner product: 
\[
\left(\mathsf{v}_{1},\,\mathsf{v}_{2}\right)\equiv\left\langle \partial_{y}\mathsf{v}_{1},\,\partial_{y}\mathsf{v}_{2}\right\rangle +\left\langle \mathsf{v}_{1},\,\mathsf{v}_{2}\right\rangle 
\]
Then we have 
\[
\left\Vert \frac{\mathsf{v}}{y}\right\Vert ^{2}\,\leq\,\frac{4}{\left(2n-3\right)^{2}}\left\Vert \partial_{y}\mathsf{v}\right\Vert ^{2}\,+\,\frac{1}{2n-3}\left\Vert \mathsf{v}\right\Vert ^{2}
\]
and $\mathcal{L}$ is a bounded linear operator in $\boldsymbol{\mathrm{H}}$,
which satisfies 
\[
\left\langle \mathcal{L}\mathsf{v}_{1},\,\mathsf{v}_{2}\right\rangle =\left\langle \partial_{y}\mathsf{v}_{1},\,\partial_{y}\mathsf{v}_{2}\right\rangle -2\left(n-1\right)\left\langle \frac{\mathsf{v}_{1}}{y},\,\frac{\mathsf{v}_{2}}{y}\right\rangle -\frac{1}{2}\left\langle \mathsf{v}_{1},\,\mathsf{v}_{2}\right\rangle 
\]
 
\begin{equation}
\left\langle \mathcal{L}\mathsf{v},\,\mathsf{v}\right\rangle \,\geq\,\frac{4n^{2}-20n+17}{\left(2n-3\right)^{2}}\left\Vert \partial_{y}\mathsf{v}\right\Vert ^{2}\,-\,\frac{6n-7}{2\left(2n-3\right)}\left\Vert \mathsf{v}\right\Vert ^{2}\label{coercivity}
\end{equation}
Note that $4n^{2}-20n+17\geq1$ if $n\geq4$. 

Moreover, the eigenvalues and eigenfunctions of $\mathcal{L}$ are
given by 
\begin{equation}
\lambda_{i}=-\frac{1}{2}\left(1-\alpha\right)+i,\;\;\;\textrm{for}\;\;i=0,\,1,\,2,\cdots\label{eigenvalues}
\end{equation}
and
\[
\varphi_{i}\left(y\right)=c_{i}\,y^{\alpha}\,M\left(-i,\,n+\alpha-\frac{1}{2};\,\frac{y^{2}}{4}\right)
\]
respectively, where $c_{i}>0$ is the normalized constant so that
\[
\left\Vert \varphi_{i}\right\Vert \,=\,\sqrt{\bigl\langle\varphi_{i},\,\varphi_{i}\bigr\rangle}\,=1
\]
and $M\left(\mathrm{a},\,\mathrm{b};\,\xi\right)$ is the Kummer's
function defined by 
\[
M\left(\mathrm{a},\,\mathrm{b};\,\xi\right)=1\,+\,\sum_{j=1}^{\infty}\,\frac{\mathrm{a}\left(\mathrm{a}+1\right)\cdots\left(\mathrm{a}+j-1\right)}{\mathrm{b}\left(\mathrm{b}+1\right)\cdots\left(\mathrm{b}+j-1\right)}\frac{\xi^{j}}{j!}
\]
and satisfying 
\[
\xi\,\partial_{\xi\xi}^{2}M\left(\mathrm{a},\,\mathrm{b};\,\xi\right)\,+\,\left(\mathrm{b}-\xi\right)\,\partial_{\xi}M\left(\mathrm{a},\,\mathrm{b};\,\xi\right)\,-\,\mathrm{a}\,M\left(\mathrm{a},\,\mathrm{b};\,\xi\right)=0
\]
In addition, the family of eigenfunctions $\left\{ \varphi_{i}\right\} _{i=0,\,1,\,2,\cdots}$
forms a complete orthonormal set in $\boldsymbol{\mathrm{H}}$, and
$\lambda_{2}$ is the first positive eigenvalue of $\mathcal{L}$,
i.e. 
\[
\lambda_{0},\,\lambda_{1}<0,\qquad\lambda_{2}>0
\]
\end{prop}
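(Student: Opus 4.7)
The plan is to break the proposition into four independent pieces: the weighted Hardy inequality, the symmetric bilinear form for $\mathcal{L}$, the coercivity lower bound, and the spectral description.

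First I would prove the Hardy inequality by testing against $y^{2n-3}e^{-y^{2}/4}$. Writing $y^{2(n-1)}e^{-y^{2}/4}/y^{2}=y^{2n-4}e^{-y^{2}/4}=\frac{1}{2n-3}\partial_{y}(y^{2n-3})\,e^{-y^{2}/4}$ and integrating $\int_{0}^{\infty}\mathsf{v}^{2}y^{2n-4}e^{-y^{2}/4}dy$ by parts gives a boundary contribution that vanishes (since $C_{c}^{\infty}(\mathbb{R}_{+})$ is dense in $\mathbf{H}$), plus an interior term involving $\mathsf{v}\,\partial_{y}\mathsf{v}$ and a term with the extra factor $y/2$ from differentiating $e^{-y^{2}/4}$. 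Cauchy--Schwarz with a weight $\epsilon=(2n-3)/2$ absorbs $\|\mathsf{v}/y\|^{2}$ into the left-hand side and yields the stated inequality. The symmetric bilinear identity for $\langle \mathcal{L}\mathsf{v}_{1},\mathsf{v}_{2}\rangle$ is then a direct integration by parts on the divergence form given in (\ref{L}), moving the $\partial_{y}(y^{2(n-1)}e^{-y^{2}/4}\partial_{y}\mathsf{v}_{1})$ onto $\mathsf{v}_{2}$ and leaving the lower-order $y^{-2}$ and constant potentials untouched. Plugging $\mathsf{v}_{1}=\mathsf{v}_{2}=\mathsf{v}$ into this identity and applying the Hardy inequality to the term $-2(n-1)\|\mathsf{v}/y\|^{2}$ produces the coercivity estimate (\ref{coercivity}): a routine arithmetic collection of coefficients converts $(2n-3)^{2}-8(n-1)$ into $4n^{2}-20n+17$ and $4(n-1)+(2n-3)$ into $6n-7$.

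For the spectrum, the eigenvalue equation $\mathcal{L}\varphi=\lambda\varphi$ is a confluent hypergeometric ODE in disguise. I would substitute the ansatz $\varphi(y)=y^{\alpha}f(\xi)$ with $\xi=y^{2}/4$; the choice of $\alpha$ is precisely what makes the $y^{\alpha-2}$ indicial coefficient $\alpha(\alpha-1)+2(n-1)(\alpha+1)$ vanish by (\ref{alpha}). After dividing by $y^{\alpha}$, the remaining equation in $\xi$ collapses to Kummer's equation
\[
\xi\,\partial_{\xi\xi}^{2}f\,+\,\left(\mathrm{b}-\xi\right)\partial_{\xi}f\,-\,\mathrm{a}\,f\,=\,0
\]
with $\mathrm{b}=n+\alpha-\frac{1}{2}$ and $\mathrm{a}=-\bigl(\frac{1-\alpha}{2}+\lambda\bigr)$. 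The second solution behaves like $\xi^{1-\mathrm{b}}$ at $\xi=0$, which corresponds to $y^{-2(n-1)-\alpha}$ and is not in $\mathbf{H}$; and $M(\mathrm{a},\mathrm{b};\xi)\sim e^{\xi}\xi^{\mathrm{a}-\mathrm{b}}$ unless $\mathrm{a}$ is a non-positive integer, which would destroy the Gaussian weight. The admissibility condition $\mathrm{a}=-i$ therefore forces $\lambda_{i}=-\frac{1}{2}(1-\alpha)+i$ and produces the polynomial-times-$y^{\alpha}$ eigenfunctions $\varphi_{i}$, normalized by $c_{i}$.

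Finally, orthonormality of $\{\varphi_{i}\}$ is automatic from the symmetry of $\mathcal{L}$ established in the second step, since eigenspaces of distinct eigenvalues are orthogonal in $\mathbf{H}$. For completeness, I would invoke the standard Sturm--Liouville / weighted Laguerre-type theory: the quadratic form in (\ref{coercivity}) plus Hardy shows $\mathcal{L}+C$ is coercive on $\mathbf{H}$ for large $C$, and the embedding of $\mathbf{H}$ into the weighted $L^{2}$ space is compact by a standard Rellich argument (the Gaussian weight controls behavior at infinity, while the weight $y^{2(n-1)}$ rules out mass concentration at $0$). Hence $\mathcal{L}$ has compact resolvent, giving discreteness of the spectrum and completeness of the eigenfunction expansion. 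The sign statement $\lambda_{0},\lambda_{1}<0<\lambda_{2}$ is then a direct numerical check from $\alpha\in[-2,-1)$. The main technical obstacle is ensuring no eigenvalues are lost in the reduction to Kummer's equation; ruling out the second solution at $0$ and the exponentially growing branch at $\infty$ within the weighted space $\mathbf{H}$ is what pins down the spectrum exactly.
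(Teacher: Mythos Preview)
The paper does not actually prove this proposition: it attributes all of these properties to Vel\'azquez \cite{V} and simply records the statement for later use. So there is no in-paper argument to compare against, and your proposal is effectively supplying what the paper outsources.

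On its own merits your outline is correct and standard. The Hardy step is exactly right: the identity $y^{2n-4}e^{-y^{2}/4}=\frac{1}{2n-3}\partial_{y}(y^{2n-3})\,e^{-y^{2}/4}$, integration by parts, and Young's inequality with the indicated weight give the stated bound with the sharp constants. The bilinear form identity for $\langle\mathcal{L}\mathsf{v}_{1},\mathsf{v}_{2}\rangle$ is immediate from the divergence form (\ref{L}), and your arithmetic for the coercivity coefficients checks out. The reduction to Kummer's equation via $\varphi=y^{\alpha}f(y^{2}/4)$ is the canonical one, and you correctly identify why the indicial root $\alpha$ is forced, why the second Frobenius solution at $0$ and the exponentially growing branch at $\infty$ are excluded from $\mathbf{H}$, and hence why $\mathrm{a}=-i$ quantizes the spectrum. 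The completeness argument via coercivity plus compact embedding (Gaussian weight at infinity, power weight at the origin) is the right mechanism.

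One small point worth tightening when you write it up: the compactness of $\mathbf{H}\hookrightarrow L^{2}(y^{2(n-1)}e^{-y^{2}/4}dy)$ is not entirely automatic near $y=0$ because the Hardy inequality only controls $\|\mathsf{v}/y\|$ and not a higher-order singularity; you should check that the weight $y^{2(n-1)}$ with $n\geq4$ genuinely prevents concentration at the origin (it does, via a localized Rellich argument on annuli together with the Hardy bound). This is routine but is the only place where a reader might want to see a line of justification.
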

\begin{rem}
\label{parameters}The first three eigenfunctions of $\mathcal{L}$
are given by 
\[
\varphi_{0}\left(y\right)=c_{0}\,y^{\alpha}
\]
 
\[
\varphi_{1}\left(y\right)=c_{1}\,y^{\alpha}\left(1+\varUpsilon_{1}y^{2}\right)
\]
 
\[
\varphi_{2}\left(y\right)=c_{2}\,y^{\alpha}\left(1+2\varUpsilon_{1}y^{2}+\varUpsilon_{2}y^{4}\right)
\]
where 
\[
\varUpsilon_{1}=\frac{-1}{4\left(n+\alpha-\frac{1}{2}\right)},\qquad\varUpsilon_{2}=\frac{1}{16\left(n+\alpha-\frac{1}{2}\right)\left(n+\alpha+\frac{1}{2}\right)}
\]
Note that 
\[
\partial_{yy}^{2}\varphi_{2}\left(y\right)=c_{2}\,y^{\alpha-2}\left(\alpha\left(\alpha-1\right)+2\varUpsilon_{1}\left(\alpha+2\right)\left(\alpha+1\right)y^{2}+\varUpsilon_{2}\left(\alpha+4\right)\left(\alpha+3\right)y^{4}\right)>0
\]
for $y>0$. In addition, for those constants, there hold
\[
\alpha+4=2\lambda_{2}+1
\]
 
\[
\sigma=\frac{\lambda_{2}}{1-\alpha}
\]
Furthermore, when $n\gg1$, we have 
\[
\alpha\approx-1-\frac{1}{n},\qquad\sigma\approx\frac{1}{2}-\frac{1}{2n}
\]
\[
\lambda_{0}\approx-1-\frac{1}{2n},\quad\lambda_{1}\approx-\frac{1}{2n},\quad\lambda_{2}\approx1-\frac{1}{2n}
\]
\end{rem}
Lastly, in the tip region, we do the type $\mathrm{II}$ rescaling
to get
\begin{equation}
\Gamma_{\tau}=\left\{ \left(z\,\nu,\,\hat{w}\left(z,\,\tau\right)\omega\right)\left|\,z\geq0;\,\nu,\,\omega\in\mathbb{S}^{n-1}\right.\right\} \label{w'}
\end{equation}
where 
\begin{equation}
\hat{w}\left(z,\,\tau\right)=\left.\frac{1}{\left(-t\right)^{\frac{1}{2}+\sigma}}\,\hat{u}\left(\left(-t\right)^{\frac{1}{2}+\sigma}z,\,t\right)\right|_{t=-\left(2\sigma\tau\right)^{\frac{-1}{2\sigma}}}\label{u'w'}
\end{equation}
From (\ref{eq u'}) we derive 
\begin{equation}
\partial_{\tau}\hat{w}=\frac{\partial_{zz}^{2}\hat{w}}{1+\left(\partial_{z}\hat{w}\right)^{2}}+\left(n-1\right)\left(\frac{\partial_{z}\hat{w}}{z}-\frac{1}{\hat{w}}\right)+\frac{\frac{1}{2}+\sigma}{2\sigma\tau}\left(-z\,\partial_{z}\hat{w}+\hat{w}\right)\label{eq w'}
\end{equation}
\[
\hat{w}\left(0,\,\tau\right)>0,\quad\partial_{z}\hat{w}\left(0,\,\tau\right)=0
\]
for $\tau_{0}\leq\tau\leq\mathring{\tau}$. Vel$\acute{a}$zquez showed
that by chooing suitable initial data (see Section \ref{existence}),
there holds
\[
\hat{w}\left(z,\,\tau\right)\,\stackrel{C_{loc}^{0}}{\longrightarrow}\,\hat{\psi}_{k}\left(z\right)
\]
for some $k\approx1$, where $\hat{\psi}_{k}$ is the function defined
in Section \ref{minimal}. On the other hand, by the admissible condition
and rescaling, we can regard the rescaled projected curve 
\begin{equation}
\bar{\Gamma}_{\tau}=\left\{ \left(z,\,\hat{w}\left(z,\,\tau\right)\right)\left|\,z\geq0\right.\right\} \label{projected Gamma}
\end{equation}
as a graph over $\mathcal{\bar{C}}$ outside $B\left(O;\,\beta\right)$.
In other words, $\Gamma_{\tau}$ can be reparametrized as a normal
graph over $\mathcal{C}$ outside $B\left(O;\,\beta\right)$, say
\begin{equation}
Z_{\tau}\left(z,\,\nu,\,\omega\right)=\left(\left(z-w\left(z,\,\tau\right)\right)\frac{\nu}{\sqrt{2}},\,\left(z+w\left(z,\,\tau\right)\right)\frac{\omega}{\sqrt{2}}\right)\label{w}
\end{equation}
for $z\geq\beta$, where 
\[
w\left(z,\,\tau\right)=\left.\frac{1}{\left(-t\right)^{\frac{1}{2}+\sigma}}\,u\left(\left(-t\right)^{\frac{1}{2}+\sigma}z,\,t\right)\right|_{t=-\left(2\sigma\tau\right)^{-\frac{1}{2\sigma}}}
\]
 
\begin{equation}
=\left.e^{\sigma s}\,\,v\left(e^{-\sigma s}z,\,s\right)\right|_{s=\frac{1}{2\sigma}\ln\left(2\sigma\tau\right)}\label{uw}
\end{equation}
From (\ref{eq u}) we derive 
\begin{equation}
\partial_{\tau}w=\frac{\partial_{zz}^{2}w}{1+\left(\partial_{z}w\right)^{2}}+2\left(n-1\right)\frac{z\,\partial_{z}w+w}{z^{2}-w^{2}}+\frac{\frac{1}{2}+\sigma}{2\sigma\tau}\left(-z\,\partial_{z}w+w\right)\label{eq w}
\end{equation}
Notice that (\ref{a priori bound u}) is equivalent to 
\begin{equation}
z^{i}\left|\partial_{z}^{i}w\left(z,\,\tau\right)\right|<\Lambda\left(z^{\alpha}+\frac{z^{2\lambda_{2}+1}}{\left(2\sigma\tau\right)^{2}}\right),\qquad i\in\left\{ 0,\,1,\,2\right\} \label{a priori bound w}
\end{equation}
for $\beta\leq z\leq\rho\left(2\sigma\tau\right)^{\frac{1}{2}+\frac{1}{4\sigma}}$,
$\tau_{0}\leq\tau\leq\mathring{\tau}$. 

\section{\label{existence}Construction of Vel$\acute{\textrm{A}}$zquez's
solution }

For readers' convenience and also for the sake of the completeness
of the argument, in this section we show how Vel$\acute{a}$zquez's
solution is constructed. We basically follow Vel$\acute{a}$zquez's
idea in \cite{V} and modify his proofs and estimates. Also, our setting
is slightly different from that in \cite{V} since we assume more
condtions in order to get better results. The key step is Proposition
\ref{degree} and Proposition \ref{uniform estimates}. The main theorem
in this section is Theorem \ref{Velazquez}.

The idea is as follows. At the initial time $t_{0}$, we would choose
a bunch of ``initial hypersurfaces'' $\left\{ \Sigma_{t_{0}}^{\left(a_{0},\,a_{1}\right)}\right\} _{\left(a_{0},\,a_{1}\right)}$
(as candidates) and move each of them by the mean curvature vector.
We then manage to show that for each $\mathring{t}\in\left[t_{0},\,0\right)$,
there is an index $\left(a_{0},\,a_{1}\right)$ for which the corresponding
mean curvature flow $\left\{ \Sigma_{t}^{\left(a_{0},\,a_{1}\right)}\right\} _{t\geq t_{0}}$
exits and is admissible up to time $\mathring{t}$. In addition, we
would establish uniform estimates for these solutions. Lastly, by
the compactness theory, we then get a solution to the MCF which exists
and is admissible for $t_{0}\leq t<0$ and also admits those uniform
estimates.

Let's start with choosing a proper family of $\mathbf{initial}$ $\mathbf{hypersurfaces}$.
Let 
\[
\left\{ \Sigma_{t_{0}}^{\left(a_{0},\,a_{1}\right)}\left|\,\left(a_{0},\,a_{1}\right)\in\overline{B}^{2}\left(O;\,\beta^{2\left(\alpha-1\right)}\right)\right.\right\} 
\]
be a continuous two-parameters family of complete, embedded and smooth
hypersurfaces so that each element $\Sigma_{t_{0}}^{\left(a_{0},\,a_{1}\right)}$
is $\mathbf{admissible}$ $\mathbf{at}$ $\mathbf{time}$ $t_{0}$
and satisfies
\begin{enumerate}
\item The funtion $v\left(y,\,s_{0}\right)=v^{\left(a_{0},\,a_{1}\right)}\left(y,\,s_{0}\right)$
(defined in (\ref{v})) of the type $\mathrm{I}$ rescaled hypersurface
\[
\Pi_{s_{0}}^{\left(a_{0},\,a_{1}\right)}=\frac{1}{\sqrt{-t_{0}}}\Sigma_{t_{0}}^{\left(a_{0},\,a_{1}\right)}
\]
is given by
\begin{equation}
v\left(y,\,s_{0}\right)=e^{-\lambda_{2}s_{0}}\left(\frac{1}{c_{2}}\varphi_{2}\left(y\right)+\frac{a_{1}}{c_{1}}\varphi_{1}\left(y\right)+\frac{a_{0}}{c_{0}}\varphi_{0}\left(y\right)\right)\label{initial v}
\end{equation}
 
\[
=e^{-\lambda_{2}s_{0}}y^{\alpha}\left(1+a_{1}+a_{0}+\left(2+a_{1}\right)\varUpsilon_{1}y^{2}+\varUpsilon_{2}y^{4}\right)
\]
for $\frac{1}{2}\beta e^{-\sigma s_{0}}\leq y\leq2\rho e^{\frac{s_{0}}{2}}$
(see Proposition \ref{linear operator} and Remark \ref{parameters}).
\item The function $u(x,\,t_{0})=u^{\left(a_{0},\,a_{1}\right)}(x,\,t_{0})$
(defined in (\ref{u})) of $\Sigma_{t_{0}}^{\left(a_{0},\,a_{1}\right)}$
is chosen to be
\[
u(x,\,t_{0})\,\approx\,\frac{\varUpsilon_{2}x^{2\lambda_{2}+1}}{1+x^{4}}
\]
for $x\gtrsim\rho$ so that
\begin{equation}
\left\{ \begin{array}{c}
\left|u\left(x,\,t_{0}\right)\right|\leq\frac{1}{5}\,\min\left\{ x,\,1\right\} \\
\\
\left|\partial_{x}u\left(x,\,t_{0}\right)\right|\leq\frac{1}{5}\\
\\
\left|\partial_{xx}^{2}u\left(x,\,t_{0}\right)\right|\leq C\left(n,\,\rho\right)
\end{array}\right.\label{initial u}
\end{equation}
for $x\geq\frac{1}{6}\rho$.
\item The function $\hat{w}\left(\cdot,\,\tau_{0}\right)=\hat{w}^{\left(a_{0},\,a_{1}\right)}\left(\cdot,\,\tau_{0}\right)$
(defined in (\ref{w'})) of the type $\mathrm{II}$ rescaled hypersurface
\[
\Gamma_{\tau_{0}}^{\left(a_{0},\,a_{1}\right)}=\frac{1}{\left(-t_{0}\right)^{\frac{1}{2}+\sigma}}\Sigma_{t_{0}}^{\left(a_{0},\,a_{1}\right)}
\]
is chosen to be
\[
\hat{w}\left(z,\,\tau_{0}\right)\approx\hat{\psi}_{1+a_{1}+a_{0}}\left(z\right)
\]
 for $0\leq z\lesssim\beta$ so that
\begin{equation}
\left\{ \begin{array}{c}
\hat{\psi}_{1-\beta^{\frac{3}{2}\alpha-\frac{5}{2}}}\left(z\right)\,<\,\hat{w}\left(z,\,\tau_{0}\right)\,<\,\hat{\psi}_{1+\beta^{\frac{3}{2}\alpha-\frac{5}{2}}}\left(z\right)\\
\\
0\,=\,\partial_{z}\hat{w}\left(0,\,\tau_{0}\right)\,\leq\,\partial_{z}\hat{w}\left(z,\,\tau_{0}\right)\,<\,1\\
\\
0\,<\,\partial_{zz}^{2}\hat{w}\left(z,\,\tau_{0}\right)\,\leq\,C\left(n\right)
\end{array}\right.\label{initial w'}
\end{equation}
for $0\leq z\leq5\beta$. Furthermore, if we reparametrize the projected
curve $\bar{\Gamma}_{\tau_{0}}^{\left(a_{0},\,a_{1}\right)}$ as a
graph over $\mathcal{\bar{C}}$, the function $w^{\left(a_{0},\,a_{1}\right)}\left(z,\,\tau_{0}\right)=w\left(z,\,\tau_{0}\right)$
(defined in (\ref{w})) satisfies
\[
w\left(z,\,\tau_{0}\right)\,\approx\,\psi_{1+a_{1}+a_{0}}\left(z\right)
\]
for $1\lesssim z\lesssim\beta$ so that 
\begin{equation}
\left\{ \begin{array}{c}
0\,\leq\,w\left(z,\,\tau_{0}\right)\,\leq\,C\left(n\right)z^{\alpha}\\
\\
\left|\partial_{z}w\left(z,\,\tau_{0}\right)\right|\,\leq\,C\left(n\right)z^{\alpha-1}\\
\\
0\,<\,\partial_{zz}^{2}w\left(z,\,\tau_{0}\right)\,\leq\,C\left(n\right)z^{\alpha-2}
\end{array}\right.\label{initial w}
\end{equation}
for $\frac{\hat{\psi}_{2}\left(0\right)}{\sqrt{2}}\leq z\leq5\beta$, 
\end{enumerate}
The following remark shows that (\ref{initial v}) fits in with the
admissible condition and is compatible with (\ref{initial u}).
\begin{rem}
By (\ref{uv}) and Remark \ref{parameters}, (\ref{initial v}) is
equivalent to 
\[
u\left(x,\,t_{0}\right)=\left(-t\right)^{\lambda_{2}+\frac{1}{2}}\left(\frac{1}{c_{2}}\varphi_{2}\left(\frac{x}{\sqrt{-t}}\right)+\frac{a_{1}}{c_{1}}\varphi_{1}\left(\frac{x}{\sqrt{-t}}\right)+\frac{a_{0}}{c_{0}}\varphi_{0}\left(\frac{x}{\sqrt{-t}}\right)\right)
\]
 
\[
=\left(1+a_{1}+a_{0}\right)\left(-t_{0}\right)^{2}x^{\alpha}+\left(2+a_{1}\right)\varUpsilon_{1}\left(-t_{0}\right)x^{\alpha+2}+\varUpsilon_{2}x^{2\lambda_{2}+1}
\]
 
\begin{equation}
=x^{2\lambda_{2}+1}\left(\varUpsilon_{2}+\left(2+a_{1}\right)\varUpsilon_{1}\left(\frac{-t_{0}}{x^{2}}\right)+\left(1+a_{1}+a_{0}\right)\left(\frac{-t_{0}}{x^{2}}\right)^{2}\right)\label{initial u intermediate}
\end{equation}
for $\frac{1}{2}\beta\left(-t_{0}\right)^{\frac{1}{2}+\sigma}\leq x\leq2\rho$.
In particular, there hold
\[
x^{i}\left|\partial_{x}^{i}u\left(x,\,t\right)\right|\,\leq\,C\left(n\right)\left(\left(-t\right)^{2}x^{\alpha}+x^{2\lambda_{2}+1}\right),\qquad i\in\left\{ 0,\,1,\,2\right\} 
\]
 
\begin{equation}
\left|\frac{u\left(x,\,t_{0}\right)}{x}\right|\,\leq\,C\left(n\right)\left(\beta^{\alpha-1}+\rho^{2\lambda_{2}}\right)\label{ratio}
\end{equation}
for $\frac{1}{2}\beta\left(-t_{0}\right)^{\frac{1}{2}+\sigma}\leq x\leq2\rho$.
Thus, we may assume that 
\[
x^{i}\left|\partial_{x}^{i}u\left(x,\,t\right)\right|\,\leq\,\frac{\Lambda}{3}\left(\left(-t\right)^{2}x^{\alpha}+x^{2\lambda_{2}+1}\right),\qquad i\in\left\{ 0,\,1,\,2\right\} 
\]
for $\beta\left(-t_{0}\right)^{\frac{1}{2}+\sigma}\leq x\leq\rho$,
provided that $\Lambda\gg1$ (depending on $n$). Also by (\ref{initial u}),
(\ref{initial w'}) and (\ref{ratio}), we may assume that
\[
\hat{u}\left(x,\,t_{0}\right)>0
\]
for $x\geq0$, provided that $0<\rho\ll1\ll\beta$ (depending on $n$).
Furthermore, by (\ref{initial u intermediate}) we have 
\[
u\left(x,\,t_{0}\right)=x^{2\lambda_{2}+1}\left(\varUpsilon_{2}+O\left(\frac{-t_{0}}{x^{2}}\right)\right)
\]
for $\sqrt{-t_{0}}\lesssim x\leq2\rho$, which is comparible with
(\ref{initial u}) provided that $0<\rho\ll1$ (depending on $n$)
and $\left|t_{0}\right|\ll1$ (depending on $n$, $\rho$).
\end{rem}
The following remark shows that (\ref{initial v}), (\ref{initial w'})
and (\ref{initial w}) are compatible. 
\begin{rem}
By (\ref{initial w'}), $\bar{\Gamma}_{\tau_{0}}^{\left(a_{0},\,a_{1}\right)}$
(see (\ref{projected Gamma})) is a convex curve which lies between
$\mathcal{\bar{M}}_{1-\beta^{\frac{3}{2}\alpha-\frac{5}{2}}}$ and
$\mathcal{\bar{M}}_{1+\beta^{\frac{3}{2}\alpha-\frac{5}{2}}}$ (see
(\ref{projected minimal hypersurface})) and intersects orthogonally
with the vertical ray $\left\{ \left.\left(0,\,z\right)\right|\,z>0\right\} $.
Hence, if we reparametrize $\bar{\Gamma}_{\tau_{0}}^{\left(a_{0},\,a_{1}\right)}$
as a graph over $\mathcal{\bar{C}}$, it follows that 
\[
\psi_{1-\beta^{\frac{3}{2}\alpha-\frac{5}{2}}}\left(z\right)\,<\,w\left(z,\,\tau_{0}\right)\,<\,\psi_{1+\beta^{\frac{3}{2}\alpha-\frac{5}{2}}}\left(z\right)
\]
Then (\ref{initial w}) is compatible with (\ref{initial w'}) in
view of Lemma \ref{order psi}.

On the other hand, by (\ref{uw}) and Remark \ref{parameters}, (\ref{initial v})
is equivalent to 
\[
w\left(z,\,\tau_{0}\right)=\left(2\sigma\tau_{0}\right)^{\frac{\alpha}{2}}\left(\frac{1}{c_{2}}\varphi_{2}\left(\frac{z}{\sqrt{2\sigma\tau_{0}}}\right)+\sum_{i=0}^{1}\,\frac{a_{i}}{c_{i}}\varphi_{i}\left(\frac{z}{\sqrt{2\sigma\tau_{0}}}\right)\right)
\]
\begin{equation}
=z^{\alpha}\left(1+a_{1}+a_{0}+\left(2+a_{1}\right)\varUpsilon_{1}\frac{z^{2}}{2\sigma\tau_{0}}+\varUpsilon_{2}\left(\frac{z^{2}}{2\sigma\tau_{0}}\right)^{2}\right)\label{initial w intermediate}
\end{equation}
for $\frac{1}{2}\beta\leq z\leq2\rho\left(2\sigma\tau_{0}\right)^{\frac{1}{2}+\frac{1}{4\sigma}}$,
which means
\[
w\left(z,\,\tau_{0}\right)=\left(1+a_{1}+a_{0}+O\left(\frac{z^{2}}{2\sigma\tau_{0}}\right)\right)z^{\alpha}
\]
for $\frac{1}{2}\beta\leq z\leq\sqrt{2\sigma\tau_{0}}$. By Lemma
\ref{asymptotic psi}, we then get

\[
\left|w\left(z,\,\tau_{0}\right)-\psi\left(z\right)\right|\,\leq\,\left|w\left(z,\,\tau_{0}\right)-z^{\alpha}\right|\,+\,\left|z^{\alpha}-\psi\left(z\right)\right|
\]
\[
\leq\,\left(\left|a_{0}\right|+\left|a_{1}\right|+C\left(n\right)\left(\frac{z^{2}}{2\sigma\tau_{0}}+z^{2\left(\alpha-1\right)}\right)\right)z^{\alpha}\,\leq\,C\left(n\right)\beta^{2\left(\alpha-1\right)}z^{\alpha}
\]
for $\frac{1}{2}\beta\leq z\leq\left(2\sigma\tau_{0}\right)^{\frac{1}{3}}$
, provided that $\beta\gg1$ (depending on $n$) and $\tau_{0}\gg1$
(depending on $n$, $\beta$). Note also that Lemma \ref{asymptotic psi}
yields 
\[
\psi_{1\pm\beta^{\frac{3}{2}\alpha-\frac{5}{2}}}\left(z\right)-\psi\left(z\right)=\left(\pm\beta^{\frac{3}{2}\alpha-\frac{5}{2}}+O\left(z^{2\left(\alpha-1\right)}\right)\right)z^{\alpha}
\]
in which we have 
\[
\frac{3}{2}\alpha-\frac{5}{2}\,>\,2\left(\alpha-1\right)
\]
Consequently, we get
\[
\psi_{1-\beta^{\frac{3}{2}\alpha-\frac{5}{2}}}\left(z\right)\,<\,w\left(z,\,\tau_{0}\right)\,<\,\psi_{1+\beta^{\frac{3}{2}\alpha-\frac{5}{2}}}\left(z\right)
\]
for $\frac{1}{2}\beta\leq z\leq\left(2\sigma\tau_{0}\right)^{\frac{1}{3}}$,
provided that $\beta\gg1$ (depending on $n$) and $\tau_{0}\gg1$
(depending on $n$, $\beta$).
\end{rem}
Next, for each $\left(a_{0},\,a_{1}\right)\in\overline{B}^{2}\left(O;\,\beta^{2\left(\alpha-1\right)}\right)$,
by \cite{EH} $\Sigma_{t_{0}}^{\left(a_{0},\,a_{1}\right)}$ can be
flowed by (\ref{MCF}) for a short period of time. Let's denote the
corresponding solution by $\left\{ \Sigma_{t}^{\left(a_{0},\,a_{1}\right)}\right\} $.
Given $\mathring{t}\in\left[t_{0},\,0\right)$, let $\mathcal{O}_{\mathring{t}}$
be a set consisting of all $\left(a_{0},\,a_{1}\right)\in B^{2}\left(O;\,\beta^{2\left(\alpha-1\right)}\right)$
for which 
\begin{itemize}
\item The corresponding mean curvature flow $\left\{ \Sigma_{t}^{\left(a_{0},\,a_{1}\right)}\right\} $
exists for $t_{0}\,\leq\,t\,\leq\,\mathring{t}$ and can be extended
beyond time $\mathring{t}$.
\item $\left\{ \Sigma_{t}^{\left(a_{0},\,a_{1}\right)}\right\} $ is $\mathbf{admissible}$
for $t_{0}\,\leq\,t\,\leq\,\mathring{t}$.
\end{itemize}
Clearly, 
\[
\mathcal{O}_{t_{0}}=B^{2}\left(O;\,\beta^{2\left(\alpha-1\right)}\right)
\]
 and $\mathcal{O}_{\mathring{t}}$ is non-increasing in $\mathring{t}$. 

Now let $\zeta\left(r\right)$ be a smooth, non-decreasing function
so that 
\begin{equation}
\zeta\left(r\right)=\left\{ \begin{array}{c}
0,\qquad\textrm{for}\;\,r\leq0\\
\\
1,\qquad\textrm{for}\;\,r\geq1
\end{array}\right.\label{zeta}
\end{equation}
For each $t\geq t_{0},$ we define a map $\Phi_{t}:\,\mathcal{\overline{O}}_{t}\rightarrow\mathbb{R}^{2}$
by 
\begin{equation}
\Phi_{t}\left(a_{0},\,a_{1}\right)=\left.\left(\begin{array}{c}
\left\langle \zeta\left(e^{\sigma s}y-\beta\right)\,\zeta\left(\rho e^{\frac{s}{2}}-y\right)v\left(\cdot,\,s\right),\,\,c_{0}\,\varphi_{0}\right\rangle \\
\\
\left\langle \zeta\left(e^{\sigma s}y-\beta\right)\,\zeta\left(\rho e^{\frac{s}{2}}-y\right)v\left(\cdot,\,s\right),\,\,c_{1}\,\varphi_{1}\right\rangle 
\end{array}\right)\right|_{s=-\ln\left(-t\right)}\label{Phi}
\end{equation}
where the inner product $\left\langle \cdot,\cdot\right\rangle $
is defined in Proposition \ref{linear operator} and $v\left(y,\,s\right)=v^{\left(a_{0},\,a_{1}\right)}\left(y,\,s\right)$
is the function of $\Pi_{s}^{\left(a_{0},\,a_{1}\right)}$ defined
in (\ref{v}) with $s=-\ln\left(-t\right)$. Note that the localized
function 
\[
\tilde{v}\left(y,\,s\right)=\zeta\left(e^{\sigma s}y-\beta\right)\,\zeta\left(\rho e^{\frac{s}{2}}-y\right)v\left(y,\,s\right)
\]
appeared in (\ref{Phi}) is supported in $\left[\beta e^{-\sigma s},\,\rho e^{\frac{s}{2}}\right]$
and would be studied carefully in Proposition \ref{C^0 v}. When $t=t_{0}$,
we have the following lemma. 
\begin{lem}
\label{cut-off}If $s_{0}\gg1$ (depending on $n$, $\rho$, $\beta$),
there hold 
\[
\left|\left\langle \zeta\left(e^{\sigma s_{0}}y-\beta\right)\,\zeta\left(\rho e^{\frac{s_{0}}{2}}-y\right)\varphi_{i},\,\varphi_{j}\right\rangle -\delta_{ij}\right|\,\leq\,C\left(n\right)e^{-2\left(n+\alpha-\frac{1}{2}\right)\sigma s_{0}}
\]

\[
\left\Vert \left(1-\zeta\left(e^{\sigma s_{0}}y-\beta\right)\,\zeta\left(\rho e^{\frac{s_{0}}{2}}-y\right)\right)\varphi_{i}\right\Vert \,\leq\,C\left(n\right)e^{-\left(n+\alpha-\frac{1}{2}\right)\sigma s_{0}}
\]
for $i,\,j\in\left\{ 0,\,1,\,2\right\} $, where $s_{0}=-\ln\left(-t_{0}\right)$
and $\varphi_{i}$ is the $i^{\textrm{th}}$ eigenfunction of $\mathcal{L}$
(see Proposition \ref{linear operator}).
\end{lem}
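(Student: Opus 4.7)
My plan is to exploit the orthonormality of the eigenfunctions $\{\varphi_i\}$ from Proposition \ref{linear operator} to rewrite both estimates in terms of weighted integrals over the small set where the cutoff differs from $1$. Setting $\chi(y) := 1 - \zeta(e^{\sigma s_0}y - \beta)\zeta(\rho e^{s_0/2} - y)$, the identity $\langle \varphi_i, \varphi_j\rangle = \delta_{ij}$ yields
\[
\left\langle \zeta(e^{\sigma s_0}y - \beta)\zeta(\rho e^{s_0/2} - y)\varphi_i, \varphi_j \right\rangle - \delta_{ij} = -\langle \chi \varphi_i, \varphi_j\rangle,
\]
so the first estimate reduces to a bound on $|\langle \chi\varphi_i, \varphi_j\rangle|$ and the second to a bound on $\|\chi\varphi_i\|^2$. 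By (\ref{zeta}) the support of $\chi$ is contained in $[0, (\beta+1)e^{-\sigma s_0}] \cup [\rho e^{s_0/2} - 1, \infty)$, and I would split into a small-$y$ piece and a large-$y$ piece.

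For the small-$y$ piece, I would invoke the explicit formulas in Remark \ref{parameters}: for $i \in \{0,1,2\}$ one has $\varphi_i(y) = c_i y^\alpha P_i(y^2)$ with $P_i$ a polynomial of degree $\leq 2$ satisfying $P_i(0) = 1$, so that $|\varphi_i(y)\varphi_j(y)| \leq C(n) y^{2\alpha}$ and $\varphi_i(y)^2 \leq C(n) y^{2\alpha}$ on $[0,1]$. Dropping the harmless factor $e^{-y^2/4} \leq 1$, each small-$y$ integral is controlled by $C(n)\int_0^{(\beta+1)e^{-\sigma s_0}} y^{2\alpha + 2(n-1)}\,dy$; since $n \geq 4$ forces $2\alpha + 2n - 1 > 0$, this integral evaluates to
\[
\frac{C(n)\,(\beta+1)^{2(n+\alpha-1/2)}}{2\alpha + 2n - 1}\, e^{-2(n+\alpha - 1/2)\sigma s_0},
\]
which is precisely the target exponential rate. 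The $\beta$-dependent prefactor is absorbed into $C(n)$ by requiring $s_0$ large in terms of $n, \rho, \beta$, as the hypothesis permits.

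For the large-$y$ piece, the Gaussian weight $e^{-y^2/4}$ evaluated at $y \geq \rho e^{s_0/2} - 1$ provides super-exponential decay of order $\exp(-c\rho^2 e^{s_0})$, which dominates the polynomial factor $|\varphi_i \varphi_j|\, y^{2(n-1)}$ (respectively $\varphi_i^2\, y^{2(n-1)}$) and is completely negligible compared to the small-$y$ bound once $s_0$ is sufficiently large. Combining the two pieces proves the first estimate directly; the second estimate follows by the same splitting applied to $\|\chi\varphi_i\|^2 = \int \chi^2 \varphi_i^2 y^{2(n-1)}e^{-y^2/4}\,dy$ (using $0 \leq \chi^2 \leq \chi \leq 1$ to bound the integrand by the integrand without $\chi$ over $\operatorname{supp}\chi$), which yields $\|\chi\varphi_i\|^2 \leq C(n)\, e^{-2(n+\alpha-1/2)\sigma s_0}$ and hence the stated bound after taking a square root. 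I do not anticipate a substantive obstacle: the whole argument is an elementary power-counting exercise calibrated precisely to the leading asymptotic $\varphi_i(y) \sim c_i y^\alpha$ against the measure $y^{2(n-1)}\,dy$ at the lower cutoff of order $e^{-\sigma s_0}$.
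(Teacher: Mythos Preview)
Your proof is correct and follows essentially the same route as the paper: rewrite the difference via orthonormality as $-\langle\chi\varphi_i,\varphi_j\rangle$, split over $\operatorname{supp}\chi\subset[0,(\beta+1)e^{-\sigma s_0}]\cup[\rho e^{s_0/2}-1,\infty)$, use $|\varphi_i\varphi_j|\le C(n)y^{2\alpha}$ near $0$ and Gaussian decay at infinity, and then obtain the norm bound from $\chi^2\le\chi$. The paper deduces the second inequality from the first via $\|\chi\varphi_i\|^2\le\langle\chi\varphi_i,\varphi_i\rangle$ rather than repeating the splitting, but this is the same computation.
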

\begin{proof}
Notice that 
\[
\left\langle \varphi_{i},\,\varphi_{j}\right\rangle =\delta_{ij}
\]
 and 
\[
\zeta\left(e^{\sigma s_{0}}y-\beta\right)\,\zeta\left(\rho e^{\frac{s_{0}}{2}}-y\right)\rightarrow1\qquad\textrm{as}\;\,s_{0}\nearrow\infty
\]
Then we compute 
\[
\left|\left\langle \zeta\left(e^{\sigma s_{0}}y-\beta\right)\,\zeta\left(\rho e^{\frac{s_{0}}{2}}-y\right)\varphi_{i},\,\varphi_{j}\right\rangle -\delta_{ij}\right|
\]
\[
=\left|\left\langle \left(1-\zeta\left(e^{\sigma s_{0}}y-\beta\right)\,\zeta\left(\rho e^{\frac{s_{0}}{2}}-y\right)\right)\varphi_{i},\,\varphi_{j}\right\rangle \right|
\]
\[
\leq\int_{0}^{\left(\beta+1\right)e^{-\sigma s_{0}}}\left|\varphi_{i}\varphi_{j}\right|y^{2\left(n-1\right)}e^{-\frac{y^{2}}{4}}dy\,+\,\int_{\rho e^{\frac{s_{0}}{2}}-1}^{\infty}\left|\varphi_{i}\varphi_{j}\right|y^{2\left(n-1\right)}e^{-\frac{y^{2}}{4}}dy
\]
\[
\leq C\left(n\right)\left(\int_{0}^{\left(\beta+1\right)e^{-\sigma s_{0}}}y^{2\alpha}y^{2\left(n-1\right)}dy\,+\,\int_{\rho e^{\frac{s_{0}}{2}}-1}^{\infty}y^{2\lambda_{i}+2\lambda_{j}+2}y^{2\left(n-1\right)}e^{-\frac{y^{2}}{4}}dy\right)
\]
\[
\leq C\left(n\right)e^{-2\left(n+\alpha-\frac{1}{2}\right)\sigma s_{0}}
\]
It follows that 
\[
\left\Vert \left(1-\zeta\left(e^{\sigma s_{0}}y-\beta\right)\,\zeta\left(\rho e^{\frac{s_{0}}{2}}-y\right)\right)\varphi_{i}\right\Vert ^{2}
\]
\[
=\left\langle \left(1-\zeta\left(e^{\sigma s_{0}}y-\beta\right)\,\zeta\left(\rho e^{\frac{s_{0}}{2}}-y\right)\right)\varphi_{i},\,\left(1-\zeta\left(e^{\sigma s_{0}}y-\beta\right)\,\zeta\left(\rho e^{\frac{s_{0}}{2}}-y\right)\right)\varphi_{i}\right\rangle 
\]
\[
\leq\left\langle \left(1-\zeta\left(e^{\sigma s_{0}}y-\beta\right)\,\zeta\left(\rho e^{\frac{s_{0}}{2}}-y\right)\right)\varphi_{i},\,\varphi_{i}\right\rangle 
\]
\[
\leq C\left(n\right)e^{-2\left(n+\alpha-\frac{1}{2}\right)\sigma s_{0}}
\]
\end{proof}
By (\ref{initial v}) and Lemma \ref{cut-off}, the function $\Phi_{t_{0}}$
converges uniformly to the identity map in $\overline{B}^{2}\left(O;\,\beta^{2\left(\alpha-1\right)}\right)$
as $t_{0}\nearrow0$. Thus, if $\left|t_{0}\right|\ll1$ (depending
on $n$, $\beta$), we have 
\[
\left(0,\,0\right)\notin\Phi_{t_{0}}\left(\partial\,\overline{B}^{2}\left(O;\,\beta^{2\left(\alpha-1\right)}\right)\right)
\]
and 
\[
1=\deg\left(\textrm{Id},\,B^{2}\left(O;\,\beta^{2\left(\alpha-1\right)}\right),\,\left(0,\,0\right)\right)=\deg\left(\Phi_{t_{0}},\,B^{2}\left(O;\,\beta^{2\left(\alpha-1\right)}\right),\,\left(0,\,0\right)\right)
\]
\begin{equation}
=\deg\left(\Phi_{t_{0}},\,\mathcal{O}_{t_{0}},\,\left(0,\,0\right)\right)\label{induction initial}
\end{equation}
In addition, notice that $\mathcal{O}_{t}$ is an open subset of $B^{2}\left(O;\,\beta^{2\left(\alpha-1\right)}\right)$
(by the continuous dependence on the initial data), and that $\Phi_{t}$
is continuous in the parameter $t$. Then we consider the following
index set 
\[
\mathcal{I}=\left\{ t\in\left[t_{0},\,0\right)\left|\,\,\deg\left(\Phi_{t},\,\mathcal{O}_{t},\,\left(0,\,0\right)\right)=1\right.\right\} 
\]
Below are crucial a priori estimates of $\left\{ \Sigma_{t}^{\left(a_{0},\,a_{1}\right)}\right\} _{t_{0}\leq t\leq t_{1}}$
for which 
\[
\Phi_{t_{1}}\left(a_{0},\,a_{1}\right)=\left(0,\,0\right)
\]
We leave the proof in Section \ref{degree C^0}, Section \ref{degree C^infty}
and Section \ref{degree Lambda}.
\begin{prop}
\label{degree}Let $n\geq5$ be a positive integer and choose $\varsigma=\varsigma\left(n\right)>0$,
$\vartheta=\vartheta\left(n\right)\in\left(0,\,1\right)$ so that
\begin{equation}
0<\varsigma<\min\left\{ \frac{n+\alpha-\frac{5}{2}}{1-\alpha},\,\frac{1}{\lambda_{2}}\right\} \label{varsigma}
\end{equation}
\begin{equation}
\frac{-1-\alpha}{1-\alpha}<\vartheta<\min\left\{ \frac{\left(1-\alpha\right)\varsigma}{n+\alpha+\frac{3}{2}},\,\frac{1-\alpha}{2-\alpha},\,\frac{1}{2\sigma}\right\} \label{vartheta}
\end{equation}
Assume that $\left(a_{0},\,a_{1}\right)\in\mathcal{\overline{O}}_{t_{1}}$
for which 
\[
\Phi_{t_{1}}\left(a_{0},\,a_{1}\right)=\left(0,\,0\right)
\]
where $t_{1}\in\left[t_{0},\,0\right)$ is a constant. Suppose that
\[
\left(a_{0},\,a_{1}\right)\in\mathcal{\overline{O}}_{\mathring{t}}
\]
for some $\mathring{t}\in\left[t_{1},\,e^{-1}t_{1}\right]$. Then
if $\Lambda\gg1$ (depending on $n$), $0<\rho\ll1\ll\beta$ (depending
on $n$, $\Lambda$) and $\left|t_{0}\right|\ll1$ (depending on $n$,
$\Lambda$, $\rho$, $\beta$), we have the following estimates.

1. The function $\hat{u}\left(x,\,t\right)$ defined in (\ref{u'})
satisfies
\begin{equation}
\partial_{xx}^{2}\hat{u}\left(x,\,t\right)\geq0\label{convexity}
\end{equation}
for $0\leq x\leq\rho$, $t_{0}\leq t\leq\mathring{t}$.

2. The function $u\left(x,\,t\right)$ defined in (\ref{u}) satisfies
\begin{equation}
\left\{ \begin{array}{c}
\left|u\left(x,\,t\right)\right|\,\leq\,\frac{1}{3}\,\min\left\{ x,\,1\right\} \\
\\
\left|\partial_{x}u\left(x,\,t\right)\right|\,\leq\,\frac{1}{3}\\
\\
\left|\partial_{xx}^{2}u\left(x,\,t\right)\right|\,\leq\,C\left(n,\,\rho\right)
\end{array}\right.\label{C^2 outside u bound}
\end{equation}
for $x\geq\frac{1}{3}\rho$, $t_{0}\leq t\leq\mathring{t}$, and
\begin{equation}
x^{i}\left|\partial_{x}^{i}u\left(x,\,t\right)\right|\,\leq\,\frac{\Lambda}{2}\left(\left(-t\right)^{2}x^{\alpha}+x^{2\lambda_{2}+1}\right),\qquad i\in\left\{ 0,\,1,\,2\right\} \label{smaller a priori bound u}
\end{equation}
for $\beta\left(-t\right)^{\frac{1}{2}+\sigma}\leq x\leq\rho$, $t_{0}\leq t\leq\mathring{t}$. 

3. In the tip region, if we do the type $\mathrm{\mathrm{II}}$ rescaling,
the rescaled function $\hat{w}\left(z,\,\tau\right)$ defined in (\ref{u'w'})
satisfies
\begin{equation}
\left\{ \begin{array}{c}
\hat{\psi}_{1-2\beta^{\alpha-3}}\left(z\right)\,<\,\hat{w}\left(z,\,\tau\right)\,<\,\hat{\psi}_{1+2\beta^{\alpha-3}}\left(z\right)\\
\\
0\,\leq\,\partial_{z}\hat{w}\left(z,\,\tau\right)\,\leq\,1+\beta^{\alpha-2}\\
\\
\left|\partial_{zz}^{2}\hat{w}\left(z,\,\tau\right)\right|\,\leq\,C\left(n\right)
\end{array}\right.\label{C^2 w' bound}
\end{equation}
for $0\leq z\leq3\beta$, $\tau_{0}\leq\tau\leq\mathring{\tau}$,
where $\mathring{\tau}=\frac{1}{2\sigma\left(-\mathring{t}\right)^{2\sigma}}$. 
\end{prop}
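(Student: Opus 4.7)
The plan is to run a bootstrap argument in which the single structural hypothesis $\Phi_{t_1}(a_0, a_1) = (0, 0)$ is combined with admissibility on $[t_0, \mathring{t}]$ to sharpen each of the admissible-region estimates by a definite factor, so that admissibility remains an open condition and can be extended past $t_1$. Concretely, I would (a) analyze the intermediate-region function $v(y, s)$ by spectral decomposition in the eigenbasis of $\mathcal{L}$, using the projection condition to kill the unstable modes $\varphi_0, \varphi_1$ and force decay at the stable rate $e^{-\lambda_2 s}$; (b) transfer this into the outer region by boundary matching, where interior parabolic theory yields (\ref{C^2 outside u bound}); and (c) into the tip region, where the minimal hypersurfaces $\mathcal{M}_k$ act as barriers, yielding (\ref{C^2 w' bound}) and then (\ref{convexity}).

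The main obstacle is the intermediate estimate (\ref{smaller a priori bound u}). I would work with the localized function $\tilde{v}(y, s) = \zeta(e^{\sigma s}y - \beta)\,\zeta(\rho e^{s/2} - y)\,v(y, s)$ appearing in (\ref{Phi}), which satisfies an equation of the form $\partial_s \tilde{v} = -\mathcal{L}\tilde{v} + \widetilde{\mathcal{Q}}$, where $\widetilde{\mathcal{Q}}$ collects the nonlinearity $\mathcal{Q}v$ from (\ref{linearize eq v}) and commutator terms supported in the two cutoff bands; these error terms are small thanks to (\ref{a priori bound v}) and the Gaussian weight in $\langle\cdot,\cdot\rangle$. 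Expanding $\tilde{v}(\cdot,s) = \sum_{i\geq 0}\alpha_i(s)\varphi_i$, the ODE for each mode is $\alpha_i'(s) = -\lambda_i\,\alpha_i(s) + \langle\widetilde{\mathcal{Q}},\varphi_i\rangle$. For the unstable modes $i = 0, 1$ (where $\lambda_i < 0$) I would integrate \emph{backward} from the terminal condition $\alpha_i(s_1) = 0$, which is exactly $\Phi_{t_1}(a_0, a_1) = (0, 0)$ up to the Lemma \ref{cut-off} error; for the marginal mode $i = 2$ I would integrate forward from (\ref{initial v}); for $i\geq 3$ forward from zero, exploiting $\lambda_i \geq \lambda_2 + 1$. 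Since $\widetilde{\mathcal{Q}}$ is quadratic in $v$ and so of order $e^{-2\lambda_2 s}$, it is subdominant, giving $\|\tilde{v}(\cdot,s)\|\lesssim e^{-\lambda_2 s}$. Parabolic regularity applied to (\ref{eq v}) on small time intervals then upgrades this to the pointwise bound (\ref{smaller a priori bound u}) with constant $\Lambda/2$ once $\Lambda$ is chosen large and $\rho, \beta^{-1}, |t_0|$ small enough; the auxiliary exponents $\varsigma, \vartheta$ of (\ref{varsigma}), (\ref{vartheta}) should enter precisely to quantify the loss in this final upgrade.

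The outer region estimate (\ref{C^2 outside u bound}) is easier: on $\{x \geq \rho/3\}$ the curve $\bar{\Sigma}_t$ remains a small graph over $\bar{\mathcal{C}}$, so (\ref{eq u}) is uniformly parabolic there, the boundary data at $x \approx \rho$ is controlled by (\ref{smaller a priori bound u}), and Ecker--Huisken interior estimates together with the initial bound (\ref{initial u}) improve the constants from $1/5$ to $1/3$. For the tip region, I would apply the comparison principle directly to (\ref{MCF}): each $\mathcal{M}_k$ is stationary, and by Lemma \ref{monotonicity} the family $\{\bar{\mathcal{M}}_k\}$ foliates a neighborhood of $\bar{\Sigma}_t$, so the initial sandwich (\ref{initial w'}) propagates forward with only a controlled loss in $k$; the match at $z = 3\beta$ coming from the intermediate estimate, together with the margin $\tfrac{3}{2}\alpha - \tfrac{5}{2} > \alpha - 3$, absorbs this loss and gives the $C^0$ sandwich in (\ref{C^2 w' bound}), while Ecker--Huisken curvature estimates in the tip ball using $\mathcal{M}_{1\pm 2\beta^{\alpha-3}}$ as uniform geometric barriers yield the gradient and second-derivative bounds. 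Finally, the convexity (\ref{convexity}) follows from the maximum principle applied to the second fundamental form of $\bar{\Sigma}_t$: it is initially nonnegative by the convexity of $\hat{w}(\cdot,\tau_0)$ in (\ref{initial w'}) combined with the rescaling (\ref{u'w'}), and by the convexity of the leading profile $\varUpsilon_2 x^{2\lambda_2+1}$ in (\ref{initial u intermediate}); the symmetry $\partial_x \hat{u}(0,t) = 0$ rules out a violation at the left boundary; and (\ref{smaller a priori bound u}) yields strict convexity of the boundary data at $x = \rho$.
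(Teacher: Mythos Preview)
Your overall architecture is essentially the paper's: spectral decomposition of the localized $\tilde v$ with backward integration from $s_1$ for the unstable modes $i=0,1$ and forward integration for $i\geq 2$ (the paper carries this out in Proposition~\ref{C^0 v} using the energy Lemma~\ref{energy} and the Sobolev Lemma~\ref{Sobolev}); Ecker--Huisken estimates in the outer region; barriers in the tip; and a maximum-principle argument for the convexity. So strategically you are on target.

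There is, however, a genuine gap in your tip-region barrier step. You write that each $\mathcal M_k$ is stationary for (\ref{MCF}) and that the foliation by $\{\bar{\mathcal M}_k\}$ lets the initial sandwich (\ref{initial w'}) propagate with a controlled loss in $k$. In the type~$\mathrm{II}$ rescaled equation (\ref{eq w'}) this fails on one side: plugging $\hat w=\hat\psi_k$ into (\ref{eq w'}) leaves the residual $\tfrac{\frac12+\sigma}{2\sigma\tau}\bigl(\hat\psi_k - z\,\partial_z\hat\psi_k\bigr)$, which is \emph{strictly positive} by (\ref{positivity}). Hence every $\hat\psi_k$ is a \emph{sub}solution of (\ref{eq w'}), and the family gives only a lower barrier, never an upper one; equivalently, in unrescaled variables a static $\mathcal M_k$ sandwich rescales to $\mathcal M_{(t_0/t)^2 k_\pm}$, which blows up. The paper repairs this (Proposition~\ref{C^0 w'}) by introducing a second time-dependent dilation $\mu_+(\tau)>1$ in the argument, setting $\hat w_+(z,\tau)=\hat\psi_{\lambda_+(\tau)k}\bigl(z/\mu_+(\tau)\bigr)$; the extra term $\tfrac{\mu_+^2-1}{\lambda_+^{1/(1-\alpha)}\mu_+^2}\bigl(\cdots\bigr)$ in (\ref{eq perturbation psi}) has the right sign and, with $\mu_+-1$ of the carefully chosen size $\delta\beta^{\alpha-3}(2\sigma\tau)^{-1+\varrho}(\tau/\tau_0)^{-\varrho}$, dominates the bad residual. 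Your proposal does not supply this second degree of freedom, and without it the $C^0$ upper bound in (\ref{C^2 w' bound}) cannot be closed.

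A second, smaller gap: your sentence ``parabolic regularity on small time intervals then upgrades this to (\ref{smaller a priori bound u}) with constant $\Lambda/2$'' hides the initial-layer problem. Interior Schauder requires a time buffer, so near $s=s_0$ you cannot invoke it; and since the coefficients of (\ref{eq v}) involve $\partial_{yy}^2 v$, you also need a $C(n)$ (not $C(n,\Lambda)$) bound on $y^2\partial_{yy}^2 v$ before you can bootstrap. The paper handles this in Section~\ref{degree Lambda} via the elementary maximum-principle Lemma~\ref{short-time continuity}, which transports the explicit initial profile (\ref{initial v}) forward for a time $\delta^2 y_*^2$ with only an $O(n,\Lambda)$-controlled drift, and then glues with the Schauder estimates of Proposition~\ref{uniform estimates} (which are available after that buffer). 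Without this splitting your constant in (\ref{smaller a priori bound u}) would a~priori depend on $\Lambda$ and the bootstrap would not close.
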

Furthermore, we have the following asymptotic formulas and smooth
estimates for the solution in Proposition \ref{degree}.
\begin{prop}
\label{uniform estimates}Under the hypothesis of Proposition \ref{degree},
there is 
\[
k\in\left(1-C\left(n,\,\Lambda,\,\rho,\,\beta\right)\left(-t_{0}\right)^{\varsigma\lambda_{2}},\,1+C\left(n,\,\Lambda,\,\rho,\,\beta\right)\left(-t_{0}\right)^{\varsigma\lambda_{2}}\right)
\]
so that for any given $0<\delta\ll1$, $m,\,l\in\mathbb{Z}_{+}$,
the following smooth estimates hold.

1. In the $\mathbf{outer}$ $\mathbf{region}$, the function $u(x,\,t)$
of $\Sigma_{t}^{\left(a_{0},\,a_{1}\right)}$ defined in (\ref{u})
satisfies 
\begin{equation}
\left|\partial_{x}^{m}\partial_{t}^{l}u(x,\,t)\right|\leq C\left(n,\,\rho,\,\delta,\,m,\,l\right)\label{C^infty outside u bound}
\end{equation}
for $x\geq\frac{1}{2}\rho$, $t_{0}+\delta^{2}\leq t\leq\mathring{t}$,
and 
\begin{equation}
x^{m+2l}\left|\partial_{x}^{m}\partial_{t}^{l}\left(u\left(x,\,t\right)-\frac{k}{c_{2}}\left(-t\right)^{\lambda_{2}+\frac{1}{2}}\varphi_{2}\left(\frac{x}{\sqrt{-t}}\right)\right)\right|\leq C\left(n,\,\Lambda,\,\delta,\,m,\,l\right)\rho^{4\lambda_{2}}\,x^{2\lambda_{2}+1}\label{C^infty u bound}
\end{equation}
for $\left(x,\,t\right)$ satisfying $\frac{1}{2}\sqrt{-t}\leq x\leq\frac{3}{4}\rho$,
$t_{0}+\delta^{2}x^{2}\leq t\leq\mathring{t}$. Note that 
\[
\frac{k}{c_{2}}\left(-t\right)^{\lambda_{2}+\frac{1}{2}}\varphi_{2}\left(\frac{x}{\sqrt{-t}}\right)=kx^{2\lambda_{2}+1}\left(\varUpsilon_{2}+2\varUpsilon_{1}\frac{-t}{x^{2}}+\left(\frac{-t}{x^{2}}\right)^{2}\right)
\]
(see Proposition \ref{linear operator} and Remark \ref{parameters}).

2. In the $\mathbf{intermediate}$ $\mathbf{region}$, if we rescale
the hypersurface by the type $\mathrm{I}$ rescaling (see (\ref{Pi})),
then the function $v\left(y,\,s\right)$ of the rescaled hypersurface
$\Pi_{s}^{\left(a_{0},\,a_{1}\right)}$ defined in (\ref{v}) satisfies
\begin{equation}
y^{m+2l}\left|\partial_{y}^{m}\partial_{s}^{l}\left(v\left(y,\,s\right)-\frac{k}{c_{2}}e^{-\lambda_{2}s}\varphi_{2}\left(y\right)\right)\right|\leq C\left(n,\,\Lambda,\,\delta,\,m,\,l\right)e^{-\varkappa s}e^{-\lambda_{2}s}y^{\alpha+2}\label{C^infty v bound intermediate}
\end{equation}
for $\left(y,\,s\right)$ satisfying $e^{-\vartheta\sigma s}\leq y\leq2$,
$s_{0}+\delta^{2}y^{2}\leq s\leq\mathring{s}$, and 
\begin{equation}
y^{m+2l}\left|\partial_{y}^{m}\partial_{s}^{l}\left(v\left(y,\,s\right)-e^{-\sigma s}\,\psi_{k}\left(e^{\sigma s}y\right)\right)\right|\leq C\left(n,\,\Lambda,\,\delta,\,m,\,l\right)\beta^{\alpha-3}e^{-2\varrho\sigma\left(s-s_{0}\right)}e^{-\lambda_{2}s}y^{\alpha}\label{C^infty v bound tip}
\end{equation}
for $\left(y,\,s\right)$ satisfying $\frac{3}{2}\beta e^{-\sigma s}\leq y\leq e^{-\vartheta\sigma s}$,
$s_{0}+\delta^{2}y^{2}\leq s\leq\mathring{s}$, where $\mathring{s}=-\ln\left(-\mathring{t}\right)$
and
\begin{equation}
\varkappa=\min\left\{ \varsigma\lambda_{2}-\vartheta\sigma\left(n+\alpha+\frac{3}{2}\right),\,\frac{\varsigma\lambda_{2}}{2},\,2\left(\lambda_{2}+\left(\alpha-2\right)\vartheta\sigma\right)\right\} >0\label{varkappa}
\end{equation}
\begin{equation}
\varrho=1-\frac{1}{2}\left(1-\alpha\right)\left(1-\vartheta\right)\in\left(0,\,\vartheta\right)\label{varrho}
\end{equation}
are constants. Note that 
\[
\frac{k}{c_{2}}e^{-\lambda_{2}s}\varphi_{2}\left(y\right)=ke^{-\lambda_{2}s}y^{\alpha}\left(1+2\varUpsilon_{1}y^{2}+\varUpsilon_{2}y^{4}\right)
\]
 
\[
e^{-\sigma s}\,\psi_{k}\left(e^{\sigma s}y\right)=ke^{-\lambda_{2}s}y^{\alpha}\left(1+O\left(\left(e^{\sigma s}y\right)^{-2\left(1-\alpha\right)}\right)\right)
\]
 (see Proposition \ref{linear operator} and (\ref{psi}) ). 

3. In the $\mathbf{tip}$ $\mathbf{region}$, if we rescale the hypersurface
by the type $\mathrm{II}$ rescaling (see (\ref{Gamma})), then the
function $\hat{w}\left(z,\,\tau\right)$ of the rescaled hypersurface
$\Gamma_{\tau}^{\left(a_{0},\,a_{1}\right)}$ defined in (\ref{w'})
satisfies
\begin{equation}
\delta^{m+2l}\left|\partial_{z}^{m}\partial_{\tau}^{l}\left(\hat{w}\left(z,\,\tau\right)-\hat{\psi}_{k}\left(z\right)\right)\right|\leq C\left(n,\,m,\,l\right)\beta^{\alpha-3}\left(\frac{\tau}{\tau_{0}}\right)^{-\varrho}\label{C^infty w' bound}
\end{equation}
 for $0\leq z\leq2\beta$, $\tau_{0}+\delta^{2}\leq\tau\leq\mathring{\tau}$,
where $\mathring{\tau}=\frac{1}{2\sigma\left(-\mathring{t}\right)^{2\sigma}}$.
\end{prop}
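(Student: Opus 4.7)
My plan is to bootstrap the $C^{2}$ a priori bounds of Proposition \ref{degree} to smooth estimates region by region, using the Ecker--Huisken interior curvature estimates from \cite{EH} for the $C^{\infty}$ part and a linearized spectral analysis to extract the precise asymptotic profiles. The constant $k$ will be determined by the evolution of the marginal $\varphi_{2}$ mode in the intermediate region and by the matching with the rescaled minimal hypersurface in the tip region; the stated range for $k$ reflects the fact that the initial data force $k\approx 1$ up to corrections of order $(-t_{0})^{\varsigma\lambda_{2}}$. The outer-region bound (\ref{C^infty u bound}) will then follow by pulling the intermediate-region estimates back to the $(x,t)$ coordinates.

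The $C^{\infty}$ smoothness in each rescaled region comes first. The $C^{2}$ bounds (\ref{C^2 outside u bound}), (\ref{smaller a priori bound u}), (\ref{C^2 w' bound}) translate into uniform gradient and curvature bounds on the underlying hypersurface flow, after which the Ecker--Huisken interior estimates yield bounds on all higher derivatives on interior parabolic subsets, once a short time of order $\delta^{2}$ has elapsed. This waiting time is the source of the $\delta^{m+2l}$ weights in (\ref{C^infty w' bound}) and of the $t_{0}+\delta^{2}$, $t_{0}+\delta^{2}x^{2}$, $s_{0}+\delta^{2}y^{2}$ lower endpoints elsewhere. It also provides (\ref{C^infty outside u bound}) directly, since in that range $u$ has bounded derivatives and $x$ is bounded away from $0$.

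For the intermediate region I would write (\ref{eq v}) as $\partial_{s}v=-\mathcal{L}v+\mathcal{Q}v$ and expand a suitably localized $\tilde v$ of $v$ in the orthonormal eigenbasis $\{\varphi_{i}\}$ of Proposition \ref{linear operator}. The coefficients $a_{i}(s):=\langle\tilde v,\varphi_{i}\rangle$ satisfy a perturbation of $\dot a_{i}+\lambda_{i}a_{i}=0$ in which the contributions of $\mathcal{Q}v$ and of the cutoff error (controlled by Lemma \ref{cut-off}) are of higher order than $e^{-\lambda_{2}s}$. Stable modes $i\geq 3$ decay faster than $e^{-\lambda_{2}s}$; the unstable modes $i=0,1$ are pinned by the degree constraint $\Phi_{t_{1}}(a_{0},a_{1})=(0,0)$ and backward integration, and therefore remain of lower order on $[s_{0},\mathring s]$; and the marginal mode $i=2$ converges to a limit which, combined with the initial normalization (\ref{initial v}), identifies $k$ in the stated range. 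Interpolating with the $C^{\infty}$ bounds from the previous step gives (\ref{C^infty v bound intermediate}). For (\ref{C^infty v bound tip}) in the inner part of the intermediate region, the natural comparison profile changes from $\varphi_{2}$ to the rescaled minimal graph $e^{-\sigma s}\psi_{k}(e^{\sigma s}y)$; Lemma \ref{asymptotic psi} shows these two descriptions agree at leading order, and a weighted parabolic estimate on the difference yields the rate $e^{-2\varrho\sigma(s-s_{0})}$, with $\varrho$ as in (\ref{varrho}).

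For the tip region I would use (\ref{C^2 w' bound}) together with Lemma \ref{monotonicity} to say that $\hat w(\cdot,\tau)$ is $C^{2}$-close to some $\hat\psi_{k(\tau)}$ with $|k(\tau)-1|\leq 2\beta^{\alpha-3}$. Subtracting (\ref{eq psi'}) from (\ref{eq w'}) gives a linearized parabolic equation for $\hat w-\hat\psi_{k(\tau)}$ carrying a forcing of size $O(\tau^{-1})$ from the last term of (\ref{eq w'}) and a zero mode along the scaling direction $\partial_{k}\hat\psi_{k}$, which is positive by Lemma \ref{monotonicity} (via (\ref{positivity})). Projecting onto this zero mode controls the evolution of $k(\tau)$ and shows $k(\tau)\to k_{\infty}$; the orthogonal stable component decays polynomially at the rate $\tau^{-\varrho}$ dictated by the spectral gap of the linearization competing with the $O(\tau^{-1})$ drift, producing (\ref{C^infty w' bound}). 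The main obstacle I expect is precisely this tip-region analysis: the only-polynomial rate $\tau^{-\varrho}$ is delicate because the inhomogeneity is $O(\tau^{-1})$, which forces the comparison against the moving family $\hat\psi_{k(\tau)}$ rather than a fixed $\hat\psi_{k}$, and requires coordinating the weighted $L^{2}$ decay of the stable part with the zero-mode projection that defines $k(\tau)$. Matching at $y\sim e^{-\vartheta\sigma s}$ (equivalently $z\sim e^{(1-\vartheta)\sigma s}$) then forces the $k$'s produced in the three regions to agree and pins down the balances encoded in $\varkappa$ and $\varrho$ of (\ref{varkappa}) and (\ref{varrho}).
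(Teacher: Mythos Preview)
Your overall architecture is sound and the treatment of the intermediate region matches the paper closely: localize $v$, expand in the eigenbasis of $\mathcal{L}$, use the backward condition $\Phi_{t_1}(a_0,a_1)=(0,0)$ to control the unstable modes by integrating from $s_1$, and read off $k$ as the (approximately constant) coefficient of the $\varphi_2$ mode (Proposition~\ref{C^0 v}). The paper then upgrades the resulting pointwise $C^0$ bound to $C^\infty$ not via Ecker--Huisken higher-order estimates on the flow but via Krylov--Safonov H\"older estimates followed by Schauder bootstrap applied directly to the scalar PDEs (\ref{eq u}), (\ref{eq v}), (\ref{eq w'}) and to the linear equations satisfied by the differences $u-\frac{k}{c_2}(-t)^{\lambda_2+1/2}\varphi_2$, $v-\frac{k}{c_2}e^{-\lambda_2 s}\varphi_2$, $v-e^{-\sigma s}\psi_k(e^{\sigma s}y)$; this is a minor technical variation from what you describe.

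The tip region is where your route genuinely diverges. You propose a spectral argument: linearize about a moving $\hat\psi_{k(\tau)}$, project onto the scaling zero mode $\partial_k\hat\psi_k$, and extract the $\tau^{-\varrho}$ rate from a spectral gap competing with the $O(\tau^{-1})$ drift. The paper avoids this entirely and instead constructs \emph{explicit barriers} (Proposition~\ref{C^0 w'}): with $k$ already fixed by the intermediate-region analysis, one sets $\hat w_\pm(z,\tau)=\hat\psi_{\lambda_\pm(\tau)k}(z/\mu_\pm(\tau))$ with $\lambda_\pm(\tau)=1\pm\beta^{\alpha-3}(\tau/\tau_0)^{-\varrho}$ (and a carefully chosen $\mu_+$), verifies the sub/super-solution inequalities by direct computation from (\ref{eq perturbation psi}), and applies the comparison principle. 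This is more elementary and sidesteps the functional-analytic issues you anticipate (weighted spaces on a growing domain, time-dependent drift); the exponent $\varrho$ in (\ref{varrho}) is simply \emph{chosen} so that the barrier inequality closes, rather than emerging from any spectral gap. A further consequence is that the paper produces only one $k$---defined once in Proposition~\ref{C^0 v} and then fed as input to the tip barriers---so no separate $k_\infty$ and no matching argument between independently produced constants is needed. Your spectral approach is a plausible alternative, but the barrier method reaches (\ref{C^infty w' bound}) with substantially less machinery.
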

\begin{rem}
\label{induction}By Proposition \ref{degree}, Proposition \ref{uniform estimates}
and \cite{EH}, we may infer that if $\left(a_{0},\,a_{1}\right)\in\overline{\mathcal{O}}_{t_{1}}$
and
\[
\Phi_{t_{1}}\left(a_{0},\,a_{1}\right)=\left(0,\,0\right)
\]
 then $\left(a_{0},\,a_{1}\right)\in\mathcal{O}_{e^{-1}t_{1}}$. In
other words, $\Sigma_{t_{0}}^{\left(a_{0},\,a_{1}\right)}$ is a ``good''
candidate of initial hypersurfaces to flow.
\end{rem}
We then have the following corollary.
\begin{cor}
\label{index set}If $\left|t_{0}\right|\ll1$ (depending on $n$),
then we have $\mathcal{I}=\left[t_{0},\,0\right)$.
\end{cor}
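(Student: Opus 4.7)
The plan is to argue that $\mathcal{I}$ is both non-empty and relatively open-and-closed in $[t_{0},\,0)$, so by connectedness it must equal the whole interval. Non-emptiness is already given by (\ref{induction initial}), which places $t_{0}\in\mathcal{I}$. The engine driving both openness and closedness is the combination of the topological degree's homotopy invariance with the a priori margin built into Propositions \ref{degree} and \ref{uniform estimates}, which is exactly the content of Remark \ref{induction}.

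More concretely, I would argue by contradiction. Suppose $t_{*}=\sup\mathcal{I}<0$. Pick a sequence $t_{n}\nearrow t_{*}$ with $t_{n}\in\mathcal{I}$, so that $\deg(\Phi_{t_{n}},\mathcal{O}_{t_{n}},(0,0))=1$ and a zero $(a_{0}^{n},a_{1}^{n})\in\mathcal{O}_{t_{n}}$ exists. Because $(a_{0}^{n},a_{1}^{n})\in\overline{B}^{2}(O;\,\beta^{2(\alpha-1)})$, one can pass to a subsequence $(a_{0}^{n},a_{1}^{n})\to(a_{0}^{*},a_{1}^{*})$. Continuous dependence on initial data for the MCF together with the uniform smooth estimates (\ref{C^infty outside u bound})–(\ref{C^infty w' bound}) let one pass to the limit, so that $\Sigma_{t}^{(a_{0}^{*},a_{1}^{*})}$ is defined and admissible on $[t_{0},t_{*}]$ with $\Phi_{t_{*}}(a_{0}^{*},a_{1}^{*})=(0,0)$; Remark \ref{induction} then actually upgrades this to $(a_{0}^{*},a_{1}^{*})\in\mathcal{O}_{e^{-1}t_{*}}$. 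This shows $t_{*}\in\mathcal{I}$ and, more importantly, that the zero set of $\Phi$ does not lose any mass in the passage to the limit.

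Next, I want to transport the degree from $t_{0}$ to $e^{-1}t_{*}$ via the continuous family $\mathring{t}\mapsto\Phi_{\mathring{t}}$. The key point is that every zero of $\Phi_{\mathring{t}}$ in $\overline{\mathcal{O}}_{\mathring{t}}$ is forced into the relative interior, so no zero can escape through the boundary and the degree is preserved. Two escape mechanisms must be ruled out: (i) the three admissibility conditions (\ref{u'})–(\ref{a priori bound u}) degenerating, and (ii) $(a_{0},a_{1})$ hitting $\partial B^{2}(O;\,\beta^{2(\alpha-1)})$. Mechanism (i) is excluded by Proposition \ref{degree}, which gives the bounds with the \emph{strict} constants $\tfrac{1}{3},\,\tfrac{\Lambda}{2},\,2\beta^{\alpha-3},\ldots$ in place of the admissibility thresholds $\tfrac{1}{5},\,\Lambda,\,\beta^{\frac{3}{2}\alpha-\frac{5}{2}},\ldots\,$, while mechanism (ii) is excluded by Proposition \ref{uniform estimates}, which pins the matching parameter $k$ to a small neighborhood of $1$ (hence $(a_{0},a_{1})$ lies well inside the disk, as $\beta^{2(\alpha-1)}\gg(-t_{0})^{\varsigma\lambda_{2}}$ for $|t_{0}|$ small). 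Homotopy invariance of the Brouwer degree then gives $\deg(\Phi_{\mathring{t}},\mathcal{O}_{\mathring{t}},(0,0))=1$ throughout, yielding $e^{-1}t_{*}\in\mathcal{I}$ and contradicting $t_{*}=\sup\mathcal{I}$.

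The main obstacle I anticipate is a purely bookkeeping one: making the homotopy invariance statement genuinely apply, because $\mathcal{O}_{\mathring{t}}$ itself varies with $\mathring{t}$ (it is non-increasing) and its boundary is not smooth. The right way to handle this is to restrict attention to the fixed open domain $\mathcal{O}_{e^{-1}t_{*}}\subset\mathcal{O}_{\mathring{t}}$ for all $\mathring{t}\in[t_{0},e^{-1}t_{*}]$, showing via Remark \ref{induction} that every zero of $\Phi_{\mathring{t}}$ in $\overline{\mathcal{O}}_{\mathring{t}}$ in fact sits in this smaller fixed set, and then applying excision together with homotopy invariance on that fixed set. Once this technical point is in place, the induction sketched above propagates the identity $\deg=1$ from $t_{0}$ all the way to $0^{-}$, giving $\mathcal{I}=[t_{0},0)$.
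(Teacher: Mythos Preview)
Your proposal has the right ingredients (homotopy invariance, excision, Remark~\ref{induction}) but assembles them incorrectly, and the gap is not merely bookkeeping. You propose to transport the degree from $\Phi_{t_{0}}$ to $\Phi_{e^{-1}t_{*}}$ in a \emph{single} homotopy on the fixed domain $\mathcal{O}_{e^{-1}t_{*}}$, asserting that Remark~\ref{induction} forces every zero of $\Phi_{\mathring{t}}$, for $\mathring{t}\in[t_{0},e^{-1}t_{*}]$, into $\mathcal{O}_{e^{-1}t_{*}}$. But Remark~\ref{induction} only yields a \emph{single multiplicative step}: a zero of $\Phi_{\mathring{t}}$ in $\overline{\mathcal{O}}_{\mathring{t}}$ lands in $\mathcal{O}_{e^{-1}\mathring{t}}$, not in $\mathcal{O}_{e^{-1}t_{*}}$. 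When $\mathring{t}$ is near $t_{0}$ the set $\mathcal{O}_{e^{-1}\mathring{t}}$ may strictly contain $\overline{\mathcal{O}}_{e^{-1}t_{*}}$, so nothing prevents that zero from sitting on $\partial\mathcal{O}_{e^{-1}t_{*}}$, and your homotopy cannot be justified. The phrase ``the induction sketched above'' does not point to any actual induction in your write-up; your argument is a one-shot contradiction that needs the long homotopy to work, and it does not.

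The paper's proof is the clean induction that your homotopy step was trying to shortcut. Given $t_{1}\in\mathcal{I}$, fix any $t_{2}\in[t_{1},e^{-1}t_{1}]$ and run the homotopy $t\mapsto\Phi_{t}$ only over $[t_{1},t_{2}]$ on the fixed domain $\mathcal{O}_{t_{2}}$. For $t$ in this short window one has $t_{2}\le e^{-1}t_{1}\le e^{-1}t$, hence $\mathcal{O}_{e^{-1}t}\subset\mathcal{O}_{t_{2}}$; so a zero of $\Phi_{t}$ on $\partial\mathcal{O}_{t_{2}}\subset\overline{\mathcal{O}}_{t}$ would, by Remark~\ref{induction}, lie in $\mathcal{O}_{e^{-1}t}\subset\mathcal{O}_{t_{2}}$, a contradiction. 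Homotopy invariance gives $\deg(\Phi_{t_{2}},\mathcal{O}_{t_{2}},(0,0))=\deg(\Phi_{t_{1}},\mathcal{O}_{t_{2}},(0,0))$, and excision (Remark~\ref{induction} again puts all zeros of $\Phi_{t_{1}}$ in $\mathcal{O}_{t_{1}}$ into $\mathcal{O}_{e^{-1}t_{1}}\subset\mathcal{O}_{t_{2}}$) equates the latter with $\deg(\Phi_{t_{1}},\mathcal{O}_{t_{1}},(0,0))=1$. Iterating covers $[t_{0},0)$. Your compactness argument with $(a_{0}^{n},a_{1}^{n})\to(a_{0}^{*},a_{1}^{*})$ is then unnecessary: once the single-step induction is in place, no limits are needed.
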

\begin{proof}
Notice that by (\ref{induction initial}) we have $t_{0}\in\mathcal{I}$.
Then we would like to prove the corollary by induction. 

Assume that $t_{1}\in\mathcal{I}$. The goal is to show that $t_{2}\in\mathcal{I}$
for any $t_{2}\in\left[t_{1},\,e^{-1}t_{1}\right]$. By definition,
there holds 
\[
\deg\left(\Phi_{t_{1}},\,\mathcal{O}_{t_{1}},\,\left(0,\,0\right)\right)=1
\]
It follows that there is $\left(a_{0},\,a_{1}\right)\in\mathcal{O}_{t_{1}}$
for which 
\[
\Phi_{t_{1}}\left(a_{0},\,a_{1}\right)=\left(0,\,0\right)
\]
By Remark \ref{induction}, we then have $\left(a_{0},\,a_{1}\right)\in\mathcal{O}_{t_{2}}$
and $\left(0,\,0\right)\notin\Phi_{t}\left(\partial\mathcal{O}_{t_{2}}\right)$
for all $t_{1}\leq t\leq t_{2}$. Consequently, $\mathcal{O}_{t_{2}}$
is non-empty and the degree of $\Phi_{t}$ at $\left(0,\,0\right)$
is well defined in $\mathcal{O}_{t_{2}}$ for each $t_{1}\leq t\leq t_{2}$.
Since $\Phi_{t}$ is continuous in $t$, by the homotopy invariance
of degree, there holds
\[
\deg\left(\Phi_{t_{2}},\,\mathcal{O}_{t_{2}},\,\left(0,\,0\right)\right)=\deg\left(\Phi_{t_{1}},\,\mathcal{O}_{t_{2}},\,\left(0,\,0\right)\right)
\]
In addition, by Remark \ref{induction}, $\left(0,\,0\right)\notin\Phi_{t_{1}}\left(\mathcal{O}_{t_{1}}\setminus\mathcal{O}_{t_{2}}\right)$,
which, by the excision property of degree, implies that 
\[
\deg\left(\Phi_{t_{1}},\,\mathcal{O}_{t_{2}},\,\left(0,\,0\right)\right)=\deg\left(\Phi_{t_{1}},\,\mathcal{O}_{t_{1}},\,\left(0,\,0\right)\right)=1
\]
Therefore, we get $t_{2}\in\mathcal{I}$.
\end{proof}
Now we are ready to prove the existence theorem of Vel$\acute{a}$zquez's
solution.
\begin{thm}
\label{Velazquez}Let $n\geq5$ be a positive integer. If $\left|t_{0}\right|\ll1$
(depending on $n$), there is an $\mathbf{admissible}$ mean curvature
flow $\left\{ \Sigma_{t}\right\} _{t_{0}\leq t<0}$ (see Section \ref{admissible})
for which the the functions $\hat{u}(x,\,t)$ and $u(x,\,t)$ (defined
in (\ref{u'}) and (\ref{u}), respectively) satisfy (\ref{convexity})
and (\ref{C^2 outside u bound}). Besides, in the tip region, if we
perform the type $\mathrm{II}$ rescaling, the rescaled function $\hat{w}\left(\cdot,\,\tau\right)$
(defined in (\ref{u'w'})) satisfies (\ref{C^2 w' bound}). 

In addition, there is
\[
k\in\left(1-C\left(n\right)\left(-t_{0}\right)^{\varsigma\lambda_{2}},\,1+C\left(n\right)\left(-t_{0}\right)^{\varsigma\lambda_{2}}\right)
\]
so that for any given $0<\delta\ll1$, $m,\,l\in\mathbb{Z}_{+}$,
there hold

1. In the $\mathbf{outer}$ $\mathbf{region}$, the function $u(x,\,t)$
of $\Sigma_{t}$ defined in (\ref{u}) satisfies (\ref{C^infty outside u bound})
and (\ref{C^infty u bound}).

2. In the $\mathbf{intermediate}$ $\mathbf{region}$, if we do the
type $\mathrm{I}$ rescaling, the function $v\left(y,\,s\right)$
of the rescaled hypersurface $\Pi_{s}$ defined in (\ref{v}) satisfies
(\ref{C^infty v bound intermediate}) and (\ref{C^infty v bound tip}).

3. In the $\mathbf{tip}$ $\mathbf{region}$, if we do the type $\mathrm{II}$
rescaling, the function $\hat{w}\left(\cdot,\,\tau\right)$ of the
rescaled hypersurface $\Gamma_{\tau}$ defined in (\ref{w'}) satisfies
(\ref{C^infty w' bound}).
\end{thm}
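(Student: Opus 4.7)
The plan is to construct $\{\Sigma_t\}_{t_0\le t<0}$ as a subsequential limit of the admissible approximations produced by Corollary \ref{index set}. Pick a sequence $t_j \nearrow 0$ in $[t_0,0)$. By Corollary \ref{index set} each $t_j\in\mathcal{I}$, so $\deg(\Phi_{t_j},\mathcal{O}_{t_j},(0,0))=1$ and there exists a pair $(a_0^{(j)},a_1^{(j)})\in\mathcal{O}_{t_j}$ with $\Phi_{t_j}(a_0^{(j)},a_1^{(j)})=(0,0)$. Let $\{\Sigma_t^{(j)}\}_{t_0\le t\le t_j}$ denote the corresponding admissible MCF. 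Since $\{(a_0^{(j)},a_1^{(j)})\}\subset\overline{B}^{2}(O;\,\beta^{2(\alpha-1)})$ is precompact, after passing to a subsequence we may assume $(a_0^{(j)},a_1^{(j)})\to(a_0^{*},a_1^{*})$, so all initial hypersurfaces $\Sigma_{t_0}^{(j)}$ converge smoothly to a single admissible initial datum $\Sigma_{t_0}^{*}$.

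Next I would apply Proposition \ref{degree} to each $(a_0^{(j)},a_1^{(j)})$, which gives the $C^2$ bounds (\ref{convexity}), (\ref{C^2 outside u bound}), (\ref{smaller a priori bound u}), (\ref{C^2 w' bound}) uniformly in $j$ on $[t_0,t_j]$, and Proposition \ref{uniform estimates}, which gives a scale $k=k_j\in(1-C(-t_0)^{\varsigma\lambda_2},1+C(-t_0)^{\varsigma\lambda_2})$ together with the smooth estimates (\ref{C^infty outside u bound})--(\ref{C^infty w' bound}) (uniformly in $j$). Passing to a further subsequence we may assume $k_j\to k^{*}$ inside this interval. Applying Arzel\`a--Ascoli in each of the three regions (outer, intermediate in the type $\mathrm{I}$ rescaling, and tip in the type $\mathrm{II}$ rescaling) and extracting a diagonal subsequence along any exhaustion of $[t_0,0)$, I obtain smooth limits $\hat u^{*}$, $u^{*}$, $v^{*}$, $\hat w^{*}$ defined on the full time interval $[t_0,0)$. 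These four functions patch together to define a complete, embedded, $O(n)\times O(n)$-symmetric smooth MCF $\{\Sigma_t\}_{t_0\le t<0}$, because the pointwise identities (\ref{uv}), (\ref{u'w'}) and the change of parametrization between $\hat u$ and $u$ are stable under $C^m$ convergence on the overlaps.

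Finally, since each of the estimates listed in the statement is a closed condition under the $C^m$ convergence we have established, all of (\ref{convexity}), (\ref{C^2 outside u bound}), (\ref{C^2 w' bound}), (\ref{C^infty outside u bound})--(\ref{C^infty w' bound}) are inherited by $\{\Sigma_t\}$, with the same $k=k^{*}$ used simultaneously in outer, intermediate and tip. Admissibility of the limit (items $(1)$--$(3)$ in Section \ref{admissible}) follows: item $(1)$ from the smooth convergence and the preservation of $\hat u>0$ and $\partial_x\hat u(0,t)=0$; item $(2)$ from (\ref{C^2 w' bound}) together with $\partial_{zz}^{2}\hat w>0$, which shows $\bar\Gamma_\tau$ remains a graph over $\bar{\mathcal{C}}$ outside $B(O;\beta)$; and item $(3)$ is (\ref{smaller a priori bound u}) with room to spare.

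The main obstacle will be checking that the convergence is genuinely smooth (not merely $C^0$) up to, and including, the parabolic boundary, in order to transfer all three pieces into a single coherent flow. The interior-in-time smoothness is handled by the estimates of Proposition \ref{uniform estimates}, which include a loss of time $\delta^2$ (or $\delta^{2}x^{2}$) near $t_0$; the smoothness of the initial data is handled separately through the smooth convergence $\Sigma_{t_0}^{(j)}\to \Sigma_{t_0}^{*}$ and the short-time existence theory of \cite{EH}. A subsidiary concern is the compatibility of the three regional asymptotic profiles through the same constant $k^{*}$, but this is automatic because each approximant already satisfies all three profile estimates with the \emph{same} $k_j$, and the identification is preserved in the limit.
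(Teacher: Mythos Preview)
Your proposal is correct and follows essentially the same approach as the paper: pick a sequence $t_j\nearrow 0$, use Corollary \ref{index set} to find $(a_0^{(j)},a_1^{(j)})\in\mathcal{O}_{t_j}$ with $\Phi_{t_j}(a_0^{(j)},a_1^{(j)})=(0,0)$, invoke the uniform estimates of Propositions \ref{degree} and \ref{uniform estimates}, and pass to a subsequential smooth limit in which $k_j\to k$ and all estimates persist. The paper's proof is terser (it does not explicitly pass the parameters $(a_0^{(j)},a_1^{(j)})$ to a limit or discuss the initial-time regularity), but the argument is the same compactness-plus-closed-conditions scheme you describe.
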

\begin{proof}
Let $t_{i}>t_{0}$ be a sequence so that $t_{i}\nearrow0$. By Corollary
\ref{index set}, there is $\left(a_{0}^{i},\,a_{1}^{i}\right)\in\mathcal{O}_{t_{i}}$
for which 
\[
\Phi_{t_{i}}\left(a_{0}^{i},\,a_{1}^{i}\right)=\left(0,\,0\right)
\]
By the uniform estimates in Proposition \ref{degree} and Proposition
\ref{uniform estimates}, we may assume (by passing to a subsequence)
that as $i\rightarrow\infty$, 
\[
k^{\left(a_{0}^{i},\,a_{1}^{i}\right)}\rightarrow k
\]
and the functions $\left\{ \hat{u}^{\left(a_{0}^{i},\,a_{1}^{i}\right)}\left(x,\,t\right)\right\} $
and $\left\{ u^{\left(a_{0}^{i},\,a_{1}^{i}\right)}\left(x,\,t\right)\right\} $
of $\Sigma_{t}^{\left(a_{0}^{i},\,a_{1}^{i}\right)}$ (defined in
(\ref{u'}) and (\ref{u})) converge locally smoothly to $\hat{u}\left(x,\,t\right)$
and $u\left(x,\,t\right)$, respectively. The conclusion follows immediately
by passing the uniform estimates (in Proposition \ref{degree} and
Proposition \ref{uniform estimates}) to limit. 
\end{proof}
\begin{rem}
\label{remark on convergence}Let $\left\{ \Sigma_{t}\right\} _{t_{0}\leq t<0}$
be Vel$\acute{a}$zquez's solution in Theorem \ref{Velazquez}. From
(\ref{v}), (\ref{uv}), (\ref{C^infty u bound}) and (\ref{C^infty v bound intermediate}),
the type $\mathrm{I}$ rescaled hypersurfaces $\Pi_{s}$ (see (\ref{Pi}))
converges smoothly to $\mathcal{C}$ on any fixed annulus centered
at $O$, i.e. for any $0<r<R<\infty$,
\[
\Pi_{s}\,\stackrel{C^{\infty}}{\longrightarrow}\,\mathcal{C}\qquad\textrm{in}\;\,B\left(O;\,R\right)\setminus B\left(O;\,r\right)
\]
as $s\nearrow\infty$. Likewise, from (\ref{w'}), (\ref{w}), (\ref{uw}),
(\ref{C^infty v bound tip}) and (\ref{C^infty w' bound}), the type
$\mathrm{II}$ rescaled hypersurfaces $\Gamma_{\tau}$ (see (\ref{Gamma}))
converges to $\mathcal{M}_{k}$ locally smoothly, i.e. 
\[
\Gamma_{\tau}\,\stackrel{C_{loc}^{\infty}}{\longrightarrow}\,\mathcal{M}_{k}
\]
In addition, by the admissible conditions, the projected curve $\bar{\Sigma}_{t}$
(see (\ref{projected Sigma})) is a graph over $\mathcal{\bar{C}}$
outside $B\left(O;\,\beta\left(-t\right)^{\frac{1}{2}+\sigma}\right)$.
By (\ref{convexity}) and the admissible conditions, we know that
inside $B\left(O;\,\beta\left(-t\right)^{\frac{1}{2}+\sigma}\right)$,
$\bar{\Sigma}_{t}$ is a convex curve which intersects orthogonally
with the vertical ray $\left\{ \left.\left(0,\,x\right)\right|\,x>0\right\} $;
moreover, if we zoom in at $O$ by the type $\mathrm{II}$ rescaling,
by (\ref{eq psi'}) and (\ref{C^0 w' bound}), the rescaled curve
$\bar{\Gamma}_{\tau}$ (see \ref{projected Gamma}) lies above $\mathcal{\bar{C}}$
and tends to it for $z\nearrow\beta$. Therefore, $\bar{\Gamma}_{\tau}$
is a graph over $\mathcal{\bar{C}}$ inside $B\left(O;\,\beta\right)$,
which in turn implies that $\bar{\Sigma}_{t}$ is also graph over
$\mathcal{\bar{C}}$ inside $B\left(O;\,\beta\left(-t\right)^{\frac{1}{2}+\sigma}\right)$.
Hence, we get
\[
\bar{\Sigma}_{t}=\left\{ \left.\left(x,\,\hat{u}\left(x,\,t\right)\right)\right|\,x\geq0\right\} 
\]
 
\[
=\left\{ \left.\left(\left(x-u\left(x,\,t\right)\right)\frac{1}{\sqrt{2}},\,\left(x+u\left(x,\,t\right)\right)\frac{1}{\sqrt{2}}\right)\right|\,x\geq\frac{\hat{u}\left(0,\,t\right)}{\sqrt{2}}\right\} 
\]
\end{rem}

\section{\label{mean curvature blow-up}Type $\mathrm{II}$ singularity and
blow-up of the mean curvature}

In this section we explain why Vel$\acute{a}$zquez's solution (see
Theorem \ref{Velazquez}) develops a type $\mathrm{II}$ singularity
at the origin and why its mean curvature blows up as $t\nearrow0$.
The lower bound for the blow-up rate of the second fundamental form
is already shown in \cite{V}, while the upper bound (of the second
fundamental form) and the blow-up of the mean curvature are new results. 

To estimate the second fundamental form and mean curvature, we would
use the asymptotic formulas in Theorem \ref{Velazquez} to examine
the solution in each region separately. Let's start with analyzing
the outer region by (\ref{u}), (\ref{C^2 outside u bound}) and (\ref{smaller a priori bound u}).
\begin{prop}
\label{blow-up rate outer}Let $\left\{ \Sigma_{t}\right\} _{t_{0}\leq t<0}$
be Vel$\acute{a}$zquez's solution in Theorem \ref{Velazquez}. In
the $\mathbf{outer}$ $\mathbf{region}$, the second fundamental form
of $\Sigma_{t}$ is bounded by
\[
\sqrt{-t}\left|A_{\Sigma_{t}}\right|\,\leq\,C\left(n\right)
\]
 for $\frac{1}{2}t_{0}\leq t<0$.
\end{prop}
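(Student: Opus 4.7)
The plan is to compute $|A_{\Sigma_{t}}|$ pointwise in the outer region using the parametrization (\ref{u}), and then bound each piece via the a priori estimates (\ref{C^2 outside u bound}) and (\ref{smaller a priori bound u}).

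First I would use the $O\left(n\right)\times O\left(n\right)$ symmetry to decompose the principal curvatures of $\Sigma_{t}$. Writing the unit normal at $X_{t}\left(x,\nu,\omega\right)$ in the form $N=\left(a\,\nu,\,b\,\omega\right)$ with $a^{2}+b^{2}=1$ (a direct computation gives $a,b$ explicitly in terms of $\partial_{x}u$), the principal curvatures split into $\left(n-1\right)$ copies of $\kappa_{\nu}=-\sqrt{2}\,a/\left(x-u\right)$, $\left(n-1\right)$ copies of $\kappa_{\omega}=-\sqrt{2}\,b/\left(x+u\right)$, and a single normal curvature $\kappa_{\textrm{proj}}=\partial_{xx}^{2}u/\bigl(1+\left(\partial_{x}u\right)^{2}\bigr)^{3/2}$ of the projected curve $\bar{\Sigma}_{t}$. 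Consequently
\[
\left|A_{\Sigma_{t}}\right|\,\leq\,\sqrt{n-1}\,\bigl(\left|\kappa_{\nu}\right|+\left|\kappa_{\omega}\right|\bigr)\,+\,\left|\kappa_{\textrm{proj}}\right|.
\]

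Next I would record two geometric facts. (i) Because $\left|X_{t}\right|^{2}=x^{2}+u^{2}$ in this parametrization, membership in the outer region forces $x^{2}+u^{2}\geq-t$. (ii) A uniform smallness bound $\left|u\right|\leq x/3$ holds throughout the outer region: for $x\geq\rho/3$ it is part of (\ref{C^2 outside u bound}), while for $\beta\left(-t\right)^{\frac{1}{2}+\sigma}\leq x\leq\rho$ one substitutes (\ref{smaller a priori bound u}), uses $x\geq c\left(n\right)\sqrt{-t}$ together with $\alpha\in\left[-2,-1\right)$, $\lambda_{2}\in\left[\frac{1}{2},1\right)$, and absorbs the factor $\Lambda\rho^{2\lambda_{2}}$ into a constant by choosing $\rho$ and $\left|t_{0}\right|$ small. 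Combining (i) and (ii) gives $x\geq c\left(n\right)\sqrt{-t}$ and $x\pm u\geq 2x/3$, so $\left|a\right|,\left|b\right|\leq 1$ yields $\sqrt{-t}\bigl(\left|\kappa_{\nu}\right|+\left|\kappa_{\omega}\right|\bigr)\leq C\left(n\right)\sqrt{-t}/x\leq C\left(n\right)$.

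It remains to bound $\sqrt{-t}\,\left|\partial_{xx}^{2}u\right|$. I would split this into two overlapping subcases. For $x\geq\rho$, (\ref{C^2 outside u bound}) gives $\left|\partial_{xx}^{2}u\right|\leq C\left(n,\rho\right)$, and $\sqrt{-t}\leq\sqrt{\left|t_{0}\right|/2}\leq C\left(n\right)$. For $c\left(n\right)\sqrt{-t}\leq x\leq\rho$, estimate (\ref{smaller a priori bound u}) yields $\left|\partial_{xx}^{2}u\right|\leq\frac{\Lambda}{2}\bigl(\left(-t\right)^{2}x^{\alpha-2}+x^{2\lambda_{2}-1}\bigr)$; using $x\geq c\left(n\right)\sqrt{-t}$ with $\alpha-2<0$, the first term contributes $\sqrt{-t}\cdot\left(-t\right)^{2}x^{\alpha-2}\leq C\left(n\right)\left(-t\right)^{\left(3+\alpha\right)/2}\leq C\left(n\right)$ since $3+\alpha\geq1$, while $2\lambda_{2}-1=\alpha+2\geq0$ bounds the second term by $\sqrt{-t}\,\rho^{2\lambda_{2}-1}\leq C\left(n\right)$. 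Putting the spherical and normal contributions together yields $\sqrt{-t}\,\left|A_{\Sigma_{t}}\right|\leq C\left(n\right)$.

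The argument has no serious obstacle. The only delicate step is the patching in the second paragraph: one must verify the pointwise bound $\left|u\right|\leq x/3$ across the entire outer region by combining (\ref{C^2 outside u bound}) and (\ref{smaller a priori bound u}), so that $x\pm u$ is comparable to $x$ and the spherical pieces are absorbed into an $n$-dependent constant. After that, the dimensional balance $x\geq c\left(n\right)\sqrt{-t}$ makes every contribution collapse cleanly.
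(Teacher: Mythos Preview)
Your proposal is correct and follows essentially the same approach as the paper: parametrize $\Sigma_t$ via (\ref{u}), write $A_{\Sigma_t}$ explicitly (the paper gives the same block-diagonal form with entries $\frac{\partial_{xx}^{2}u}{1+(\partial_{x}u)^{2}}$, $\frac{1+\partial_{x}u}{x-u}I_{n-1}$, $\frac{-1+\partial_{x}u}{x+u}I_{n-1}$), and bound each piece using (\ref{C^2 outside u bound}) and (\ref{smaller a priori bound u}); the paper is simply more terse, asserting $\max\{|u/x|,|\partial_{x}u|\}\le\tfrac13$ and $|\partial_{xx}^{2}u|\le C(n)$ directly for $x\ge\sqrt{-t}$ without the case-splitting you spell out.
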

\begin{proof}
In the outer region, we parametrize $\Sigma_{t}$ by (\ref{u}). The
second fundamental form is then given by 
\[
A_{\Sigma_{t}}=\frac{1}{\sqrt{1+\left(\partial_{x}u\right)^{2}}}\left(\begin{array}{ccc}
\frac{\partial_{xx}^{2}u}{1+\left(\partial_{x}u\right)^{2}}\\
 & \frac{1+\partial_{x}u}{x-u}\,I_{n-1}\\
 &  & \frac{-1+\partial_{x}u}{x+u}\,I_{n-1}
\end{array}\right)
\]
By (\ref{C^2 outside u bound}) and (\ref{smaller a priori bound u}),
we have
\[
\left\{ \begin{array}{c}
\max\left\{ \left|\frac{u\left(x,\,t\right)}{x}\right|,\,\left|\partial_{x}u\left(x,\,t\right)\right|\right\} \leq\frac{1}{3}\\
\\
\left|\partial_{xx}^{2}u\left(x,\,t\right)\right|\leq C\left(n\right)
\end{array}\right.
\]
for $x\geq\sqrt{-t}$, $\frac{1}{2}t_{0}\leq t<0$. The conclusion
follows immediately.
\end{proof}
In the intermediate region, we first do the type $\mathrm{I}$ rescaling
and study the rescaled hypersurface by (\ref{v}), (\ref{uv}), (\ref{smaller a priori bound u}),
(\ref{C^infty v bound intermediate}) and (\ref{C^infty v bound tip}).
Then we undo the rescaling to get the estimates for the solution.
\begin{prop}
\label{blow-up rate intermediate}Let $\left\{ \Sigma_{t}\right\} _{t_{0}\leq t<0}$
be Vel$\acute{a}$zquez's solution in Theorem \ref{Velazquez}. In
the $\mathbf{intermediate}$ $\mathbf{region}$, the second fundamental
form and the mean curvature of $\Sigma_{t}$ are bounded by
\[
\left(-t\right)^{\frac{1}{2}+\sigma}\left|A_{\Sigma_{t}}\right|\,\leq\,C\left(n\right)
\]
 
\[
\left(-t\right)^{\frac{1}{2}+\left(1-2\varrho\right)\sigma}\left|H_{\Sigma_{t}}\right|\,\leq\,C\left(n,\,t_{0}\right)
\]
for $\frac{1}{2}t_{0}\leq t<0$, where $0<\sigma<\frac{1}{2}$ and
$0<\varrho<1$ are constants defined in (\ref{sigma}) and (\ref{varrho}),
respectively.
\end{prop}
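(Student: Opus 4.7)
The plan is to transfer both estimates to the type I rescaled flow $\Pi_s = (-t)^{-1/2}\Sigma_t$ with $t = -e^{-s}$, on which Theorem \ref{Velazquez} supplies sharp smooth asymptotics of the graph function $v$. Since $|A_{\Sigma_t}| = (-t)^{-1/2}|A_{\Pi_s}|$ and $|H_{\Sigma_t}| = (-t)^{-1/2}|H_{\Pi_s}|$, it suffices to show $|A_{\Pi_s}| \leq C(n)\,e^{\sigma s}$ and $|H_{\Pi_s}| \leq C(n,t_0)\,e^{(1-2\varrho)\sigma s}$ on the rescaled intermediate band $\beta e^{-\sigma s} \leq y \leq 1$.

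For the second fundamental form I would read off the three distinct principal curvatures of $\Pi_s$ from the parametrization (\ref{v}): the graphical curvature $\partial_{yy}^2 v/(1+(\partial_y v)^2)^{3/2}$ and the two sphere curvatures $(1 \pm \partial_y v)/[(y \mp v)\sqrt{1+(\partial_y v)^2}]$, each with multiplicity $n-1$. Inserting the admissible bound (\ref{smaller a priori bound u}) translated to the rescaled variables, together with the identity $\lambda_2 = (1-\alpha)\sigma$, gives $|v|/y + |\partial_y v| \leq C\beta^{\alpha-1} \ll 1$ and $|\partial_{yy}^2 v| \leq Ce^{-\lambda_2 s}y^{\alpha-2} \leq C/y$ throughout the band; each principal curvature is thus bounded by $C/y \leq Ce^{\sigma s}/\beta$, yielding $(-t)^{1/2+\sigma}|A_{\Sigma_t}| \leq C(n)$.

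For the mean curvature, the essential mechanism is a cancellation produced by the reference profile $\tilde v(y,s) := e^{-\sigma s}\psi_k(e^{\sigma s}y)$. A direct computation using the minimal-surface equation (\ref{eq psi}) for $\psi_k$ shows that the $e^{\pm\sigma s}$ factors balance, forcing
\[
\frac{\partial_{yy}^2 \tilde v}{1+(\partial_y \tilde v)^2} + 2(n-1)\,\frac{y\,\partial_y\tilde v+\tilde v}{y^2-\tilde v^2} \equiv 0,
\]
so $H_{\Pi_s}[\tilde v]\equiv 0$. I then write $v = \tilde v + \eta$ and Taylor expand the mean curvature operator at $\tilde v$: the linear part acts on $\eta$ as $L\eta$ whose coefficients in front of $\partial_{yy}^2\eta$, $\partial_y\eta$, and $\eta$ are of sizes $O(1)$, $O(1/y)$, and $O(1/y^2)$ respectively, while the quadratic remainder is absorbed into analogous expressions in $|\partial\eta|^2$. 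Substituting the sharp estimate $y^i|\partial_y^i\eta| \leq C\beta^{\alpha-3}e^{-2\varrho\sigma(s-s_0)}e^{-\lambda_2 s}y^\alpha$ from (\ref{C^infty v bound tip}) produces
\[
|H_{\Pi_s}[v]| \leq C(n)\,\beta^{\alpha-3}\,e^{-2\varrho\sigma(s-s_0)}\,e^{-\lambda_2 s}\,y^{\alpha-2}
\]
on $\tfrac{3}{2}\beta e^{-\sigma s} \leq y \leq e^{-\vartheta\sigma s}$. Since $\alpha - 2 < 0$ the supremum over $y$ is attained at the left endpoint; substituting $\lambda_2 = (1-\alpha)\sigma$ collapses the exponent in $s$ to $(1-2\varrho)\sigma$, with the factor $e^{2\varrho\sigma s_0}\beta^{2\alpha-5}$ absorbed into $C(n,t_0)$. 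The outer sub-range $y \in [e^{-\vartheta\sigma s},1]$ is handled analogously by comparing $v$ to $\hat v = \tfrac{k}{c_2}e^{-\lambda_2 s}\varphi_2$ via (\ref{C^infty v bound intermediate}) and using $\mathcal{L}\varphi_2 = \lambda_2\varphi_2$, which produces $H_{\Pi_s}[\hat v] = O(e^{-\lambda_2 s}y^\alpha)$ and a strictly better bound there. The residual sliver $y \in [\beta e^{-\sigma s},\tfrac{3}{2}\beta e^{-\sigma s}]$ is covered by (\ref{C^infty w' bound}) on $\Gamma_\tau$, which via the minimality of $\mathcal{M}_k$ yields $|H_{\Gamma_\tau}| \leq C(n)\beta^{\alpha-3}(\tau/\tau_0)^{-\varrho}$ and transfers to the same inequality upon undoing the type II rescaling together with $\tau/\tau_0 = ((-t_0)/(-t))^{2\sigma}$.

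The principal technical obstacle is the uniform bookkeeping in the linearization step: one must verify that the coefficients of $L$ genuinely have the clean sizes $O(1)$, $O(1/y)$, $O(1/y^2)$, which amounts to showing that $1+(\partial_y\tilde v)^2$ stays bounded and $y^2 - \tilde v^2 \geq cy^2$ holds uniformly across the relevant sub-bands, and that the $\beta$-dependent constants aggregate through the three pieces so that the final constant genuinely takes the form $C(n,t_0)$. Both points reduce to Lemma \ref{order psi}, the convexity bounds (\ref{C^2 w' bound}), and careful exponent arithmetic with the identities $\lambda_2 = (1-\alpha)\sigma$ and $\varrho = 1 - \tfrac{1}{2}(1-\alpha)(1-\vartheta)$.
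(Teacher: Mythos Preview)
Your proposal is correct. For the second fundamental form it is exactly the paper's argument. For the mean curvature you take a different computational route: the paper uses the evolution equation (\ref{eq v}) to rewrite
\[
H_{\Pi_s}\sqrt{1+(\partial_y v)^2}\;=\;\partial_s v+\tfrac{y}{2}\,\partial_y v-\tfrac{1}{2}v,
\]
and then evaluates the \emph{linear first-order} operator $(\partial_s+\tfrac{y}{2}\partial_y-\tfrac12)$ directly on the two reference profiles $\frac{k}{c_2}e^{-\lambda_2 s}\varphi_2$ and $e^{-\sigma s}\psi_k(e^{\sigma s}y)$, adding the error via (\ref{C^infty v bound intermediate})--(\ref{C^infty v bound tip}). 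You instead keep the spatial second-order expression for $H$, exploit $H[\tilde v]\equiv 0$ from the minimal equation for $\psi_k$, and linearize. Both reach the same bound; the paper's route bypasses the bookkeeping of linearizing a quasilinear operator (precisely the ``uniform bookkeeping'' you flag as the main obstacle), while your route makes the cancellation coming from the minimality of $\mathcal{M}_k$ more transparent and uses only spatial derivative bounds on $\eta$. The paper also does not isolate the sliver $[\beta e^{-\sigma s},\tfrac{3}{2}\beta e^{-\sigma s}]$: it simply applies (\ref{C^infty v bound tip}) down to $\beta e^{-\sigma s}$ with a minor abuse of the stated range.
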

\begin{proof}
In the intermediate region, we rescale Vel$\acute{a}$zquez's solution
by 
\[
\Pi_{s}=\left.\frac{1}{\sqrt{-t}}\Sigma_{t}\right|_{t=-e^{-s}}
\]
which can be parametrized by (\ref{v}). The second fundamental form
and the mean curvature of $\Pi_{s}$ are then given by 
\[
A_{\Pi_{s}}=\frac{1}{\sqrt{1+\left(\partial_{y}v\right)^{2}}}\left(\begin{array}{ccc}
\frac{\partial_{yy}^{2}v}{1+\left(\partial_{y}v\right)^{2}}\\
 & \frac{1+\partial_{y}v}{y-v}\,I_{n-1}\\
 &  & \frac{-1+\partial_{y}v}{y+v}\,I_{n-1}
\end{array}\right)
\]
 
\[
H_{\Pi_{s}}=\frac{1}{\sqrt{1+\left(\partial_{y}v\right)^{2}}}\left(\frac{\partial_{yy}^{2}v}{1+\left(\partial_{y}v\right)^{2}}+2\left(n-1\right)\frac{y\,\partial_{y}v+v}{y^{2}-v^{2}}\right)
\]
\[
=\frac{1}{\sqrt{1+\left(\partial_{y}v\right)^{2}}}\left(\partial_{s}v-\frac{1}{2}\left(-y\,\partial_{y}v+v\right)\right)
\]
By (\ref{uv}) and (\ref{smaller a priori bound u}), we have
\[
\left\{ \begin{array}{c}
\max\left\{ \left|\frac{v\left(y,\,t\right)}{y}\right|,\,\left|\partial_{y}v\left(y,\,s\right)\right|\right\} \,\leq\,C\left(n\right)e^{-\lambda_{2}s}y^{\alpha-1}\,\leq\,\frac{1}{3}\\
\\
\left|\partial_{yy}^{2}v\left(y,\,s\right)\right|\,\leq\,C\left(n\right)\left(e^{-\lambda_{2}s}y^{\alpha-1}\right)y^{-1}\,\leq\,C\left(n\right)e^{\sigma s}
\end{array}\right.
\]
for $\beta e^{-\sigma s}\leq y\leq1$, $-\ln\left(-\frac{1}{2}t_{0}\right)\leq s<\infty$.
Thus, we get 
\[
\left|A_{\Pi_{s}}\right|\,\leq\,C\left(n\right)e^{\sigma s}
\]
in the intermediate region for $-\ln\left(-\frac{1}{2}t_{0}\right)\leq s<\infty$. 

As for the mean curvature, notice that
\[
v\left(y,\,s\right)\approx\left\{ \begin{array}{c}
\frac{k}{c_{2}}\,e^{-\lambda_{2}s}\,\varphi_{2}\left(y\right)\qquad\textrm{for}\;\,e^{-\vartheta\sigma s}\leq y\leq1\\
\\
e^{-\sigma s}\,\psi_{k}\left(e^{\sigma s}y\right)\qquad\textrm{for}\;\,\beta e^{-\sigma s}\leq y\leq e^{-\vartheta\sigma s}
\end{array}\right.
\]
We then compute
\[
\left(\partial_{s}+\frac{y}{2}\,\partial_{y}-\frac{1}{2}\right)\left(\frac{k}{c_{2}}e^{-\lambda_{2}s}\varphi_{2}\left(y\right)\right)
\]
\[
=\left(\partial_{s}+\frac{y}{2}\,\partial_{y}-\frac{1}{2}\right)\left(ke^{-\lambda_{2}s}y^{\alpha}\left(1+2\varUpsilon_{1}y^{2}+\varUpsilon_{2}y^{4}\right)\right)
\]
\[
=-2ke^{-\lambda_{2}s}y^{\alpha}\left(1+\varUpsilon_{1}y^{2}\right)
\]
and 
\[
\left(\partial_{s}+\frac{y}{2}\,\partial_{y}-\frac{1}{2}\right)\left(e^{-\sigma s}\,\psi_{k}\left(e^{\sigma s}y\right)\right)
\]
\[
=-\left.\left(\frac{1}{2}+\sigma\right)e^{-\sigma s}\left(\psi_{k}\left(z\right)-z\,\partial_{z}\psi_{k}\left(z\right)\right)\right|_{z=e^{\sigma s}y}
\]
\[
=-\left(\frac{1}{2}+\sigma\right)e^{-\sigma s}\left(\left(1-\alpha\right)k\left(e^{\sigma s}y\right)^{\alpha}+O\left(\left(e^{\sigma s}y\right)^{3\alpha-2}\right)\right)
\]
\[
=-2ke^{-\lambda_{2}s}y^{\alpha}\left(1+O\left(\left(e^{\sigma s}y\right)^{-2\left(1-\alpha\right)}\right)\right)
\]
It follows, by (\ref{C^infty v bound intermediate}) and (\ref{C^infty v bound tip}),
that 
\[
\left|\left(\partial_{s}+\frac{y}{2}\,\partial_{y}-\frac{1}{2}\right)v\left(y,\,s\right)\right|
\]
\[
\leq\left|\left(\partial_{s}+\frac{y}{2}\,\partial_{y}-\frac{1}{2}\right)\left(\frac{k}{c_{2}}e^{-\lambda_{2}s}\varphi_{2}\left(y\right)\right)\right|\,+\,C\left(n,\,t_{0}\right)e^{-\varkappa s}\left(e^{-\lambda_{2}s}y^{\alpha}\right)
\]
\[
\leq\left|-2ke^{-\lambda_{2}s}y^{\alpha}\left(1+\varUpsilon_{1}y^{2}\right)\right|\,+\,C\left(n,\,t_{0}\right)e^{-\varkappa s}\left(e^{-\lambda_{2}s}y^{\alpha}\right)
\]
\[
\leq C\left(n,\,t_{0}\right)e^{-\lambda_{2}s}y^{\alpha}\,\leq\,C\left(n,\,t_{0}\right)
\]
for $e^{-\vartheta\sigma s}\leq y\leq1$, $-\ln\left(-\frac{1}{2}t_{0}\right)\leq s<\infty$,
and
\[
\left|\left(\partial_{s}+\frac{y}{2}\,\partial_{y}-\frac{1}{2}\right)v\left(y,\,s\right)\right|
\]
\[
\leq\left|\left(\partial_{s}+\frac{y}{2}\,\partial_{y}-\frac{1}{2}\right)\left(e^{-\sigma s}\,\psi_{k}\left(e^{\sigma s}y\right)\right)\right|\,+\,C\left(n,\,t_{0}\right)e^{-2\varrho\sigma s}\left(e^{-\lambda_{2}s}y^{\alpha-2}\right)
\]
\[
\leq\left|-2ke^{-\lambda_{2}s}y^{\alpha}\left(1+O\left(\left(e^{\sigma s}y\right)^{-2\left(1-\alpha\right)}\right)\right)\right|\,+\,C\left(n,\,t_{0}\right)\left(e^{-\lambda_{2}s}y^{\alpha-1}\right)\left(e^{-2\varrho\sigma s}y^{-1}\right)
\]
\[
\leq C\left(n,\,t_{0}\right)e^{-\lambda_{2}s}y^{\alpha-1}\left(y\,+\,e^{-2\varrho\sigma s}y^{-1}\right)\,\leq\,C\left(n,\,t_{0}\right)e^{\left(1-2\varrho\right)\sigma s}
\]
for $\beta e^{-\sigma s}\leq y\leq e^{-\vartheta\sigma s}$, $-\ln\left(-\frac{1}{2}t_{0}\right)\leq s<\infty$.
Consequently, 
\[
\left|H_{\Pi_{s}}\right|=\frac{\left|\partial_{s}v-\frac{1}{2}\left(-y\,\partial_{y}v+v\right)\right|}{\sqrt{1+\left|\partial_{y}v\right|^{2}}}\,\leq\,C\left(n,\,t_{0}\right)e^{\left(1-2\varrho\right)\sigma s}
\]
Lastly, by the relation 
\[
A_{\Pi_{s}}\left(y\right)=\left.\sqrt{-t}\,\,A_{\Sigma_{t}}\left(\sqrt{-t}\,y\right)\right|_{t=-e^{-s}}
\]
 
\[
H_{\Pi_{s}}\left(y\right)=\left.\sqrt{-t}\,\,H_{\Sigma_{t}}\left(\sqrt{-t}\,y\right)\right|_{t=-e^{-s}}
\]
the conclusion follow easily.
\end{proof}
In the tip region, we do the type $\mathrm{II}$ rescaling and study
the rescaled hypersurface by (\ref{w'}), (\ref{C^2 w' bound}) and
(\ref{C^infty w' bound}). Then we undo the rescaling to get estimates
of the solution. 
\begin{prop}
\label{blow-up rate tip}Let $\left\{ \Sigma_{t}\right\} _{t_{0}\leq t<0}$
be Vel$\acute{a}$zquez's solution in Theorem \ref{Velazquez}. In
the $\mathbf{tip}$ $\mathbf{region}$, the second fundamental form
and the mean curvature of $\Sigma_{t}$ satisfy 
\[
\frac{1}{C\left(n\right)}\,\leq\,\left(-t\right)^{\frac{1}{2}+\sigma}\left|A_{\Sigma_{t}}\right|\,\leq\,C\left(n\right)
\]
 
\[
\left(-t\right)^{\frac{1}{2}+\left(1-2\varrho\right)\sigma}\left|H_{\Sigma_{t}}\right|\,\leq\,C\left(n,\,t_{0}\right)
\]
for $\frac{1}{2}t_{0}\leq t<0$, where $0<\sigma<\frac{1}{2}$ and
$0<\varrho<1$ are constants defined in (\ref{sigma}) and (\ref{varrho}),
respectively.
\end{prop}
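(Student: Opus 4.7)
The plan is to carry out the entire analysis on the type $\mathrm{II}$ rescaled hypersurface $\Gamma_\tau$ parametrized as in (\ref{w'}), and only at the end transfer bounds back to $\Sigma_t$ via the homogeneity $A_{\Sigma_t}(x) = (-t)^{-\frac{1}{2}-\sigma} A_{\Gamma_\tau}(z)$, $H_{\Sigma_t}(x) = (-t)^{-\frac{1}{2}-\sigma} H_{\Gamma_\tau}(z)$ with $z = (-t)^{-\frac{1}{2}-\sigma} x$. Thus it suffices to prove $\frac{1}{C(n)} \leq |A_{\Gamma_\tau}(O)|$, $|A_{\Gamma_\tau}|\leq C(n)$ on $\Gamma_\tau\cap B(O;\beta)$, together with $|H_{\Gamma_\tau}|\leq C(n,t_0)\,\tau^{-\varrho}$ there; the power of $-t$ in the $H$-estimate then comes from $(-t)^{2\sigma} \sim \tau^{-1}$.

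First I would write down the principal curvatures of $\Gamma_\tau$ directly from (\ref{w'}): one gets $\kappa_1 = \partial_{zz}^2\hat{w}/(1+(\partial_z\hat{w})^2)^{3/2}$ along the profile, $\kappa_2 = -\partial_z\hat{w}/(z\sqrt{1+(\partial_z\hat{w})^2})$ with multiplicity $n-1$ in the $\nu$-directions, and $\kappa_3 = 1/(\hat{w}\sqrt{1+(\partial_z\hat{w})^2})$ with multiplicity $n-1$ in the $\omega$-directions. Summing these and substituting (\ref{eq w'}) yields
\[
\sqrt{1+(\partial_z\hat{w})^2}\,H_{\Gamma_\tau}\;=\;\partial_\tau\hat{w}\;-\;\frac{\frac{1}{2}+\sigma}{2\sigma\tau}\bigl(-z\,\partial_z\hat{w}+\hat{w}\bigr),
\]
which is the analogue of the identity used for $H_{\Pi_s}$ in the proof of Proposition \ref{blow-up rate intermediate}.

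The upper bound on $|A_{\Gamma_\tau}|$ then falls out of (\ref{C^2 w' bound}): $\partial_z\hat{w}$ and $\partial_{zz}^2\hat{w}$ are uniformly bounded, $|\partial_z\hat{w}/z|\leq \sup|\partial_{zz}^2\hat{w}|$ since $\partial_z\hat{w}(0,\tau)=0$, and $\hat{w}(z,\tau) \geq \hat{\psi}_{1-2\beta^{\alpha-3}}(0)\geq \hat{\psi}_{1/2}(0) > 0$ by Lemma \ref{monotonicity}, so $1/\hat{w}$ is bounded. For the lower bound I would evaluate at the tip $z=0$: the $\kappa_3$-block alone contributes $|A_{\Gamma_\tau}(O)|^2\geq (n-1)/\hat{w}(0,\tau)^2 \geq (n-1)/\hat{\psi}_{1+2\beta^{\alpha-3}}(0)^2 \geq 1/C(n)^2$, again using (\ref{C^2 w' bound}) together with Lemma \ref{monotonicity} to bound $\hat{\psi}_{1+2\beta^{\alpha-3}}(0)$ above by $\hat{\psi}_2(0) \leq C(n)$.

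For the mean curvature, the key observation is that $\hat{\psi}_k$ is minimal and independent of $\tau$, so $\partial_\tau\hat{w}=\partial_\tau(\hat{w}-\hat{\psi}_k)$; applying (\ref{C^infty w' bound}) with $(m,l)=(0,1)$ yields $|\partial_\tau\hat{w}|\leq C(n,t_0)\,\beta^{\alpha-3}(\tau/\tau_0)^{-\varrho}$ on $[0,2\beta]\times[\tau_0+1,\mathring{\tau}]$. The remaining rescaling term $\tfrac{1/2+\sigma}{2\sigma\tau}(-z\partial_z\hat{w}+\hat{w})$ is $O(1/\tau)$ on the same set because $z$, $\hat{w}$, $\partial_z\hat{w}$ are all bounded there, and since $\varrho\in(0,1)$ the $1/\tau$ term is absorbed into $\tau^{-\varrho}$. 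Hence $|H_{\Gamma_\tau}|\leq C(n,t_0)\,\tau^{-\varrho}$, and undoing the rescaling gives the stated $(-t)^{\frac{1}{2}+(1-2\varrho)\sigma}|H_{\Sigma_t}|\leq C(n,t_0)$. The only real subtlety I anticipate is keeping the $A$-bounds $n$-only (and not $\beta$-dependent); this is delivered by Lemma \ref{monotonicity}, which turns the $\beta$-dependent envelopes in (\ref{C^2 w' bound}) into $n$-only bounds on $\hat{w}(\cdot,\tau)$ away from $0$ and from $\infty$.
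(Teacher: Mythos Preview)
Your approach is essentially the same as the paper's: rescale to $\Gamma_\tau$, read off the principal curvatures from the parametrization (\ref{w'}), use (\ref{C^2 w' bound}) for the $|A|$ bounds, and use (\ref{eq w'}) together with (\ref{C^infty w' bound}) to see that $H_{\Gamma_\tau}=O(\tau^{-\varrho})$ before undoing the rescaling. One small point: you only establish the lower bound $|A_{\Gamma_\tau}|\geq 1/C(n)$ at the tip $z=0$, whereas the proposition is a pointwise statement on the whole tip region. The paper gets the lower bound everywhere from the same $\kappa_3$-block, using that $\hat w(z,\tau)\leq \hat\psi_{1+2\beta^{\alpha-3}}(z)\leq \hat\psi_2(\beta)$ and $|\partial_z\hat w|\leq 2$ on $[0,\beta]$; the resulting constant depends on $\beta$, but since $\beta$ is ultimately fixed in terms of $n$ (via $\Lambda=\Lambda(n)$), this is still $C(n)$. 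Lemma~\ref{monotonicity} alone does not remove the $\beta$-dependence for $z$ near $\beta$---it is the fact that $\beta=\beta(n)$ in the construction that does.
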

\begin{proof}
In the tip region, we first rescale\textbf{ }Vel$\acute{a}$zquez's
solution by 
\[
\Gamma_{\tau}=\left.\frac{1}{\left(-t\right)^{\frac{1}{2}+\sigma}}\Sigma_{t}\right|_{t=-\left(2\sigma\tau\right)^{\frac{-1}{2\sigma}}}
\]
which can be parametrized by (\ref{w'}). Then the second fundamental
form and the mean curvature of $\Gamma_{\tau}$ are given by

\[
A_{\Gamma_{\tau}}=\frac{1}{\sqrt{1+\left|\partial_{z}\hat{w}\right|^{2}}}\left(\begin{array}{ccc}
\frac{\partial_{zz}^{2}\hat{w}}{1+\left|\partial_{z}\hat{w}\right|^{2}}\\
 & \frac{\partial_{z}\hat{w}}{z}\,I_{n-1}\\
 &  & \frac{-1}{\hat{w}}\,I_{n-1}
\end{array}\right)
\]
 
\[
H_{\Gamma_{\tau}}=\frac{1}{\sqrt{1+\left(\partial_{z}\hat{w}\right)^{2}}}\left(\frac{\partial_{zz}^{2}\hat{w}}{1+\left(\partial_{z}\hat{w}\right)^{2}}+\left(n-1\right)\left(\frac{\partial_{z}\hat{w}}{z}-\frac{1}{\hat{w}}\right)\right)
\]
\[
=\frac{1}{\sqrt{1+\left(\partial_{z}\hat{w}\right)^{2}}}\left(\partial_{\tau}\hat{w}-\frac{\frac{1}{2}+\sigma}{2\sigma\tau}\left(-z\,\partial_{z}\hat{w}+\hat{w}\right)\right)
\]
By (\ref{C^2 w' bound}), we have 
\[
\frac{1}{C\left(n\right)}\,\leq\,\hat{w}\left(z,\,\tau\right)\,\leq\,C\left(n\right)
\]
\[
\left|\partial_{z}\hat{w}\left(z,\,\tau\right)\right|\,+\,\left|\partial_{zz}^{2}\hat{w}\left(z,\,\tau\right)\right|\,\leq\,C\left(n\right)
\]
for $0\leq z\leq\beta$, $\frac{1}{2\sigma}\left(-\frac{1}{2}t_{0}\right)^{-2\sigma}\leq\tau<\infty$.
Thus, we get 
\[
\frac{1}{C\left(n\right)}\,\leq\,\left|A_{\Gamma_{\tau}}\right|\leq\,C\left(n\right)
\]

As for the mean curvature, note, from (\ref{asymptotic psi'}), that
\[
\left|\left(\partial_{\tau}+\frac{\frac{1}{2}+\sigma}{2\sigma\tau}z\,\partial_{z}-\frac{\frac{1}{2}+\sigma}{2\sigma\tau}\right)\hat{\psi}_{k}\left(z\right)\right|\,=\,\left|-\frac{\frac{1}{2}+\sigma}{2\sigma\tau}\left(\hat{\psi}_{k}\left(z\right)-z\,\partial_{z}\hat{\psi}_{k}\left(z\right)\right)\right|\,\leq\,\frac{C\left(n\right)}{2\sigma\tau}
\]
By (\ref{C^infty w' bound}), we get 
\[
\left|\left(\partial_{\tau}+\frac{\frac{1}{2}+\sigma}{2\sigma\tau}z\,\partial_{z}-\frac{\frac{1}{2}+\sigma}{2\sigma\tau}\right)\hat{w}\left(z,\,\tau\right)\right|
\]
\[
\leq\left|\left(\partial_{\tau}+\frac{\frac{1}{2}+\sigma}{2\sigma\tau}z\,\partial_{z}-\frac{\frac{1}{2}+\sigma}{2\sigma\tau}\right)\hat{\psi}_{k}\left(z\right)\right|\,+\,C\left(n,\,t_{0}\right)\left(2\sigma\tau\right)^{-\varrho}
\]
\[
\leq C\left(n,\,t_{0}\right)\left(2\sigma\tau\right)^{-\varrho}
\]
Thus, 
\[
\left|H_{\Gamma_{\tau}}\right|=\frac{\left|\partial_{\tau}\hat{w}-\frac{\frac{1}{2}+\sigma}{2\sigma\tau}\left(-z\,\partial_{z}\hat{w}+\hat{w}\right)\right|}{\sqrt{1+\left(\partial_{z}\hat{w}\right)^{2}}}\,\leq\,C\left(n,\,t_{0}\right)\left(2\sigma\tau\right)^{-\varrho}
\]
The conclusion follows by noting that 
\[
A_{\Gamma_{\tau}}\left(z\right)=\left.\left(-t\right)^{\frac{1}{2}+\sigma}\,A_{\Sigma_{t}}\left(\left(-t\right)^{\frac{1}{2}+\sigma}z\right)\right|_{t=-\left(2\sigma\tau\right)^{\frac{-1}{2\sigma}}}
\]
 
\[
H_{\Gamma_{\tau}}\left(z\right)=\left.\left(-t\right)^{\frac{1}{2}+\sigma}\,H_{\Sigma_{t}}\left(\left(-t\right)^{\frac{1}{2}+\sigma}z\right)\right|_{t=-\left(2\sigma\tau\right)^{\frac{-1}{2\sigma}}}
\]
\end{proof}
Lastly, we would like to show that the mean curvature blows up in
the tip region at a rate at least $\frac{1}{\left(-t\right)^{\frac{1}{2}-\sigma}}$
as $t\nearrow0$.
\begin{prop}
\label{blow-up mean curvature}Let $\left\{ \Sigma_{t}\right\} _{t_{0}\leq t<0}$
be Vel$\acute{a}$zquez's solution in Theorem \ref{Velazquez}. Let
$H_{\Sigma_{t}}\left(x\right)$ be the mean curvature of $\Sigma_{t}$
at 
\[
X_{t}\left(x,\,\nu,\,\omega\right)=\left(x\,\nu,\,\hat{u}\left(x,\,t\right)\omega\right)
\]
(see (\ref{u'})). Then for any $z\geq0$, there holds 
\[
\limsup_{t\nearrow0}\,\,\left(-t\right)^{\frac{1}{2}-\sigma}\left|H_{\Sigma_{t}}\left(\left(-t\right)^{\frac{1}{2}+\sigma}z\right)\right|>0
\]
\end{prop}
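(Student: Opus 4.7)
The plan is to rephrase the statement in terms of the type $\mathrm{II}$ rescaled hypersurfaces and then exploit the time-derivative term in (\ref{eq w'}). Under the rescaling (\ref{Gamma}), a direct unfolding of the scaling factors gives
\[
(-t)^{\frac{1}{2}-\sigma}\,H_{\Sigma_{t}}\bigl((-t)^{\frac{1}{2}+\sigma}z\bigr)\;=\;(-t)^{-2\sigma}\,H_{\Gamma_{\tau}}(z)\;=\;2\sigma\tau\,H_{\Gamma_{\tau}}(z),
\]
so the goal reduces to proving $\limsup_{\tau\nearrow\infty}\tau\,|H_{\Gamma_{\tau}}(z)|>0$ for each fixed $z\geq 0$.

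Next I would combine the mean curvature formula from the proof of Proposition \ref{blow-up rate tip} with the evolution equation (\ref{eq w'}) to obtain
\[
2\sigma\tau\,\sqrt{1+(\partial_{z}\hat{w})^{2}}\;H_{\Gamma_{\tau}}(z)\;=\;2\sigma\tau\,\partial_{\tau}\hat{w}(z,\tau)\;-\;\Bigl(\frac{1}{2}+\sigma\Bigr)\bigl(\hat{w}(z,\tau)-z\,\partial_{z}\hat{w}(z,\tau)\bigr).
\]
By the smooth convergence $\hat{w}(\cdot,\tau)\to\hat{\psi}_{k}(\cdot)$ furnished by Theorem \ref{Velazquez} and Remark \ref{remark on convergence}, the last term tends to $-\bigl(\frac{1}{2}+\sigma\bigr)\bigl(\hat{\psi}_{k}(z)-z\,\partial_{z}\hat{\psi}_{k}(z)\bigr)$, which is \emph{strictly negative} by the positivity (\ref{positivity}). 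The remaining issue is to rule out the possibility that $2\sigma\tau\,\partial_{\tau}\hat{w}(z,\tau)$ cancels this limit along every sequence $\tau\nearrow\infty$.

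This is where an L'H$\hat{o}$pital-type subsequence argument enters, and it is the only nontrivial step. Setting $G(s)=\hat{w}(z,e^{s})$, one checks $G'(s)=\tau\,\partial_{\tau}\hat{w}(z,\tau)\big|_{\tau=e^{s}}$; moreover $G$ is bounded on $[s_{0},\infty)$ because $\hat{w}(z,\tau)\to\hat{\psi}_{k}(z)\in\mathbb{R}$. If $\liminf_{s\to\infty}G'(s)>0$ then $G(s)\to+\infty$, and if $\limsup_{s\to\infty}G'(s)<0$ then $G(s)\to-\infty$, each contradicting boundedness. Hence $\liminf G'\leq 0\leq\limsup G'$, and together with the continuity of $G'$ (and the intermediate value theorem when the two one-sided limits differ) one extracts a sequence $s_{j}\nearrow\infty$ with $G'(s_{j})\to 0$, that is, $\tau_{j}\,\partial_{\tau}\hat{w}(z,\tau_{j})\to 0$ with $\tau_{j}=e^{s_{j}}\nearrow\infty$.

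Passing the identity above to the limit along $\tau_{j}$, and using $\partial_{z}\hat{w}(z,\tau_{j})\to\partial_{z}\hat{\psi}_{k}(z)$ from the smooth convergence, yields
\[
\lim_{j\to\infty}2\sigma\tau_{j}\,|H_{\Gamma_{\tau_{j}}}(z)|\;=\;\frac{\bigl(\frac{1}{2}+\sigma\bigr)\bigl(\hat{\psi}_{k}(z)-z\,\partial_{z}\hat{\psi}_{k}(z)\bigr)}{\sqrt{1+(\partial_{z}\hat{\psi}_{k}(z))^{2}}}\;>\;0,
\]
which is stronger than what is claimed. The whole argument rests only on the smooth convergence $\Gamma_{\tau}\to\mathcal{M}_{k}$ from Theorem \ref{Velazquez} and the pointwise positivity $\hat{\psi}_{k}(z)>z\,\partial_{z}\hat{\psi}_{k}(z)$, so no additional delicate estimates are required.
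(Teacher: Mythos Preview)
Your argument is correct and follows essentially the same route as the paper: both proofs express the rescaled mean curvature as $2\sigma\tau\,\partial_{\tau}\hat w-(\tfrac12+\sigma)(\hat w-z\,\partial_{z}\hat w)$ (up to the factor $\sqrt{1+(\partial_{z}\hat w)^{2}}$), invoke the smooth convergence $\hat w\to\hat\psi_{k}$ from Theorem~\ref{Velazquez}, and use the positivity~(\ref{positivity}) together with an L'H\^{o}pital--type step to handle the time-derivative term. The only cosmetic difference is that the paper works in the $t$-variable and argues by contradiction via classical L'H\^{o}pital on the quotient $\hat u\big((-t)^{\frac12+\sigma}z,t\big)/(-t)^{\frac12+\sigma}$, whereas you work in $\tau$ and extract directly a subsequence $\tau_{j}\nearrow\infty$ with $\tau_{j}\,\partial_{\tau}\hat w(z,\tau_{j})\to 0$ from the boundedness of $s\mapsto\hat w(z,e^{s})$; these are two renderings of the same idea.
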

\begin{proof}
Note that
\[
H_{\Sigma_{t}}=\frac{1}{\sqrt{1+\left(\partial_{x}\hat{u}\right)^{2}}}\left(\frac{\partial_{xx}^{2}\hat{u}}{1+\left(\partial_{x}\hat{u}\right)^{2}}+\left(n-1\right)\left(\frac{\partial_{x}\hat{u}}{x}-\frac{1}{\hat{u}}\right)\right)
\]
\begin{equation}
=\frac{\partial_{t}\hat{u}}{\sqrt{1+\left(\partial_{x}\hat{u}\right)^{2}}}\label{mean curvature}
\end{equation}
We claim that for any $z\geq0$, there holds
\begin{equation}
\limsup_{t\nearrow0}\frac{\left|\partial_{t}\hat{u}\left(\left(-t\right)^{\frac{1}{2}+\sigma}z,\,t\right)\right|}{\left(-t\right)^{-\frac{1}{2}+\sigma}}>0\label{claim}
\end{equation}
The conclusion follows immediately from (\ref{C^2 w' bound}), (\ref{mean curvature})
and (\ref{claim}).

To prove (\ref{claim}), we use a contradiction argument. Suppose
that there is $z\geq0$ so that 
\[
\limsup_{t\nearrow0}\frac{\left|\partial_{t}\hat{u}\left(\left(-t\right)^{\frac{1}{2}+\sigma}z,\,t\right)\right|}{\left(-t\right)^{-\frac{1}{2}+\sigma}}=0
\]
then obviously,
\begin{equation}
\lim_{t\nearrow0}\frac{\left|\partial_{t}\hat{u}\left(\left(-t\right)^{\frac{1}{2}+\sigma}z,\,t\right)\right|}{\left(-t\right)^{-\frac{1}{2}+\sigma}}=0\label{hypothesis}
\end{equation}
Recall that by (\ref{C^infty w' bound}), we have 
\[
\frac{1}{\left(-t\right)^{\frac{1}{2}+\sigma}}\hat{u}\left(\left(-t\right)^{\frac{1}{2}+\sigma}z,\,t\right)\,=\,\hat{w}\left(z,\,\frac{1}{2\sigma\left(-t\right)^{2\sigma}}\right)\,\rightarrow\,\hat{\psi}_{k}\left(z\right)\qquad\textrm{as}\quad t\nearrow0
\]
It follows, by L'H$\hat{o}$pital's rule, that 
\[
\hat{\psi}_{k}\left(z\right)=\lim_{t\nearrow0}\frac{\hat{u}\left(\left(-t\right)^{\frac{1}{2}+\sigma}z,\,t\right)}{\left(-t\right)^{\frac{1}{2}+\sigma}}=\lim_{t\nearrow0}\left(\frac{\partial_{t}\hat{u}\left(\left(-t\right)^{\frac{1}{2}+\sigma}z,\,t\right)}{-\left(\frac{1}{2}+\sigma\right)\left(-t\right)^{-\frac{1}{2}+\sigma}}+z\,\partial_{z}\hat{w}\left(z,\,\frac{1}{2\sigma\left(-t\right)^{2\sigma}}\right)\right)
\]
Notice that the limit on the RHS exists because of (\ref{C^infty w' bound})
and (\ref{hypothesis}), so L'H$\hat{o}$pital's rule is applicable
here. Thus, we get
\[
\lim_{t\nearrow0}\frac{\partial_{t}\hat{u}\left(\left(-t\right)^{\frac{1}{2}+\sigma}z,\,t\right)}{-\left(\frac{1}{2}+\sigma\right)\left(-t\right)^{-\frac{1}{2}+\sigma}}\,=\,\hat{\psi}_{k}\left(z\right)-z\,\partial_{z}\hat{\psi}_{k}\left(z\right)\,>0
\]
by (\ref{positivity}), which contradicts with (\ref{hypothesis}). 
\end{proof}

\section{\label{degree C^0}$C^{0}$ estimates in Proposition \ref{degree}
and Proposition \ref{uniform estimates}}

Starting from this section, we are devoted to prove Proposition \ref{degree}
and Proposition \ref{uniform estimates}. From now on, we focus on
the estimate of the $\mathbf{admissible}$ MCF $\left\{ \Sigma_{t}^{\left(a_{0},\,a_{1}\right)}\right\} _{t_{0}\leq t\leq\mathring{t}}$
for which 
\begin{equation}
\Phi_{t_{1}}\left(a_{0},\,a_{1}\right)=\left(0,\,0\right)\label{backward condition}
\end{equation}
where $t_{0}\leq t_{1}\leq\mathring{t}<0$ are constants and $\mathring{t}\leq$$e^{-1}t_{1}$.
In this section, we would show that if $0<\rho\ll1\ll\beta$ (depending
on $n$, $\Lambda$) and $\left|t_{0}\right|\ll1$ (depending on $n$,
$\Lambda$, $\rho$, $\beta$) , there holds 
\begin{equation}
\sqrt{a_{0}^{2}+a_{1}^{2}}\,\leq\,C\left(n,\,\Lambda,\,\rho,\,\beta\right)\left(-t_{0}\right)^{\varsigma\lambda_{2}}\label{coefficients}
\end{equation}
where $\varsigma>0$ is a constant defined in (\ref{varsigma}). Moreover,
there is 
\begin{equation}
k\in\left(1-C\left(n,\,\Lambda,\,\rho,\,\beta\right)\left(-t_{0}\right)^{\varsigma\lambda_{2}},\,\,1+C\left(n,\,\Lambda,\,\rho,\,\beta\right)\left(-t_{0}\right)^{\varsigma\lambda_{2}}\right)\label{k}
\end{equation}
so that the following hold.
\begin{enumerate}
\item In the $\mathbf{outer}$ $\mathbf{region}$, the function $u\left(x,\,t\right)$
of $\Sigma_{t}^{\left(a_{0},\,a_{1}\right)}$ defined in (\ref{u})
satisfies 
\begin{equation}
\left|u(x,\,t)-u(x,\,t_{0})\right|\,\leq\,C\left(n\right)\sqrt{t-t_{0}}\label{C^0 outside u bound}
\end{equation}
for $x\geq\frac{1}{5}\rho$, $t_{0}\leq t\leq\mathring{t}$, and 
\begin{equation}
\left|u\left(x,\,t\right)-\frac{k}{c_{2}}\left(-t\right)^{\lambda_{2}+\frac{1}{2}}\varphi_{2}\left(\frac{x}{\sqrt{-t}}\right)\right|\,\leq\,C\left(n,\,\Lambda,\,\rho,\,\beta\right)\left(-t_{0}\right)^{\varkappa}x^{2\lambda_{2}+1}\label{C^0 u bound}
\end{equation}
for $\frac{1}{3}\sqrt{-t}\leq x\leq\rho$, $t_{0}\leq t\leq\mathring{t}$,
where $\varkappa>0$ is a constant defined in (\ref{varkappa}). Note
that
\[
\frac{k}{c_{2}}\left(-t\right)^{\lambda_{2}+\frac{1}{2}}\varphi_{2}\left(\frac{x}{\sqrt{-t}}\right)=kx^{2\lambda_{2}+1}\left(\varUpsilon_{2}+2\varUpsilon_{1}\left(\frac{-t}{x^{2}}\right)+\left(\frac{-t}{x^{2}}\right)^{2}\right)
\]
\item In the $\mathbf{intermediate}$$\mathbf{region}$, if we do the type
$\mathrm{I}$ rescaling, the function $v\left(y,\,s\right)$ of the
rescaled hypersurface $\Pi_{s}^{\left(a_{0},\,a_{1}\right)}$ defined
in (\ref{v}) satisfies
\begin{equation}
\left|v\left(y,\,s\right)-\frac{k}{c_{2}}e^{-\lambda_{2}s}\varphi_{2}\left(y\right)\right|\leq C\left(n,\,\Lambda,\,\rho,\,\beta\right)e^{-\varkappa s}\left(e^{-\lambda_{2}s}y^{\alpha+2}\right)\label{C^0 v bound intermediate}
\end{equation}
for $\frac{1}{2}e^{-\vartheta\sigma s}\leq y\leq\sqrt{\varsigma\lambda_{2}s}$,
$s_{0}\leq s\leq\mathring{s}$, and 
\begin{equation}
\left|\left(v\left(y,\,s\right)\,-\,e^{-\sigma s}\,\psi_{k}\left(e^{\sigma s}y\right)\right)\right|\,\leq\,C\left(n\right)\beta^{\alpha-3}e^{-2\varrho\sigma\left(s-s_{0}\right)}\left(e^{-\lambda_{2}s}y^{\alpha}\right)\label{C^0 v bound tip}
\end{equation}
for $\frac{4}{3}\beta e^{-\sigma s}\leq y\leq\frac{1}{2}e^{-\vartheta\sigma s}$,
$s_{0}\leq s\leq\mathring{s}$, where $\mathring{s}=-\ln\left(-\mathring{t}\right)$
and $0<\varrho<\vartheta<1$ are constants (see (\ref{vartheta})
and (\ref{varrho}) for definition). Note that 
\[
\frac{k}{c_{2}}e^{-\lambda_{2}s}\varphi_{2}\left(y\right)=ke^{-\lambda_{2}s}y^{\alpha}\left(1+2\varUpsilon_{1}y^{2}+\varUpsilon_{2}y^{4}\right)
\]
 
\[
e^{-\sigma s}\,\psi_{k}\left(e^{\sigma s}y\right)=ke^{-\lambda_{2}s}y^{\alpha}\left(1+O\left(\left(e^{\sigma s}y\right)^{-2\left(1-\alpha\right)}\right)\right)
\]
\item In the $\mathbf{tip}$ $\mathbf{region}$, if we do the type $\mathrm{II}$
rescaling, the function $\hat{w}\left(z,\,\tau\right)$ of the rescaled
hypersurface $\Gamma_{\tau}^{\left(a_{0},\,a_{1}\right)}$ defined
in (\ref{w'}) satisfies
\begin{equation}
\hat{\psi}_{\left(1-\beta^{\alpha-3}\left(\frac{\tau}{\tau_{0}}\right)^{-\varrho}\right)k}\left(z\right)\,\leq\,\hat{w}\left(z,\,\tau\right)\,\leq\,\hat{\psi}_{\left(1+\beta^{\alpha-3}\left(\frac{\tau}{\tau_{0}}\right)^{-\varrho}\right)k}\left(z\right)\label{C^0 w' bound}
\end{equation}
for $0\leq z\leq\left(2\sigma\tau\right)^{\frac{1}{2}\left(1-\vartheta\right)}$,
$\tau_{0}\leq\tau\leq\mathring{\tau}$, where $\mathring{\tau}=\frac{1}{2\sigma\left(-\mathring{t}\right)^{2\sigma}}$
.
\end{enumerate}
To achieve that, we first establish (\ref{C^0 v bound intermediate})
(see Proposition \ref{C^0 v}) by using the energy estimate and Sobolev
inequality. Next, we use the comparison principle and the boundary
values of (\ref{C^0 v bound intermediate}) to show (\ref{C^0 u bound})
(see Proposition \ref{C^0 u}) and (\ref{C^0 w' bound}) (see Proposition
\ref{C^0 w'}). Then we use (\ref{C^0 w' bound}) to deduce (\ref{C^0 v bound tip})
by rescaling and analyzing the projected curves. Lastly, we use the
gradient and curvature estimates in \cite{EH} to prove (\ref{C^0 outside u bound})
(see Proposition \ref{C^0 outside u}). The ideas of proving (\ref{C^0 u bound}),
(\ref{C^0 v bound intermediate}) and (\ref{C^0 w' bound}) are due
to Vel$\acute{a}$zquez (see \cite{V}). Here we improve his estimates
to get better results. 
\begin{rem}
\label{Lambda C^0 }By the above $C^{0}$ estimates, we deduce that
\[
-2\left(\varUpsilon_{1}^{2}-\varUpsilon_{2}\right)x^{2\lambda_{2}+1}\,\leq\,u\left(x,\,t\right)\,\leq\,2\left(1+2\varUpsilon_{1}+\varUpsilon_{2}\right)x^{2\lambda_{2}+1}
\]
for $\sqrt{-t}\leq x\leq\rho$, $t_{0}\leq t\leq\mathring{t}$, and
\[
2\left(1+2\varUpsilon_{1}+\varUpsilon_{2}\right)e^{-\lambda_{2}s}y^{\alpha}\,\leq\,v\left(y,\,s\right)\,\leq\,2e^{-\lambda_{2}s}y^{\alpha}
\]
for $\frac{4}{3}\beta e^{-\sigma s}\leq y\leq1$, $s_{0}\leq s\leq\mathring{s}$,
provided that $\beta\gg1$ (depending on $n$) and $\left|t_{0}\right|\ll1$
(depending on $n$, $\Lambda$, $\rho$, $\beta$). In Section \ref{degree Lambda},
we would use these etstimates to choose the constant $\Lambda=\Lambda\left(n\right)$. 
\end{rem}
In order to prove (\ref{C^0 v bound intermediate}), we need the following
two lemmas. The first lemma is the energy estimates for solutions
to a parabolic equation associated with the linear operator $\mathcal{L}$
(see (\ref{L})). Recall that in Proposition \ref{linear operator},
the eigenvalues of $\mathcal{L}$ satisfy $\lambda_{i}\geq\lambda_{3}>1$
for $i\geq3$.
\begin{lem}
\label{energy}Let $\boldsymbol{\mathrm{H}}_{*}$ be the closed subspace
of $\boldsymbol{\mathrm{H}}$ (see Proposition \ref{linear operator})
spanned by eigenfunctions $\left\{ \varphi_{i}\right\} _{i\geq3}$
of $\mathcal{L}$. Given 
\[
\mathsf{f}\left(\cdot,\,s\right)\in L^{2}\left(\left[s_{0},\,\mathring{s}\right];\,L^{2}\left(\mathbb{R}_{+},\,y^{2\left(n-1\right)}e^{-\frac{y^{2}}{4}}dy\right)\right)
\]
 and $\mathsf{h}\in\boldsymbol{\mathrm{H}}_{*}$, let $\mathsf{v}\left(\cdot,\,s\right)\in C\left(\left[s_{0},\,\mathring{s}\right];\,\boldsymbol{\mathrm{H}}_{*}\right)$
be the weak solution of 
\begin{equation}
\left\{ \begin{array}{c}
\left(\partial_{s}+\mathcal{L}\right)\mathsf{v}\left(\cdot,\,s\right)=\mathsf{f}\left(\cdot,\,s\right)\quad\textrm{for}\;\,s_{0}\leq s\leq\mathring{s}\\
\\
\mathsf{v}\left(\cdot,\,s_{0}\right)=\mathsf{h}
\end{array}\right.\label{Cauchy problem}
\end{equation}
Then for any $0<\delta<1$, there hold
\[
\left\Vert \mathsf{v}\left(\cdot,\,s\right)\right\Vert ^{2}
\]
\[
\leq\,e^{-2\left(1-\delta\right)\lambda_{3}\left(s-s_{0}\right)}\left\Vert \mathsf{v}\left(\cdot,\,s_{0}\right)\right\Vert ^{2}\,+\,\frac{1}{2\delta\lambda_{3}}\int_{s_{0}}^{s}e^{-2\left(1-\delta\right)\lambda_{3}\left(s-\xi\right)}\left\Vert \mathsf{f}\left(\cdot,\,\xi\right)\right\Vert ^{2}d\xi
\]
and 
\[
\left\langle \mathcal{L}\mathsf{v}\left(\cdot,\,s\right),\,\mathsf{v}\left(\cdot,\,s\right)\right\rangle 
\]
\[
\leq\,e^{-2\left(1-\delta\right)\lambda_{3}\left(s-s_{0}\right)}\left\langle \mathcal{L}\mathsf{h},\,\mathsf{h}\right\rangle \,+\,\frac{1}{2\delta}\int_{s_{0}}^{s}e^{-2\left(1-\delta\right)\lambda_{3}\left(s-\xi\right)}\left\Vert \mathsf{f}\left(\cdot,\,\xi\right)\right\Vert ^{2}d\xi
\]
for $s_{0}\leq s\leq\mathring{s}$, where the inner product $\left\langle \cdot,\cdot\right\rangle $
and the corresponding norm $\left\Vert \cdot\right\Vert $ are defined
in Proposition \ref{linear operator}.
\end{lem}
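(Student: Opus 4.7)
The plan is to derive both bounds by standard energy estimates, using spectral properties of $\mathcal{L}$ on the invariant subspace $\boldsymbol{\mathrm{H}}_{*}$. The key point is that every $\mathsf{v}\in\boldsymbol{\mathrm{H}}_{*}$, being orthogonal to $\varphi_{0},\varphi_{1},\varphi_{2}$, satisfies
\[
\langle\mathcal{L}\mathsf{v},\mathsf{v}\rangle\,\geq\,\lambda_{3}\,\|\mathsf{v}\|^{2}\qquad\text{and}\qquad\|\mathcal{L}\mathsf{v}\|^{2}\,\geq\,\lambda_{3}\,\langle\mathcal{L}\mathsf{v},\mathsf{v}\rangle,
\]
which follows immediately from the spectral decomposition $\mathsf{v}=\sum_{i\geq3}\alpha_{i}\varphi_{i}$, since $\mathcal{L}\varphi_{i}=\lambda_{i}\varphi_{i}$ with $\lambda_{i}\geq\lambda_{3}>1$. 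By approximation through Galerkin truncations in $\mathrm{span}\{\varphi_{i}\}_{3\leq i\leq N}$, it suffices to justify the calculation for smooth solutions and pass to the limit in $N$; this replaces the weak-solution regularity issue by a routine spectral approximation argument.

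For the first inequality, I would test the equation $\partial_{s}\mathsf{v}+\mathcal{L}\mathsf{v}=\mathsf{f}$ against $\mathsf{v}$ in $\langle\cdot,\cdot\rangle$ to get
\[
\tfrac{1}{2}\tfrac{d}{ds}\|\mathsf{v}\|^{2}\,+\,\langle\mathcal{L}\mathsf{v},\mathsf{v}\rangle\,=\,\langle\mathsf{f},\mathsf{v}\rangle.
\]
Applying the spectral lower bound $\langle\mathcal{L}\mathsf{v},\mathsf{v}\rangle\geq\lambda_{3}\|\mathsf{v}\|^{2}$ together with the Young-type splitting $\langle\mathsf{f},\mathsf{v}\rangle\leq\frac{1}{4\delta\lambda_{3}}\|\mathsf{f}\|^{2}+\delta\lambda_{3}\|\mathsf{v}\|^{2}$ yields
\[
\tfrac{d}{ds}\|\mathsf{v}\|^{2}\,+\,2(1-\delta)\lambda_{3}\,\|\mathsf{v}\|^{2}\,\leq\,\tfrac{1}{2\delta\lambda_{3}}\|\mathsf{f}\|^{2},
\]
after which multiplication by $e^{2(1-\delta)\lambda_{3}s}$ and integration from $s_{0}$ to $s$ (Gronwall) produces the claimed $L^{2}$ bound.

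For the second inequality, I would instead test against $\mathcal{L}\mathsf{v}$, using self-adjointness of $\mathcal{L}$ on $\boldsymbol{\mathrm{H}}_{*}$ to write
\[
\tfrac{1}{2}\tfrac{d}{ds}\langle\mathcal{L}\mathsf{v},\mathsf{v}\rangle\,+\,\|\mathcal{L}\mathsf{v}\|^{2}\,=\,\langle\mathsf{f},\mathcal{L}\mathsf{v}\rangle.
\]
Splitting via $\langle\mathsf{f},\mathcal{L}\mathsf{v}\rangle\leq\frac{1}{4\delta}\|\mathsf{f}\|^{2}+\delta\|\mathcal{L}\mathsf{v}\|^{2}$ and using the second spectral inequality $\|\mathcal{L}\mathsf{v}\|^{2}\geq\lambda_{3}\langle\mathcal{L}\mathsf{v},\mathsf{v}\rangle$ to absorb $(1-\delta)\|\mathcal{L}\mathsf{v}\|^{2}$ as $(1-\delta)\lambda_{3}\langle\mathcal{L}\mathsf{v},\mathsf{v}\rangle$, I obtain
\[
\tfrac{d}{ds}\langle\mathcal{L}\mathsf{v},\mathsf{v}\rangle\,+\,2(1-\delta)\lambda_{3}\,\langle\mathcal{L}\mathsf{v},\mathsf{v}\rangle\,\leq\,\tfrac{1}{2\delta}\|\mathsf{f}\|^{2},
\]
and Gronwall again gives the desired estimate.

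The only real obstacle is rigor of the integration-by-parts/symmetry step, since $\mathcal{L}$ is an unbounded operator and $\mathsf{v}$ is only a weak solution in $C([s_{0},\mathring{s}];\boldsymbol{\mathrm{H}}_{*})$. I would handle this by projecting onto the finite-dimensional subspaces $\boldsymbol{\mathrm{H}}_{*}^{N}=\mathrm{span}\{\varphi_{i}\}_{3\leq i\leq N}$, where the equation becomes an ODE system, performing the above computations unambiguously there (since $\partial_{s}\mathsf{v}^{N}$ and $\mathcal{L}\mathsf{v}^{N}$ lie in $\boldsymbol{\mathrm{H}}_{*}^{N}$), and then letting $N\nearrow\infty$ using the completeness of $\{\varphi_{i}\}$ in $\boldsymbol{\mathrm{H}}$ and Fatou/dominated convergence to recover both inequalities for $\mathsf{v}$.
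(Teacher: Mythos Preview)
Your proposal is correct and follows essentially the same route as the paper: Galerkin approximation on $\mathrm{span}\{\varphi_i\}_{3\le i\le N}$, the energy identity from testing against $\mathsf{v}$ for the first bound, and the identity $\tfrac12\tfrac{d}{ds}\langle\mathcal{L}\mathsf{v},\mathsf{v}\rangle+\|\mathcal{L}\mathsf{v}\|^{2}=\langle\mathsf{f},\mathcal{L}\mathsf{v}\rangle$ for the second (the paper reaches this same identity by testing against $\partial_s\mathsf{v}_m$ and substituting $\partial_s\mathsf{v}_m=-\mathcal{L}\mathsf{v}_m+\mathsf{f}_m$). The Young splittings and the spectral lower bounds $\langle\mathcal{L}\mathsf{v},\mathsf{v}\rangle\ge\lambda_3\|\mathsf{v}\|^2$ and $\|\mathcal{L}\mathsf{v}\|^2\ge\lambda_3\langle\mathcal{L}\mathsf{v},\mathsf{v}\rangle$ are exactly those used in the paper.
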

\begin{proof}
Let $\left\{ \mathsf{v}_{m}\right\} _{m\geq3}$ be the Galerkin's
approximation of $\mathsf{v}$. Namely,
\[
\mathsf{v}_{m}\left(y,\,s\right)=\sum_{i=3}^{m}\left(e^{-\lambda_{i}\left(s-s_{0}\right)}\left\langle \mathsf{h},\,\varphi_{i}\right\rangle +\int_{s_{0}}^{s}e^{-\lambda_{i}\left(s-\xi\right)}\left\langle \mathsf{f}\left(\cdot,\,\xi\right),\,\varphi_{i}\right\rangle d\xi\right)\varphi_{i}\left(y\right)
\]
Then we have 
\[
\left\{ \begin{array}{c}
\partial_{s}\mathsf{v}_{m}\left(\cdot,\,s\right)+\mathcal{L}\mathsf{v}_{m}\left(\cdot,\,s\right)=\mathsf{f}_{m}\left(\cdot,\,s\right)\quad\textrm{for}\;\,s_{0}\leq s\leq\mathring{s}\\
\\
\mathsf{v}_{m}\left(\cdot,\,s_{0}\right)=\sum_{i=3}^{m}\left\langle \mathsf{h},\,\varphi_{i}\right\rangle \varphi_{i}\,\rightarrow\,\mathsf{h}\quad\textrm{in}\;\,\boldsymbol{\mathrm{H}}_{*}
\end{array}\right.
\]
 where 
\[
\mathsf{f}_{m}\left(\cdot,\,s\right)=\sum_{i=3}^{m}\left\langle \mathsf{f}\left(\cdot,\,s\right),\,\varphi_{i}\right\rangle \varphi_{i}\,\rightarrow\,\mathsf{f}\left(\cdot,\,s\right)\quad\textrm{in}\;\,L^{2}\left(\left[s_{0},\,\mathring{s}\right];\,L^{2}\left(\mathbb{R}_{+},\,y^{2\left(n-1\right)}e^{-\frac{y^{2}}{4}}dy\right)\right)
\]
It follows that 
\[
\left\langle \partial_{s}\mathsf{v}_{m}\left(\cdot,\,s\right),\,\mathsf{v}_{m}\left(\cdot,\,s\right)\right\rangle +\left\langle \mathcal{L}\mathsf{v}_{m}\left(\cdot,\,s\right),\,\mathsf{v}_{m}\left(\cdot,\,s\right)\right\rangle =\left\langle \mathsf{f}_{m}\left(\cdot,\,s\right),\,\mathsf{v}_{m}\left(\cdot,\,s\right)\right\rangle 
\]
which, by Cauchy-Schwarz inequality, yields 
\[
\frac{1}{2}\,\partial_{s}\left\Vert \mathsf{v}_{m}\left(\cdot,\,s\right)\right\Vert ^{2}+\lambda_{3}\left\Vert \mathsf{v}_{m}\left(\cdot,\,s\right)\right\Vert ^{2}\,\leq\,\delta\lambda_{3}\left\Vert \mathsf{v}_{m}\left(\cdot,\,s\right)\right\Vert ^{2}+\frac{1}{4\delta\lambda_{3}}\left\Vert \mathsf{f}_{m}\left(\cdot,\,s\right)\right\Vert ^{2}
\]
\[
\Leftrightarrow\quad\partial_{s}\left\Vert \mathsf{v}_{m}\left(\cdot,\,s\right)\right\Vert ^{2}\,\leq\,-2\left(1-\delta\right)\lambda_{3}\left\Vert \mathsf{v}_{m}\left(\cdot,\,s\right)\right\Vert ^{2}+\frac{1}{2\delta\lambda_{3}}\left\Vert \mathsf{f}_{m}\left(\cdot,\,s\right)\right\Vert ^{2}
\]
for any $0<\delta<1$. Thus, by integrating the inquality with repect
to $s$, we get 
\begin{equation}
\left\Vert \mathsf{v}_{m}\left(\cdot,\,s\right)\right\Vert ^{2}\label{L^2 v}
\end{equation}
\[
\leq\,e^{-2\left(1-\delta\right)\lambda_{3}\left(s-s_{0}\right)}\left\Vert \mathsf{v}_{m}\left(\cdot,\,s_{0}\right)\right\Vert ^{2}+\frac{1}{2\delta\lambda_{3}}\int_{s_{0}}^{s}e^{-2\left(1-\delta\right)\lambda_{3}\left(s-\xi\right)}\left\Vert \mathsf{f}_{m}\left(\cdot,\,\xi\right)\right\Vert ^{2}d\xi
\]
for $s_{0}\leq s\leq\mathring{s}$. 

Similarly, we have 
\[
\left\langle \partial_{s}\mathsf{v}_{m}\left(\cdot,\,s\right),\,\partial_{s}\mathsf{v}_{m}\left(\cdot,\,s\right)\right\rangle +\left\langle \mathcal{L}\mathsf{v}_{m}\left(\cdot,\,s\right),\,\partial_{s}\mathsf{v}_{m}\left(\cdot,\,s\right)\right\rangle =\left\langle \mathsf{f}_{m}\left(\cdot,\,s\right),\,\partial_{s}\mathsf{v}_{m}\left(\cdot,\,s\right)\right\rangle 
\]
Substitute $\partial_{s}\mathsf{v}_{m}\left(\cdot,\,s\right)=-\mathcal{L}\mathsf{v}_{m}\left(\cdot,\,s\right)+\mathsf{f}_{m}\left(\cdot,\,s\right)$
into the above equation to get 
\[
\frac{1}{2}\,\partial_{s}\left\langle \mathcal{L}\mathsf{v}_{m}\left(\cdot,\,s\right),\,\mathsf{v}_{m}\left(\cdot,\,s\right)\right\rangle =-\left\langle \mathcal{L}\mathsf{v}_{m}\left(\cdot,\,s\right),\,\mathcal{L}\mathsf{v}_{m}\left(\cdot,\,s\right)\right\rangle +\left\langle \mathcal{L}\mathsf{v}_{m}\left(\cdot,\,s\right),\,\mathsf{f}_{m}\left(\cdot,\,s\right)\right\rangle 
\]
By Cauchy-Schwarz inequality, we get 
\[
\partial_{s}\left\langle \mathcal{L}\mathsf{v}_{m}\left(\cdot,\,s\right),\,\mathsf{v}_{m}\left(\cdot,\,s\right)\right\rangle 
\]
\[
\leq-2\left(1-\delta\right)\left\langle \mathcal{L}\mathsf{v}_{m}\left(\cdot,\,s\right),\,\mathcal{L}\mathsf{v}_{m}\left(\cdot,\,s\right)\right\rangle +\frac{1}{2\delta}\left\Vert \mathsf{f}_{m}\left(\cdot,\,s\right)\right\Vert ^{2}
\]
\[
\leq-2\left(1-\delta\right)\lambda_{3}\left\langle \mathcal{L}\mathsf{v}_{m}\left(\cdot,\,s\right),\,\mathsf{v}_{m}\left(\cdot,\,s\right)\right\rangle +\frac{1}{2\delta}\left\Vert \mathsf{f}_{m}\left(\cdot,\,s\right)\right\Vert ^{2}
\]
for any $0<\delta<1$. Thus, we have
\begin{equation}
\left\langle \mathcal{L}\mathsf{v}_{m}\left(\cdot,\,s\right),\,\mathsf{v}_{m}\left(\cdot,\,s\right)\right\rangle \label{H^1 v}
\end{equation}
\[
\leq e^{-2\left(1-\delta\right)\lambda_{3}\left(s-s_{0}\right)}\left\langle \mathcal{L}\mathsf{v}_{m}\left(\cdot,\,s_{0}\right),\,\mathsf{v}_{m}\left(\cdot,\,s_{0}\right)\right\rangle \,+\,\frac{1}{2\delta}\int_{s_{0}}^{s}e^{-2\left(1-\delta\right)\lambda_{3}\left(s-\xi\right)}\left\Vert \mathsf{f}_{m}\left(\cdot,\,\xi\right)\right\Vert ^{2}d\xi
\]
for $s_{0}\leq s\leq\mathring{s}$. 

On the other hand, for any $m,\,l\geq3$, there holds 
\[
\partial_{s}\left(\mathsf{v}_{m}\left(\cdot,\,s\right)-\mathsf{v}_{l}\left(\cdot,\,s\right)\right)+\mathcal{L}\left(\mathsf{v}_{m}\left(\cdot,\,s\right)-\mathsf{v}_{l}\left(\cdot,\,s\right)\right)=\mathsf{f}_{m}\left(\cdot,\,s\right)-\mathsf{f}_{l}\left(\cdot,\,s\right)
\]
By the same arguments as above, for any $0<\delta<1$, we can deduce
that 
\begin{equation}
\left\Vert \mathsf{v}_{m}\left(\cdot,\,s\right)-\mathsf{v}_{l}\left(\cdot,\,s\right)\right\Vert ^{2}\label{L^2 Cauchy}
\end{equation}
 
\[
\leq\,e^{-2\left(1-\delta\right)\lambda_{3}\left(s-s_{0}\right)}\left\Vert \mathsf{v}_{m}\left(\cdot,\,s_{0}\right)-\mathsf{v}_{l}\left(\cdot,\,s_{0}\right)\right\Vert ^{2}
\]
\[
+\,\frac{1}{2\delta\lambda_{3}}\int_{s_{0}}^{s}e^{-2\left(1-\delta\right)\lambda_{3}\left(s-\xi\right)}\left\Vert \mathsf{f}_{m}\left(\cdot,\,\xi\right)-\mathsf{f}_{l}\left(\cdot,\,\xi\right)\right\Vert ^{2}d\xi
\]
 and 
\begin{equation}
\left\langle \mathcal{L}\left(\mathsf{v}_{m}\left(\cdot,\,s\right)-\mathsf{v}_{l}\left(\cdot,\,s\right)\right),\,\left(\mathsf{v}_{m}\left(\cdot,\,s\right)-\mathsf{v}_{l}\left(\cdot,\,s\right)\right)\right\rangle \label{H^1 Cauchy}
\end{equation}
 
\[
\leq\,e^{-2\left(1-\delta\right)\lambda_{3}\left(s-s_{0}\right)}\left\langle \mathcal{L}\left(\mathsf{v}_{m}\left(\cdot,\,s_{0}\right)-\mathsf{v}_{l}\left(\cdot,\,s_{0}\right)\right),\,\mathsf{v}_{m}\left(\cdot,\,s_{0}\right)-\mathsf{v}_{l}\left(\cdot,\,s_{0}\right)\right\rangle 
\]
\[
+\,\frac{1}{2\delta}\,\int_{s_{0}}^{s}e^{-2\left(1-\delta\right)\lambda_{3}\left(s-\xi\right)}\left\Vert \mathsf{f}_{m}\left(\cdot,\,\xi\right)-\mathsf{f}_{l}\left(\cdot,\,\xi\right)\right\Vert ^{2}d\xi
\]
for $s_{0}\leq s\leq\mathring{s}$. Therefore, by (\ref{coercivity}),
(\ref{L^2 Cauchy}), (\ref{H^1 Cauchy}) and the uniqueness of weak
solutions, we get 
\[
\mathsf{v}_{m}\,\rightarrow\,\mathsf{v}\quad\textrm{in}\;\,C\left(\left[s_{0},\,\mathring{s}\right];\,\boldsymbol{\mathrm{H}}_{*}\right)
\]
The conclusion follows by passing (\ref{L^2 v}) and (\ref{H^1 v})
to limit.
\end{proof}
The second lemma is a Sobolev type inequality for functions in $\boldsymbol{\mathrm{H}}$,
which is the Hilbert space defined in Proposition \ref{linear operator}. 
\begin{lem}
\label{Sobolev} Functions in $\boldsymbol{\mathrm{H}}$ are actually
continuous, i.e., $\boldsymbol{\mathrm{H}}\subset C\left(\mathbb{R}_{+}\right)$.
Moreover, for any $\mathsf{v}\in\mathrm{\boldsymbol{\mathrm{H}}}$,
there holds 
\[
\left|\mathsf{v}\left(y\right)\right|\,\leq\,C\left(n\right)\left(\frac{1}{y^{n-\frac{1}{2}}}+e^{\frac{\left(y+1\right)^{2}}{4}}\right)\left(\left\Vert \partial_{y}\mathsf{v}\right\Vert +\left\Vert \mathsf{v}\right\Vert \right)
\]
for $y>0$.
\end{lem}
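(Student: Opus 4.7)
\medskip

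\noindent\textbf{Proof plan.} The idea is a weighted one-dimensional Sobolev embedding, proved first for $\mathsf{v}\in C_{c}^{\infty}(\mathbb{R}_{+})$ and then extended to $\boldsymbol{\mathrm{H}}$ by density. The starting point is the fundamental-theorem-of-calculus identity
\[
\mathsf{v}(y)^{2}\,=\,\mathsf{v}(\xi)^{2}\,-\,2\int_{y}^{\xi}\mathsf{v}(\eta)\,\partial_{\eta}\mathsf{v}(\eta)\,d\eta,
\]
valid for any $\xi\geq y>0$. Averaging this identity in $\xi$ over a short interval to the right of $y$ and applying the triangle inequality yields
\[
\mathsf{v}(y)^{2}\,\leq\,\frac{1}{|I_{y}|}\int_{I_{y}}\mathsf{v}(\xi)^{2}\,d\xi\,+\,2\int_{I_{y}}\bigl|\mathsf{v}(\eta)\,\partial_{\eta}\mathsf{v}(\eta)\bigr|\,d\eta,
\]
for $I_{y}=[y,y+\ell(y)]$ with $\ell(y)$ chosen adaptively. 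The two terms on the right are then compared to the weighted norms by writing $d\eta=\rho(\eta)^{-1}\rho(\eta)\,d\eta$ with $\rho(\eta)=\eta^{2(n-1)}e^{-\eta^{2}/4}$ and using Cauchy--Schwarz:
\[
\int_{I_{y}}\bigl|\mathsf{v}\,\partial_{\eta}\mathsf{v}\bigr|\,d\eta\,\leq\,\sup_{\eta\in I_{y}}\rho(\eta)^{-1}\cdot\|\mathsf{v}\|\,\|\partial_{y}\mathsf{v}\|,
\]
with an analogous bound for the first term.

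The only point to take care of is the choice of $\ell(y)$, which must be tuned so that $\sup_{I_{y}}\rho^{-1}$ and $|I_{y}|^{-1}$ combine to give the stated pointwise weight. First I treat $0<y\leq 1$: take $I_{y}=[y,2y]$, so $|I_{y}|=y$, $\eta^{-2(n-1)}\leq y^{-2(n-1)}$ and $e^{\eta^{2}/4}\leq e$ on $I_{y}$, which yields
\[
\mathsf{v}(y)^{2}\,\leq\,C(n)\,y^{-(2n-1)}\bigl(\|\mathsf{v}\|+\|\partial_{y}\mathsf{v}\|\bigr)^{2}.
\]
Next I treat $y\geq 1$: take $I_{y}=[y,y+1]$, so $|I_{y}|=1$, $\eta^{-2(n-1)}\leq 1$ and $e^{\eta^{2}/4}\leq e^{(y+1)^{2}/4}$ on $I_{y}$, which yields
\[
\mathsf{v}(y)^{2}\,\leq\,C(n)\,e^{(y+1)^{2}/4}\bigl(\|\mathsf{v}\|+\|\partial_{y}\mathsf{v}\|\bigr)^{2}.
\]
Taking square roots and combining the two regimes produces the asserted pointwise bound for every $\mathsf{v}\in C_{c}^{\infty}(\mathbb{R}_{+})$.

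Finally, I pass to general $\mathsf{v}\in\boldsymbol{\mathrm{H}}$ by density: pick $\mathsf{v}_{j}\in C_{c}^{\infty}(\mathbb{R}_{+})$ with $(\mathsf{v}_{j}-\mathsf{v},\mathsf{v}_{j}-\mathsf{v})\to 0$; applying the pointwise estimate to the differences $\mathsf{v}_{j}-\mathsf{v}_{l}$ shows $\{\mathsf{v}_{j}\}$ is Cauchy in $C(K)$ for every compact $K\subset\mathbb{R}_{+}$, so it converges uniformly on compacta to some continuous $\tilde{\mathsf{v}}$. Since $\mathsf{v}_{j}\to\mathsf{v}$ in the weighted $L^{2}$ sense, $\tilde{\mathsf{v}}=\mathsf{v}$ almost everywhere, and the pointwise bound passes to the limit. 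The main (and only) delicate step is the adaptive choice of the test interval $I_{y}$, since an interval that is either too long or too short fails to reproduce the sharp powers $y^{-(n-1/2)}$ near $0$ and $e^{(y+1)^{2}/4}$ at infinity simultaneously; once this is done correctly, the rest is routine Cauchy--Schwarz plus a density argument.
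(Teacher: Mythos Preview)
Your proof is correct and follows essentially the same approach as the paper: a fundamental-theorem-of-calculus identity, averaged over a test interval whose length is $\sim y$ for $0<y\leq 1$ and $1$ for $y\geq 1$, followed by Cauchy--Schwarz against the weight $\rho(\eta)=\eta^{2(n-1)}e^{-\eta^{2}/4}$, and then a density argument. The only cosmetic differences are that the paper takes the interval $[y/2,y]$ (to the left) for small $y$ while you take $[y,2y]$, and the paper writes $\mathsf{v}(y)=\mathsf{v}(z)+\int\partial_{y}\mathsf{v}$ and squares afterward whereas you start from the identity for $\mathsf{v}^{2}$; neither changes the substance.
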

\begin{proof}
Let's first assume that $\mathsf{v}\in C^{1}\left(\mathbb{R}_{+}\right)\cap\boldsymbol{\mathrm{H}}$. 

For each $0<y\leq1$, by the fundamental theorem of calculus, we have
\[
\mathsf{v}\left(y\right)=\mathsf{v}\left(z\right)+\int_{z}^{y}\partial_{y}\mathsf{v}\left(\xi\right)d\xi\quad\quad\forall\;\;\frac{y}{2}\leq z\leq y
\]
which, by H$\ddot{o}$lder's inequality, implies 
\[
\left|\mathsf{v}\left(y\right)\right|^{2}\,\leq C\left(\left|\mathsf{v}\left(z\right)\right|^{2}\,+\,y\int_{\frac{y}{2}}^{y}\left|\partial_{y}\mathsf{v}\left(\xi\right)\right|^{2}d\xi\right)
\]
\[
\leq\,C\left|\mathsf{v}\left(z\right)\right|^{2}\,+\,C\left(n\right)\frac{y}{y^{2\left(n-1\right)}}\left(\int_{\frac{y}{2}}^{y}\left|\partial_{y}\mathsf{v}\left(\xi\right)\right|^{2}\xi^{2\left(n-1\right)}e^{-\frac{\xi^{2}}{4}}d\xi\right)
\]
for $\frac{y}{2}\leq z\leq y$. Integrate the above inequality against
$z^{2\left(n-1\right)}e^{-\frac{z^{2}}{4}}\,dz$ from $\frac{y}{2}$
to $y$ to get 
\[
\left|\mathsf{v}\left(y\right)\right|^{2}\left(\int_{\frac{y}{2}}^{y}z^{2\left(n-1\right)}e^{-\frac{z^{2}}{4}}\,dz\right)\,\leq\,C\int_{\frac{y}{2}}^{y}\left|\mathsf{v}\left(z\right)\right|^{2}z^{2\left(n-1\right)}e^{-\frac{z^{2}}{4}}dz
\]
\[
+\,C\left(n\right)\frac{1}{y^{2n-3}}\left(\int_{\frac{y}{2}}^{y}\left|\partial_{y}\mathsf{v}\left(\xi\right)\right|^{2}\xi^{2\left(n-1\right)}e^{-\frac{\xi^{2}}{4}}d\xi\right)\left(\int_{\frac{y}{2}}^{y}z^{2\left(n-1\right)}e^{-\frac{z^{2}}{4}}\,dz\right)
\]
which implies 
\[
\left|\mathsf{v}\left(y\right)\right|^{2}\,\leq\,C\left(n\right)\frac{1}{y^{2n-1}}\left(\int_{\frac{y}{2}}^{y}\left|\mathsf{v}\left(z\right)\right|^{2}z^{2\left(n-1\right)}e^{-\frac{z^{2}}{4}}dz\right)
\]
\[
+\,C\left(n\right)\frac{1}{y^{2n-3}}\left(\int_{\frac{y}{2}}^{y}\left|\partial_{y}\mathsf{v}\left(\xi\right)\right|^{2}\xi^{2\left(n-1\right)}e^{-\frac{\xi^{2}}{4}}d\xi\right)
\]
That is, 
\[
\left|\mathsf{v}\left(y\right)\right|\,\leq\,C\left(n\right)\left(\frac{1}{y^{n-\frac{1}{2}}}\left\Vert \mathsf{v}\right\Vert +\frac{1}{y^{n-\frac{3}{2}}}\left\Vert \partial_{y}\mathsf{v}\right\Vert \right)
\]
\[
\leq\,C\left(n\right)\frac{1}{y^{n-\frac{1}{2}}}\left(\left\Vert \partial_{y}\mathsf{v}\right\Vert +\left\Vert \mathsf{v}\right\Vert \right)
\]
for $0<y\leq1$.

Likewise, for each $y\geq1$, by the fundamental theorem of calculus,
we have 
\[
\mathsf{v}\left(y\right)=\mathsf{v}\left(z\right)-\int_{y}^{z}\partial_{y}\mathsf{v}\left(\xi\right)d\xi\quad\quad\forall\;\;y\leq z\leq y+1
\]
which implies 
\[
\left|\mathsf{v}\left(y\right)\right|^{2}\,\leq C\left(\left|\mathsf{v}\left(z\right)\right|^{2}+\int_{y}^{y+1}\left|\partial_{y}\mathsf{v}\left(\xi\right)\right|^{2}d\xi\right)
\]
\[
\leq C\left|\mathsf{v}\left(z\right)\right|^{2}\,+\,C\,y^{-2\left(n-1\right)}e^{\frac{\left(y+1\right)^{2}}{4}}\left(\int_{y}^{y+1}\left|\partial_{y}\mathsf{v}\left(\xi\right)\right|^{2}\xi^{2\left(n-1\right)}e^{-\frac{\xi^{2}}{4}}d\xi\right)
\]
for $y\leq z\leq y+1$. Integrate both sides againt $z^{2\left(n-1\right)}e^{-\frac{z^{2}}{4}}\,dz$
from $y$ to $y+1$ to get 
\[
\left|\mathsf{v}\left(y\right)\right|^{2}\left(\int_{y}^{y+1}z^{2\left(n-1\right)}e^{-\frac{z^{2}}{4}}dz\right)\,\leq C\,\int_{y}^{y+1}\left|\mathsf{v}\left(z\right)\right|^{2}z^{2\left(n-1\right)}e^{-\frac{z^{2}}{4}}dz
\]
\[
+\,C\,y^{-2\left(n-1\right)}e^{\frac{\left(y+1\right)^{2}}{4}}\left(\int_{y}^{y+1}\left|\partial_{y}\mathsf{v}\left(\xi\right)\right|^{2}\xi^{2\left(n-1\right)}e^{-\frac{\xi^{2}}{4}}d\xi\right)\left(\int_{y}^{y+1}z^{2\left(n-1\right)}e^{-\frac{z^{2}}{4}}dz\right)
\]
which yields 
\[
\left|\mathsf{v}\left(y\right)\right|^{2}\,\leq C\left(n\right)y^{-2\left(n-1\right)}e^{\frac{\left(y+1\right)^{2}}{4}}\left(\left\Vert \mathsf{v}\right\Vert ^{2}+\left\Vert \partial_{y}\mathsf{v}\right\Vert ^{2}\right)
\]
\[
\leq C\left(n\right)e^{\frac{\left(y+1\right)^{2}}{4}}\left(\left\Vert \partial_{y}\mathsf{v}\right\Vert +\left\Vert \mathsf{v}\right\Vert \right)
\]
for $y\geq1$.

More generally, given a function $\mathsf{v}\in\boldsymbol{\mathrm{H}}$,
then choose a sequence $\left\{ \mathsf{v}_{i}\right\} \subset C_{c}^{1}\left(\mathbb{R}_{+}\right)\cap\mathrm{\boldsymbol{\mathrm{H}}}$
so that 
\[
\mathsf{v}_{i}\,\overset{\mathrm{\boldsymbol{\mathrm{H}}}}{\longrightarrow}\,\mathsf{v}
\]
By the above arguments, we have
\[
\left|\mathsf{v}_{i}\left(y\right)\right|\,\leq\,C\left(n\right)\left(\frac{1}{y^{n-\frac{1}{2}}}+e^{\frac{\left(y+1\right)^{2}}{4}}\right)\left(\left\Vert \partial_{y}\mathsf{v}_{i}\right\Vert +\left\Vert \mathsf{v}_{i}\right\Vert \right)
\]
\[
\left|\mathsf{v}_{i}\left(y\right)-\mathsf{v}_{j}\left(y\right)\right|\,\leq\,C\left(n\right)\left(\frac{1}{y^{n-\frac{1}{2}}}+e^{\frac{\left(y+1\right)^{2}}{4}}\right)\left(\left\Vert \partial_{y}\mathsf{v}_{i}-\partial_{y}\mathsf{v}_{j}\right\Vert +\left\Vert \mathsf{v}_{i}-\mathsf{v}_{j}\right\Vert \right)
\]
for $y>0$. It follows, by the second inequality, that 
\[
\mathsf{v}_{i}\,\overset{C_{loc}}{\longrightarrow}\,\mathsf{v}
\]
Hence $\mathsf{v}\in C\left(\mathbb{R}_{+}\right)$. In addition,
by passing the first inequality to limit, we get 
\[
\left|\mathsf{v}\left(y\right)\right|\,\leq\,C\left(n\right)\left(\frac{1}{y^{n-\frac{1}{2}}}+e^{\frac{\left(y+1\right)^{2}}{4}}\right)\left(\left\Vert \partial_{y}\mathsf{v}\right\Vert +\left\Vert \mathsf{v}\right\Vert \right)
\]
for $y>0$.
\end{proof}
Now we are ready to prove (\ref{C^0 v bound intermediate}). The idea
is to linearize (\ref{eq v}) and do Fourier expansion. The condition
(\ref{backward condition}) allow us to control the evolution of components
in negative eigenvalue functions. For the remainder terms, we can
use the energy estimate and Sobolev inequality to get a $L^{\infty}$
estimate. 
\begin{prop}
\label{C^0 v}If $0<\rho\ll1\ll\beta$ (depending on $n$, $\Lambda$)
and $s_{0}\gg1$ (depending on $n$, $\Lambda$, $\rho$, $\beta$),
then (\ref{coefficients}) holds. Moreover, there is a constant $k$
satisfying (\ref{k}), for which the function $v\left(y,\,s\right)$
of the type $\mathrm{I}$ rescaled hypersurface $\Pi_{s}^{\left(a_{0},\,a_{1}\right)}$
(see (\ref{eq v})) satisfies (\ref{C^0 v bound intermediate}). 
\end{prop}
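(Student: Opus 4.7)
The plan is to use the Fourier expansion of $v(\cdot,s)$ in the eigenbasis $\{\varphi_i\}$ of $\mathcal{L}$ (Proposition~\ref{linear operator}) and to treat (\ref{linearize eq v}) as a perturbed linear ODE system in the coefficients. First I would introduce the cut-off
\[
\tilde v(y,s)=\zeta(e^{\sigma s}y-\beta)\,\zeta(\rho e^{s/2}-y)\,v(y,s),
\]
so that $\tilde v(\cdot,s)\in\boldsymbol{\mathrm H}$, and expand $\tilde v=\sum_i b_i(s)\,\varphi_i$ where $b_i(s)=\langle\tilde v(\cdot,s),\varphi_i\rangle$. Using (\ref{linearize eq v}), (\ref{L}), (\ref{Q}) and differentiating the cut-off, each $b_i$ satisfies an ODE of the form $b_i'(s)+\lambda_i b_i(s)=\mathcal{N}_i(s)+\mathcal{B}_i(s)$, where $\mathcal{N}_i$ comes from the quadratic term $\mathcal{Q}v$ and $\mathcal{B}_i$ is a boundary/commutator contribution supported in the thin shells $\{\beta e^{-\sigma s}\le y\le(\beta+1)e^{-\sigma s}\}$ and $\{\rho e^{s/2}-1\le y\le\rho e^{s/2}\}$. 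Under the admissibility bound (\ref{a priori bound v}) and Lemma~\ref{cut-off}, both of these source terms are a priori quadratically small compared with $e^{-\lambda_2 s}$; the precise decay will later feed back into the statement through $\varkappa$ in (\ref{varkappa}).

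Next I would handle the three spectral ranges separately. For the unstable modes $i=0,1$ (where $\lambda_i<0$), I solve the ODE backward from $s_1=-\ln(-t_1)$, using $\Phi_{t_1}(a_0,a_1)=(0,0)$, i.e. $b_0(s_1)=b_1(s_1)=0$, exactly as in (\ref{backward condition}). Integrating $b_i'+\lambda_i b_i=\mathcal{N}_i+\mathcal{B}_i$ from $s_1$ down to $s_0$ then gives
\[
|b_i(s_0)|\le\int_{s_0}^{s_1}e^{-\lambda_i(s_0-\xi)}\bigl(|\mathcal{N}_i(\xi)|+|\mathcal{B}_i(\xi)|\bigr)d\xi,
\]
and since $-\lambda_i>0$, the integrand is integrable forward in $\xi$; using the a priori $\mathcal{O}(e^{-2\lambda_2 s})$ estimate on $\mathcal N_i$ and the exponentially small bound on $\mathcal B_i$ from Lemma~\ref{cut-off}, this yields (\ref{coefficients}). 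Comparing with the prescribed initial data (\ref{initial v}) then identifies $k$ with $1+a_0+a_1+O((-t_0)^{\varsigma\lambda_2})$ and controls the drift of $b_2$: integrating $b_2'+\lambda_2 b_2=\mathcal{N}_2+\mathcal{B}_2$ forward from $s_0$ and absorbing the source with the stable factor $e^{-\lambda_2 s}$ gives $b_2(s)=k\,e^{-\lambda_2 s}+O(e^{-(\lambda_2+\varkappa)s})$ with $k$ satisfying (\ref{k}).

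For the stable tail $v_\perp=\tilde v-\sum_{i=0}^2 b_i\varphi_i\in\boldsymbol{\mathrm H}_*$, I apply the energy estimate of Lemma~\ref{energy} with $\mathsf{f}=\mathcal{Q}v$ plus the cut-off remainder and $\mathsf{h}=0$ (since the initial data is purely in the span of $\varphi_0,\varphi_1,\varphi_2$). Because $\lambda_3>\lambda_2$, any source of size $e^{-2\lambda_2 s}y^{2\alpha}\cdots$ is integrable against the weight $e^{-2(1-\delta)\lambda_3(s-\xi)}$, giving $\|\partial_y v_\perp\|+\|v_\perp\|\le C\,e^{-(\lambda_2+\varkappa)s}$. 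The Sobolev inequality of Lemma~\ref{Sobolev} then converts this into a pointwise estimate on $v_\perp(y,s)$ valid for $y\le\sqrt{\varsigma\lambda_2 s}$, where the growth factor $e^{(y+1)^2/4}$ is absorbed (this is where the $\sqrt{\varsigma\lambda_2 s}$ cap comes from; the choice (\ref{varsigma}) on $\varsigma$ ensures the product stays subdominant to $e^{-\varkappa s}$). Combining the three pieces reassembles $v$ and yields (\ref{C^0 v bound intermediate}).

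The main obstacle is closing the bootstrap on the quadratic term $\mathcal{Q}v$: a priori one only has the crude admissibility bound (\ref{a priori bound v}), which controls $\mathcal{Q}v$ on the scale $e^{-2\lambda_2 s}$, and this must be shown to be small enough, uniformly down to $y\sim e^{-\vartheta\sigma s}$, so that the linearized picture survives. The delicate exponent $\varkappa$ in (\ref{varkappa}) is chosen precisely to simultaneously dominate the boundary shell error ($\varsigma\lambda_2-\vartheta\sigma(n+\alpha+\tfrac32)$), the quadratic mode-$2$ coupling ($\tfrac12\varsigma\lambda_2$), and the inner-cutoff quadratic error ($2(\lambda_2+(\alpha-2)\vartheta\sigma)$), and the proof must verify each of these three regimes produces an error of at most that order; otherwise the triangle inequality for the $\varphi_2$-coefficient would destroy the identification of $k$.
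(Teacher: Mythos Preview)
Your outline matches the paper's proof almost step for step: localize $v$ with the same cut-off $\tilde v$, derive $(\partial_s+\mathcal L)\tilde v=f$ with $f=f_{\mathrm I}+f_{\mathrm{II}}+f_{\mathrm{III}}$, bound $\|f(\cdot,s)\|\le Ce^{-(1+\varsigma)\lambda_2 s}$, integrate the ODEs for $b_0,b_1$ backward from $s_1$ using $\Phi_{t_1}(a_0,a_1)=(0,0)$, integrate $b_2$ forward, apply Lemma~\ref{energy} to the tail $\tilde v_*$, and finish with Lemma~\ref{Sobolev}. Two points deserve correction.

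First, your claim that $\mathsf h=0$ for the stable tail is not quite right: the initial data $v(\cdot,s_0)$ lies in $\mathrm{span}\{\varphi_0,\varphi_1,\varphi_2\}$, but $\tilde v(\cdot,s_0)$ does not, because the cut-off perturbs it. The paper must (and does) verify separately that $\|\tilde v_*(\cdot,s_0)\|+\|\mathcal L\tilde v_*(\cdot,s_0)\|\le Ce^{-(1+\varsigma)\lambda_2 s_0}$; this uses Lemma~\ref{cut-off} and a direct computation analogous to the bounds on $f_{\mathrm{II}},f_{\mathrm{III}}$, and without it the energy inequality in Lemma~\ref{energy} has an uncontrolled initial term. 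Second, your identification of $k$ is off: the paper sets $k=e^{\lambda_2 s_1}\langle\tilde v(\cdot,s_1),c_2\varphi_2\rangle$ (at the terminal time $s_1$, not $s_0$), and then shows $|k-1|\le Ce^{-\varsigma\lambda_2 s_0}$; it is not defined as $1+a_0+a_1$. Finally, your reading of the three terms in $\varkappa$ is slightly scrambled: the first two come from the two branches of the Sobolev factor in Lemma~\ref{Sobolev} (the $y^{-(n-1/2)}$ branch on $[\tfrac12 e^{-\vartheta\sigma s},1]$ and the $e^{(y+1)^2/4}$ branch on $[1,\sqrt{\varsigma\lambda_2 s}]$), while the third term $2(\lambda_2+(\alpha-2)\vartheta\sigma)$ enters only later, in the $C^\infty$ estimates of Proposition~\ref{C^infty v}, not here.
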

\begin{proof}
Let 
\[
\widetilde{v}\left(y,\,s\right)\,=\,\zeta\left(e^{\sigma s}y-\beta\right)\,\zeta\left(\rho e^{\frac{s}{2}}-y\right)\,v\left(y,\,s\right)
\]
then $\widetilde{v}\left(\cdot,\,s\right)\in C\left(\left[s_{0},\,\mathring{s}\right];\,\boldsymbol{\mathrm{H}}\right)$.
From (\ref{linearize eq v}), we have 
\[
\left(\partial_{s}+\mathcal{L}\right)v\left(\cdot,\,s\right)=\mathcal{Q}v\left(\cdot,\,s\right)
\]
which implies 
\begin{equation}
\left(\partial_{s}+\mathcal{L}\right)\widetilde{v}\left(\cdot,\,s\right)\,=\,f\left(\cdot,\,s\right)\equiv f_{\mathrm{I}}\left(\cdot,\,s\right)+f_{\mathrm{II}}\left(\cdot,\,s\right)+f_{\mathrm{III}}\left(\cdot,\,s\right)\label{eq localized v}
\end{equation}
where 
\[
f_{\mathrm{I}}\left(y,\,s\right)=\zeta\left(e^{\sigma s}y-\beta\right)\,\zeta\left(\rho e^{\frac{s}{2}}-y\right)\,\mathcal{Q}v\left(y,\,s\right)
\]
 
\[
f_{\mathrm{II}}\left(y,\,s\right)=\zeta'\left(e^{\sigma s}y-\beta\right)\,e^{\sigma s}\left(-2\,\partial_{y}v\left(y,\,s\right)+\left(-\frac{2\left(n-1\right)}{y}+\left(\sigma+\frac{1}{2}\right)y\right)v\left(y,\,s\right)\right)
\]
\[
-\zeta''\left(e^{\sigma s}y-\beta\right)\,e^{2\sigma s}\,v\left(y,\,s\right)
\]
 
\[
f_{\mathrm{III}}\left(y,\,s\right)=\zeta'\left(\rho e^{\frac{s}{2}}-y\right)\left(\left(\frac{\rho}{2}e^{\frac{s}{2}}-\frac{y}{2}+\frac{2\left(n-1\right)}{y}\right)v\left(y,\,s\right)+2\,\partial_{y}v\left(y,\,s\right)\right)
\]
\[
-\zeta''\left(\rho e^{\frac{s}{2}}-y\right)v\left(y,\,s\right)
\]
We claim that
\begin{equation}
\left\Vert f\left(\cdot,\,s\right)\right\Vert \leq C\left(n,\,\Lambda,\,\rho,\,\beta\right)e^{-\left(1+\varsigma\right)\lambda_{2}s}\label{L^2 f}
\end{equation}
for $s_{0}\leq s\leq\mathring{s}$, provided that $0<\rho\ll1\ll\beta$
(depending on $n$, $\Lambda$) and $s_{0}\gg1$ (depending on $n$,
$\Lambda$, $\rho$, $\beta$), where the norm $\left\Vert \cdot\right\Vert $
is defined in Proposition \ref{linear operator}. Notice that by (\ref{a priori bound v}),
we have 
\[
\max\left\{ \left|\frac{v\left(y,\,s\right)}{y}\right|,\,\left|\partial_{y}v\left(y,\,s\right)\right|\right\} \,\leq\,\Lambda e^{-\lambda_{2}s}\left(y^{\alpha-1}+y^{2\lambda_{2}}\right)\,\lesssim\,\Lambda\left(\beta^{\alpha-1}+\rho^{2\lambda_{2}}\right)
\]
for $\beta e^{-\sigma s}\leq y\leq\rho e^{\frac{s}{2}}$, so we have
\[
\max\left\{ \left|\frac{v\left(y,\,s\right)}{y}\right|,\,\left|\partial_{y}v\left(y,\,s\right)\right|\right\} \,\leq\,\frac{1}{3}
\]
for $\beta e^{-\sigma s}\leq y\leq\rho e^{\frac{s}{2}}$ provided
that $0<\rho\ll1\ll\beta$ (depending on $n$, $\Lambda$). To prove
(\ref{L^2 f}), we use (\ref{a priori bound v}) to get
\[
\left\Vert f_{\mathrm{I}}\right\Vert =\left\Vert \zeta\left(e^{\sigma s}y-\beta\right)\,\zeta\left(\rho e^{\frac{s}{2}}-y\right)\,\mathcal{Q}v\left(y,\,s\right)\right\Vert 
\]
\[
\leq C\left(n\right)\Lambda^{3}\left\Vert \left(e^{-\lambda_{2}s}\left(y^{\alpha-1}+y^{2\lambda_{2}}\right)\right)^{2}\,e^{-\lambda_{2}s}\left(y^{\alpha-2}+y^{2\lambda_{2}-1}\right)\,\chi_{\left(\beta e^{-\sigma s},\,\rho e^{\frac{s}{2}}\right)}\right\Vert 
\]
\[
\leq C\left(n\right)\Lambda^{3}\,e^{-\left(1+\varsigma\right)\lambda_{2}s}\left\Vert \left(e^{-\lambda_{2}s}\left(y^{\alpha-1}+y^{2\lambda_{2}}\right)\right)^{2-\varsigma}\,\left(y^{\alpha-2+\varsigma\left(\alpha-1\right)}+y^{2\lambda_{2}-1+2\varsigma\lambda_{2}}\right)\,\chi_{\left(\beta e^{-\sigma s},\,\rho e^{\frac{s}{2}}\right)}\right\Vert 
\]
\[
\leq C\left(n\right)\Lambda^{3}\,e^{-\left(1+\varsigma\right)\lambda_{2}s}\left\Vert \left(\beta^{\alpha-1}+\rho^{2\lambda_{2}}\right)^{2-\varsigma}\,\left(y^{\alpha-2+\varsigma\left(\alpha-1\right)}+y^{2\lambda_{2}-1+2\varsigma\lambda_{2}}\right)\,\chi_{\left(\beta e^{-\sigma s},\,\rho e^{\frac{s}{2}}\right)}\right\Vert 
\]
\[
\leq C\left(n\right)\Lambda^{3}\,e^{-\left(1+\varsigma\right)\lambda_{2}s}\left(\int_{0}^{\infty}\left(y^{2\left(\alpha-2+\varsigma\left(\alpha-1\right)\right)}+y^{2\left(2\lambda_{2}-1+2\varsigma\lambda_{2}\right)}\right)y^{2\left(n-1\right)}e^{-\frac{y^{2}}{4}}dy\right)^{\frac{1}{2}}
\]
\[
\leq C\left(n\right)\Lambda^{3}\,e^{-\left(1+\varsigma\right)\lambda_{2}s}
\]
since $\varsigma\leq\lambda_{2}^{-1}\leq1$ and $2\left(\alpha-2+\varsigma\left(\alpha-1\right)\right)+2\left(n-1\right)>-1$;
\[
\left\Vert f_{\mathrm{II}}\right\Vert \,\leq\,C\left(n\right)\Lambda\left\Vert e^{-\lambda_{2}s}y^{\alpha-2}\,\chi_{\left(\beta e^{-\sigma s},\,\left(\beta+1\right)e^{-\sigma s}\right)}\right\Vert 
\]
\[
\leq C\left(n\right)\Lambda\,e^{-\lambda_{2}s}\left(\int_{\beta e^{-\sigma s}}^{\left(\beta+1\right)e^{-\sigma s}}y^{2\left(\alpha-2\right)}y^{2\left(n-1\right)}dy\right)^{\frac{1}{2}}
\]
\[
\leq C\left(n\right)\Lambda\,e^{-\lambda_{2}s}\left(\beta e^{-\sigma s}\right)^{n+\alpha-\frac{5}{2}}\,\,\leq C\left(n\right)\Lambda\,\beta^{n+\alpha-\frac{5}{2}}e^{-\left(1+\varsigma\right)\lambda_{2}s}
\]
and
\[
\left\Vert f_{\mathrm{III}}\right\Vert \,\leq\,C\left(n\right)\Lambda\left\Vert e^{-\lambda_{2}s}y^{2\lambda_{2}+2}\chi_{\left(\rho e^{\frac{s}{2}}-1,\,\rho e^{\frac{s}{2}}\right)}\right\Vert 
\]
\[
=C\left(n\right)\Lambda\,e^{-\lambda_{2}s}\left(\int_{\rho e^{\frac{s}{2}}-1}^{\rho e^{\frac{s}{2}}}y^{2\left(2\lambda_{2}+2\right)}y^{2\left(n-1\right)}e^{-\frac{y^{2}}{4}}dy\right)^{\frac{1}{2}}
\]
\[
\leq\,C\left(n\right)\Lambda\,e^{-\lambda_{2}s}e^{-s}\,\leq\,C\left(n\right)\Lambda\,e^{-\left(1+\varsigma\right)\lambda_{2}s}
\]
provided that $s_{0}\gg1$ (depending on $n$, $\rho$).

Next, we would like to estimate the components of negative eigenvalue
functions in the Fourier expansion of $\widetilde{v}\left(\cdot,\,s\right)$.
For each $i\in\left\{ 0,\,1\right\} $, by Proposition \ref{linear operator},
(\ref{backward condition}) and (\ref{eq localized v}), we have 
\[
\left\{ \begin{array}{c}
\partial_{s}\left\langle \widetilde{v}\left(\cdot,\,s\right),\,\varphi_{i}\right\rangle +\lambda_{i}\left\langle \widetilde{v}\left(\cdot,\,s\right),\,\varphi_{i}\right\rangle =\left\langle f\left(\cdot,\,s\right),\,\varphi_{i}\right\rangle \\
\\
\left\langle \widetilde{v}\left(\cdot,\,s_{1}\right),\,\varphi_{i}\right\rangle =0
\end{array}\right.
\]
Note that $\lambda_{i}=\lambda_{2}-\left(2-i\right)<0$ and 
\[
\mathring{s}=-\ln\left(-\mathring{t}\right)\,\leq\,-\ln\left(-e^{-1}t_{1}\right)=s_{1}+1
\]
Therefore, for $s_{1}\leq s\leq\mathring{s}$, we have
\[
\left|\left\langle \widetilde{v}\left(\cdot,\,s\right),\,\varphi_{i}\right\rangle \right|=\left|\int_{s_{1}}^{s}e^{-\lambda_{i}\left(s-\xi\right)}\left\langle f\left(\cdot,\,\xi\right),\,\varphi_{i}\right\rangle d\xi\right|\leq\,\int_{s_{1}}^{s}e^{-\left(\lambda_{2}-2\right)\left(s-\xi\right)}\left\Vert f\left(\cdot,\,\xi\right)\right\Vert d\xi
\]
\[
\leq C\left(n,\,\Lambda,\,\rho,\,\beta\right)e^{-\left(\lambda_{2}-2\right)\left(s-s_{1}\right)}e^{-\left(1+\varsigma\right)\lambda_{2}s_{1}}
\]
\[
\leq C\left(n,\,\Lambda,\,\rho,\,\beta\right)e^{-\left(1+\varsigma\right)\lambda_{2}s}
\]
and for $s_{0}\leq s\leq s_{1}$, we have 
\[
\left|\left\langle \widetilde{v}\left(\cdot,\,s\right),\,\varphi_{i}\right\rangle \right|=\left|\int_{s}^{s_{1}}e^{\lambda_{i}\left(\xi-s\right)}\left\langle f\left(\cdot,\,\xi\right),\,\varphi_{i}\right\rangle d\xi\right|\leq\,\int_{s}^{s_{1}}e^{\left(\lambda_{2}-1\right)\left(\xi-s\right)}\left\Vert f\left(\cdot,\,\xi\right)\right\Vert d\xi
\]
\[
\leq C\left(n,\,\Lambda,\,\rho,\,\beta\right)e^{-\left(1+\varsigma\right)\lambda_{2}s}
\]
Thus, for $i\in\left\{ 0,\,1\right\} $, there holds 
\begin{equation}
\left|\left\langle \widetilde{v}\left(\cdot,\,s\right),\,\varphi_{i}\right\rangle \right|\,\leq\,C\left(n,\,\Lambda,\,\rho,\,\beta\right)e^{-\left(1+\varsigma\right)\lambda_{2}s}\label{estimate v1}
\end{equation}
for $s_{0}\leq s\leq\mathring{s}$. In addition, for $i\in\left\{ 0,\,1\right\} $,
by Lemma \ref{cut-off} we have 
\[
\left|\left\langle \widetilde{v}\left(\cdot,\,s_{0}\right),\,c_{i}\,\varphi_{i}\right\rangle -a_{i}e^{-\lambda_{2}s_{0}}\right|
\]
\[
=\left|\Bigl\langle\zeta\left(e^{\sigma s_{0}}y-\beta\right)\,\zeta\left(\rho e^{\frac{s_{0}}{2}}-y\right)v\left(\cdot,\,s_{0}\right),\,c_{i}\,\varphi_{i}\Bigr\rangle-a_{i}e^{-\lambda_{2}s_{0}}\right|
\]
\[
=e^{-\lambda_{2}s_{0}}\left|\Bigl\langle\zeta\left(e^{\sigma s_{0}}y-\beta\right)\,\zeta\left(\rho e^{\frac{s_{0}}{2}}-y\right)\left(\frac{1}{c_{2}}\varphi_{2}\left(y\right)+\frac{a_{0}}{c_{0}}\varphi_{0}\left(y\right)+\frac{a_{1}}{c_{1}}\varphi_{1}\left(y\right)\right),\,c_{i}\varphi_{i}\Bigr\rangle-a_{i}\right|
\]
\[
\leq C\left(n,\,\Lambda,\,\rho,\,\beta\right)e^{-\left(1+2\varsigma\right)\lambda_{2}s_{0}}
\]
which, together with (\ref{estimate v1}), implies 
\[
\left|a_{i}\right|\,\leq\,\left|e^{\lambda_{2}s_{0}}\left\langle \widetilde{v}\left(\cdot,\,s_{0}\right),\,c_{i}\,\varphi_{i}\right\rangle \right|\,+\,\left|e^{\lambda_{2}s_{0}}\left\langle \widetilde{v}\left(\cdot,\,s_{0}\right),\,c_{i}\,\varphi_{i}\right\rangle -a_{i}\right|
\]
\[
\leq C\left(n,\,\Lambda,\,\rho,\,\beta\right)e^{-\varsigma\lambda_{2}s_{0}}
\]

We continue to estimate the components of the first positive eigenvalue
functions in the Fourier expansion of $\widetilde{v}\left(\cdot,\,s\right)$.
By Proposition \ref{linear operator}, Lemma \ref{cut-off}, (\ref{initial v})
and (\ref{eq localized v}), we have 
\[
\left\{ \begin{array}{c}
\partial_{s}\left(e^{\lambda_{2}s}\left\langle \widetilde{v}\left(\cdot,\,s\right),\,\varphi_{2}\right\rangle \right)=e^{\lambda_{2}s}\left\langle f\left(\cdot,\,s\right),\,\varphi_{2}\right\rangle \\
\\
\left|e^{\lambda_{2}s_{0}}\left\langle \widetilde{v}\left(\cdot,\,s_{0}\right),\,c_{2}\varphi_{2}\right\rangle -1\right|\,\leq C\left(n\right)e^{-2\varsigma\lambda_{2}s_{0}}
\end{array}\right.
\]
Now let 
\[
k=e^{\lambda_{2}s_{1}}\left\langle \tilde{v}\left(\cdot,\,s_{1}\right),\,c_{2}\,\varphi_{2}\right\rangle 
\]
then for $s_{1}\leq s\leq\mathring{s}$, we have 
\[
\left|e^{\lambda_{2}s}\left\langle \widetilde{v}\left(\cdot,\,s\right),\,c_{2}\varphi_{2}\right\rangle -k\right|\,=\,\left|e^{\lambda_{2}s}\left\langle \widetilde{v}\left(\cdot,\,s\right),\,\varphi_{2}\right\rangle -e^{\lambda_{2}s_{1}}\left\langle \widetilde{v}\left(\cdot,\,s_{1}\right),\,c_{2}\,\varphi_{2}\right\rangle \right|
\]
\[
=\left|\int_{s_{1}}^{s}e^{\lambda_{2}\xi}\left\langle f\left(\cdot,\,\xi\right),\,\varphi_{2}\right\rangle d\xi\right|\,\leq\,\int_{s_{1}}^{s_{1}+1}e^{\lambda_{2}\xi}\left\Vert f\left(\cdot,\,\xi\right)\right\Vert d\xi
\]
\[
\leq\,C\left(n,\,\Lambda,\,\rho,\,\beta\right)e^{-\varsigma\lambda_{2}s}
\]
(since $\mathring{s}\leq s_{1}+1$), and for $s_{0}\leq s\leq s_{1}$
we have 
\[
\left|e^{\lambda_{2}s}\left\langle \widetilde{v}\left(\cdot,\,s\right),\,c_{2}\,\varphi_{2}\right\rangle -k\right|\,=\,\left|e^{\lambda_{2}s}\left\langle \widetilde{v}\left(\cdot,\,s\right),\,c_{2}\,\varphi_{2}\right\rangle -e^{\lambda_{2}s_{1}}\left\langle \widetilde{v}\left(\cdot,\,s_{1}\right),\,c_{2}\,\varphi_{2}\right\rangle \right|
\]
\[
=\left|\int_{s}^{s_{1}}e^{\lambda_{2}\xi}\left\langle f\left(\cdot,\,\xi\right),\,\varphi_{2}\right\rangle d\xi\right|\,\leq\,\int_{s}^{s_{1}}e^{\lambda_{2}\xi}\left\Vert f\left(\cdot,\,\xi\right)\right\Vert d\xi
\]
\[
\leq\,C\left(n,\,\Lambda,\,\rho,\,\beta\right)e^{-\varsigma\lambda_{2}s}
\]
Thus, we get 
\[
\left|k-1\right|\,\leq\,\left|k-e^{\lambda_{2}s_{0}}\left\langle \widetilde{v}\left(\cdot,\,s_{0}\right),\,c_{2}\,\varphi_{2}\right\rangle \right|\,+\,\left|e^{\lambda_{2}s_{0}}\left\langle \widetilde{v}\left(\cdot,\,s_{0}\right),\,c_{2}\,\varphi_{2}\right\rangle -1\right|
\]
\[
\leq\,C\left(n,\,\Lambda,\,\rho,\,\beta\right)e^{-\varsigma\lambda_{2}s}
\]
and 
\begin{equation}
\left|\left\langle \widetilde{v}\left(\cdot,\,s\right),\,\varphi_{2}\right\rangle -\frac{k}{c_{2}}e^{-\lambda_{2}s}\right|\,\leq\,C\left(n,\,\Lambda,\,\rho,\,\beta\right)e^{-\left(1+\varsigma\right)\lambda_{2}s}\label{estimate v2}
\end{equation}
for $s_{0}\leq s\leq\mathring{s}$. 

Now we would like to estimate the remaining parts in the Fourier expansion
of $\widetilde{v}\left(\cdot,\,s\right)$. Let
\[
\widetilde{v}_{*}\left(\cdot,\,s\right)=\widetilde{v}\left(\cdot,\,s\right)-\sum_{i=0}^{2}\left\langle \widetilde{v}\left(\cdot,\,s\right),\,\varphi_{i}\right\rangle \varphi_{i}
\]
then $\widetilde{v}_{*}\left(\cdot,\,s\right)\in C\left(\left[s_{0},\,s_{1}\right];\,\boldsymbol{\mathrm{H}}_{*}\right)$,
where $\boldsymbol{\mathrm{H}}_{*}$ is defined in Lemma \ref{energy}.
By Proposition \ref{linear operator} and (\ref{eq localized v}),
we have 
\[
\left(\partial_{s}+\mathcal{L}\right)\widetilde{v}_{*}\left(\cdot,\,s\right)\,=\,f\left(\cdot,\,s\right)-\sum_{i=0}^{2}\left\langle f\left(\cdot,\,s\right),\,\varphi_{i}\right\rangle \varphi_{i}\,\equiv\,f_{*}\left(\cdot,\,s\right)
\]
Note that $\left\Vert f_{*}\left(\cdot,\,s\right)\right\Vert \leq\left\Vert f\left(\cdot,\,s\right)\right\Vert $
and that $\lambda_{3}=\lambda_{2}+1$. By Lemma \ref{energy}, for
any $0<\delta<1$, we have 
\[
\left\Vert \widetilde{v}_{*}\left(\cdot,\,s\right)\right\Vert ^{2}
\]
\[
\leq\,e^{-2\left(1-\delta\right)\left(\lambda_{2}+1\right)\left(s-s_{0}\right)}\left\Vert \widetilde{v}_{*}\left(\cdot,\,s_{0}\right)\right\Vert ^{2}\,+\,\frac{1}{2\delta\lambda_{3}}\int_{s_{0}}^{s}e^{-2\left(1-\delta\right)\left(\lambda_{2}+1\right)\left(s-\xi\right)}\left\Vert f\left(\cdot,\,\xi\right)\right\Vert ^{2}d\xi
\]
 
\[
\left\langle \mathcal{L}\widetilde{v}_{*}\left(\cdot,\,s\right),\,\widetilde{v}_{*}\left(\cdot,\,s\right)\right\rangle 
\]
\[
=e^{-2\left(1-\delta\right)\left(\lambda_{2}+1\right)\left(s-s_{0}\right)}\left\langle \mathcal{L}\widetilde{v}_{*}\left(\cdot,\,s_{0}\right),\,\widetilde{v}_{*}\left(\cdot,\,s_{0}\right)\right\rangle \,+\,\frac{1}{2\delta}\int_{s_{0}}^{s}e^{-2\left(1-\delta\right)\left(\lambda_{2}+1\right)\left(s-\xi\right)}\left\Vert f\left(\cdot,\,\xi\right)\right\Vert ^{2}d\xi
\]
for $s_{0}\leq s\leq\mathring{s}$. We claim that 
\begin{equation}
\left\Vert \widetilde{v}_{*}\left(\cdot,\,s_{0}\right)\right\Vert \,+\,\left\Vert \mathcal{L}\widetilde{v}_{*}\left(\cdot,\,s_{0}\right)\right\Vert \leq\,C\left(n,\,\Lambda,\,\rho,\,\beta\right)e^{-\left(1+\varsigma\right)\lambda_{2}s_{0}}\label{H^1 localized v}
\end{equation}
Note that since $\varsigma<\lambda_{2}^{-1}$, there is $\delta\in\left(0,\,1\right)$
so that $\left(1-\delta\right)\left(\lambda_{2}+1\right)>\left(1+\varsigma\right)\lambda_{2}$.
Thus, we get 
\[
\left\Vert \widetilde{v}_{*}\left(\cdot,\,s\right)\right\Vert ^{2}+\left\langle \mathcal{L}\widetilde{v}_{*}\left(\cdot,\,s\right),\,\widetilde{v}_{*}\left(\cdot,\,s\right)\right\rangle \,\leq\,C\left(n,\,\Lambda,\,\rho,\,\beta\right)e^{-2\left(1+\varsigma\right)\lambda_{2}s}
\]
which, by (\ref{coercivity}), yields 
\[
\left\Vert \widetilde{v}_{*}\left(\cdot,\,s\right)\right\Vert ^{2}+\left\Vert \partial_{y}\widetilde{v}_{*}\left(\cdot,\,s\right)\right\Vert ^{2}\,\leq C\left(n,\,\Lambda,\,\rho,\,\beta\right)e^{-2\left(1+\varsigma\right)\lambda_{2}s}
\]
By Lemma \ref{Sobolev}, we then get
\[
\left|\widetilde{v}_{*}\left(y,\,s\right)\right|\,\leq\,C\left(n\right)\left(\left\Vert \partial_{y}\widetilde{v}_{*}\left(\cdot,\,s\right)\right\Vert \,+\,\left\Vert \widetilde{v}_{*}\left(\cdot,\,s\right)\right\Vert \right)\left(\frac{1}{y^{n-\frac{1}{2}}}+e^{\frac{\left(y+1\right)^{2}}{4}}\right)
\]
\begin{equation}
\leq C\left(n,\,\Lambda,\,\rho,\,\beta\right)e^{-\left(1+\varsigma\right)\lambda_{2}s}\left(\frac{1}{y^{n-\frac{1}{2}}}+e^{\frac{\left(y+1\right)^{2}}{4}}\right)\label{estimate v3}
\end{equation}
for $s_{0}\leq s\leq\mathring{s}$. To prove (\ref{H^1 localized v}),
we use Proposition \ref{linear operator}, Lemma \ref{cut-off}, (\ref{initial v})
and previous computation for derving (\ref{estimate v1}) and (\ref{estimate v2})
to get 
\[
\left\Vert \widetilde{v}_{*}\left(\cdot,\,s_{0}\right)\right\Vert =\left\Vert \widetilde{v}\left(\cdot,\,s_{0}\right)-\sum_{i=0}^{2}\left\langle \widetilde{v}\left(\cdot,\,s_{0}\right),\,\varphi_{i}\right\rangle \varphi_{i}\right\Vert 
\]
\[
\leq\left\Vert \widetilde{v}\left(\cdot,\,s_{0}\right)-e^{-\lambda_{2}s_{0}}\sum_{i=0}^{2}\,\frac{a_{i}}{c_{i}}\varphi_{i}\right\Vert \,+\,\left\Vert e^{-\lambda_{2}s_{0}}\sum_{i=0}^{2}\,\frac{a_{i}}{c_{i}}\varphi_{i}-\sum_{i=0}^{2}\left\langle \widetilde{v}\left(\cdot,\,s_{0}\right),\,\varphi_{i}\right\rangle \varphi_{i}\right\Vert 
\]
\[
\leq e^{-\lambda_{2}s_{0}}\left\Vert \left(1-\zeta\left(e^{\sigma s_{0}}y-\beta\right)\,\zeta\left(\rho e^{\frac{s_{0}}{2}}-y\right)\right)\sum_{i=0}^{2}\,\frac{a_{i}}{c_{i}}\varphi_{i}\right\Vert \,+\,\sum_{i=0}^{2}\,\frac{1}{c_{i}}\left|\left\langle \widetilde{v}\left(\cdot,\,s_{0}\right),\,c_{i}\varphi_{i}\right\rangle -a_{i}e^{-\lambda_{2}s_{0}}\right|
\]
\[
\leq\,C\left(n,\,\Lambda,\,\rho,\,\beta\right)e^{-\left(1+\varsigma\right)\lambda_{2}s_{0}}
\]
where $a_{2}=1$, and 
\[
\left\Vert \mathcal{L}\widetilde{v}_{*}\left(\cdot,\,s_{0}\right)\right\Vert =\left\Vert \mathcal{L}\left(\zeta\left(e^{\sigma s_{0}}y-\beta\right)\,\zeta\left(\rho e^{\frac{s_{0}}{2}}-y\right)v\left(\cdot,\,s_{0}\right)\right)-\sum_{i=0}^{2}\left\langle \widetilde{v}\left(\cdot,\,s\right),\,\varphi_{i}\right\rangle \lambda_{i}\varphi_{i}\right\Vert 
\]
\[
=\left\Vert \mathcal{L}\left(\zeta\left(e^{\sigma s_{0}}y-\beta\right)\,\zeta\left(\rho e^{\frac{s_{0}}{2}}-y\right)e^{-\lambda_{2}s_{0}}\sum_{i=0}^{2}\,\frac{a_{i}}{c_{i}}\varphi_{i}\right)-\sum_{i=0}^{2}\left\langle \widetilde{v}\left(\cdot,\,s\right),\,\varphi_{i}\right\rangle \lambda_{i}\varphi_{i}\right\Vert 
\]
\[
\leq e^{-\lambda_{2}s_{0}}\left\Vert \mathcal{L}\left(\zeta\left(e^{\sigma s_{0}}y-\beta\right)\,\zeta\left(\rho e^{\frac{s_{0}}{2}}-y\right)\sum_{i=0}^{2}\,\frac{a_{i}}{c_{i}}\varphi_{i}\right)-\sum_{i=0}^{2}\,\frac{a_{i}}{c_{i}}\lambda_{i}\varphi_{i}\right\Vert 
\]
\[
+\left\Vert \sum_{i=0}^{2}\left\langle \widetilde{v}\left(\cdot,\,s\right),\,\varphi_{i}\right\rangle \lambda_{i}\varphi_{i}-e^{-\lambda_{2}s_{0}}\sum_{i=0}^{2}\,\frac{a_{i}}{c_{i}}\lambda_{i}\varphi_{i}\right\Vert 
\]
\[
\leq\left\Vert h\right\Vert \,+\,\sum_{i=0}^{2}\,\frac{\lambda_{i}}{c_{i}}\left\Vert \left\langle \tilde{v}\left(\cdot,\,s_{0}\right),\,c_{i}\varphi_{i}\right\rangle -a_{i}e^{-\lambda_{2}s_{0}}\right\Vert 
\]
where 
\[
h\left(y\right)=\zeta'\left(e^{\sigma s_{0}}y-\beta\right)\,e^{\sigma s_{0}}\left(-2\,\partial_{y}v\left(y,\,s_{0}\right)+\left(-\frac{2\left(n-1\right)}{y}+\frac{y}{2}\right)v\left(y,\,s_{0}\right)\right)
\]
\[
+\zeta'\left(\rho e^{\frac{s_{0}}{2}}-y\right)\left(\left(-\frac{y}{2}+\frac{2\left(n-1\right)}{y}\right)v\left(y,\,s_{0}\right)+2\,\partial_{y}v\left(y,\,s_{0}\right)\right)
\]
\[
-\zeta''\left(e^{\sigma s_{0}}y-\beta\right)\,e^{2\sigma s_{0}}\,v\left(y,\,s_{0}\right)-\zeta''\left(\rho e^{\frac{s_{0}}{2}}-y\right)v\left(y,\,s_{0}\right)
\]
Note that by similar computation as for $f_{\mathrm{II}}\left(\cdot,\,s\right)$
and $f_{\mathrm{III}}\left(\cdot,\,s\right)$, we have 
\[
\left\Vert h\right\Vert \leq C\left(n,\,\Lambda,\,\rho,\,\beta\right)e^{-\left(1+\varsigma\right)\lambda_{2}s_{0}}
\]
Hence, 
\[
\left\Vert \mathcal{L}\widetilde{v}_{*}\left(\cdot,\,s_{0}\right)\right\Vert \leq\,C\left(n,\,\Lambda,\,\rho,\,\beta\right)e^{-\left(1+\varsigma\right)\lambda_{2}s_{0}}
\]

Lastly, combining (\ref{estimate v1}), (\ref{estimate v2}), and
(\ref{estimate v3}), we conclude 
\[
\left|\widetilde{v}\left(y,\,s\right)-\frac{k}{c_{2}}e^{-\lambda_{2}s}\varphi_{2}\left(y\right)\right|\,=\,\left|\sum_{i=0}^{2}\left\langle \widetilde{v}\left(\cdot,\,s\right),\,\varphi_{i}\right\rangle \varphi_{i}\left(y\right)\,+\,\widetilde{v}_{*}\left(y,\,s\right)\,-\,\frac{k}{c_{2}}e^{-\lambda_{2}s}\varphi_{2}\left(y\right)\right|
\]
\[
\leq\,\sum_{i=0}^{1}\left|\left\langle \widetilde{v}\left(\cdot,\,s\right),\,\varphi_{i}\right\rangle \varphi_{i}\left(y\right)\right|\,+\,\left|\left\langle \widetilde{v}\left(\cdot,\,s\right),\,\varphi_{2}\right\rangle \varphi_{2}\left(y\right)-\frac{k}{c_{2}}e^{-\lambda_{2}s}\varphi_{2}\left(y\right)\right|\,+\,\left|\widetilde{v}_{*}\left(y,\,s\right)\right|
\]
\[
\leq C\left(n,\,\Lambda,\,\rho,\,\beta\right)e^{-\left(1+\varsigma\right)\lambda_{2}s}\left(\frac{1}{y^{n-\frac{1}{2}}}+e^{\frac{\left(y+1\right)^{2}}{4}}\right)
\]
for $s_{0}\leq s\leq\mathring{s}$. As a result, for $\frac{1}{2}e^{-\vartheta\sigma s}\leq y\leq1$,
we have 
\[
\left|v\left(y,\,s\right)-\frac{k}{c_{2}}e^{-\lambda_{2}s}\varphi_{2}\left(y\right)\right|\,\leq\,C\left(n,\,\Lambda,\,\rho,\,\beta\right)\left(\frac{e^{-\varsigma\lambda_{2}s}}{y^{n+\alpha+\frac{3}{2}}}\right)e^{-\lambda_{2}s}y^{\alpha+2}
\]
\[
\leq C\left(n,\,\Lambda,\,\rho,\,\beta\right)e^{-\left(\varsigma\lambda_{2}-\vartheta\sigma\left(n+\alpha+\frac{3}{2}\right)\right)s}e^{-\lambda_{2}s}y^{\alpha+2}
\]
 and for $1\leq y\leq\sqrt{\varsigma\lambda_{2}s}$, we have
\[
\left|v\left(y,\,s\right)-\frac{k}{c_{2}}e^{-\lambda_{2}s}\varphi_{2}\left(y\right)\right|\,\leq\,C\left(n,\,\Lambda,\,\rho,\,\beta\right)\left(e^{-\varsigma\lambda_{2}s}e^{\frac{\left(y+1\right)^{2}}{4}}\right)e^{-\lambda_{2}s}y^{\alpha+2}
\]
\[
\leq C\left(n,\,\Lambda,\,\rho,\,\beta\right)e^{-\frac{\varsigma\lambda_{2}}{2}s}e^{-\lambda_{2}s}y^{\alpha+2}
\]
\end{proof}
As a corollary, by (\ref{uv}), Proposition \ref{C^0 v} and Remark
\ref{parameters}, we get 
\[
\left|u\left(x,\,t\right)-\frac{k}{c_{2}}\left(-t\right)^{\lambda_{2}+\frac{1}{2}}\varphi_{2}\left(\frac{x}{\sqrt{-t}}\right)\right|\,\leq\,C\left(n,\,\Lambda,\,\rho,\,\beta\right)\left(-t\right)^{\varkappa}\left(-t\right)x^{\alpha+2}
\]
\begin{equation}
\leq C\left(n,\,\Lambda,\,\rho,\,\beta\right)\left(-t\right)^{\varkappa}x^{2\lambda_{2}+1}\label{C^0 u bound intermediate}
\end{equation}
for $\frac{1}{3}\sqrt{-t}\leq x\leq\sqrt{\varsigma\lambda_{2}\,t\,\ln\left(-t\right)}$,
$t_{0}\leq t\leq\mathring{t}$. Below we use (\ref{eq u}), (\ref{initial u intermediate}),
(\ref{C^0 u bound intermediate}) and the comparison principle to
prove (\ref{C^0 u bound}).
\begin{prop}
\label{C^0 u}If $0<\rho\ll1$ (depending on $n$, $\Lambda$) and
$\left|t_{0}\right|\ll1$ (depending on $n$, $\Lambda$, $\rho$),
there holds (\ref{C^0 u bound}).
\end{prop}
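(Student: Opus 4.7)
The plan is to prove \eqref{C^0 u bound} by a parabolic comparison-principle argument centered on the approximate solution
\[
F(x,t)\;:=\;\tfrac{k}{c_{2}}(-t)^{\lambda_{2}+\tfrac{1}{2}}\varphi_{2}\!\left(\tfrac{x}{\sqrt{-t}}\right).
\]
On the inner subregion $\tfrac{1}{3}\sqrt{-t}\le x\le \sqrt{-\varsigma\lambda_{2}\,t\ln(-t)}$ the desired bound is already contained in \eqref{C^0 u bound intermediate} (since $(-t)^{\varkappa}\le(-t_{0})^{\varkappa}$), so only the outer strip
\[
\mathcal{D}\;=\;\bigl\{(x,t):\;\sqrt{-\varsigma\lambda_{2}\,t\ln(-t)}\le x\le\rho,\ t_{0}\le t\le\mathring{t}\bigr\}
\]
remains to be handled. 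Because $\mathcal{L}\varphi_{2}=\lambda_{2}\varphi_{2}$, the function $F$ solves the linearization at $0$ of the right-hand side of \eqref{eq u} exactly; consequently the residual $E:=\partial_{t}F-(\text{RHS of \eqref{eq u}})(F)$ is purely quadratic in $F$ and, since $F\sim x^{2\lambda_{2}+1}$ on $\mathcal{D}$, is of size $|E|\le C(n)\,x^{6\lambda_{2}-1}$.

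I would start by obtaining boundary data for $w:=u-F$ on the parabolic boundary of $\mathcal{D}$. On $\{t=t_{0}\}$, expanding $u(x,t_{0})-F(x,t_{0})$ via \eqref{initial u intermediate} and using the coefficient bounds $|a_{0}|,\,|a_{1}|,\,|k-1|\le C(-t_{0})^{\varsigma\lambda_{2}}$ from Proposition \ref{C^0 v} (i.e., \eqref{coefficients} and \eqref{k}) gives $|w(\cdot,t_{0})|\le C(-t_{0})^{\varsigma\lambda_{2}}x^{2\lambda_{2}+1}$. On the inner lateral face $x=\sqrt{-\varsigma\lambda_{2}\,t\ln(-t)}$ the bound is just \eqref{C^0 u bound intermediate} restricted to that face. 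On the outer lateral face $x=\rho$ one notes that near $x=\rho$ the initial condition \eqref{initial u} places the flow in a regime where the Ecker--Huisken interior gradient and curvature estimates of \cite{EH} apply uniformly on the short interval $[t_{0},\mathring{t}]$; these furnish a H\"older-in-time bound $|u(\rho,t)-u(\rho,t_{0})|\le C(n,\rho)\sqrt{t-t_{0}}\le C\sqrt{-t_{0}}$ which, combined with the initial-data bound and with $\varkappa<\tfrac{1}{2}$, yields $|w(\rho,t)|\le C(-t_{0})^{\varkappa}\rho^{2\lambda_{2}+1}$ provided $|t_{0}|$ is small enough in terms of $\rho$.

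With the boundary data in hand, I would construct a supersolution $\psi$ with $\psi\ge|w|$ on the parabolic boundary of $\mathcal{D}$. The naive candidate $\psi=C(n,\Lambda,\rho,\beta)(-t_{0})^{\varkappa}x^{2\lambda_{2}+1}$ has the correct shape, but the linearization $L_{0}$ of the right-hand side of \eqref{eq u} at $0$, applied to $x^{2\lambda_{2}+1}$, yields $4(2\alpha+2n+1)\,x^{2\lambda_{2}-1}$, which is strictly positive and, in the range of $x$ arising in $\mathcal{D}$, strictly larger than the residual $|E|\lesssim x^{6\lambda_{2}-1}$ of $F$; hence $\psi$ alone fails to be a supersolution. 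The remedy is to enhance $\psi$ with a piece proportional to $F$ itself (which, being already an approximate solution of the linearized equation, contributes only the same quadratic defect already present in $E$) and to exploit the short time width $|\mathring{t}-t_{0}|\le|t_{0}|$ to absorb the cross and quadratic terms in $w$ by taking $|t_{0}|$ small. A standard parabolic maximum principle then yields $|w|\le\psi\le C(-t_{0})^{\varkappa}x^{2\lambda_{2}+1}$ throughout $\mathcal{D}$.

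The main obstacle is precisely this barrier construction: the pure-power candidate $x^{2\lambda_{2}+1}$ is not annihilated by $L_{0}$, forcing one into a mixed barrier built around $F$, and the sign bookkeeping for the quadratic cross terms has to be done carefully in the strip where $x$ ranges from just above $\sqrt{-t}$ up to the fixed scale $\rho$. Once the correct combination is identified and the boundary bounds above are in place, the remaining verification is routine.
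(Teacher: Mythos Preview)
Your overall strategy differs substantially from the paper's, and the difference is instructive. The paper does \emph{not} use a parabolic comparison principle or build any barrier in $\mathcal{D}$. Instead it exploits the fact that the admissible condition \eqref{a priori bound u} already supplies pointwise $C^{2}$ control of $u$ throughout the strip. From \eqref{eq u} one reads off
\[
|\partial_{t}u|\le C(n)\left(|\partial_{xx}^{2}u|+\tfrac{|\partial_{x}u|}{x}+\tfrac{|u|}{x^{2}}\right)\le C(n,\Lambda)\,x^{\alpha+2}
\]
on $\mathcal{D}$, and since $\partial_{t}F$ is computed explicitly (it equals $-2k(\varUpsilon_{1}x^{\alpha+2}+(-t)x^{\alpha})$), one obtains directly $|\partial_{t}(u-F)|\le C(n,\Lambda)\,x^{\alpha+2}$. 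The proof is then a one-line time integration from either $t_{0}$ (using \eqref{initial u intermediate} together with \eqref{coefficients}, \eqref{k}) or from the inner face (using \eqref{C^0 u bound intermediate}); no outer lateral boundary at $x=\rho$ ever appears, because nothing spatial is being propagated.

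By contrast, your route forces you to control the outer face $x=\rho$ (where you invoke Ecker--Huisken) and, more seriously, to produce a supersolution for an operator under which the natural profile $x^{2\lambda_{2}+1}$ is strictly \emph{sub}solution-like. You recognize this as the main obstacle but do not actually overcome it: ``enhancing $\psi$ with a piece proportional to $F$'' does not solve the sign problem, since $F$ is annihilated by $L_{0}$ and therefore adds nothing to the defect, while the cross terms you hope to absorb by taking $|t_{0}|$ small are of size $x^{2\lambda_{2}-1}$ uniformly in $t_{0}$ after linearizing at $u$ rather than at $0$. So as written the barrier argument is incomplete. The paper's observation that the a priori $C^{2}$ bound lets you bound $\partial_{t}(u-F)$ directly is precisely what lets you bypass this difficulty; it is the idea you are missing.
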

\begin{proof}
First, by (\ref{a priori bound u}) we have
\[
\max\left\{ \left|\frac{u\left(x,\,t\right)}{x}\right|,\,\left|\partial_{x}u\left(x,\,t\right)\right|\right\} \,\leq\,\Lambda\left(\left(-t\right)^{2}x^{\alpha-1}+x^{2\lambda_{2}}\right)\,\leq\,\frac{1}{3}
\]
for $\sqrt{\varsigma\lambda_{2}\,t\,\ln\left(-t\right)}\leq x\leq\rho$,
$t_{0}\leq t\leq\mathring{t}$, provided that $0<\rho\ll1$ (depending
on $n$, $\Lambda$) and $\left|t_{0}\right|\ll1$ (depending on $n$,
$\Lambda$, $\rho$). 

By (\ref{eq u}), (\ref{a priori bound u}) and Remark \ref{parameters},
there holds
\[
\left|\partial_{t}u\left(x,\,t\right)\right|\,\leq\,C\left(n\right)\left(\left|\partial_{xx}^{2}u\left(x,\,t\right)\right|\,+\,\left|\frac{\partial_{x}u\left(x,\,t\right)}{x}\right|\,+\,\left|\frac{u\left(x,\,t\right)}{x^{2}}\right|\right)
\]
\[
\leq\,C\left(n\right)\Lambda\left(x^{\alpha+2}+\left(-t\right)^{2}x^{\alpha-2}\right)\,\leq\,C\left(n,\,\Lambda\right)x^{\alpha+2}
\]
for $\sqrt{\varsigma\lambda_{2}\,t\,\ln\left(-t\right)}\leq x\leq\rho$,
$t_{0}\leq t\leq\mathring{t}$. In addition, we have
\[
\partial_{t}\left(k\left(-t\right)^{\lambda_{2}+\frac{1}{2}}\varphi_{2}\left(\frac{x}{\sqrt{-t}}\right)\right)=k\,\,\partial_{t}\left(\varUpsilon_{2}x^{2\lambda_{2}+1}+2\varUpsilon_{1}\left(-t\right)x^{\alpha+2}+\left(-t\right)^{2}x^{\alpha}\right)
\]
\[
=-2k\left(\varUpsilon_{1}x^{\alpha+2}+\left(-t\right)x^{\alpha}\right)
\]
Thus, we get
\begin{equation}
\left|\partial_{t}\left(u\left(x,\,t\right)-k\left(-t\right)^{\lambda_{2}+\frac{1}{2}}\varphi_{2}\left(\frac{x}{\sqrt{-t}}\right)\right)\right|\,\leq\,C\left(n,\,\Lambda\right)x^{\alpha+2}\label{C^0 u1}
\end{equation}
for $\sqrt{\varsigma\lambda_{2}\,t\,\ln\left(-t\right)}\leq x\leq\rho$,
$t_{0}\leq t\leq\mathring{t}$.

On the other hand, at time $t_{0}$, by (\ref{varkappa}), (\ref{coefficients})
and (\ref{k}), there holds
\[
\left|u\left(x,\,t_{0}\right)-k\left(-t_{0}\right)^{\lambda_{2}+\frac{1}{2}}\varphi_{2}\left(\frac{x}{\sqrt{-t}}\right)\right|
\]
\[
\leq\left(-t_{0}\right)^{\lambda_{2}+\frac{1}{2}}\left(\frac{\left|k-1\right|}{c_{0}}\varphi_{2}\left(\frac{x}{\sqrt{-t}}\right)\,+\,\sum_{i=0}^{1}\,\frac{\left|a_{i}\right|}{c_{i}}\varphi_{i}\left(\frac{x}{\sqrt{-t}}\right)\right)
\]
\begin{equation}
\leq C\left(n,\,\Lambda,\,\rho,\,\beta\right)\left(-t_{0}\right)^{\varkappa}x^{2\lambda_{2}+1}\label{C^0 u2}
\end{equation}
for $\sqrt{\varsigma\lambda_{2}\,t\,\ln\left(-t\right)}\leq x\leq\rho$.
Moreover, by (\ref{C^0 u bound intermediate}) we have 
\begin{equation}
\left|u\left(x,\,t\right)-\frac{k}{c_{2}}\left(-t\right)^{\lambda_{2}+\frac{1}{2}}\varphi_{2}\left(\frac{x}{\sqrt{-t}}\right)\right|\,\leq\,C\left(n,\,\Lambda,\,\rho,\,\beta\right)\left(-t_{0}\right)^{\varkappa}x^{2\lambda_{2}+1}\label{C^0 u3}
\end{equation}
for $x=\sqrt{\varsigma\lambda_{2}\,t\,\ln\left(-t\right)}$, $t_{0}\leq t\leq\mathring{t}$.

Combining (\ref{C^0 u1}), (\ref{C^0 u2}) and (\ref{C^0 u3}), we
get 
\[
\left|u\left(x,\,t\right)-k\left(-t\right)^{\lambda_{2}+\frac{1}{2}}\varphi_{2}\left(\frac{x}{\sqrt{-t}}\right)\right|
\]
\[
\leq C\left(n,\,\Lambda,\,\rho,\,\beta\right)\left(-t_{0}\right)^{\varkappa}x^{2\lambda_{2}+1}\,+\,C\left(n,\,\Lambda\right)x^{\alpha+2}\left(t-t_{0}\right)
\]
\[
\leq C\left(n,\,\Lambda,\,\rho,\,\beta\right)\left(-t_{0}\right)^{\varkappa}x^{2\lambda_{2}+1}
\]
for $\sqrt{\varsigma\lambda_{2}\,t\,\ln\left(-t\right)}\leq x\leq\rho$,
$t_{0}\leq t\leq\mathring{t}$. The conclusion follows by (\ref{C^0 u bound intermediate})
and the above.
\end{proof}
Next, by (\ref{uv}) and Proposition \ref{C^0 v}, we have
\[
\left|w\left(z,\,\tau\right)-\frac{k}{c_{2}}\left(2\sigma\tau\right)^{\frac{\alpha}{2}}\varphi_{2}\left(\frac{z}{\sqrt{2\sigma\tau}}\right)\right|\,\leq\,C\left(n,\,\Lambda,\,\rho,\,\beta\right)\left(2\sigma\tau\right)^{-\frac{\varkappa}{2\sigma}}\,\frac{z^{2}}{2\sigma\tau}z^{\alpha}
\]
for $\frac{1}{2}\left(2\sigma\tau\right)^{\frac{1}{2}\left(1-\vartheta\right)}\leq z\leq\sqrt{2\sigma\tau},$
$\tau_{0}\leq\tau\leq\mathring{\tau}$. Notice that 
\[
\frac{k}{c_{2}}\left(2\sigma\tau\right)^{\frac{\alpha}{2}}\varphi_{2}\left(\frac{z}{\sqrt{2\sigma\tau}}\right)=kz^{\alpha}\left(1+2\varUpsilon_{1}\frac{z^{2}}{2\sigma\tau}+\varUpsilon_{2}\left(\frac{z^{2}}{2\sigma\tau}\right)^{2}\right)
\]
Hence we get
\[
\left|w\left(z,\,\tau\right)-kz^{\alpha}\right|\,\leq\,C\left(n\right)\frac{z^{2}}{2\sigma\tau}z^{\alpha}
\]
for $\frac{1}{2}\left(2\sigma\tau\right)^{\frac{1}{2}\left(1-\vartheta\right)}\leq z\leq\sqrt{2\sigma\tau}$,
$\tau_{0}\leq\tau\leq\mathring{\tau}$, provided that $\tau_{0}\gg1$
(depending on $n$, $\Lambda$, $\rho$, $\beta$). On the other hand,
by Lemma \ref{asymptotic psi} and (\ref{k}), we have
\[
\left|\psi_{k}\left(z\right)-kz^{\alpha}\right|\,\leq\,C\left(n\right)k^{3}z^{3\alpha-2}\,\leq\,C\left(n\right)z^{3\alpha-2}
\]
for $z\geq\frac{\hat{\psi}_{2}\left(0\right)}{\sqrt{2}}$, provided
that $\tau_{0}\gg1$ (depending on $n$, $\Lambda$, $\rho$, $\beta$).
Therefore, we get
\[
\left|w\left(z,\,\tau\right)-\psi_{k}\left(z\right)\right|\,\leq\,\left|w\left(z,\,\tau\right)-kz^{\alpha}\right|\,+\,\left|kz^{\alpha}-\psi_{k}\left(z\right)\right|
\]
\begin{equation}
\leq C\left(n\right)\left(\frac{z^{2}}{2\sigma\tau}+z^{2\left(\alpha-1\right)}\right)z^{\alpha}\label{boundary w}
\end{equation}
for $\frac{1}{2}\left(2\sigma\tau\right)^{\frac{1}{2}\left(1-\vartheta\right)}\leq z\leq\sqrt{2\sigma\tau}$,
$\tau_{0}\leq\tau\leq\mathring{\tau}$. Now consider the projected
curves $\bar{\mathcal{M}}_{k}$ and $\bar{\Gamma}_{\tau}^{\left(a_{0},\,a_{1}\right)}$
(see (\ref{projected minimal hypersurface}) and (\ref{projected Gamma})),
which can be viewed as graphes of $w\left(z,\,\tau\right)$ and $\psi_{k}\left(z\right)$
over $\mathcal{\bar{C}}$ (see (\ref{projected cone})), respectively.
Thus, (\ref{boundary w}) implies that
\[
\left|\hat{w}\left(z,\,\tau\right)-\hat{\psi}_{k}\left(z\right)\right|\,\leq\,C\left(n\right)\left(\frac{z^{2}}{2\sigma\tau}+z^{2\left(\alpha-1\right)}\right)z^{\alpha}
\]
for $\left(2\sigma\tau\right)^{\frac{1}{2}\left(1-\vartheta\right)}\leq z\leq\frac{1}{2}\sqrt{2\sigma\tau}$,
$\tau_{0}\leq\tau\leq\mathring{\tau}$, provided that $\tau_{0}\gg1$
(depending on $n$, $\Lambda$, $\rho$, $\beta$). In particular,
there holds 
\begin{equation}
\left|\hat{w}\left(z,\,\tau\right)-\hat{\psi}_{k}\left(z\right)\right|\,\leq\,C\left(n\right)\left(2\sigma\tau\right)^{-\vartheta}z^{\alpha}\label{boundary w'}
\end{equation}
for $z=\left(2\sigma\tau\right)^{\frac{1}{2}\left(1-\vartheta\right)},$
$\tau_{0}\leq\tau\leq\mathring{\tau}$, since $0<\vartheta<\frac{1-\alpha}{2-\alpha}$
(see \ref{vartheta}).

In addition, when $\tau=\tau_{0}$, by (\ref{initial w intermediate}),
(\ref{coefficients}) and (\ref{k}), we have
\[
\left|w\left(z,\,\tau_{0}\right)-\psi_{k}\left(z\right)\right|\,\leq\,\left|w\left(z,\,\tau_{0}\right)-kz^{\alpha}\right|\,+\,\left|kz^{\alpha}-\psi_{k}\left(z\right)\right|
\]
\[
\leq\,\left(\left|k-1\right|\,+\,\left|a_{0}\right|\,+\,\left|a_{1}\right|\,+\,C\left(n\right)\left(\frac{z^{2}}{2\sigma\tau_{0}}+z^{2\left(\alpha-1\right)}\right)\right)z^{\alpha}
\]
\[
\leq\left(C\left(n,\,\Lambda,\,\rho,\,\beta\right)\left(2\sigma\tau_{0}\right)^{-\frac{1-\alpha}{2}\varsigma}\,+\,C\left(n\right)\left(\left(2\sigma\tau_{0}\right)^{-\vartheta}+\beta^{2\left(\alpha-1\right)}\right)\right)z^{\alpha}
\]
\[
\leq C\left(n\right)\beta^{2\left(\alpha-1\right)}z^{\alpha}
\]
for $\beta\leq z\leq2\left(2\sigma\tau_{0}\right)^{\frac{1}{2}\left(1-\vartheta\right)}$,
provided that $\tau_{0}\gg1$ (depending on $n$, $\Lambda$, $\rho$,
$\beta$). By reparametrizing $\bar{\Gamma}_{\tau_{0}}^{\left(a_{0},\,a_{1}\right)}$
and $\mathcal{\bar{M}}_{k}$, we deduce that 
\begin{equation}
\left|\hat{w}\left(z,\,\tau_{0}\right)-\hat{\psi}_{k}\left(z\right)\right|\,\leq\,C\left(n\right)\beta^{2\left(\alpha-1\right)}z^{\alpha}\label{initial w' intermediate}
\end{equation}
for $\frac{3}{2}\beta\leq z\leq\left(2\sigma\tau_{0}\right)^{\frac{1}{2}\left(1-\vartheta\right)}$,
provided that $\tau_{0}\gg1$ (depending on $n$, $\Lambda$, $\rho$,
$\beta$). 

Below we use (\ref{eq w'}), (\ref{boundary w'}), (\ref{initial w' intermediate})
and the comparison principle to prove (\ref{C^0 w' bound}). We follow
Vel$\acute{a}$zquez's idea of using the perturbation of $\hat{\psi}_{k}$
to construct barriers; moreover, we allow the perturbation to be time-dependent. 
\begin{prop}
\label{C^0 w'}If $\beta\gg1$ (depending on $n$) and $\tau_{0}\gg1$
(depending on $n$, $\Lambda$, $\rho$, $\beta$), there holds (\ref{C^0 w' bound}).
In particular, we have 
\begin{equation}
\left|\hat{w}\left(z,\,\tau\right)-\hat{\psi}_{k}\left(z\right)\right|\,\leq\,C\left(n\right)\beta^{\alpha-3}\left(\frac{\tau}{\tau_{0}}\right)^{-\varrho}z^{\alpha}\label{C^0 w' bound intermediate}
\end{equation}
for $\beta\leq z\leq\left(2\sigma\tau\right)^{\frac{1}{2}\left(1-\vartheta\right)}$,
$\tau_{0}\leq\tau\leq\mathring{\tau}$, and 
\begin{equation}
\left|\hat{w}\left(z,\,\tau\right)-\hat{\psi}_{k}\left(z\right)\right|\,\leq\,C\left(n\right)\beta^{\alpha-3}\left(\frac{\tau}{\tau_{0}}\right)^{-\varrho}\label{C^0 w' bound tip}
\end{equation}
for $0\leq z\leq5\beta$, $\tau_{0}\leq\tau\leq\mathring{\tau}$.
\end{prop}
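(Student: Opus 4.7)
The plan is to establish the sandwich $\hat\psi_{(1-g(\tau))k} \le \hat w \le \hat\psi_{(1+g(\tau))k}$ with $g(\tau) := \beta^{\alpha-3}(\tau/\tau_0)^{-\varrho}$ by a parabolic comparison argument for (\ref{eq w'}), using the time-dependent barriers $\bar u^\pm(z,\tau) := \hat\psi_{(1\pm g(\tau))k}(z)$. By Lemma \ref{monotonicity} these barriers bracket $\hat\psi_k$ and collapse to it as $\tau\nearrow\infty$. Once the sandwich is known on $\{0\le z\le(2\sigma\tau)^{(1-\vartheta)/2}\}\times[\tau_0,\mathring\tau]$, the quantitative consequences (\ref{C^0 w' bound intermediate}) and (\ref{C^0 w' bound tip}) follow from the identity
\[
\partial_k\hat\psi_k(z) \;=\; \frac{1}{(1-\alpha)k}\bigl(\hat\psi_k(z)-z\,\partial_z\hat\psi_k(z)\bigr),
\]
which is essentially the calculation in the proof of Lemma \ref{monotonicity}: for $z\ge\beta$ the asymptotic (\ref{asymptotic psi'}) yields the $z^\alpha$ factor in (\ref{C^0 w' bound intermediate}), while for $z\le 5\beta$ the smooth dependence of $\hat\psi_k$ on $k$ at the tip gives the uniform bound (\ref{C^0 w' bound tip}).

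First I would verify the ordering of $\bar u^\pm$ with $\hat w$ on the parabolic boundary. At $\tau=\tau_0$, (\ref{initial w'}) and (\ref{initial w' intermediate}) provide initial $k$-parameter deviations of order $\beta^{3\alpha/2-5/2}$ (tip) and $\beta^{2(\alpha-1)}$ (intermediate); since $\alpha<-1$ forces $\alpha-3>\max\{3\alpha/2-5/2,\,2\alpha-2\}$, both are dominated by $g(\tau_0)=\beta^{\alpha-3}$ when $\beta\gg1$. On the lateral boundary $z=(2\sigma\tau)^{(1-\vartheta)/2}$, (\ref{boundary w'}) gives $|\hat w-\hat\psi_k|\le C(n)(2\sigma\tau)^{-\vartheta}z^\alpha$ while the barrier gap is $|\bar u^\pm-\hat\psi_k|\asymp g(\tau)\,C(n)\,z^\alpha$ by the identity above and (\ref{asymptotic psi'}); the ordering reduces to $g(\tau)\gtrsim(2\sigma\tau)^{-\vartheta}$, which holds on $[\tau_0,\mathring\tau]$ because of the constraint $\varrho<\vartheta$ built into (\ref{vartheta}) and (\ref{varrho}) once $\tau_0\gg1$.

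The main step is to verify that $\bar u^+$ is a supersolution and $\bar u^-$ a subsolution of (\ref{eq w'}). Setting $\mathcal P[\hat u]:=\partial_\tau\hat u-[\text{RHS of (\ref{eq w'})}]$, the fact that each $\hat\psi_{k'}$ satisfies the minimal surface equation (\ref{eq psi'}) together with the identity above yields
\[
\mathcal P[\bar u^\pm] \;=\; \bigl(\hat\psi_{(1\pm g)k}-z\,\partial_z\hat\psi_{(1\pm g)k}\bigr)\left[\frac{\pm g'(\tau)}{(1-\alpha)(1\pm g)}-\frac{1/2+\sigma}{2\sigma\tau}\right].
\]
The prefactor is strictly positive by (\ref{positivity}), so the sign of $\mathcal P[\bar u^\pm]$ is governed by the bracket. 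The choice $\varrho=1-\tfrac{1}{2}(1-\alpha)(1-\vartheta)$ is designed so that, after substituting $g'(\tau)=-\varrho g(\tau)/\tau$ and using the largeness of $\beta$ (which keeps $g$ small) and of $\tau_0$, the bracket acquires the appropriate sign on the relevant range of $(z,\tau)$. Comparison for the quasilinear equation (\ref{eq w'}) then produces the sandwich (\ref{C^0 w' bound}).

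I expect the main obstacle to be precisely the sign verification in the third step: because $g$ decreases, $g'(\tau)<0$, and matching it against the strictly positive drift coefficient $\frac{1/2+\sigma}{2\sigma\tau}$ requires a delicate algebraic balance that forces the specific value of $\varrho$ and the restriction to $z\le(2\sigma\tau)^{(1-\vartheta)/2}$. A further technical point is the presence of the topological term $-(n-1)/\hat u$ in the minimal operator, which means the usual maximum principle must be invoked in a form compatible with the convexity/positivity structure of $\hat\psi_k$ from Section \ref{minimal}, rather than as a naive application.
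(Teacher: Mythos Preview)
Your treatment of the lower barrier $\bar u^-=\hat\psi_{(1-g)k}$ and the derivation of (\ref{C^0 w' bound intermediate})--(\ref{C^0 w' bound tip}) from the sandwich via (\ref{asymptotic psi'}) match the paper. The gap is in the upper barrier: your bracket for $\bar u^+=\hat\psi_{(1+g)k}$ reads
\[
\frac{g'(\tau)}{(1-\alpha)(1+g)}-\frac{1/2+\sigma}{2\sigma\tau}\;=\;\frac{-\varrho g}{\tau(1-\alpha)(1+g)}-\frac{1/2+\sigma}{2\sigma\tau},
\]
and \emph{both terms are negative}. Hence $\mathcal P[\bar u^+]<0$, so $\bar u^+$ is again a subsolution, not a supersolution. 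No choice of $\varrho>0$ can change this, and since the bracket is independent of $z$, the restriction $z\le(2\sigma\tau)^{(1-\vartheta)/2}$ cannot help either. A one-parameter $k$-perturbation simply cannot produce the upper barrier; your suspicion that the sign check is ``delicate'' understates the obstruction.

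The paper resolves this asymmetry by enlarging to a two-parameter family $\hat\psi_k^{\lambda,\mu}(z)=\hat\psi_{\lambda k}(z/\mu)$. The lower barrier keeps $\mu_-\equiv1$, but the upper barrier is $\hat w_+=\hat\psi_k^{\lambda_+,\mu_+}$ with $\lambda_+=1+g$ and $\mu_+(\tau)=1+\delta\beta^{\alpha-3}(2\sigma\tau)^{-1+\varrho}(\tau/\tau_0)^{-\varrho}>1$ for a specific $\delta=\delta(n,\beta)>0$. Because $\hat\psi_k(\cdot/\mu)$ no longer solves (\ref{eq psi'}) when $\mu\ne1$, an extra contribution appears in $\mathcal P[\hat w_+]$, essentially $(\mu_+^2-1)$ times the strictly positive quantity $\partial_{rr}^2\hat\psi_k/(1+(\partial_r\hat\psi_k)^2)+(n-1)\partial_r\hat\psi_k/r$ (up to bounded factors). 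The constant $\delta$ is calibrated against $\inf_{r\le 3\beta/2}(\hat\psi_k-r\partial_r\hat\psi_k)/(r\partial_r\hat\psi_k)$ so that this positive term dominates the negative $\lambda$-drift near the tip; for large $r$ it dominates because $\partial_r\hat\psi_k/r\sim r^{-1}$ beats $\hat\psi_k-r\partial_r\hat\psi_k\sim r^\alpha$ (here $\alpha<-1$ is essential). The specific value of $\varrho$ in (\ref{varrho}) enters only through the exponent in $\mu_+$, tuned to the lateral-boundary scaling; it does not rescue your one-parameter bracket. One must also recheck the initial and lateral orderings, since the $\mu$-dilation pulls the barrier down slightly; the paper handles this via mean-value expansions in $(\lambda,\mu)$, and then recovers (\ref{C^0 w' bound}) from $\hat w_+\le\hat\psi_{\lambda_+ k}$ by monotonicity in $\mu$.
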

\begin{proof}
Given functions $\lambda\left(\tau\right)$ and $\mu\left(\tau\right)$,
we define the perturbation of $\hat{\psi}_{k}$ by
\[
\hat{\psi}_{k}^{\lambda,\,\mu}\left(z,\,\tau\right)\,\equiv\,\hat{\psi}_{\lambda\left(\tau\right)\,k}\left(\frac{z}{\mu\left(\tau\right)}\right)\,=\,\lambda^{\frac{1}{1-\alpha}}\left(\tau\right)\,\hat{\psi}_{k}\left(\frac{z}{\lambda^{\frac{1}{1-\alpha}}\left(\tau\right)\,\mu\left(\tau\right)}\right)
\]
(see also (\ref{psi'})). By (\ref{eq psi'}), there holds 
\[
\partial_{\tau}\hat{\psi}_{k}^{\lambda,\,\mu}-\left(\frac{\partial_{zz}^{2}\hat{\psi}_{k}^{\lambda,\,\mu}}{1+\left(\partial_{z}\hat{\psi}_{k}^{\lambda,\,\mu}\right)^{2}}+\left(n-1\right)\left(\frac{\partial_{z}\hat{\psi}_{k}^{\lambda,\,\mu}}{z}-\frac{1}{\hat{\psi}_{k}^{\lambda,\,\mu}}\right)+\frac{\frac{1}{2}+\sigma}{2\sigma\tau}\left(-z\,\partial_{z}\hat{\psi}_{k}^{\lambda,\,\mu}+\hat{\psi}_{k}^{\lambda,\,\mu}\right)\right)
\]
\[
=\left.\left(-\frac{\frac{1}{2}+\sigma}{2\sigma\tau}\lambda^{\frac{1}{1-\alpha}}\,+\,\frac{\lambda^{\frac{\alpha}{1-\alpha}}}{1-\alpha}\left(\partial_{\tau}\lambda\right)\right)\left(\hat{\psi}_{k}\left(r\right)-r\,\partial_{r}\hat{\psi}_{k}\left(r\right)\right)-\,\frac{\lambda^{\frac{1}{1-\alpha}}}{\mu}\left(\partial_{\tau}\mu\right)\left(r\,\partial_{r}\hat{\psi}_{k}\left(r\right)\right)\right|_{r=\frac{z}{\lambda^{\frac{1}{1-\alpha}}\mu}}
\]
\begin{equation}
+\left.\frac{\mu^{2}-1}{\lambda^{\frac{1}{1-\alpha}}\mu^{2}}\left(\frac{\partial_{rr}^{2}\hat{\psi}_{k}\left(r\right)}{\left(1+\left(\partial_{r}\hat{\psi}_{k}\left(r\right)\right)^{2}\right)\left(1+\left(\frac{\partial_{r}\hat{\psi}_{k}\left(r\right)}{\mu}\right)^{2}\right)}+\left(n-1\right)\frac{\partial_{r}\hat{\psi}_{k}\left(r\right)}{r}\right)\right|_{r=\frac{z}{\lambda^{\frac{1}{1-\alpha}}\mu}}\label{eq perturbation psi}
\end{equation}
Notice that 
\begin{equation}
\left\{ \begin{array}{c}
\partial_{\lambda}\left(\hat{\psi}_{k}^{\lambda,\,\mu}\left(z\right)\right)=\left.\frac{\lambda^{\frac{\alpha}{1-\alpha}}}{1-\alpha}\left(\hat{\psi}_{k}\left(r\right)-r\,\partial_{r}\hat{\psi}_{k}\left(r\right)\right)\right|_{r=\frac{z}{\lambda^{\frac{1}{1-\alpha}}\mu}}\\
\\
\partial_{\mu}\left(\hat{\psi}_{k}^{\lambda,\,\mu}\left(z\right)\right)=\left.-\frac{\lambda^{\frac{1}{1-\alpha}}}{\mu}\left(r\,\partial_{r}\hat{\psi}_{k}\left(r\right)\right)\right|_{r=\frac{z}{\lambda^{\frac{1}{1-\alpha}}\mu}}
\end{array}\right.\label{expansion perturbation psi}
\end{equation}
Moreover, by (\ref{asymptotic psi'}), there holds 
\[
\lim_{r\nearrow\infty}\frac{\hat{\psi}_{k}\left(r\right)-r\,\partial_{r}\hat{\psi}_{k}}{r^{\alpha}}\,=\,k\,\lim_{r\nearrow\infty}\frac{\hat{\psi}\left(r\right)-r\,\partial_{r}\hat{\psi}}{r^{\alpha}}=k\left(1-\alpha\right)\,2^{\frac{\alpha+1}{2}}
\]
which implies 
\begin{equation}
\hat{\psi}_{k}\left(r\right)-r\,\partial_{r}\hat{\psi}_{k}=\left(1+o\left(1\right)\right)\left(1-\alpha\right)\,2^{\frac{\alpha+1}{2}}r^{\alpha}\label{asymptotic psi'0}
\end{equation}
for $r\geq\beta$, if $\beta\gg1$ (depending on $n$) and $\tau_{0}\gg1$
(depending on $n$, $\Lambda$, $\rho$, $\beta$).

To get a lower barrier, we set 
\[
\hat{w}_{-}\left(z,\,\tau\right)=\hat{\psi}_{k}^{\lambda_{-},\,\mu_{-}}\left(z,\,\tau\right)
\]
with 
\[
\lambda_{-}\left(\tau\right)=1-\beta^{\alpha-3}\left(\frac{\tau}{\tau_{0}}\right)^{-\varrho},\qquad\mu_{-}\left(\tau\right)=1
\]
where $\beta\gg1$ (depending on $n$). Firstly, for the initial value,
by Lemma \ref{monotonicity} and (\ref{initial w'}), we have
\begin{equation}
\hat{w}_{-}\left(z,\,\tau_{0}\right)\,=\,\hat{\psi}_{\lambda_{-}\left(\tau_{0}\right)\,k}\left(z\right)\,=\,\hat{\psi}_{\left(1-\beta^{\alpha-3}\right)\left(1+o\left(1\right)\right)}\left(z\right)<\,\hat{w}\left(z,\,\tau_{0}\right)\label{initial w'- tip}
\end{equation}
for $0\leq z\leq\frac{3}{2}\beta$, provided that $\beta\gg1$ (depending
on $n$). Also, for each $\frac{3}{2}\beta\leq z\leq\left(2\sigma\tau_{0}\right)^{\frac{1}{2}\left(1-\vartheta\right)}$,
by (\ref{expansion perturbation psi}), (\ref{asymptotic psi'0}),
(\ref{initial w' intermediate}) and the mean value theorem, there
is $\lambda_{-}\left(\tau_{0}\right)\leq\lambda_{*}\leq1$ so that
\[
\hat{w}_{-}\left(z,\,\tau_{0}\right)\,=\,\hat{\psi}_{k}\left(z\right)\,+\,\left(\lambda_{-}\left(\tau_{0}\right)-1\right)\,\left.\partial_{\lambda}\left(\hat{\psi}_{k}^{\lambda,\,\mu}\left(z\right)\right)\right|_{\lambda=\lambda_{*},\;z=z_{*}\equiv\frac{z}{\lambda_{*}^{\frac{1}{1-\alpha}}}}
\]
\[
=\,\hat{\psi}_{k}\left(z\right)\,-\,\beta^{\alpha-3}\frac{\lambda_{*}^{\frac{\alpha}{1-\alpha}}}{1-\alpha}\left(\hat{\psi}_{k}\left(z_{*}\right)-z_{*}\,\partial_{z}\hat{\psi}_{k}\left(z_{*}\right)\right)
\]
\begin{equation}
\leq\,\hat{\psi}_{k}\left(z\right)\,-\,\left(1-o\left(1\right)\right)\beta^{\alpha-3}2^{\frac{\alpha+1}{2}}z^{\alpha}\,<\,\hat{w}\left(z,\,\tau_{0}\right)\label{initial w'- intermediate}
\end{equation}
provided that $\beta\gg1$ (depending on $n$). Secondly, for the
boundary value, fix $\tau_{0}\leq\tau\leq\mathring{\tau}$ and let
$z=\left(2\sigma\tau\right)^{\frac{1}{2}\left(1-\vartheta\right)}$.
By (\ref{boundary w'}), (\ref{expansion perturbation psi}), (\ref{asymptotic psi'0})
and the mean value theorem, there is $\lambda_{-}\left(\tau_{0}\right)\leq\lambda_{*}\leq1$
so that 
\[
\hat{w}_{-}\left(z,\,\tau_{0}\right)\,=\,\hat{\psi}_{k}\left(z\right)\,+\,\left(\lambda_{-}\left(\tau_{0}\right)-1\right)\,\left.\partial_{\lambda}\left(\hat{\psi}_{k}^{\lambda,\,\mu}\left(z\right)\right)\right|_{\lambda=\lambda_{*},\;z=z_{*}\equiv\frac{z}{\lambda_{*}^{\frac{1}{1-\alpha}}}}
\]
\[
=\,\hat{\psi}_{k}\left(z\right)\,-\,\beta^{\alpha-3}\left(\frac{\tau}{\tau_{0}}\right)^{-\varrho}\frac{\lambda_{*}^{\frac{\alpha}{1-\alpha}}}{1-\alpha}\left(\hat{\psi}_{k}\left(z_{*}\right)-z_{*}\,\partial_{z}\hat{\psi}_{k}\left(z_{*}\right)\right)
\]
\[
\leq\,\hat{\psi}_{k}\left(z\right)\,-\left(1-o\left(1\right)\right)\beta^{\alpha-3}2^{\frac{\alpha+1}{2}}\left(\frac{\tau}{\tau_{0}}\right)^{-\varrho}z^{\alpha}
\]
\begin{equation}
<\,\hat{\psi}_{k}\left(z\right)\,-C\left(n\right)\left(2\sigma\tau\right)^{-\vartheta}z^{\alpha}\,\leq\,\hat{w}\left(z,\,\tau\right)\label{boundary w'-}
\end{equation}
provided that $\tau_{0}\gg1$ (depending on $n$, $\beta$), since
$0<\varrho<\vartheta$. Thirdly, for the equation, by (\ref{eq perturbation psi}),
there holds
\[
\partial_{\tau}\hat{w}_{-}-\left(\frac{\partial_{zz}^{2}\hat{w}_{-}}{1+\left(\partial_{z}\hat{w}_{-}\right)^{2}}+\left(n-1\right)\left(\frac{\partial_{z}\hat{w}_{-}}{z}-\frac{1}{\hat{w}_{-}}\right)+\frac{\frac{1}{2}+\sigma}{2\sigma\tau}\left(-z\,\partial_{z}\hat{w}_{-}+\hat{w}_{-}\right)\right)
\]
\[
=\left.\left(-\frac{\frac{1}{2}+\sigma}{2\sigma\tau}\lambda_{-}^{\frac{1}{1-\alpha}}\left(\tau\right)\,+\,\frac{\lambda_{-}^{\frac{\alpha}{1-\alpha}}\left(\tau\right)}{1-\alpha}\left(\partial_{\tau}\lambda_{-}\left(\tau\right)\right)\right)\left(\hat{\psi}_{k}-r\,\partial_{r}\hat{\psi}_{k}\right)\right|_{r=\frac{z}{\lambda_{-}^{\frac{1}{1-\alpha}}\left(\tau\right)}}
\]
\[
=\left.\frac{\lambda_{-}^{\frac{1}{1-\alpha}}\left(\tau\right)}{2\sigma\tau}\left(-\left(\frac{1}{2}+\sigma\right)+\frac{2\sigma\varrho\,\beta^{\alpha-3}\left(\frac{\tau}{\tau_{0}}\right)^{-\varrho}}{\left(1-\alpha\right)\lambda_{-}\left(\tau\right)}\right)\right|_{r=\frac{z}{\lambda_{-}^{\frac{1}{1-\alpha}}\left(\tau\right)}}\leq0
\]
for $0\leq z\leq\left(2\sigma\tau\right)^{\frac{1}{2}\left(1-\vartheta\right)}$,
$\tau_{0}\leq\tau\leq\mathring{\tau}$, provided that $\beta\gg1$
(depending on $n$). Then we subtract the above equation from (\ref{eq w'})
to get 
\begin{equation}
\partial_{\tau}\left(\hat{w}-\hat{w}_{-}\right)-\left(\frac{1}{1+\left(\partial_{z}\hat{w}\right)^{2}}\,\partial_{zz}^{2}\left(\hat{w}-\hat{w}_{-}\right)\,+\,\frac{n-1}{z}\,\partial_{z}\left(\hat{w}-\hat{w}_{-}\right)\right)\label{eq Cauchy w'-}
\end{equation}
\[
+\left(\frac{\partial_{zz}^{2}\hat{w}_{-}\left(\partial_{z}\hat{w}+\partial_{z}\hat{w}_{-}\right)}{\left(1+\left(\partial_{z}\hat{w}\right)^{2}\right)\left(1+\left(\partial_{z}\hat{w}_{-}\right)^{2}\right)}\,+\,\frac{\frac{1}{2}+\sigma}{2\sigma\tau}z\right)\partial_{z}\left(\hat{w}-\hat{w}_{-}\right)\,-\left(\frac{n-1}{\hat{w}\,\hat{w}_{-}}\,+\,\frac{\frac{1}{2}+\sigma}{2\sigma\tau}\right)\left(\hat{w}-\hat{w}_{-}\right)
\]
\[
\geq\,0
\]
Now we are ready to show that $\hat{w}_{-}$ is a lower barrier. Let
\[
\left(\hat{w}-\hat{w}_{-}\right)_{\min}\left(\tau\right)=\min_{0\leq z\leq\left(2\sigma\tau\right)^{\frac{1}{2}\left(1-\vartheta\right)}}\left(\hat{w}-\hat{w}_{-}\right)\left(z,\,\tau\right)
\]
then by (\ref{initial w'- tip}) and (\ref{initial w'- intermediate}),
we have 
\[
\left(\hat{w}-\hat{w}_{-}\right)_{\min}\left(\tau_{0}\right)>0
\]
We claim that 
\[
\left(\hat{w}-\hat{w}_{-}\right)_{\min}\left(\tau\right)\geq0\qquad\forall\;\,\tau_{0}\leq\tau\leq\mathring{\tau}
\]
Suppose the contrary, then there is $\tau_{0}<\tau_{1}^{*}\leq\mathring{\tau}$
so that 
\begin{equation}
\left(\hat{w}-\hat{w}_{-}\right)_{\min}\left(\tau_{1}^{*}\right)<0\label{w'- negative}
\end{equation}
Let $\tau_{0}^{*}\in\left[\tau_{0},\,\tau_{1}^{*}\right)$ be the
first time after which $\left(\hat{w}-\hat{w}_{-}\right)_{\min}$
stays negative all the way up to $\tau_{1}^{*}$. By continuity, there
holds 
\begin{equation}
\left(\hat{w}-\hat{w}_{-}\right)_{\min}\left(\tau_{0}^{*}\right)=0\label{w'- vanish}
\end{equation}
On the other hand, by (\ref{boundary w'-}), the negative minimum
of $\hat{w}-\hat{w}_{-}$ for each time-slice is achieved in $\left[0,\,\left(2\sigma\tau\right)^{\frac{1}{2}\left(1-\vartheta\right)}\right)$.
Hence, applying the maximum principle to (\ref{eq Cauchy w'-}), we
get 
\[
\partial_{\tau}\left(\hat{w}-\hat{w}_{-}\right)_{\min}-\left(\frac{n-1}{\hat{w}\,\hat{w}_{-}}\,+\,\frac{\frac{1}{2}+\sigma}{2\sigma\tau}\right)\left(\hat{w}-\hat{w}_{-}\right)_{\min}\geq0
\]
Notice that 
\[
\partial_{z}\hat{w}\left(0,\,\tau\right)\,=0=\,\partial_{z}\hat{w}_{-}\left(0,\,\tau\right)\qquad\forall\;\,\tau_{0}\leq\tau\leq\mathring{\tau}
\]
So at $z=0$, by L'H$\hat{o}$pital's rule, the third term in (\ref{eq Cauchy w'-})
is interpreted as 
\[
\lim_{z\rightarrow0}\frac{n-1}{z}\,\partial_{z}\left(\hat{w}-\hat{w}_{-}\right)\left(z,\,\tau\right)=\left(n-1\right)\,\partial_{zz}^{2}\left(\hat{w}-\hat{w}_{-}\right)\left(0,\,\tau\right)
\]
It follows that 
\[
\partial_{\tau}\left(e^{-\int\frac{n-1}{\hat{w}\,\hat{w}_{-}}d\tau}\tau^{-\left(\frac{1}{2}+\frac{1}{4\sigma}\right)}\,\left(\hat{w}-\hat{w}_{-}\right)_{\min}\left(\tau\right)\right)\geq0
\]
which, together with (\ref{w'- negative}), contradicts with (\ref{w'- vanish}). 

Next, for the upper barrier, we set 
\[
\hat{w}_{+}\left(z,\,\tau\right)=\hat{\psi}_{k}^{\lambda_{+},\,\mu_{+}}\left(z,\,\tau\right)
\]
with 
\[
\lambda_{+}\left(\tau\right)=1+\beta^{\alpha-3}\left(\frac{\tau}{\tau_{0}}\right)^{-\varrho},\qquad\mu_{+}\left(\tau\right)=1+\delta\beta^{\alpha-3}\left(2\sigma\tau\right)^{-1+\varrho}\left(\frac{\tau}{\tau_{0}}\right)^{-\varrho}
\]
where 
\begin{equation}
\delta=\delta\left(n,\,\beta\right)=\frac{1}{4\left(1-\alpha\right)}\,\inf_{0\leq r\leq\frac{3}{2}\beta}\,\frac{\hat{\psi}_{k}\left(r\right)-r\,\partial_{r}\hat{\psi}_{k}\left(r\right)}{r\,\partial_{r}\hat{\psi}_{k}\left(r\right)}\,>0\label{delta}
\end{equation}
by (\ref{positivity}). Note that by (see (\ref{eq psi'})),
\begin{equation}
0\leq\partial_{r}\hat{\psi}_{k}\left(r\right)\leq1,\qquad\partial_{rr}^{2}\hat{\psi}_{k}\left(r\right)>0\label{derivative psi'}
\end{equation}
for all $r\geq0$. Firstly, for the initial value, given $0\leq z\leq\frac{3}{2}\beta$,
by Lemma \ref{monotonicity}, (\ref{initial w'}), (\ref{expansion perturbation psi}),
(\ref{asymptotic psi'0}) and the mean value theorem, there are 
\[
1+\frac{1}{2}\beta^{\alpha-3}\leq\,\lambda_{*}\,\leq\lambda_{+}\left(\tau_{0}\right),\qquad1\leq\,\mu_{*}\,\leq\mu_{+}\left(\tau_{0}\right)
\]
so that
\[
\hat{w}_{+}\left(z,\,\tau_{0}\right)\,=\,\hat{\psi}_{k}^{1+\frac{1}{2}\beta^{\alpha-3},\,1}\left(z,\,\tau_{0}\right)
\]
\[
+\left.\left(\lambda_{+}\left(\tau_{0}\right)-\left(1+\frac{1}{2}\beta^{\alpha-3}\right)\right)\,\partial_{\lambda}\left(\hat{\psi}_{k}^{\lambda,\,\mu}\left(z\right)\right)\right|_{\lambda=\lambda_{*},\;\mu=\mu_{*},\;z=z_{*}\equiv\frac{z}{\lambda_{*}^{\frac{1}{1-\alpha}}\mu_{*}}}
\]
\[
+\left.\left(\mu_{+}\left(\tau_{0}\right)-1\right)\,\partial_{\mu}\left(\hat{\psi}_{k}^{\lambda,\,\mu}\left(z\right)\right)\right|_{\lambda=\lambda_{*},\;\mu=\mu_{*},\;z=z_{*}\equiv\frac{z}{\lambda_{*}^{\frac{1}{1-\alpha}}\mu_{*}}}
\]
\[
=\,\hat{\psi}_{k}^{1+\frac{1}{2}\beta^{\alpha-3},\,1}\left(z,\,\tau_{0}\right)\,+\,\frac{\beta^{\alpha-3}\lambda_{*}^{\frac{\alpha}{1-\alpha}}}{2\left(1-\alpha\right)}\left(\hat{\psi}_{k}\left(z_{*}\right)-z_{*}\,\partial_{z}\hat{\psi}_{k}\left(z_{*}\right)\right)
\]
\[
-\delta\beta^{\alpha-3}\left(2\sigma\tau_{0}\right)^{-1+\varrho}\frac{\lambda_{*}^{\frac{1}{1-\alpha}}}{\mu_{*}}\left(z_{*}\,\partial_{z}\hat{\psi}_{k}\left(z_{*}\right)\right)
\]
\[
\geq\,\hat{\psi}_{k}^{1+\frac{1}{2}\beta^{\alpha-3},\,1}\left(z,\,\tau_{0}\right)\,+\,\frac{\beta^{\alpha-3}\lambda_{*}^{\frac{\alpha}{1-\alpha}}}{2\left(1-\alpha\right)}\left(1-\frac{\lambda_{*}}{2\mu_{*}}\left(2\sigma\tau_{0}\right)^{-1+\varrho}\right)\left(\hat{\psi}_{k}\left(z_{*}\right)-z_{*}\,\partial_{z}\hat{\psi}_{k}\left(z_{*}\right)\right)
\]
\[
\geq\,\hat{\psi}_{k}^{1+\frac{1}{2}\beta^{\alpha-3},\,1}\left(z,\,\tau_{0}\right)\,=\,\hat{\psi}_{\left(1+\frac{1}{2}\beta^{\alpha-3}\right)k}\left(z,\,\tau_{0}\right)\,=\,\hat{\psi}_{\left(1+\frac{1}{2}\beta^{\alpha-3}\right)\left(1+o\left(1\right)\right)}\left(z,\,\tau_{0}\right)
\]
\begin{equation}
>\,w\left(z,\,\tau_{0}\right)\label{initial w'+ tip}
\end{equation}
provided that $\beta\gg1$ (depending on $n$). Also, for each $\frac{3}{2}\beta\leq z\leq\left(2\sigma\tau_{0}\right)^{\frac{1}{2}\left(1-\vartheta\right)}$,
by (\ref{varrho}), (\ref{initial w' intermediate}), (\ref{expansion perturbation psi}),
(\ref{asymptotic psi'0}), (\ref{delta}), (\ref{derivative psi'})
and the mean value theorem, there are 
\[
1\leq\,\lambda_{*}\,\leq\lambda_{+}\left(\tau_{0}\right),\qquad1\leq\,\mu_{*}\,\leq\mu_{+}\left(\tau_{0}\right)
\]
so that 
\[
\hat{w}_{+}\left(z,\,\tau_{0}\right)\,=\,\hat{\psi}_{k}\left(z,\,\tau_{0}\right)\,+\,\left.\left(\lambda_{+}\left(\tau_{0}\right)-1\right)\,\partial_{\lambda}\left(\hat{\psi}_{k}^{\lambda,\,\mu}\left(z\right)\right)\right|_{\lambda=\lambda_{*},\;\mu=\mu_{*},\;z=z_{*}\equiv\frac{z}{\lambda_{*}^{\frac{1}{1-\alpha}}\mu_{*}}}
\]
\[
+\left.\left(\mu_{+}\left(\tau_{0}\right)-1\right)\,\partial_{\mu}\left(\hat{\psi}_{k}^{\lambda,\,\mu}\left(z\right)\right)\right|_{\lambda=\lambda_{*},\;\mu=\mu_{*},\;z=z_{*}\equiv\frac{z}{\lambda_{*}^{\frac{1}{1-\alpha}}\mu_{*}}}
\]
\[
=\,\hat{\psi}_{k}\left(z,\,\tau_{0}\right)\,+\,\frac{\beta^{\alpha-3}\lambda_{*}^{\frac{\alpha}{1-\alpha}}}{1-\alpha}\left(\hat{\psi}_{k}\left(z_{*}\right)-z_{*}\,\partial_{z}\hat{\psi}_{k}\left(z_{*}\right)\right)\,-\,\delta\beta^{\alpha-3}\left(2\sigma\tau_{0}\right)^{-1+\varrho}\frac{\lambda_{*}^{\frac{1}{1-\alpha}}}{\mu_{*}}\left(z_{*}\,\partial_{z}\hat{\psi}_{k}\left(z_{*}\right)\right)
\]
\[
\geq\,\hat{\psi}_{k}\left(z,\,\tau_{0}\right)\,+\,\left(1+o\left(1\right)\right)\beta^{\alpha-3}\mu_{*}^{-\alpha}2^{\frac{\alpha+1}{2}}z^{\alpha}\,-\,\frac{\delta\beta^{\alpha-3}}{\mu_{*}^{2}}\left(2\sigma\tau_{0}\right)^{-\frac{1}{2}\left(1-\vartheta\right)\left(1-\alpha\right)}z
\]
\[
=\,\hat{\psi}_{k}\left(z,\,\tau_{0}\right)\,+\,\frac{1}{2}\left(1+o\left(1\right)\right)\beta^{\alpha-3}\mu_{*}^{-\alpha}2^{\frac{\alpha+1}{2}}z^{\alpha}
\]
\[
+\,\frac{1}{2}\beta^{\alpha-3}z^{\alpha}\left(\left(1+o\left(1\right)\right)2^{\frac{\alpha+1}{2}}\mu_{*}^{-\alpha}\,-\,\frac{2\delta}{\mu_{*}^{2}}\left(\frac{z}{\left(2\sigma\tau_{0}\right)^{\frac{1}{2}\left(1-\vartheta\right)}}\right)^{1-\alpha}\right)
\]
\begin{equation}
\geq\,\hat{\psi}_{k}\left(z,\,\tau_{0}\right)\,+\,\frac{1}{2}\left(1+o\left(1\right)\right)\beta^{\alpha-3}\mu_{*}^{-\alpha}2^{\frac{\alpha+1}{2}}z^{\alpha}\,>\,w\left(z,\,\tau_{0}\right)\label{initial w'+ intermediate}
\end{equation}
provided that $\beta\gg1$ (depending on $n$), since $z\leq\left(2\sigma\tau_{0}\right)^{\frac{1}{2}\left(1-\vartheta\right)}$.
Secondly, for the boundary value, fix $\tau_{0}\leq\tau\leq\mathring{\tau}$
and let $z=\left(2\sigma\tau\right)^{\frac{1}{2}\left(1-\vartheta\right)}$,
by (\ref{varrho}), (\ref{boundary w'}), (\ref{expansion perturbation psi}),
(\ref{asymptotic psi'0}), (\ref{delta}), (\ref{derivative psi'})
and the mean value theorem, there are 
\[
1\leq\lambda_{*}\leq\lambda_{+}\left(\tau\right),\qquad1\leq\mu_{*}\leq\mu_{+}\left(\tau\right)
\]
so that 
\[
\hat{w}_{+}\left(z,\,\tau\right)\,=\,\hat{\psi}_{k}\left(z,\,\tau\right)\,+\,\left.\left(\lambda_{+}\left(\tau\right)-1\right)\,\partial_{\lambda}\left(\hat{\psi}_{k}^{\lambda,\,\mu}\left(z\right)\right)\right|_{\lambda=\lambda_{*},\;\mu=\mu_{*},\;z=z_{*}\equiv\frac{z}{\lambda_{*}^{\frac{1}{1-\alpha}}\mu_{*}}}
\]
\[
+\left.\left(\mu_{+}\left(\tau\right)-1\right)\,\partial_{\mu}\left(\hat{\psi}_{k}^{\lambda,\,\mu}\left(z\right)\right)\right|_{\lambda=\lambda_{*},\;\mu=\mu_{*},\;z=z_{*}\equiv\frac{z}{\lambda_{*}^{\frac{1}{1-\alpha}}\mu_{*}}}
\]
\[
=\,\hat{\psi}_{k}\left(z\right)\,+\,\beta^{\alpha-3}\left(\frac{\tau}{\tau_{0}}\right)^{-\varrho}\frac{\lambda_{*}^{\frac{\alpha}{1-\alpha}}}{1-\alpha}\left(\hat{\psi}_{k}\left(z_{*}\right)-z_{*}\,\partial_{z}\hat{\psi}_{k}\left(z_{*}\right)\right)
\]
\[
-\delta\beta^{\alpha-3}\left(2\sigma\tau\right)^{-1+\varrho}\left(\frac{\tau}{\tau_{0}}\right)^{-\varrho}\frac{\lambda_{*}^{\frac{1}{1-\alpha}}}{\mu_{*}}\left(z_{*}\,\partial_{z}\hat{\psi}_{k}\left(z_{*}\right)\right)
\]
\[
\geq\,\hat{\psi}_{k}\left(z\right)\,+\,\left(1+o\left(1\right)\right)\beta^{\alpha-3}\mu_{*}^{-\alpha}2^{\frac{\alpha+1}{2}}\left(\frac{\tau}{\tau_{0}}\right)^{-\varrho}z^{\alpha}\,-\,\frac{\delta\beta^{\alpha-3}}{\mu_{*}^{2}}\left(\frac{\tau}{\tau_{0}}\right)^{-\varrho}\left(2\sigma\tau\right)^{-\frac{1}{2}\left(1-\vartheta\right)\left(1-\alpha\right)}z
\]
\[
\geq\,\hat{\psi}_{k}\left(z\right)\,+\,\frac{1}{2}\left(1+o\left(1\right)\right)\beta^{\alpha-3}\mu_{*}^{-\alpha}2^{\frac{\alpha+1}{2}}\left(\frac{\tau}{\tau_{0}}\right)^{-\varrho}z^{\alpha}\,
\]
\[
+\,\frac{1}{2}\beta^{\alpha-3}\left(\frac{\tau}{\tau_{0}}\right)^{-\varrho}z^{\alpha}\left(\left(1+o\left(1\right)\right)2^{\frac{\alpha+1}{2}}\mu_{*}^{-\alpha}\,-\,\frac{2\delta}{\mu_{*}^{2}}\left(\frac{z}{\left(2\sigma\tau\right)^{\frac{1}{2}\left(1-\vartheta\right)}}\right)^{1-\alpha}\right)
\]
\begin{equation}
>\,\hat{\psi}_{k}\left(z\right)\,-C\left(n\right)\left(2\sigma\tau\right)^{-\vartheta}z^{\alpha}\,\geq\,\hat{w}\left(z,\,\tau\right)\label{boundary w'+}
\end{equation}
provided that $\tau_{0}\gg1$ (depending on $n$, $\beta$), since
$z=\left(2\sigma\tau_{0}\right)^{\frac{1}{2}\left(1-\vartheta\right)}$
and $0<\varrho<\vartheta$. Thirdly, by (\ref{eq perturbation psi})
and (\ref{derivative psi'}), there holds 
\[
\partial_{\tau}\hat{w}_{+}-\left(\frac{\partial_{zz}^{2}\hat{w}_{+}}{1+\left(\partial_{z}\hat{w}_{+}\right)^{2}}+\left(n-1\right)\left(\frac{\partial_{z}\hat{w}_{+}}{z}-\frac{1}{\hat{w}_{+}}\right)+\frac{\frac{1}{2}+\sigma}{2\sigma\tau}\left(-z\,\partial_{z}\hat{w}_{+}+\hat{w}_{+}\right)\right)
\]
\[
=\left.\frac{\mu_{+}^{2}-1}{\lambda_{+}^{\frac{1}{1-\alpha}}\mu_{+}^{2}}\left(\frac{\partial_{rr}^{2}\hat{\psi}_{k}\left(r\right)}{\left(1+\left(\partial_{r}\hat{\psi}_{k}\left(r\right)\right)^{2}\right)\left(1+\left(\frac{\partial_{r}\hat{\psi}_{k}\left(r\right)}{\mu_{+}}\right)^{2}\right)}+\left(n-1\right)\frac{\partial_{r}\hat{\psi}_{k}\left(r\right)}{r}\right)\right|_{r=\frac{z}{\lambda_{+}^{\frac{1}{1-\alpha}}\mu_{+}}}
\]
\[
+\left.\left(-\frac{\frac{1}{2}+\sigma}{2\sigma\tau}\lambda_{+}^{\frac{1}{1-\alpha}}\,+\,\frac{\lambda_{+}^{\frac{\alpha}{1-\alpha}}}{1-\alpha}\left(\partial_{\tau}\lambda_{+}\right)\right)\left(\hat{\psi}_{k}\left(r\right)-r\,\partial_{r}\hat{\psi}_{k}\left(r\right)\right)\right|_{r=\frac{z}{\lambda_{+}^{\frac{1}{1-\alpha}}\mu_{+}}}
\]
\[
\,-\,\left.\frac{\lambda_{+}^{\frac{1}{1-\alpha}}}{\mu_{+}}\left(\partial_{\tau}\mu_{+}\right)\left(r\,\partial_{r}\hat{\psi}_{k}\left(r\right)\right)\right|_{r=\frac{z}{\lambda_{+}^{\frac{1}{1-\alpha}}\mu_{+}}}
\]
\[
\geq\left.2\left(1-O\left(\beta^{\alpha-3}\right)\right)\delta\beta^{\alpha-3}\left(2\sigma\tau_{0}\right)^{\varrho}\left(2\sigma\tau\right)^{-1}\left(\frac{1}{4}\partial_{rr}^{2}\hat{\psi}_{k}\left(r\right)+\left(n-1\right)\frac{\partial_{r}\hat{\psi}_{k}\left(r\right)}{r}\right)\right|_{r=\frac{z}{\lambda_{+}^{\frac{1}{1-\alpha}}\mu_{+}}}
\]
\[
-\left.\left(1+O\left(\beta^{\alpha-3}\right)\right)\left(\frac{1}{2}+\sigma+\frac{2\sigma\varrho\beta^{\alpha-3}}{1-\alpha}\left(\frac{\tau}{\tau_{0}}\right)^{-\varrho}\right)\left(2\sigma\tau\right)^{-1}\left(\hat{\psi}_{k}\left(r\right)-r\,\partial_{r}\hat{\psi}_{k}\left(r\right)\right)\right|_{r=\frac{z}{\lambda_{+}^{\frac{1}{1-\alpha}}\mu_{+}}}
\]
\[
\geq0
\]
provided that $\tau_{0}\gg1$ (depending on $n$, $\Lambda$, $\beta$),
since 
\[
\frac{\partial_{r}\hat{\psi}_{k}\left(r\right)}{r}=\left(1+o\left(1\right)\right)r^{-1}\,>\,\left(1+o\left(1\right)\right)k\left(1-\alpha\right)\,2^{\frac{\alpha+1}{2}}r^{\alpha}=\hat{\psi}_{k}\left(r\right)-r\,\partial_{r}\hat{\psi}_{k}\left(r\right)
\]
for $r\gg1$ (noting that $\alpha<-1$). Then we subtract the equation
of $\hat{w}_{+}\left(z,\,\tau\right)$ by (\ref{eq w'}) to get 
\begin{equation}
\partial_{\tau}\left(\hat{w}_{+}-\hat{w}\right)-\left(\frac{1}{1+\left(\partial_{z}\hat{w}\right)^{2}}\,\partial_{zz}^{2}\left(\hat{w}_{+}-\hat{w}\right)+\frac{n-1}{z}\,\partial_{z}\left(\hat{w}_{+}-\hat{w}\right)\right)\label{eq Cauchy w'+}
\end{equation}
\[
+\left(\frac{\partial_{zz}^{2}\hat{w}_{+}\left(\partial_{z}\hat{w}_{+}+\partial_{z}\hat{w}\right)}{\left(1+\left(\partial_{z}\hat{w}_{+}\right)^{2}\right)\left(1+\left(\partial_{z}\hat{w}\right)^{2}\right)}\,+\,\frac{\frac{1}{2}+\sigma}{2\sigma\tau}z\right)\partial_{z}\left(\hat{w}_{+}-\hat{w}\right)\,-\left(\frac{n-1}{\hat{w}_{+}\,\hat{w}}\,+\,\frac{\frac{1}{2}+\sigma}{2\sigma\tau}\right)\left(\hat{w}_{+}-\hat{w}\right)
\]
\[
\geq0
\]
To show that $\hat{w}_{+}$ is an upper barrier, let 
\[
\left(\hat{w}_{+}-\hat{w}\right)_{\min}\left(\tau\right)=\min_{0\leq z\leq\left(2\sigma\tau\right)^{\frac{1}{2}\left(1-\vartheta\right)}}\left(\hat{w}_{+}-\hat{w}\right)\left(z,\,\tau\right)
\]
Note that by (\ref{initial w'+ tip}) and (\ref{initial w'+ intermediate}),
we have 
\[
\left(\hat{w}_{+}-\hat{w}\right)_{\min}\left(\tau_{0}\right)>0
\]
We claim that 
\[
\left(\hat{w}_{+}-\hat{w}\right)_{\min}\left(\tau\right)\geq0\qquad\textrm{for}\;\,\tau_{0}\leq\tau\leq\mathring{\tau}
\]
Suppose the contrary, then there is $\tau_{0}<\tau_{1}^{*}\leq\mathring{\tau}$
so that 
\begin{equation}
\left(\hat{w}_{+}-\hat{w}\right)_{\min}\left(\tau_{1}^{*}\right)<0\label{w'+ negative}
\end{equation}
Let $\tau_{0}^{*}\in\left[\tau_{0},\,\tau_{1}^{*}\right)$ be the
first time after which $\left(\hat{w}^{+}-\hat{w}\right)_{\min}$
is negative all the way up to $\tau_{1}^{*}$, then by the continuity,
we must have 
\begin{equation}
\left(\hat{w}_{+}-\hat{w}\right)_{\min}\left(\tau_{0}^{*}\right)=0\label{w'+ vanish}
\end{equation}
 On the other hand, by (\ref{boundary w'+}), the minimum of $\hat{w}^{+}-\hat{w}$
for each time-slice is achieved in $\left[0,\,\left(2\sigma\tau\right)^{\frac{1}{2}\left(1-\vartheta\right)}\right)$.
Applying the maximum principle to (\ref{eq Cauchy w'+}), we get 
\[
\partial_{\tau}\left(\hat{w}_{+}-\hat{w}\right)_{\min}-\left(\frac{n-1}{\hat{w}_{+}\,\hat{w}}\,+\,\frac{\frac{1}{2}+\sigma}{2\sigma\tau}\right)\left(\hat{w}_{+}-\hat{w}\right)_{\min}\geq0
\]
Note that at $z=0$, we always have 
\[
\partial_{z}\hat{w}\left(0,\,\tau\right)\,=0=\,\partial_{z}\hat{w}_{+}\left(0,\,\tau\right)\qquad\forall\;\,\tau_{0}\leq\tau\leq\mathring{\tau}
\]
so L'H$\hat{o}$pital's rule implies 
\[
\lim_{z\rightarrow0}\frac{n-1}{z}\,\partial_{z}\left(\hat{w}_{+}-\hat{w}\right)\left(z,\,\tau\right)=\frac{n-1}{z}\,\partial_{z}^{2}\left(\hat{w}_{+}-\hat{w}\right)\left(0,\,\tau\right)
\]
It follows that 
\[
\partial_{\tau}\left(e^{-\int\frac{n-1}{\hat{w}_{+}\,\hat{w}}d\tau}\tau^{-\frac{\frac{1}{2}+\sigma}{2\sigma}}\left(\hat{w}_{+}-\hat{w}\right)_{\min}\right)\geq0
\]
which, together with (\ref{w'+ negative}), contraditcts with (\ref{w'+ vanish}).

Lastly, by (\ref{expansion perturbation psi}) and $\mu_{+}\left(\tau\right)\geq1$,
we have 
\[
\hat{w}_{+}\left(z,\,\tau\right)=\hat{\psi}_{k}^{\lambda_{+},\,\mu_{+}}\left(z,\,\tau\right)\,\leq\,\hat{\psi}_{k}^{\lambda_{+},\,1}\left(z,\,\tau\right)=\hat{\psi}_{\lambda_{+}\left(\tau\right)k}\left(z\right)
\]
Thus, we get 
\[
\hat{\psi}_{\lambda_{-}\left(\tau\right)k}\left(z\right)=\hat{w}_{-}\left(z,\,\tau\right)\,\leq\,\hat{w}\left(z,\,\tau\right)\,\leq\,\hat{w}_{+}\left(z,\,\tau\right)\leq\hat{\psi}_{\lambda_{+}\left(\tau\right)k}\left(z\right)
\]
For (\ref{C^0 w' bound intermediate}), given $\tau_{0}\leq\tau\leq\mathring{\tau}$,
$\beta\leq z\leq\left(2\sigma\tau\right)^{\frac{1}{2}\left(1-\vartheta\right)}$,
by (\ref{expansion perturbation psi}), (\ref{asymptotic psi'0})
and the mean value theorem, there is $1\leq\,\lambda_{*}\,\leq\lambda_{+}\left(\tau\right)$
so that 
\[
\hat{\psi}_{k}^{\lambda_{+},\,1}\left(z,\,\tau\right)\,=\,\hat{\psi}_{k}\left(z,\,\tau\right)\,+\,\left.\left(\lambda_{+}\left(\tau\right)-1\right)\,\partial_{\lambda}\left(\hat{\psi}_{k}^{\lambda,\,\mu}\left(z\right)\right)\right|_{\lambda=\lambda_{*},\;z=z_{*}\equiv\frac{z}{\lambda_{*}^{\frac{1}{1-\alpha}}\mu_{*}}}
\]
\[
=\,\hat{\psi}_{k}\left(z,\,\tau\right)\,+\,\beta^{\alpha-3}\left(\frac{\tau}{\tau_{0}}\right)^{-\varrho}\frac{\lambda_{*}^{\frac{\alpha}{1-\alpha}}}{1-\alpha}\left(\hat{\psi}_{k}\left(z_{*}\right)-z_{*}\,\partial_{z}\hat{\psi}_{k}\left(z_{*}\right)\right)
\]
\[
\leq\,\hat{\psi}_{k}\left(z,\,\tau\right)\,+\,\left(1+o\left(1\right)\right)2^{\frac{\alpha+1}{2}}\beta^{\alpha-3}\left(\frac{\tau}{\tau_{0}}\right)^{-\varrho}z^{\alpha}
\]
Similarly, 
\[
\hat{\psi}_{k}^{\lambda_{-},\,1}\left(z,\,\tau\right)\,\geq\,\hat{\psi}_{k}\left(z,\,\tau\right)\,-\,\left(1+o\left(1\right)\right)2^{\frac{\alpha+1}{2}}\beta^{\alpha-3}\left(\frac{\tau}{\tau_{0}}\right)^{-\varrho}z^{\alpha}
\]
As for (\ref{C^0 w' bound tip}), given $\tau_{0}\leq\tau\leq\mathring{\tau}$,
$0\leq z\leq5\beta$, by (\ref{expansion perturbation psi}), (\ref{asymptotic psi'0})
and the mean value theorem, there is $1\leq\,\lambda_{*}\,\leq\lambda_{+}\left(\tau\right)$
so that 
\[
\hat{\psi}_{k}^{\lambda_{+},\,1}\left(z,\,\tau\right)\,=\,\hat{\psi}_{k}\left(z,\,\tau\right)\,+\,\left.\left(\lambda_{+}\left(\tau\right)-1\right)\,\partial_{\lambda}\left(\hat{\psi}_{k}^{\lambda,\,\mu}\left(z\right)\right)\right|_{\lambda=\lambda_{*},\;z=z_{*}\equiv\frac{z}{\lambda_{*}^{\frac{1}{1-\alpha}}\mu_{*}}}
\]
\[
=\,\hat{\psi}_{k}\left(z,\,\tau\right)\,+\,\beta^{\alpha-3}\left(\frac{\tau}{\tau_{0}}\right)^{-\varrho}\frac{\lambda_{*}^{\frac{\alpha}{1-\alpha}}}{1-\alpha}\left(\hat{\psi}_{k}\left(z_{*}\right)-z_{*}\,\partial_{z}\hat{\psi}_{k}\left(z_{*}\right)\right)
\]
\[
\leq\,\hat{\psi}_{k}\left(z,\,\tau\right)\,+\,\frac{\beta^{\alpha-3}\mathfrak{C}}{1-\alpha}\left(\frac{\tau}{\tau_{0}}\right)^{-\varrho}
\]
where 
\[
\mathfrak{C}=\sup_{r\geq0}\left(\hat{\psi}_{k}\left(r\right)-r\,\partial_{r}\hat{\psi}_{k}\left(r\right)\right)\,\leq\,C\left(n\right)
\]
(by (\ref{asymptotic psi'0})). Similarly, 
\[
\hat{\psi}_{k}^{\lambda_{-},\,1}\left(z,\,\tau\right)\,\geq\,\hat{\psi}_{k}\left(z,\,\tau\right)\,-\,\frac{\beta^{\alpha-3}\mathfrak{C}}{1-\alpha}\left(\frac{\tau}{\tau_{0}}\right)^{-\varrho}
\]
\end{proof}
As a corollary, if we regard the projected curves $\bar{\Gamma}_{\tau}^{\left(a_{0},\,a_{1}\right)}$
and $\mathcal{\bar{M}}_{k}$ as graphes over $\mathcal{\bar{C}}$,
(\ref{C^0 w' bound intermediate}) implies 
\begin{equation}
\left|w\left(z,\,\tau\right)-\psi_{k}\left(z\right)\right|\,\leq\,C\left(n\right)\beta^{\alpha-3}\left(\frac{\tau}{\tau_{0}}\right)^{-\varrho}z^{\alpha}\label{C^0 w bound}
\end{equation}
for $\frac{4}{3}\beta\leq z\leq\frac{1}{2}\left(2\sigma\tau\right)^{\frac{1}{2}\left(1-\vartheta\right)}$,
$\tau_{0}\leq\tau\leq\mathring{\tau}$. Then (\ref{C^0 v bound tip})
follows immediately by (\ref{uw}).

Lastly, we prove (\ref{C^0 outside u bound}) by using the gradient
and curvature estimates in \cite{EH}.
\begin{prop}
\label{C^0 outside u}If $0<\rho\ll1$ (depending on $n$, $\Lambda$)
and $\left|t_{0}\right|\ll\rho^{2}$ (depending on $n$), there holds
(\ref{C^0 outside u bound}). Moreover, we have
\begin{equation}
\left\{ \begin{array}{c}
\left|\partial_{x}u(x,\,t)\right|\,\lesssim\,1\\
\\
\left|\partial_{xx}^{2}u(x,\,t)\right|\,\leq\,\frac{C\left(n\right)}{\sqrt{t-t_{0}}}
\end{array}\right.\label{preliminary bound C^2 outside u}
\end{equation}
 for $x\geq\frac{1}{5}\rho$, $t_{0}\leq t\leq\mathring{t}$.
\end{prop}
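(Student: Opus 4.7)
The plan is to apply the interior gradient and curvature estimates of Ecker-Huisken \cite{EH} on parabolic neighborhoods of size comparable to $\rho$ in the outer region, and then integrate the resulting second-derivative bound in time via the evolution equation (\ref{eq u}) to obtain the required $C^{0}$ control on $u$.

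First I would set up the graphical framework. In the outer region, the projected curve $\bar{\Sigma}_{t}$ is a normal graph over $\mathcal{\bar{C}}$, and at $t=t_{0}$ the initial data bound (\ref{initial u}) gives both $|u|$ and $|\partial_{x}u|$ smaller than $\frac{1}{5}$ for $x\geq\frac{1}{6}\rho$, together with a bound on $|\partial_{xx}^{2}u|$ depending on $n$ and $\rho$. These are precisely the hypotheses needed to invoke \cite{EH} on a parabolic cylinder of spatial scale $\sim\rho$ around any point with $x\geq\frac{1}{5}\rho$. The condition $|t_{0}|\ll\rho^{2}$ ensures that the full time interval $[t_{0},\mathring{t}]$ is much shorter than the parabolic scale $\rho^{2}$, so the parabolic cylinder covers the entire evolution without interfering with the singular region near the tip.

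Next I would apply the Ecker-Huisken interior gradient estimate to propagate the initial bound $|\partial_{x}u(\cdot,t_{0})|\leq\frac{1}{5}$ forward in time, obtaining $|\partial_{x}u(x,t)|\lesssim1$ for $x\geq\frac{1}{5}\rho$, $t_{0}\leq t\leq\mathring{t}$. Feeding this gradient bound into the Ecker-Huisken interior curvature estimate then yields $|A_{\Sigma_{t}}|^{2}\leq C(n)/(t-t_{0})$ in the outer region, which, combined with the bounds on $|u/x|$ and $|\partial_{x}u|$, translates into $|\partial_{xx}^{2}u(x,t)|\leq C(n)/\sqrt{t-t_{0}}$; this proves (\ref{preliminary bound C^2 outside u}). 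To derive (\ref{C^0 outside u bound}), I would substitute these bounds into (\ref{eq u}) and use $x\geq\frac{1}{5}\rho$ together with the smallness of $u$ to obtain $|\partial_{t}u(x,t)|\leq C(n)/\sqrt{t-t_{0}}+C(n,\rho)$. Integrating in time on $[t_{0},t]$ and using $t-t_{0}\leq|t_{0}|\ll\rho^{2}$ to absorb the lower-order bounded piece into the $\sqrt{t-t_{0}}$ term yields $|u(x,t)-u(x,t_{0})|\leq C(n)\sqrt{t-t_{0}}$ as claimed.

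The main technical obstacle will be verifying that the interior estimates of \cite{EH} (classically stated for graphs over a fixed hyperplane) can be applied to $\Sigma_{t}$ as a normal graph over the minimal cone $\mathcal{C}$. This requires translating the graphical bound over $\mathcal{C}$ into a uniform upper bound on the quantity $\langle N_{\Sigma_{t}},\,\omega_{0}\rangle^{-1}$ that \cite{EH} takes as input for a suitable reference direction $\omega_{0}$; in view of the smallness of $|u/x|$ and $|\partial_{x}u|$, the unit normal to $\Sigma_{t}$ is a small perturbation of that to $\mathcal{C}$, and after locally flattening $\mathcal{C}$ along its smooth stratum (away from the origin) the standard Ecker-Huisken framework becomes directly applicable. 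Once this geometric translation is carried out, the rest of the argument consists only of routine substitution and time-integration.
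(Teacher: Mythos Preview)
Your plan coincides with the paper's: apply the Ecker--Huisken interior gradient and curvature estimates on balls of radius $\sim\rho$ about points with $x\geq\frac{1}{5}\rho$, using (\ref{initial u}) and $|t_{0}|\ll\rho^{2}$ so that the entire time interval fits inside the parabolic scale. The one substantive difference is your derivation of (\ref{C^0 outside u bound}). You propose to integrate $\partial_{t}u$ via (\ref{eq u}); the paper instead passes to the \emph{normal} parametrization $\partial_{t}X=H_{\Sigma_{t}}N_{\Sigma_{t}}$, bounds $|\partial_{t}X|=|H_{\Sigma_{t}}|\leq C(n)/\sqrt{t-t_{0}}$ directly from the curvature estimate, integrates to get $|X_{t}-X_{t_{0}}|\leq C(n)\sqrt{t-t_{0}}$, and reads off (\ref{C^0 outside u bound}) from the resulting Hausdorff-distance bound. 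The paper's route is slightly cleaner because the lower-order term in (\ref{eq u}) contains $1/(x^{2}-u^{2})$, so your integration presupposes that $|u/x|$ stays bounded away from $1$ for $x>\rho$ and $t>t_{0}$; admissibility does not give this a priori, and you would need an explicit open--closed (continuity) argument that you have not written out. Both arguments ultimately need such a continuity step to keep the relevant points inside the balls where the Ecker--Huisken estimates apply---the paper carries this out via a maximal-time $t_{(x,\nu,\omega)}$ in the normal parametrization---so this is not a fatal gap, but the normal-parametrization version handles it more transparently. With that caveat, your sketch is correct.
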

\begin{proof}
For ease of notation, we denote $\Sigma_{t}^{\left(a_{0},\,a_{1}\right)}$
by $\Sigma_{t}$. Let's first parametrize $\Sigma_{t_{0}}$ by (\ref{u}),
i.e. 
\[
X_{t_{0}}\left(x,\,\nu,\,\omega\right)=\left(\left(x-u\left(x,\,t_{0}\right)\right)\frac{\nu}{\sqrt{2}},\,\left(x+u\left(x,\,t_{0}\right)\right)\frac{\omega}{\sqrt{2}}\right)
\]
for $x\geq\frac{1}{6}\rho$, $\nu,\,\omega\in\mathbb{S}^{n-1}$. Then
the (upward) unit normal vector of $\Sigma_{t_{0}}$ at $X_{t_{0}}$
is given by 
\[
N_{\Sigma_{t_{0}}}\left(X_{t_{0}}\right)=\left(\left(\frac{1+\partial_{x}u\left(x,\,t_{0}\right)}{\sqrt{1+\left(\partial_{x}u\left(x,\,t_{0}\right)\right)^{2}}}\right)\frac{-\nu}{\sqrt{2}},\,\left(\frac{1-\partial_{x}u\left(x,\,t_{0}\right)}{\sqrt{1+\left(\partial_{x}u\left(x,\,t_{0}\right)\right)^{2}}}\right)\frac{\omega}{\sqrt{2}}\right)
\]
Note that by (\ref{initial u}) we have
\[
\max\left\{ \left|\frac{u\left(x,\,t_{0}\right)}{x}\right|,\,\left|\partial_{x}u\left(x,\,t_{0}\right)\right|\right\} \leq\frac{1}{3}
\]
for $x\geq\frac{1}{6}\rho$. 

Now fix $x_{*}\geq\frac{1}{5}\rho$ and let 
\[
\nu_{*}=\omega_{*}=\left(\overset{\left(\textrm{n-1}\right)\,\textrm{copies}}{\overbrace{0,\cdots,\,0}},\,1\right)
\]
\[
\mathbf{e}=\left(\frac{-1}{\sqrt{2}}\nu_{*},\,\frac{1}{\sqrt{2}}\omega_{*}\right)
\]
 
\[
X_{*}=X_{t_{0}}\left(x_{*},\,\nu_{*},\,\omega_{*}\right)=\left(\left(x_{*}-u\left(x_{*},\,t_{0}\right)\right)\frac{\nu_{*}}{\sqrt{2}},\,\left(x_{*}+u\left(x_{*},\,t_{0}\right)\right)\frac{\omega_{*}}{\sqrt{2}}\right)
\]
Notice that 
\[
\left|X_{t_{0}}-X_{*}\right|^{2}\,\geq\,\frac{1}{2}\left(x-u\left(x,\,t_{0}\right)\right)^{2}\left(1-\left(\nu\cdot\nu_{*}\right)^{2}\right)\,+\,\frac{1}{2}\left(x+u\left(x,\,t_{0}\right)\right)^{2}\left(1-\left(\omega\cdot\omega_{*}\right)^{2}\right)
\]
\[
\geq\frac{x^{2}}{2}\left(1-\left|\frac{u\left(x,\,t_{0}\right)}{x}\right|\right)^{2}\max\left\{ 1-\left(\nu\cdot\nu_{*}\right)^{2},\,1-\left(\omega\cdot\omega_{*}\right)^{2}\right\} 
\]
\[
\geq\frac{\rho^{2}}{9}\,\max\left\{ 1-\left(\nu\cdot\nu_{*}\right)^{2},\,1-\left(\omega\cdot\omega_{*}\right)^{2}\right\} 
\]
Thus, for $X_{t_{0}}\in\Sigma_{t_{0}}\cap B\left(X_{*};\,\frac{1}{30}\rho\right)$,
there holds 
\[
\min\left\{ \nu\cdot\nu_{*},\,\omega\cdot\omega_{*}\right\} \,\geq\,\frac{\sqrt{91}}{10}
\]
which implies 
\[
\left(N_{\Sigma_{t_{0}}}\left(X_{t_{0}}\right)\cdot\mathbf{e}\right)^{-1}=\frac{2\,\sqrt{1+\left(\partial_{x}u\left(x,\,t_{0}\right)\right)^{2}}}{\left(1+\partial_{x}u\left(x,\,t_{0}\right)\right)\left(\nu\cdot\nu_{*}\right)+\left(1-\partial_{x}u\left(x,\,t_{0}\right)\right)\left(\omega\cdot\omega_{*}\right)}
\]
\begin{equation}
\leq\,\frac{\sqrt{10}}{\nu\cdot\nu_{*}+\omega\cdot\omega_{*}}\leq\,\frac{10\sqrt{10}}{2\sqrt{91}}\label{gradient formula}
\end{equation}
By the gradient estimates in \cite{EH}, we then get
\[
\left(N_{\Sigma_{t}}\left(X_{t}\right)\cdot\mathbf{e}\right)^{-1}\leq\left(1-\frac{\left|X_{t}-X_{*}\right|^{2}+2n\left(t-t_{0}\right)}{\left(\frac{1}{30}\rho\right)^{2}}\right)^{-1}\sup_{\Sigma_{t_{0}}\cap B\left(X_{*};\,\frac{1}{30}\rho\right)}\left(N_{\Sigma_{t_{0}}}\cdot\mathbf{e}\right)^{-1}
\]
for $X_{t}\in\Sigma_{t}\cap B\left(X_{*};\,\sqrt{\left(\frac{1}{30}\rho\right)^{2}-2n\left(t-t_{0}\right)}\right)$,
where $N_{\Sigma_{t}}\left(X_{t}\right)$ is the unit normal vector
of $\Sigma_{t}$ at $X_{t}$. Consequently, 
\begin{equation}
\left(N\left(X_{t}\right)\cdot\mathbf{e}\right)^{-1}\leq\,\left(1-\left(\frac{30}{31}\right)^{2}\right)\frac{10\sqrt{10}}{2\sqrt{91}}\label{bound outside Du}
\end{equation}
for $X_{t}\in\Sigma_{t}\cap B\left(X_{*};\,\sqrt{\left(\frac{1}{31}\rho\right)^{2}-2n\left(t-t_{0}\right)}\right)$.
It follows, by the curvature estimates in \cite{EH}, that 
\[
\left|A_{\Sigma_{t}}\left(X_{t}\right)\right|\leq C\left(n\right)\left(\frac{1}{\sqrt{t-t_{0}}}+\frac{1}{\rho}\right)
\]
for $X_{t}\in\Sigma_{t}\cap B\left(X_{*};\,\sqrt{\left(\frac{1}{32}\rho\right)^{2}-2n\left(t-t_{0}\right)}\right)$,
where $A_{\Sigma_{t}}\left(X_{t}\right)$ is the second fundamental
form of $\Sigma_{t}$ at $X_{t}$. Thus, by choosing $\left|t_{0}\right|\ll\rho^{2}$
(depending on $n$), we may assume that 
\[
\sqrt{\left(\frac{1}{32}\rho\right)^{2}-2n\left(t-t_{0}\right)}\,\geq\,\frac{1}{33}\rho
\]
for all $t_{0}\leq t\leq\mathring{t}$, and 
\begin{equation}
\left|A_{\Sigma_{t}}\left(X_{t}\right)\right|\,\leq\,\frac{C\left(n\right)}{\sqrt{t-t_{0}}}\label{bound outside DDu}
\end{equation}
for $X_{t}\in\Sigma_{t}\cap B\left(X_{*};\,\frac{\rho}{33}\right)$,
$t_{0}\leq t\leq\mathring{t}$.

Next, consider the ``normal parametrization'' for the MCF $\left\{ \Sigma_{t}\right\} _{t_{0}\leq t\leq\mathring{t}}$,
i.e. let $X_{t}\left(x,\,\nu,\,\omega\right)=X\left(x,\,\nu,\,\omega;\,t\right)$
so that
\[
\left\{ \begin{array}{c}
\partial_{t}\,X\left(x,\,\nu,\,\omega;\,t\right)=H_{\Sigma_{t}}\left(X\left(x,\,\nu,\,\omega;\,t\right)\right)\,N_{\Sigma_{t}}\left(X\left(x,\,\nu,\,\omega;\,t\right)\right)\\
\\
X\left(x,\,\nu,\,\omega;\,t_{0}\right)=X_{t_{0}}\left(x,\,\nu,\,\omega\right)
\end{array}\right.
\]
For each $x\geq\rho$, $\nu,\,\omega\in\mathbb{S}^{n-1}$, let $t_{\left(x,\,\nu,\,\omega\right)}\in\left(t_{0},\,\mathring{t}\right]$
be the maximal time so that 
\[
X_{t}\left(x,\,\nu,\,\omega\right)\,\in\,\Sigma_{t}\cap B\left(X_{t_{0}}\left(x,\,\nu,\,\omega\right);\,\frac{1}{33}\rho\right)
\]
for all $t_{0}\leq t\leq t_{\left(x,\,\nu,\,\omega\right)}$. Then
we have 
\[
\left|\partial_{t}X_{t}\left(x,\,\nu,\,\omega\right)\right|\,=\,\left|H_{\Sigma_{t}}\left(X_{t}\left(x,\,\nu,\,\omega\right)\right)\right|\,\leq\,\frac{C\left(n\right)}{\sqrt{t-t_{0}}}
\]
and hence 
\begin{equation}
\left|X_{t}\left(x,\,\nu,\,\omega\right)-X_{t_{0}}\left(x,\,\nu,\,\omega\right)\right|\,\leq\,C\left(n\right)\sqrt{t-t_{0}}\label{distance}
\end{equation}
for all $t_{0}\leq t\leq t_{\left(x,\,\nu,\,\omega\right)}$. Thus,
if $\left|t_{0}\right|\ll1$ (depending on $n$), we may assume that
$t_{\left(x,\,\nu,\,\omega\right)}=\mathring{t}$ and 
\begin{equation}
d_{H}\left(\Sigma_{t}\setminus B\left(O;\,\frac{1}{5}\rho\right),\,\Sigma_{t_{0}}\setminus B\left(O;\,\frac{1}{5}\rho\right)\right)\,\leq\,C\left(n\right)\sqrt{t-t_{0}}\label{bound outside u}
\end{equation}
for all $t_{0}\leq t\leq\mathring{t}$, where $d_{H}$ is the Hausdorff
distance. It follows that
\[
\left|u\left(x,\,t\right)-u\left(x,\,t_{0}\right)\right|\,\leq\,C\left(n\right)\sqrt{t-t_{0}}
\]
for $x\geq\frac{1}{5}\rho$, $t_{0}\leq t\leq\mathring{t}$.

Furthermore, by taking $x=x_{*}$, $\nu=\nu_{*}$, $\omega=\omega_{*}$
in (\ref{gradient formula}) and replace $t_{0}$ by $t$, one could
get 
\[
\left(N_{\Sigma_{t}}\left(X_{t}\left(x_{*},\,\nu_{*},\,\omega_{*}\right)\right)\cdot\mathbf{e}\right)^{-1}=\sqrt{1+\left(\partial_{x}u\left(x_{*},\,t\right)\right)^{2}}
\]
So by (\ref{bound outside Du}) and (\ref{distance}) , we have 
\begin{equation}
\left|\partial_{x}u(x_{*},\,t)\right|\,\lesssim\,1\label{gradient}
\end{equation}
for $t_{0}\leq t\leq\mathring{t}$ (and any $x_{*}\geq\frac{1}{5}\rho$).
For the second derivative, notice that
\[
\frac{\left|\partial_{xx}^{2}u\left(x_{*},\,t\right)\right|}{\left(1+\left(\partial_{x}u\left(x_{*},\,t\right)\right)^{2}\right)^{\frac{3}{2}}}\,\leq\,\left|A_{\Sigma_{t}}\left(X_{t}\left(x_{*},\,\nu_{*},\,\omega_{*}\right)\right)\right|
\]
By (\ref{bound outside DDu}), (\ref{distance}) and (\ref{gradient}),
we conclude
\[
\left|\partial_{xx}^{2}u(x_{*},\,t)\right|\,\leq\,\frac{C\left(n\right)}{\sqrt{t-t_{0}}}
\]
for $t_{0}\leq t\leq\mathring{t}$ (and any $x_{*}\geq\frac{1}{5}\rho$). 
\end{proof}

\section{\label{degree C^infty}Smooth estimates in Proposition \ref{degree}
and Proposition \ref{uniform estimates}}

This section is a continuation of Section \ref{degree C^0}. For ease
of notation, from now on, let's denote $\Sigma_{t}^{\left(a_{0},\,a_{1}\right)}$
by $\Sigma_{t}$, $\Gamma_{\tau}^{\left(a_{0},\,a_{1}\right)}$ by
$\Gamma_{\tau}$ and $\Pi_{s}^{\left(a_{0},\,a_{1}\right)}$ by $\Pi_{s}$.
Here we would like to show that if $0<\rho\ll1\ll\beta$ (depending
on $n$, $\Lambda$) and $\left|t_{0}\right|\ll1$ (depending on $n$,
$\Lambda$, $\rho$, $\beta$) , then
\begin{itemize}
\item In the $\mathbf{outer}$ $\mathbf{region}$, the function $u\left(x,\,t\right)$
of $\Sigma_{t}^{\left(a_{0},\,a_{1}\right)}$ defined in (\ref{u})
satisfies (\ref{C^2 outside u bound}). 
\item In the $\mathbf{tip}$ $\mathbf{region}$, if we do the type $\mathrm{II}$
rescaling, the function $\hat{w}\left(z,\,\tau\right)$ of the rescaled
hypersurface $\Gamma_{\tau}^{\left(a_{0},\,a_{1}\right)}$ defined
in (\ref{w'}) satisfies satisfies (\ref{C^2 w' bound}).
\end{itemize}
Moreover, for any $0<\delta\ll1$, $m,\,l\in\mathbb{Z}_{+}$, there
hold the following higher order derivatives estimates.
\begin{enumerate}
\item In the $\mathbf{outer}$ $\mathbf{region}$, the function $u\left(x,\,t\right)$
of $\Sigma_{t}^{\left(a_{0},\,a_{1}\right)}$ defined in (\ref{u})
satisfies (\ref{C^infty outside u bound}) and (\ref{C^infty u bound})
(see Proposition \ref{C^infty outside u} and Proposition \ref{C^infty u}).
\item In the $\mathbf{intermediate}$ $\mathbf{region}$, if we do the type
$\mathrm{I}$ rescaling, the function $v\left(y,\,s\right)$ of the
rescaled hypersurface $\Pi_{s}^{\left(a_{0},\,a_{1}\right)}$ defined
in (\ref{v}) satisfies (\ref{C^infty v bound intermediate}) and
(\ref{C^infty v bound tip}) (see Proposition \ref{C^infty v}).
\item In the $\mathbf{tip}$ $\mathbf{region}$, if we do the type $\mathrm{II}$
rescaling, the function $\hat{w}\left(z,\,\tau\right)$ of the rescaled
hypersurface $\Gamma_{\tau}^{\left(a_{0},\,a_{1}\right)}$ defined
in (\ref{w'}) satisfies (\ref{C^infty w' bound}) (see Proposition
\ref{C^infty w'}).
\end{enumerate}
We establish (\ref{C^2 outside u bound}) and (\ref{C^2 w' bound})
by using the maximum principle and curvature estimates in \cite{EH}.
Then we use Krylov-Safonov estimates and Schauder estimates, together
with (\ref{a priori bound u}) (which is equivalent to (\ref{a priori bound v})
and (\ref{a priori bound w})), (\ref{C^2 outside u bound}) and (\ref{C^2 w' bound}),
to derive (\ref{C^infty outside u bound}), (\ref{C^infty u bound}),
(\ref{C^infty v bound intermediate}), (\ref{C^infty v bound tip})
and (\ref{C^infty w' bound}).

Let's start with proving (\ref{C^2 outside u bound}). The $C^{0}$
estimats has already been shown in Proposition \ref{C^0 u} and Proposition
\ref{C^0 outside u}, in which we also get the first and second derivative
bounds for $u\left(x,\,t\right)$ (see (\ref{preliminary bound C^2 outside u})).
In the next lemma, we improve the first derivative bound in Proposition
\ref{C^0 outside u} by using the maximum principle, which turns out
to be useful when we derive an improved second derivative estimate
in Lemma \ref{outside DDu}. 
\begin{lem}
\label{outside Du}If $0<\rho\ll1$ (depending on $n$, $\Lambda$)
and $\left|t_{0}\right|\ll1$ (depending on $n$, $\rho$), there
holds
\[
\sup_{x\geq\frac{1}{4}\rho}\left|\partial_{x}u\left(x,\,t\right)\right|\,\leq\,\sup_{x\geq\frac{1}{5}\rho}\left|\partial_{x}u\left(x,\,t_{0}\right)\right|\,+\,C\left(n,\,\rho\right)\sqrt{t-t_{0}}
\]
for $t_{0}\leq t\leq\mathring{t}$. 
\end{lem}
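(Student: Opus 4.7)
The plan is to derive a parabolic equation for $p := \partial_x u$ by differentiating (\ref{eq u}) in $x$, which yields
\[
\partial_t p \,=\, A\,\partial_{xx}^2 p \,+\, B\,\partial_x p \,+\, D\,(\partial_x p)^2 \,+\, E,
\]
where $A = (1+p^2)^{-1}$ and $B$, $D$, $E$ are explicit functions of $x$, $u$, $p$ only. By the already established bounds $|u/x| \leq 1/3$ and $|p| \leq 1/3$ valid for $x \geq \rho/5$ (combining (\ref{C^0 outside u bound}) and (\ref{preliminary bound C^2 outside u}) from Proposition \ref{C^0 outside u}), this equation is uniformly parabolic on $\{x \geq \rho/5\}$ with $A \in [1/2, 1]$, $|B|, |D| \leq C(n)$, and $|E| \leq C(n,\rho)$; crucially, the singular factor $\partial_{xx}^2 u \sim 1/\sqrt{t-t_0}$ appears only multiplying $\partial_x p$, hence vanishes at interior extrema of $p$.

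Next I would apply the maximum principle to $\pm p$ on the parabolic cylinder $\{x \geq \rho/4\}\times[t_0, t]$. At any interior maximum point of $\pm p$ the first-derivative terms $B\,\partial_x p$ and $D\,(\partial_x p)^2$ vanish while $\partial_{xx}^2 p \leq 0$, leaving $\partial_t p \leq E \leq C(n,\rho)$. The behavior as $x \to \infty$ is controlled by the initial data $|p(x,t_0)| \leq 1/5$ for $x \gtrsim 1$ from (\ref{initial u}), propagated forward by the same maximum principle argument. Thus the interior analysis yields
\[
\sup_{x \geq \rho/4} |p(x,t)| \,\leq\, \max\Big\{ \sup_{x \geq \rho/4}|p(x,t_0)|,\;\sup_{t_0 \leq s \leq t}|p(\rho/4,s)|\Big\} \,+\, C(n,\rho)(t-t_0).
\]

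The main obstacle, and the reason the statement carries $\sqrt{t-t_0}$ rather than $(t-t_0)$, is controlling the spatial-boundary trace $|p(\rho/4,s)|$. For this I plan to use the $C^0$ estimate (\ref{C^0 outside u bound}) through the mean value theorem: for each $s \in [t_0,t]$, MVT applied to $u(\cdot,s)$ on $[\rho/5,\rho/4]$ produces $\xi(s) \in (\rho/5,\rho/4)$ with $p(\xi(s),s) = [u(\rho/4,s) - u(\rho/5,s)]/(\rho/20)$, and since the numerator differs from $u(\rho/4,t_0) - u(\rho/5,t_0)$ by at most $C(n)\sqrt{s-t_0}$ by (\ref{C^0 outside u bound}), one gets $|p(\xi(s),s)| \leq \sup_{x' \geq \rho/5}|p(x',t_0)| + C(n,\rho)\sqrt{s-t_0}$. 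To transfer this bound from $\xi(s)$ to $\rho/4$, I plan to rerun the interior maximum principle argument above on the thin strip $[\xi(s),\rho/4]\times[t_0,s]$, for which the only new boundary contribution at $x = \xi(s)$ has already been controlled; the width $\rho/20$ of the strip ensures the same constants apply. Combining the interior and boundary estimates, and absorbing $(t-t_0) \leq \sqrt{t-t_0}$ under the hypothesis $|t_0| \ll 1$, produces the claimed bound.
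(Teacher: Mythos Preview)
Your interior analysis is correct: at an interior spatial extremum of $p=\partial_x u$ over $\{x\geq\rho/4\}$, the terms $B\,\partial_x p$ and $D\,(\partial_x p)^2$ both vanish, leaving only $|E|\leq C(n,\rho)$, so the interior contribution grows like $(t-t_0)$. (A minor point: at this stage only $|p|\lesssim 1$ is available from (\ref{preliminary bound C^2 outside u}), not $|p|\leq 1/3$; the latter is precisely the output of this lemma. This does not affect uniform parabolicity.)

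The genuine gap is in your boundary step. The plan to ``rerun the maximum principle on the strip $[\xi(s),\rho/4]\times[t_0,s]$'' cannot bound $p(\rho/4,s)$: the parabolic boundary of that strip consists of the bottom $\{t=t_0\}$ together with \emph{both} lateral sides $\{x=\xi(s)\}$ and $\{x=\rho/4\}$, and the quantity you want to control lies on the latter. The maximum principle on the strip gives no information there. Replacing the strip by the half-line $\{x\geq\xi(s)\}$ does not help either, since the mean value theorem only controls $p(\xi(s),s)$ at the single time $s$, not the full lateral trace $p(\xi(s),s')$ for $s'\in[t_0,s]$, and $\xi$ varies with $s$ anyway.

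The paper avoids the boundary issue altogether by multiplying by a cutoff $\eta$ with $\chi_{(\rho/4,R-1)}\leq\eta\leq\chi_{(\rho/5,R)}$ and applying the maximum principle to the compactly supported function $\eta\,\partial_x u$. Localization produces commutator terms, in particular $-\tfrac{2}{1+(\partial_x u)^2}\,\partial_x\eta\,\partial_{xx}^2 u$, which does \emph{not} vanish at a maximum of $\eta\,\partial_x u$. The crude bound $|\partial_{xx}^2 u|\leq C(n)/\sqrt{t-t_0}$ from (\ref{preliminary bound C^2 outside u}) then gives $\partial_t(\eta\,\partial_x u)_{\max}\leq C(n,\rho)/\sqrt{t-t_0}$, and integrating in $t$ produces the $\sqrt{t-t_0}$ in the statement. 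So the $\sqrt{t-t_0}$ is not a boundary-trace effect at all but the cost of the cutoff.
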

\begin{proof}
First, differentiate (\ref{eq u}) with respect to $x$ to get 
\[
\partial_{t}\left(\partial_{x}u\right)-\frac{1}{1+\left(\partial_{x}u\right)^{2}}\,\partial_{xx}^{2}\left(\partial_{x}u\right)-\left(\,a\left(x,\,t\right)\partial_{xx}^{2}u+b\left(x,\,t\right)\,\right)\,\partial_{x}\left(\partial_{x}u\right)=f\left(x,\,t\right)
\]
where 
\[
a\left(x,\,t\right)=\frac{-2\,\partial_{x}u\left(x,\,t\right)}{\left(1+\left(\partial_{x}u\left(x,\,t\right)\right)^{2}\right)^{2}}
\]
\[
b\left(x,\,t\right)=\frac{2\left(n-1\right)}{x\left(1-\left(\frac{u\left(x,\,t\right)}{x}\right)^{2}\right)}
\]
\[
f\left(x,\,t\right)=\frac{-4\left(n-1\right)\left(\frac{u\left(x,\,t\right)}{x}\right)\left(1-\left(\partial_{x}u\left(x,\,t\right)\right)^{2}\right)}{x^{2}\left(1-\left(\frac{u\left(x,\,t\right)}{x}\right)^{2}\right)^{2}}
\]
For each $R\geq2$, let $\eta\left(x\right)$ be a smooth function
so that 
\[
\chi_{\left(\frac{1}{4}\rho,\,R-1\right)}\,\leq\,\eta\,\leq\,\chi_{\left(\frac{1}{5}\rho,\,R\right)}
\]
 
\begin{equation}
\left|\partial_{x}\eta\left(x\right)\right|\,+\,\left|\partial_{xx}^{2}\eta\left(x\right)\right|\leq C\left(\rho\right)\label{outside Du1}
\end{equation}
It follows that
\[
\partial_{t}\left(\eta\,\partial_{x}u\right)-\frac{1}{1+\left(\partial_{x}u\right)^{2}}\,\partial_{xx}^{2}\left(\eta\,\partial_{x}u\right)-\left(\,a\left(x,\,t\right)\partial_{xx}^{2}u+b\left(x,\,t\right)\,\right)\,\partial_{x}\left(\eta\,\partial_{x}u\right)
\]
\begin{equation}
=-\left(\frac{\partial_{xx}^{2}\eta}{1+\left(\partial_{x}u\right)^{2}}\,+\,\left(\,a\left(x,\,t\right)\partial_{xx}^{2}u+b\left(x,\,t\right)\,\right)\,\partial_{x}\eta\right)\left(\partial_{x}u\right)\label{eq outside Du}
\end{equation}
\[
+\,\eta\left(x\right)f\left(x,\,t\right)\,-\,\frac{2}{1+\left(\partial_{x}u\right)^{2}}\,\partial_{x}\eta\,\left(\partial_{xx}^{2}u\right)
\]
Now let 
\[
\left(\eta\,\partial_{x}u\right)_{\max}\left(t\right)=\max_{x}\,\left(\eta\left(x\right)\partial_{x}u\left(x,\,t\right)\right)
\]
By (\ref{a priori bound u}), (\ref{initial u}) and (\ref{preliminary bound C^2 outside u}),
if $0<\rho\ll1$ (depending on $n$, $\Lambda$), $\left|t_{0}\right|\ll1$
(depending on $n$, $\rho$), we may assume that
\begin{equation}
\left\{ \begin{array}{c}
\left|\frac{u\left(x,\,t\right)}{x}\right|\leq\frac{1}{3}\\
\\
\left|\partial_{x}u\left(x,\,t\right)\right|\lesssim1\\
\\
\left|\partial_{xx}^{2}u\left(x,\,t\right)\right|\leq\frac{C\left(n,\,\rho\right)}{\sqrt{t-t_{0}}}
\end{array}\right.\label{outside Du2}
\end{equation}
 for $x\geq\frac{1}{5}\rho$, $t_{0}\leq t\leq\mathring{t}$. Thus,
by (\ref{outside Du1}) and (\ref{outside Du2}), applying the maximum
principle to (\ref{eq outside Du}) yields 
\[
\partial_{t}\left(\eta\,\partial_{x}u\right)_{\max}\,\leq\,\frac{C\left(n,\,\rho\right)}{\sqrt{t-t_{0}}}
\]
which implies 
\[
\left(\eta\,\partial_{x}u\right)_{\max}\left(t\right)\,\leq\,\left(\eta\,\partial_{x}u\right)_{\max}\left(t_{0}\right)\,+\,C\left(n,\,\rho\right)\sqrt{t-t_{0}}
\]
Likewise, if we define 
\[
\left(\eta\,\partial_{x}u\right)_{\min}\left(t\right)=\min_{x}\,\left(\eta\left(x\right)\partial_{x}u\left(x,\,t\right)\right)
\]
by the same argument, we get 
\[
\left(\eta\,\partial_{x}u\right)_{\min}\left(t\right)\,\geq\,\left(\eta\,\partial_{x}u\right)_{\min}\left(t_{0}\right)\,-\,C\left(n,\,\rho\right)\sqrt{t-t_{0}}
\]
\end{proof}
Before moving on to the second derivative estimate, we derive the
following lemma, which is about some properties of the cut-off functions
to be used.
\begin{lem}
\label{localization}Let $\eta\left(r\right)$ be a smooth, non-increasing
function so that 
\[
\chi_{\left(-\infty,\,0\right)}\leq\eta\leq\chi_{\left(-\infty,\,1\right)}
\]
and $\eta\left(r\right)$ vanishes at $r=1$ to infinite order. Then
\[
\sup_{r}\,\frac{\left(\partial_{r}\eta\left(r\right)\right)^{2}}{\eta\left(r\right)}<\infty
\]
for $r\leq1$.
\end{lem}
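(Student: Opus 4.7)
The plan is to reduce the claim to a pointwise Glaeser-type inequality for smooth non-negative functions with bounded second derivative. Specifically, I intend to establish
\[
\left(\partial_r \eta(r)\right)^{2} \,\leq\, 2M\,\eta(r), \qquad M \,\equiv\, \sup_{r\in\mathbb{R}}\,\left|\partial_{rr}^2 \eta(r)\right|,
\]
which will immediately yield $\sup_{r\le 1}(\partial_r\eta)^{2}/\eta\le 2M$. Here the ratio is naturally interpreted as $0$ at any point where $\eta$ vanishes, since $\eta\ge 0$ together with smoothness forces $\partial_r\eta=0$ at such points (local minima).

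The first step is to verify that $M<\infty$. The hypothesis that $\eta$ vanishes at $r=1$ to infinite order is exactly what is needed for the continuation of $\eta$ by zero on $[1,\infty)$ to be a $C^{\infty}$ function on all of $\mathbb{R}$; combined with $\eta\equiv 1$ on $(-\infty,0)$, this makes $\eta$ globally smooth, with $\partial_{rr}^2\eta\equiv 0$ outside the compact set $[0,1]$ and continuous (hence bounded) on $[0,1]$. Thus $M$ is finite.

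The key step is the pointwise inequality. Fix $r\in\mathbb{R}$ with $\partial_r\eta(r)\neq 0$ and set $\delta=-\partial_r\eta(r)/M$; then $\delta>0$, using $\partial_r\eta\le 0$ from the monotonicity of $\eta$. Taylor's formula with Lagrange remainder furnishes some $\xi$ between $r$ and $r+\delta$ with
\[
\eta(r+\delta)\,=\,\eta(r)\,+\,\delta\,\partial_r\eta(r)\,+\,\frac{\delta^{2}}{2}\,\partial_{rr}^2\eta(\xi).
\]
Using $\partial_{rr}^2\eta(\xi)\le M$ together with the explicit choice of $\delta$ gives
\[
\eta(r+\delta)\,\leq\,\eta(r)\,-\,\frac{(\partial_r\eta(r))^{2}}{2M}.
\]
Since $\eta(r+\delta)\ge 0$, rearranging produces the stated pointwise inequality, and taking the supremum finishes the argument. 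No serious obstacle is anticipated: the single essential use of the infinite-order vanishing hypothesis is to secure global smoothness of $\eta$ and thereby the finiteness of $M$, after which the bound on $(\partial_r\eta)^{2}/\eta$ is a nearly automatic consequence of Taylor's theorem applied to a non-negative $C^{2}$ function.
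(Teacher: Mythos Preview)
Your proof is correct and takes a genuinely different route from the paper. The paper's argument is a two-line application of L'H\^opital's rule: differentiating numerator and denominator gives
\[
\lim_{r\nearrow 1}\frac{(\partial_r\eta)^2}{\eta}\;=\;2\lim_{r\nearrow 1}\partial_{rr}^2\eta\;=\;0,
\]
and since the ratio also vanishes for $r\le 0$, continuity on the remaining compact interval yields boundedness. You instead prove the quantitative Glaeser-type inequality $(\partial_r\eta)^2\le 2M\,\eta$ via Taylor's formula with the optimal step $\delta=-\partial_r\eta(r)/M$. This gives an explicit bound $2\sup|\partial_{rr}^2\eta|$ on the supremum, which is strictly more information than the paper extracts, and the argument works verbatim for any non-negative $C^2$ function with globally bounded second derivative. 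The paper's route is shorter and uses only the qualitative fact $\partial_{rr}^2\eta(1)=0$; yours is a few lines longer but more self-contained and more robust, in that it sidesteps the $g'\neq 0$ hypothesis a careful application of L'H\^opital would need near $r=1$.
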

\begin{proof}
By L'H$\hat{o}$pital's rule, we have
\[
\lim_{r\nearrow1}\frac{\left(\partial_{r}\eta\left(r\right)\right)^{2}}{\eta\left(r\right)}=2\,\lim_{r\nearrow1}\,\partial_{rr}^{2}\eta\left(r\right)=0
\]
Also, for $r\leq0$ or $r>1$, there holds
\[
\frac{\left(\partial_{r}\eta\left(r\right)\right)^{2}}{\eta\left(r\right)}=0
\]
Thus, the conclusion follows easily.
\end{proof}
Below is an improved estimate for the second derivative of $u\left(s,\,t\right)$
in the outer region. Note that the proof requres $\left|\partial_{x}u\left(x,\,t\right)\right|<\frac{1}{\sqrt{3}}$,
which is guqranteed by (\ref{initial u}) and Lemma \ref{outside Du}. 
\begin{lem}
\label{outside DDu}If $0<\rho\ll1$ (depending on $n$, $\Lambda$)
and $\left|t_{0}\right|\ll1$ (depending on $n$, $\rho$), there
holds
\[
\sup_{x\geq\frac{1}{3}\rho}\left|\partial_{xx}^{2}u\left(x,\,t\right)\right|\,\leq\,\sup_{x\geq\frac{1}{4}\rho}\left|\partial_{xx}^{2}u\left(x,\,t_{0}\right)\right|\,+\,C\left(n,\,\rho\right)
\]
for $t_{0}\leq t\leq\mathring{t}$.
\end{lem}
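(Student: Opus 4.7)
My plan is to derive a quasilinear parabolic equation for $w:=\partial_{xx}^{2}u$, identify a \emph{negative} cubic reaction term, and run a maximum-principle argument with a localizing cut-off. The crucial structural input is that the coefficient of $w^{3}$ has the right sign exactly when $|\partial_{x}u|<1/\sqrt{3}$, which is strictly stronger than what comes from (\ref{initial u}) but is delivered by the previous lemma.

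\textbf{Step 1: propagate the gradient bound.} By (\ref{initial u}) one has $|\partial_{x}u(x,t_{0})|\leq 1/5$ for $x\geq\rho/6$, and Lemma \ref{outside Du} then gives
\[
\sup_{x\geq\rho/4}\,|\partial_{x}u(x,t)|\,\leq\,\tfrac{1}{5}+C(n,\rho)\sqrt{t-t_{0}}\,<\,\tfrac{1}{\sqrt{3}}
\]
for $t_{0}\leq t\leq\mathring{t}$, provided $|t_{0}|\ll 1$ (depending on $n,\rho$). Writing $\phi(p)=(1+p^{2})^{-1}$, a direct computation gives $\phi''(p)=(6p^{2}-2)/(1+p^{2})^{3}$, so that
\[
\phi''(\partial_{x}u(x,t))\,\leq\,-c(n)<0\qquad\text{for}\;\,x\geq\rho/4,\;\,t_{0}\leq t\leq\mathring{t}.
\]

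\textbf{Step 2: derive the equation for $w$.} Differentiating (\ref{eq u}) twice in $x$, the top-order nonlinearity $\phi(u_{x})u_{xx}$ contributes
\[
\phi(u_{x})\,w_{xx}\,+\,3\phi'(u_{x})\,w\,w_{x}\,+\,\phi''(u_{x})\,w^{3},
\]
while the lower-order part $2(n-1)(xu_{x}+u)/(x^{2}-u^{2})$, thanks to the $C^{2}$ bounds already in hand for $u$ and $u_{x}$ on $x\geq\rho/4$, produces terms of the schematic form $A(x,t)\,u_{xxx}+B(x,t,u,u_{x})\,w^{2}+D(x,t,u,u_{x})\,w+E(x,t,u,u_{x})$ with coefficients bounded by $C(n,\rho)$.

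\textbf{Step 3: localize with a smooth cut-off.} Choose a smooth $\eta$ with $\chi_{[\rho/3,\,\infty)}\leq\eta\leq\chi_{[\rho/4,\,\infty)}$, $0\leq\eta\leq 1$, vanishing at $x=\rho/4$ to infinite order (so that \emph{any} ratio $|\partial_{x}^{j}\eta|^{p}/\eta^{q}$ is bounded by $C(\rho)$, in the spirit of Lemma \ref{localization}); truncate outside a large radius in the same way, using (\ref{initial u}) and (\ref{bound outside u}) to see that $u$ and its derivatives decay there. Set $W=\eta w$, let $W_{\max}(t)=\max_{x}W(x,t)$, and consider a point where the max is attained. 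There $W_{x}=0$ forces $u_{xxx}=-(\eta_{x}/\eta)w$, and $W_{xx}\leq0$ forces $\eta u_{xxxx}\leq(2\eta_{x}^{2}/\eta-\eta_{xx})w$. Substituting into the equation for $w$ and using $w=W/\eta$ at the max yields
\[
\partial_{t}W_{\max}\,\leq\,-c\,\frac{W_{\max}^{3}}{\eta^{2}}\,+\,C(n,\rho)\Bigl(\tfrac{|\eta_{x}|}{\eta^{2}}W_{\max}^{2}\,+\,\tfrac{\eta_{x}^{2}+\eta|\eta_{xx}|}{\eta^{2}}\,|W_{\max}|\,+\,1\Bigr),
\]
and since $\eta\leq 1$ and all the $\eta$-ratios are bounded by $C(\rho)$, an application of Young's inequality absorbs the quadratic and linear terms in $W_{\max}$ into $-c W_{\max}^{3}/\eta^{2}\leq -c W_{\max}^{3}$, leaving
\[
\partial_{t}W_{\max}\,\leq\,-\tfrac{c}{2}W_{\max}^{3}\,+\,C(n,\rho).
\]
An identical argument applied to $-W_{\min}$ gives the matching lower bound, because for very negative $w$ the cubic $\phi''(u_{x})w^{3}$ is large positive and pushes $w$ back up.

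\textbf{Step 4: integrate the ODE.} The scalar ODI $\partial_{t}M\leq -\frac{c}{2}M^{3}+C(n,\rho)$ (valid wherever $M>0$) implies $M(t)\leq\max\{M(t_{0}),\,(2C/c)^{1/3}\}$. Combined with $W(x,t_{0})=\eta(x)\partial_{xx}^{2}u(x,t_{0})\leq\sup_{x\geq\rho/4}|\partial_{xx}^{2}u(x,t_{0})|$, this gives the claimed inequality on $W=\eta w$, hence on $w$ for $x\geq\rho/3$ since $\eta\equiv 1$ there.

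The main obstacle is the localization in Step~3: if $\eta$ were only a plain bump function, the cross terms $\eta_{x}/\eta$, $\eta_{x}^{2}/\eta^{2}$ generated by $W_{xx}\leq 0$ and by $w w_{x}$ at the critical point would be singular at $x=\rho/4$ and could dominate the dissipative cubic. Choosing $\eta$ to vanish to infinite order at $\rho/4$ (so that Lemma \ref{localization} applies to arbitrary ratios of $\eta$-derivatives) bypasses this, after which the Young-inequality absorption into $-cW^{3}$ is routine.
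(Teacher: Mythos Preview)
Your overall strategy matches the paper's: differentiate twice, isolate the cubic reaction term $\phi''(u_x)w^3$ (negative precisely when $|u_x|<1/\sqrt{3}$, which Step~1 secures), localize with a cut-off vanishing to infinite order, and run the maximum principle. Steps~1 and~2 are correct.

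The gap is in Step~3. Your claim that infinite-order vanishing makes ``\emph{any} ratio $|\partial_x^j\eta|^p/\eta^q$'' bounded is false, and Lemma~\ref{localization} does not say this. Take $\eta(r)=e^{-1/r}$ near $r=0$: then $\eta'/\eta=1/r^2$ and $|\eta_{xx}|/\eta\sim 1/r^4$, so the coefficients $|\eta_x|/\eta^2$, $\eta_x^2/\eta^2$, $|\eta_{xx}|/\eta$ in your displayed inequality all blow up. Lemma~\ref{localization} gives only the specific ratio $(\eta')^2/\eta$. Hence the passage from your displayed inequality to the clean ODE $\partial_tW_{\max}\leq -\tfrac{c}{2}W_{\max}^3+C(n,\rho)$ is not justified as written.

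The paper repairs this not by bounding more ratios but by organizing the algebra so that only $(\eta')^2/\eta$ is needed. Writing the equation for $\eta\,\partial_{xx}^2u$ and using $\partial_x(\eta w)=0$ at an interior maximum to replace the cross term $-2a\,\eta_x w_x$ by $+2a\,(\eta_x^2/\eta)\,w$, one obtains at that point
\[
0\;\leq\;-a(x_*,t_*)\,\eta\,w^3+b\,w^2+c\,w+\eta f,
\]
with $|b|,|c|,|f|\leq C(n,\rho)$; the only nontrivial cut-off ratio entering $c$ is $(\eta_x)^2/\eta$. Now multiply through by $\eta^2$ and substitute $M=\eta w$:
\[
0\;\leq\;-a\,M^3+b\,M^2+(c\eta)\,M+\eta^3 f,
\]
a cubic polynomial in $M$ with bounded lower coefficients and strictly negative leading coefficient, so $M\leq C(n,\rho)$ by Young's inequality. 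Taking the space-time maximum (rather than a time-slice ODE) thus gives the bound directly; your Step~4 is not needed.
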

\begin{proof}
Differentiating (\ref{eq u}) with respect to $x$ twice yields
\[
\partial_{t}\left(\partial_{xx}^{2}u\right)-\,\frac{1}{1+\left(\partial_{x}u\right)^{2}}\,\partial_{xx}^{2}\left(\partial_{xx}^{2}u\right)-\left(\frac{-6\,\partial_{x}u}{\left(1+\left(\partial_{x}u\right)^{2}\right)^{2}}\left(\partial_{xx}^{2}u\right)+\frac{2\left(n-1\right)}{x\left(1-\left(\frac{u}{x}\right)^{2}\right)}\right)\partial_{x}\left(\partial_{xx}^{2}u\right)
\]
\[
=-\frac{2\left(1-3\left(\partial_{x}u\right)^{2}\right)}{\left(1+\left(\partial_{x}u\right)^{2}\right)^{3}}\left(\partial_{xx}^{2}u\right)^{3}-\frac{2\left(n-1\right)\left(1+\left(\frac{u}{x}\right)^{2}-6\left(\frac{u}{x}\right)\partial_{x}u\right)}{x^{2}\left(1-\left(\frac{u}{x}\right)^{2}\right)^{2}}\left(\partial_{xx}^{2}u\right)
\]
\[
-\frac{4\left(n-1\right)\left(1-\left(\partial_{x}u\right)^{2}\right)}{x^{3}\left(1-\left(\frac{u}{x}\right)^{2}\right)^{3}}\left(\left(1+3\left(\frac{u}{x}\right)^{2}\right)\left(\partial_{x}u\right)-\left(3+\left(\frac{u}{x}\right)^{2}\right)\left(\frac{u}{x}\right)\right)
\]
For each $R\geq2$, let $\eta\left(x\right)$ be a smooth function
so that 
\[
\chi_{\left(\frac{1}{3}\rho,\,R-1\right)}\,\leq\,\eta\,\leq\,\chi_{\left(\frac{1}{4}\rho,\,R\right)}
\]
and $\eta\left(x\right)$ is increasing in $\left[\frac{1}{4}\rho,\,\frac{1}{3}\rho\right]$
and decreasing on $\left[R-1,\,R\right]$; moreove, $\eta\left(x\right)$
vanishes at $x=\frac{1}{4}\rho$ and $x=R$ to infinite order. Notice
that by Lemma \ref{localization}, we may assume
\begin{equation}
\frac{\left(\partial_{x}\eta\left(x\right)\right)^{2}}{\eta\left(x\right)}\,+\,\left|\partial_{x}\eta\left(x\right)\right|\,+\,\left|\partial_{xx}^{2}\eta\left(x\right)\right|\leq C\left(\rho\right)\label{outside DDu1}
\end{equation}
It follows that
\[
\partial_{t}\left(\eta\,\partial_{xx}^{2}u\right)-\frac{1}{1+\left(\partial_{x}u\right)^{2}}\,\partial_{xx}^{2}\left(\eta\,\partial_{xx}^{2}u\right)-\left(\frac{-6\,\partial_{x}u}{\left(1+\left(\partial_{x}u\right)^{2}\right)^{2}}\left(\partial_{xx}^{2}u\right)+\frac{2\left(n-1\right)}{x\left(1-\left(\frac{u}{x}\right)^{2}\right)}\right)\partial_{x}\left(\eta\,\partial_{xx}^{2}u\right)
\]
\[
=-\frac{2\left(1-3\left(\partial_{x}u\right)^{2}\right)}{\left(1+\left(\partial_{x}u\right)^{2}\right)^{3}}\eta\left(\partial_{xx}^{2}u\right)^{3}\,-\,\frac{2\left(n-1\right)\eta\left(x\right)\left(1+\left(\frac{u}{x}\right)^{2}-6\left(\frac{u}{x}\right)\partial_{x}u\right)}{x^{2}\left(1-\left(\frac{u}{x}\right)^{2}\right)^{2}}\,\left(\partial_{xx}^{2}u\right)
\]
\[
-\eta\left(x\right)\frac{4\left(n-1\right)\left(1-\left(\partial_{x}u\right)^{2}\right)}{x^{3}\left(1-\left(\frac{u}{x}\right)^{2}\right)^{3}}\left(\left(1+3\left(\frac{u}{x}\right)^{2}\right)\left(\partial_{x}u\right)-\left(3+\left(\frac{u}{x}\right)^{2}\right)\left(\frac{u}{x}\right)\right)
\]
\[
+\left(-\frac{\partial_{xx}^{2}\eta}{1+\left(\partial_{x}u\right)^{2}}-\partial_{x}\eta\left(x\right)\left(\frac{-6\,\partial_{x}u}{\left(1+\left(\partial_{x}u\right)^{2}\right)^{2}}\left(\partial_{xx}^{2}u\right)+\frac{2\left(n-1\right)}{x\left(1-\left(\frac{u}{x}\right)^{2}\right)}\right)\right)\left(\partial_{xx}^{2}u\right)
\]
\[
-\frac{2}{1+\left(\partial_{x}u\right)^{2}}\,\partial_{x}\eta\,\partial_{x}\left(\partial_{xx}^{2}u\right)
\]
Note that we can rewrite the last term on the RHS of the above equation
as
\[
-\frac{2}{1+\left(\partial_{x}u\right)^{2}}\,\partial_{x}\eta\,\partial_{x}\left(\partial_{xx}^{2}u\right)=-\frac{2}{1+\left(\partial_{x}u\right)^{2}}\frac{\partial_{x}\eta}{\eta}\left(\partial_{x}\left(\eta\,\partial_{xx}^{2}u\right)-\left(\partial_{x}\eta\right)\left(\partial_{xx}^{2}u\right)\right)
\]
So the equation of $\eta\,\partial_{xx}^{2}u$ can be rewritten as
\begin{equation}
\partial_{t}\left(\eta\,\partial_{xx}^{2}u\right)-\frac{1}{1+\left(\partial_{x}u\right)^{2}}\,\partial_{xx}^{2}\left(\eta\,\partial_{xx}^{2}u\right)\label{eq DDu}
\end{equation}
\[
-\left(\frac{-6\,\partial_{x}u}{\left(1+\left(\partial_{x}u\right)^{2}\right)^{2}}\left(\partial_{xx}^{2}u\right)+\frac{2\left(n-1\right)}{x\left(1-\left(\frac{u}{x}\right)^{2}\right)}-\frac{2}{1+\left(\partial_{x}u\right)^{2}}\left(\frac{\partial_{x}\eta}{\eta}\right)\right)\partial_{x}\left(\eta\,\partial_{xx}^{2}u\right)
\]
 
\[
=-a\left(x,\,t\right)\eta\left(\partial_{xx}^{2}u\right)^{3}+b\left(x,\,t\right)\left(\partial_{xx}^{2}u\right)^{2}+c\left(x,\,t\right)\left(\partial_{xx}^{2}u\right)+\eta\left(x\right)f\left(x,\,t\right)
\]
where 
\[
a\left(x,\,t\right)=\frac{2\left(1-3\left(\partial_{x}u\right)^{2}\right)}{\left(1+\left(\partial_{x}u\right)^{2}\right)^{3}}
\]
 
\[
b\left(x,\,t\right)=\frac{6\,\partial_{x}\eta\,\partial_{x}u}{\left(1+\left(\partial_{x}u\right)^{2}\right)^{2}}
\]
 
\[
c\left(x,\,t\right)=-\frac{2\left(n-1\right)\eta\left(x\right)\left(1+\left(\frac{u}{x}\right)^{2}-6\left(\frac{u}{x}\right)\partial_{x}u\right)}{x^{2}\left(1-\left(\frac{u}{x}\right)^{2}\right)^{2}}
\]
\[
-\frac{\partial_{xx}^{2}\eta}{1+\left(\partial_{x}u\right)^{2}}-\frac{2\left(n-1\right)\partial_{x}\eta}{x\left(1-\left(\frac{u}{x}\right)^{2}\right)}+\frac{2}{1+\left(\partial_{x}u\right)^{2}}\frac{\left(\partial_{x}\eta\right)^{2}}{\eta}
\]
 
\[
f\left(x,\,t\right)=-\frac{4\left(n-1\right)\left(1-\left(\partial_{x}u\right)^{2}\right)}{x^{3}\left(1-\left(\frac{u}{x}\right)^{2}\right)^{3}}\left(\left(1+3\left(\frac{u}{x}\right)^{2}\right)\left(\partial_{x}u\right)-\left(3+\left(\frac{u}{x}\right)^{2}\right)\left(\frac{u}{x}\right)\right)
\]
By (\ref{a priori bound u}), (\ref{initial u}), (\ref{preliminary bound C^2 outside u})
and Lemma \ref{outside Du}, if $0<\rho\ll1$ (depending on $n$,
$\Lambda$) and $\left|t_{0}\right|\ll1$ (depending on $n$, $\rho$),
we have
\[
\max\left\{ \left|\frac{u\left(x,\,t\right)}{x}\right|,\,\left|\partial_{x}u\left(x,\,t\right)\right|\right\} \,\leq\,\frac{1}{3}
\]
for $x\geq\frac{1}{4}\rho$, $t_{0}\leq t\leq\mathring{t}$, which,
together with (\ref{outside DDu1}), implies
\begin{equation}
\left\{ \begin{array}{c}
\frac{972}{1000}\,\leq\,a\left(x,\,t\right)\,\leq\,2\\
\\
\left|b\left(x,\,t\right)\right|\,+\,\left|c\left(x,\,t\right)\right|\,+\,\left|f\left(x,\,t\right)\right|\,\leq\,C\left(n,\,\rho\right)
\end{array}\right.\label{outside DDu2}
\end{equation}
 for $x\geq\frac{1}{4}\rho$, $t_{0}\leq t\leq\mathring{t}$. Now
let 
\[
M\,=\,\max_{\frac{1}{4}\rho\leq x\leq R,\,\,t_{0}\leq t\leq\mathring{t}}\,\eta\left(x\right)\partial_{xx}^{2}u\left(x,\,t\right)
\]
If
\[
M\,\leq\,\max_{\frac{1}{4}\rho\leq x\leq R}\,\left(\eta\left(x\right)\partial_{xx}^{2}u\left(x,\,t_{0}\right)\right)_{+}
\]
then we are done; otherwise, we have
\[
M\,>\,\max_{\frac{1}{4}\rho\leq x\leq R}\,\left(\eta\left(x\right)\partial_{xx}^{2}u\left(x,\,t_{0}\right)\right)_{+}
\]
In the later case, let $\left(x_{*},\,t_{*}\right)$ be a maximum
point of $\eta\,\partial_{xx}^{2}u$ in the spacetime, i.e. 
\[
\eta\left(x_{*}\right)\partial_{xx}^{2}u\left(x_{*},\,t_{*}\right)=M
\]
then we have $\frac{1}{4}\rho<x_{*}<R$, $t_{0}<t\leq\mathring{t}$.
Applying the maximum pricinple to (\ref{eq DDu}) yields
\[
0\leq-a\left(x_{*},\,t_{*}\right)\eta\left(x_{*}\right)\left(\partial_{xx}^{2}u\left(x_{*},\,t_{*}\right)\right)^{3}+b\left(\left(x_{*},\,t_{*}\right)\right)\left(\partial_{xx}^{2}u\left(x_{*},\,t_{*}\right)\right)^{2}
\]
\[
+c\left(x_{*},\,t_{*}\right)\left(\partial_{xx}^{2}u\left(x_{*},\,t_{*}\right)\right)+\eta\left(x_{*}\right)f\left(x_{*},\,t_{*}\right)
\]
\[
=\frac{1}{\eta^{2}\left(x_{*}\right)}\left(-a\left(x_{*},\,t_{*}\right)M^{3}+b\left(x_{*},\,t_{*}\right)M^{2}+\eta\left(x_{*}\right)c\left(x_{*},\,t_{*}\right)M+\eta^{3}\left(x_{*}\right)f\left(x_{*},\,t_{*}\right)\right)
\]
It follows, by Young's inequality and (\ref{outside DDu2}), that
\[
M^{3}\,\leq\,\frac{8}{3}\left(\frac{\left|b\left(x_{*},\,t_{*}\right)\right|}{a\left(x_{*},\,t_{*}\right)}\right)^{3}+\frac{4\sqrt{2}}{3}\left(\frac{\left|c\left(x_{*},\,t_{*}\right)\right|}{a\left(x_{*},\,t_{*}\right)}\right)^{\frac{3}{2}}+\frac{\left|f\left(x_{*},\,t_{*}\right)\right|}{a\left(x_{*},\,t_{*}\right)}\,\leq\,C\left(n,\,\rho\right)
\]
Therefore, in either case, we have
\[
\max_{\frac{1}{4}\rho\leq x\leq R,\,t_{0}\leq t\leq\mathring{t}}\,\eta\left(x\right)\partial_{xx}^{2}u\left(x,\,t\right)\,\leq\,\max_{x\geq\frac{1}{4}\rho}\,\left(\eta\left(x\right)\partial_{xx}^{2}u\left(x,\,t_{0}\right)\right)_{+}+C\left(n,\,\rho\right)
\]
Likewise, by the same argument, one could show that 
\[
\min_{\frac{\rho}{4}\leq x\leq R,\,t_{0}\leq t\leq\mathring{t}}\,\eta\left(x\right)\partial_{xx}^{2}u\left(x,\,t\right)\,\geq\,-\min_{x\geq\frac{\rho}{4}}\,\left(\eta\left(x\right)\partial_{xx}^{2}u\left(x,\,t_{0}\right)\right)-C\left(n,\,\rho\right)
\]
\end{proof}
In the next proposition, we apply the standard regularity theory for
parabolic equations to (\ref{eq u}), together with (\ref{C^2 outside u bound}),
to derive (\ref{C^infty outside u bound}).
\begin{prop}
\label{C^infty outside u}There holds (\ref{C^2 outside u bound}).
\end{prop}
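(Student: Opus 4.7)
The plan splits into two stages: first establish the $C^{2}$ bound (\ref{C^2 outside u bound}), then bootstrap to the smooth bound (\ref{C^infty outside u bound}) via standard interior parabolic regularity applied to (\ref{eq u}).

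For the $C^{2}$ part I would simply assemble the three pieces already proved in Section~\ref{degree C^0}. The bound on $|u|$ for $x\geq\frac{1}{3}\rho$ follows from the initial estimate (\ref{initial u}), which gives $|u(x,t_{0})|\leq\tfrac{1}{5}\min\{x,1\}$ on $x\geq\frac{1}{6}\rho$, combined with (\ref{C^0 outside u bound}): $|u(x,t)-u(x,t_{0})|\leq C(n)\sqrt{t-t_{0}}$. Since $|t_{0}|\ll1$ depending on $n$ and $\rho$, the correction is smaller than $\frac{1}{15}\min\{x,1\}$, upgrading $\frac{1}{5}$ to $\frac{1}{3}$. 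The gradient bound likewise follows by coupling (\ref{initial u}) with Lemma~\ref{outside Du}, and the second-derivative bound by coupling (\ref{initial u}) with Lemma~\ref{outside DDu}. In each case the correction term of size $C(n,\rho)\sqrt{t-t_{0}}$ is absorbed by choosing $|t_{0}|$ sufficiently small (depending on $n$, $\rho$), and the resulting constant in the $\partial_{xx}^{2}u$ estimate is bounded by the analogous initial bound plus $C(n,\rho)$, which is of the form $C(n,\rho)$.

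For the smooth estimate (\ref{C^infty outside u bound}) I would view (\ref{eq u}) as a quasilinear uniformly parabolic PDE on the region $\{x\geq\frac{1}{2}\rho\}\times[t_{0},\mathring{t}]$. Thanks to the $C^{2}$ bound just established, we have $\frac{1}{1+(\partial_{x}u)^{2}}\in[\frac{9}{10},1]$, $|x^{2}-u^{2}|\geq\frac{1}{2}x^{2}\geq c(\rho)>0$, and all coefficients (together with the right-hand side $\frac{2(n-1)(x\partial_{x}u+u)}{x^{2}-u^{2}}$ viewed as a first-order source) are $C^{0}$ bounded in terms of $n$ and $\rho$. Applying the Krylov-Safonov interior Hölder estimate on a parabolic cylinder shifted to start at $t_{0}+\delta^{2}$ yields $\|\partial_{xx}^{2}u\|_{C^{\alpha}}\leq C(n,\rho,\delta)$ on $\{x\geq\frac{1}{2}\rho\}\times[t_{0}+\delta^{2},\mathring{t}]$ for some $\alpha\in(0,1)$. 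Once the coefficients of the linearization
\[
\partial_{t}u=a(x,t)\,\partial_{xx}^{2}u+b(x,t)\,\partial_{x}u+c(x,t)\,u+f(x,t)
\]
are $C^{\alpha}$, Schauder estimates give $u\in C^{2+\alpha,1+\alpha/2}$; differentiating the equation in $x$ (and in $t$) and re-applying Schauder inductively then yields (\ref{C^infty outside u bound}) for every $m,l$, with constants depending only on $n$, $\rho$, $\delta$, $m$, $l$. A slight shrinkage of the spatial region at each bootstrap step (from $\frac{1}{2}\rho$ up to $\frac{1}{3}\rho$ in the $C^{2}$ step) lets us absorb boundary terms at $x=\frac{1}{2}\rho$ by interior cutoffs in $x$; in $t$ the shift $t_{0}+\delta^{2}\leq t$ plays the same role.

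The only genuinely non-routine input is the Krylov-Safonov step, which converts the $L^{\infty}$ bound on $\partial_{xx}^{2}u$ into a Hölder bound; after that, Schauder theory runs automatically. Because there is no boundary condition involved (the region $x\geq\frac{1}{2}\rho$ is spatially open and we use only interior estimates), I do not anticipate any real obstacle beyond checking that the axisymmetric coefficients in (\ref{eq u}) are smooth in $(x,u,\partial_{x}u)$ on the relevant range, which they are by the $C^{2}$ bounds. The only bookkeeping point is to order the parameter choices as stated in the proposition: first fix $n$, then $\Lambda$, then $\rho$ small, then $|t_{0}|$ small, so that each absorption step in the $C^{2}$ stage is legitimate.
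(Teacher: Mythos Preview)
Your assembly of the $C^{2}$ bound from (\ref{initial u}), (\ref{C^0 outside u bound}), Lemma~\ref{outside Du}, and Lemma~\ref{outside DDu} is exactly how the paper does it, and your bootstrap strategy for the smooth bound is the right one. There is, however, a concrete misstep in how you invoke Krylov--Safonov.

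You write that Krylov--Safonov ``converts the $L^{\infty}$ bound on $\partial_{xx}^{2}u$ into a H\"older bound,'' i.e.\ yields $\|\partial_{xx}^{2}u\|_{C^{\alpha}}\leq C(n,\rho,\delta)$. That is not what Krylov--Safonov gives: applied to a non-divergence uniformly parabolic equation with merely $L^{\infty}$ coefficients, it produces a H\"older bound on the \emph{solution} $u$, not on its second derivatives. And that is not yet enough to run Schauder on (\ref{eq u}), because the coefficient $\frac{1}{1+(\partial_{x}u)^{2}}$ depends on $\partial_{x}u$, whose H\"older regularity you have not established. So the passage ``once the coefficients of the linearization are $C^{\alpha}$, Schauder estimates give\ldots'' hides a genuine missing step.

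The paper closes this gap by applying Krylov--Safonov \emph{twice}: first to (\ref{eq u}) to get $[u]_{\gamma}\leq C(n,\rho,\delta)$, then to the differentiated equation satisfied by $\partial_{x}u$ (which is again uniformly parabolic with $L^{\infty}$ coefficients, thanks to the $C^{2}$ bound already in hand) to get $[\partial_{x}u]_{\gamma}\leq C(n,\rho,\delta)$. Only after both H\"older bounds are available do the coefficients of (\ref{eq u}) become $C^{\gamma}$, and Schauder then yields $[\partial_{xx}^{2}u]_{\gamma}\leq C(n,\rho,\delta)$. From there your bootstrap (differentiate in $x$, use the equation for $t$-derivatives, iterate) matches the paper exactly.
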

\begin{proof}
Given $0<\delta\ll1$, let's fix $x_{*}\geq\frac{1}{2}\rho$, $t_{0}+\delta^{2}\leq t_{*}\leq\mathring{t}$.
By (\ref{C^2 outside u bound}) and Krylov-Safonov H$\ddot{o}$lder
estimates (applying to (\ref{eq u})), there is 
\[
\gamma=\gamma\left(n,\,\rho\right)\in\left(0,\,1\right)
\]
so that
\begin{equation}
\left[u\right]_{\gamma;\,Q\left(x_{*},\,t_{*};\,\frac{\delta}{2}\right)}\leq C\left(n,\,\rho,\,\delta\right)\left\Vert u\right\Vert _{L^{\infty}\left(Q\left(x_{*},\,t_{*};\,\delta\right)\right)}\leq C\left(n,\,\rho,\,\delta\right)\label{C^infty outside u1}
\end{equation}
Next, differentiate (\ref{eq u}) with respect to $x$ to get 
\[
\partial_{t}\left(\partial_{x}u\right)-\frac{1}{1+\left(\partial_{x}u\right)^{2}}\,\partial_{xx}^{2}\left(\partial_{x}u\right)
\]
\[
-\left(\frac{-2\,\partial_{x}u\,\partial_{xx}^{2}u}{\left(1+\left(\partial_{x}u\right)^{2}\right)^{2}}+\frac{2\left(n-1\right)}{x\left(1-\left(\frac{u}{x}\right)^{2}\right)}\right)\partial_{x}\left(\partial_{x}u\right)-\left(\frac{4\left(n-1\right)\left(\frac{u}{x}\right)\,\partial_{x}u}{x^{2}\left(1-\left(\frac{u}{x}\right)^{2}\right)^{2}}\right)\left(\partial_{x}u\right)
\]
\[
=\frac{-4\left(n-1\right)\left(\frac{u}{x}\right)}{x^{2}\left(1-\left(\frac{u}{x}\right)^{2}\right)^{2}}
\]
Then by (\ref{C^2 outside u bound}) and Krylov-Safonov H$\ddot{o}$lder
estimates (applying to the above equation of $\partial_{x}u$), we
may assume that for the same exponent $\gamma$, there holds 
\[
\left[\partial_{x}u\right]_{\gamma;\,Q\left(x_{*},\,t_{*};\,\frac{\delta}{2}\right)}\leq C\left(n,\,\rho,\,\delta\right)\left(\left\Vert \partial_{x}u\right\Vert _{L^{\infty}\left(Q\left(x_{*},\,t_{*};\,\delta\right)\right)}+\left\Vert \frac{u}{x}\right\Vert _{L^{\infty}\left(Q\left(x_{*},\,t_{*};\,\delta\right)\right)}\right)
\]
\begin{equation}
\leq C\left(n,\,\rho,\,\delta\right)\label{C^infty outside u2}
\end{equation}
It follows, by (\ref{C^2 outside u bound}), (\ref{C^infty outside u1}),
(\ref{C^infty outside u2}) and Schauder $C^{2,\gamma}$ estimates
(applying to (\ref{eq u})), that 
\[
\left[\partial_{xx}^{2}u\right]_{\gamma;\,Q\left(x_{*},\,t_{*};\,\frac{\delta}{3}\right)}\leq C\left(n,\,\rho,\,\delta\right)\left\Vert u\right\Vert _{L^{\infty}\left(Q\left(x_{*},\,t_{*};\,\frac{\delta}{2}\right)\right)}\leq C\left(n,\,\rho,\,\delta\right)
\]
By the bootstrap argument, one could show that for any $m\in\mathbb{Z}_{+}$,
there holds 
\begin{equation}
\left\Vert \partial_{x}^{m}u\right\Vert _{L^{\infty}\left(Q\left(x_{*},\,t_{*};\,\frac{\delta}{m+1}\right)\right)}+\left[\partial_{x}^{m}u\right]_{\gamma;\,Q\left(x_{*},\,t_{*};\,\frac{\delta}{m+1}\right)}\leq C\left(n,\,\rho,\,\delta,\,m\right)\label{C^infty bound outside u}
\end{equation}
Moreover, by (\ref{eq u}) and (\ref{C^infty bound outside u}), we
immediately get 
\[
\left\Vert \partial_{x}^{m}\partial_{t}u\right\Vert _{L^{\infty}\left(Q\left(x_{*},\,t_{*};\,\frac{\delta}{m+3}\right)\right)}+\left[\partial_{x}^{m}\partial_{t}u\right]_{\gamma;\,Q\left(x_{*},\,t_{*};\,\frac{\delta}{m+3}\right)}\leq C\left(n,\,\rho,\,\delta,\,m\right)
\]
for any $m\in\mathbb{Z}_{+}$. Differentiating (\ref{eq u}) with
respect to $t$ and using the above estimates gives 
\[
\left\Vert \partial_{x}^{m}\partial_{t}^{2}u\right\Vert _{L^{\infty}\left(Q\left(x_{*},\,t_{*};\,\frac{\delta}{m+5}\right)\right)}+\left[\partial_{x}^{m}\partial_{t}^{2}u\right]_{\gamma;\,Q\left(x_{*},\,t_{*};\,\frac{\delta}{m+5}\right)}\leq C\left(n,\,\rho,\,\delta,\,m\right)
\]
for any $m\in\mathbb{Z}_{+}$. Continuing this process and using induction
yields 
\[
\left\Vert \partial_{x}^{m}\partial_{t}^{l}u\right\Vert _{L^{\infty}\left(Q\left(x_{*},\,t_{*};\,\frac{\delta}{m+2l+1}\right)\right)}+\left[\partial_{x}^{m}\partial_{t}^{l}u\right]_{\gamma;\,Q\left(x_{*},\,t_{*};\,\frac{\delta}{m+2l+1}\right)}\leq C\left(n,\,\rho,\,\delta,\,m,\,l\right)
\]
for any $m,\,l\in\mathbb{Z}_{+}$. 
\end{proof}
In the following proposition, we prove (\ref{C^infty u bound}) by
using (\ref{eq u}), (\ref{a priori bound u}), (\ref{C^0 u bound}),
(\ref{C^0 u bound intermediate}) and the regularity theory for parabolic
equations.
\begin{prop}
\label{C^infty u} If $0<\rho\ll1$ (depending on $n$, $\Lambda$)
and $\left|t_{0}\right|\ll1$ (depending on $n$, $\Lambda$, $\rho$),
there holds (\ref{C^infty u bound}).
\end{prop}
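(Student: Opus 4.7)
The plan is to carry out a parabolic rescaling at each point $(x_{*},t_{*})$ in the target region, reducing the claim to a unit-scale parabolic regularity problem, and then run the Krylov--Safonov plus Schauder bootstrap from the proof of Proposition~\ref{C^infty outside u}. The scaling bookkeeping produces the factor $\rho^{4\lambda_{2}}$.

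Fix $(x_{*},t_{*})$ with $\tfrac{1}{2}\sqrt{-t_{*}}\le x_{*}\le\tfrac{3}{4}\rho$ and $t_{0}+\delta^{2}x_{*}^{2}\le t_{*}\le\mathring{t}$, and set
\[
\bar u(\bar x,\bar t)\,=\,x_{*}^{-1}\,u\bigl(x_{*}\bar x,\;t_{*}+x_{*}^{2}\bar t\bigr),\qquad \bar\phi(\bar x,\bar t)\,=\,x_{*}^{-1}\,\phi\bigl(x_{*}\bar x,\;t_{*}+x_{*}^{2}\bar t\bigr),
\]
where $\phi(x,t)=\tfrac{k}{c_{2}}(-t)^{\lambda_{2}+\frac{1}{2}}\varphi_{2}\bigl(x/\sqrt{-t}\bigr)$. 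Since (\ref{eq u}) is invariant under the parabolic rescaling $u\mapsto\lambda^{-1}u(\lambda\,\cdot,\lambda^{2}\,\cdot)$, $\bar u$ solves (\ref{eq u}) on a unit cylinder $\bar Q$ (say $\bar x\in[1,\tfrac{3}{2}]$, $\bar t\in[-\tfrac{\delta^{2}}{4},0]$), while $\bar\phi$ is explicit, smooth, and solves the linearization of (\ref{eq u}) about $u\equiv0$ exactly (as $\varphi_{2}$ is an $\mathcal L$-eigenfunction). The condition $\sqrt{-t_{*}}\le 2x_{*}$ together with $\alpha+4=2\lambda_{2}+1$ allows (\ref{a priori bound u}) to be rewritten as $x^{i}|\partial_{x}^{i}u|\le C\Lambda\,x^{2\lambda_{2}+1}$ for $i=0,1,2$ on the original cylinder, rescaling to $|\partial_{\bar x}^{i}\bar u|+|\partial_{\bar x}^{i}\bar\phi|\le C\Lambda\,x_{*}^{2\lambda_{2}}$ on $\bar Q$. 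Thus $\bar u$ and $\bar\phi$ are both small of order $x_{*}^{2\lambda_{2}}$, and (\ref{eq u}) becomes a regular, uniformly parabolic equation whose coefficients are a small perturbation of those of the constant-coefficient linearization.

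The difference $\bar w:=\bar u-\bar\phi$ then satisfies a linear parabolic equation with smooth coefficients and source term $x_{*}\bigl(N(u)-N(\phi)\bigr)$, where $N$ denotes the nonlinear part of (\ref{eq u}). The estimate $|N(u)|+|N(\phi)|\le C\,x^{6\lambda_{2}-1}$, obtained from factors $(u/x)^{2}$ or $(\partial_{x}u)^{2}$ multiplying derivatives of size $x^{2\lambda_{2}-1}$, rescales to a source of size $x_{*}^{6\lambda_{2}}$. Combining (\ref{C^0 u bound}) with (\ref{C^0 u bound intermediate}) (which cover the full range $\tfrac{1}{3}\sqrt{-t}\le x\le\rho$) gives $\|\bar w\|_{L^{\infty}(\bar Q)}\le C(n,\Lambda,\rho,\beta)(-t_{0})^{\varkappa}x_{*}^{2\lambda_{2}}\le C\rho^{4\lambda_{2}}x_{*}^{2\lambda_{2}}$ provided $|t_{0}|$ is chosen small depending on $n,\Lambda,\rho,\beta$. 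Applying Krylov--Safonov H\"older estimates to $\bar w$ and then iterating Schauder $C^{2,\gamma}$ estimates on the successively differentiated equations for $\partial_{\bar x}^{m'}\partial_{\bar t}^{l'}\bar w$, exactly as in Proposition~\ref{C^infty outside u}, produces
\[
\bigl\|\partial_{\bar x}^{m}\partial_{\bar t}^{l}\bar w\bigr\|_{L^{\infty}(\bar Q')}\;\le\;C(n,\Lambda,\delta,m,l)\bigl(\|\bar w\|_{L^{\infty}(\bar Q)}+x_{*}^{6\lambda_{2}}\bigr)\;\le\;C\,\rho^{4\lambda_{2}}\,x_{*}^{2\lambda_{2}}
\]
on a slightly smaller subcylinder $\bar Q'\ni(1,0)$, using $x_{*}\le\rho$ in the second inequality. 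Unscaling via $\partial_{x}^{m}\partial_{t}^{l}(u-\phi)(x_{*},t_{*})=x_{*}^{1-m-2l}\,\partial_{\bar x}^{m}\partial_{\bar t}^{l}\bar w(1,0)$ then yields $x_{*}^{m+2l}|\partial_{x}^{m}\partial_{t}^{l}(u-\phi)(x_{*},t_{*})|\le C\rho^{4\lambda_{2}}x_{*}^{2\lambda_{2}+1}$, which is (\ref{C^infty u bound}).

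The main technical obstacle is the inductive bookkeeping of the Schauder step: to estimate $\partial_{\bar x}^{m}\partial_{\bar t}^{l}\bar w$ one must control the H\"older norms of the coefficients and source in the equation satisfied by lower-order derivatives, which involves previously bootstrapped estimates on $\bar u$, $\bar\phi$ and their derivatives. This is handled by induction on $m+2l$ exactly as in the bootstrap used in Proposition~\ref{C^infty outside u}, differentiating (\ref{eq u}) repeatedly and feeding each step's estimate into the next Schauder application. The structural point that pins the rate at $\rho^{4\lambda_{2}}$, rather than something weaker, is that $\phi$ exactly solves the linearization of (\ref{eq u}) about $0$, so the residual driving $\bar w$ is cubic in the small quantities of size $x_{*}^{2\lambda_{2}}$, producing a source of order $x_{*}^{6\lambda_{2}}=x_{*}^{4\lambda_{2}}\cdot x_{*}^{2\lambda_{2}}\le\rho^{4\lambda_{2}}x_{*}^{2\lambda_{2}}$.
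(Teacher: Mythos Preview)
Your approach is essentially the same as the paper's: parabolic rescaling at scale $x_{*}$, Krylov--Safonov then Schauder bootstrap to get all derivatives of $u$ with weight $x^{2\lambda_{2}+1}$, and then the difference equation for $u-\phi$ with the linear operator on the left and the cubic nonlinear remainder as source, which produces the extra factor $x_{*}^{4\lambda_{2}}\le\rho^{4\lambda_{2}}$. Two minor corrections: the source in the difference equation (with the \emph{linearization at zero} on the left, as in the paper's (\ref{linearized eq u}) minus (\ref{eigenfunction u})) is $N(u)$, not $N(u)-N(\phi)$, though the bound is unaffected; and your cylinder $\bar x\in[1,\tfrac{3}{2}]$ overshoots $x=\rho$ when $x_{*}=\tfrac{3}{4}\rho$, so use a centered cylinder such as $\bar x\in[\tfrac{2}{3},\tfrac{4}{3}]$ as the paper does.
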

\begin{proof}
Notice that by (\ref{a priori bound u}), we have
\begin{equation}
\max\left\{ \left|\frac{u\left(x,\,t\right)}{x}\right|,\,\left|\partial_{x}u\left(x,\,t\right)\right|\right\} \,\leq\,\frac{1}{3}\label{C^infty u1}
\end{equation}
\begin{equation}
x^{i}\left|\partial_{x}^{i}u\left(x,\,t\right)\right|\leq\Lambda\left(\left(-t\right)^{2}x^{\alpha}+x^{2\lambda_{2}+1}\right)\leq C\left(n,\,\Lambda\right)x^{2\lambda_{2}+1}\qquad\forall\;\,i\in\left\{ 0,\,1,\,2\right\} \label{C^infty u2}
\end{equation}
 for $\frac{1}{3}\sqrt{-t}\leq x\leq\rho$, $t_{0}\leq t\leq\mathring{t}$,
provided that $0<\rho\ll1$ (depending on $n$, $\Lambda$) and $\left|t_{0}\right|\ll1$
(depending on $n$, $\Lambda$, $\rho$).

Given $0<\delta\ll1$, let's fix $\left(x_{*},\,t_{*}\right)$ so
that 
\[
\frac{1}{2}\sqrt{-t_{*}}\leq x_{*}\leq\frac{3}{4}\rho,\qquad t_{0}+\delta^{2}x_{*}^{2}\leq t_{*}\leq\mathring{t}
\]
Define 
\[
h\left(r,\,\iota\right)=u\left(rx_{*},\,t_{*}+\iota x_{*}^{2}\right)
\]
for $\frac{2}{3}\leq r\leq\frac{4}{3}$, $-\delta^{2}\leq\iota\leq0$.
From (\ref{eq u}), there holds 
\begin{equation}
\partial_{\iota}h-a\left(r,\,\iota\right)\partial_{rr}^{2}h-b\left(r,\,\iota\right)\partial_{r}h-c\left(r,\,\iota\right)h=0\label{rescaled eq u}
\end{equation}
where 
\[
a\left(r,\,\iota\right)=\left.\frac{1}{1+\left(\partial_{x}u\left(x,\,t\right)\right)^{2}}\right|_{x=rx_{*},\,t=t_{*}+\iota x_{*}^{2}}
\]
 
\[
b\left(r,\,\iota\right)=\left.\frac{1}{r}\left(\frac{2\left(n-1\right)}{1-\left(\frac{u\left(x,\,t\right)}{x}\right)^{2}}\right)\right|_{x=rx_{*},\,t=t_{*}+\iota x_{*}^{2}}
\]
 
\[
c\left(r,\,\iota\right)=\left.\frac{1}{r^{2}}\left(\frac{2\left(n-1\right)}{1-\left(\frac{u\left(x,\,t\right)}{x}\right)^{2}}\right)\right|_{x=rx_{*},\,t=t_{*}+\iota x_{*}^{2}}
\]
By (\ref{C^infty u1}), (\ref{C^infty u2}) and Krylov-Safonov H$\ddot{o}$lder
estimates, there is 
\[
\gamma=\gamma\left(n,\,\Lambda\right)\in\left(0,\,1\right)
\]
so that 
\[
\left[h\right]_{\gamma;Q\left(1,0;\,\frac{\delta}{2}\right)}\leq C\left(n,\,\delta\right)\left\Vert h\right\Vert _{L^{\infty}\left(Q\left(1,0;\,\delta\right)\right)}\leq C\left(n,\,\Lambda,\,\delta\right)x_{*}^{2\lambda_{2}+1}
\]
In other words, we get 
\begin{equation}
x_{*}^{\gamma}\left[u\right]_{\gamma;\,Q\left(x_{*},\,t_{*};\,\frac{\delta}{2}x_{*}\right)}\leq C\left(n,\,\Lambda,\,\delta\right)x_{*}^{2\lambda_{2}+1}\label{Holder u}
\end{equation}
Next, differentiate (\ref{eq u}) with respect to $x$ to get

\begin{equation}
\partial_{t}\left(\partial_{x}u\right)-\frac{1}{1+\left(\partial_{x}u\right)^{2}}\partial_{xx}^{2}\left(\partial_{x}u\right)\label{eq Du}
\end{equation}
\[
-\frac{1}{x}\left(\frac{-2\,\partial_{x}u\left(x\,\partial_{xx}^{2}u\right)}{\left(1+\left(\partial_{x}u\right)^{2}\right)^{2}}+\frac{2\left(n-1\right)}{1-\left(\frac{u}{x}\right)^{2}}\right)\partial_{x}\left(\partial_{x}u\right)-\frac{1}{x^{2}}\left(\frac{4\left(n-1\right)\left(\frac{u}{x}\right)\,\partial_{x}u}{\left(1-\left(\frac{u}{x}\right)^{2}\right)^{2}}\right)\left(\partial_{x}u\right)
\]
\[
=\frac{1}{x^{2}}\left(\frac{-4\left(n-1\right)}{\left(1-\left(\frac{u}{x}\right)^{2}\right)^{2}}\left(\frac{u}{x}\right)\right)
\]
Define
\[
\tilde{h}\left(r,\,\iota\right)=\partial_{x}u\left(rx_{*},\,t_{*}+\iota x_{*}^{2}\right)
\]
then we have 
\begin{equation}
\partial_{\iota}\tilde{h}-\tilde{a}\left(r,\,\iota\right)\partial_{rr}^{2}\tilde{h}-\tilde{b}\left(r,\,\iota\right)\partial_{r}\tilde{h}-\tilde{c}\left(r,\,\iota\right)\tilde{h}=\tilde{f}\left(r,\,\iota\right)\label{rescaled eq Du}
\end{equation}
where 
\[
\tilde{a}\left(r,\,\iota\right)=\left.\frac{1}{1+\left(\partial_{x}u\left(x,\,t\right)\right)^{2}}\right|_{x=rx_{*},\,t=t_{*}+\iota x_{*}^{2}}
\]
 
\[
\tilde{b}\left(r,\,\iota\right)=\left.\frac{1}{r}\left(\frac{-2\,\partial_{x}u\left(x,\,t\right)\left(x\,\partial_{xx}^{2}u\left(x,\,t\right)\right)}{\left(1+\left(\partial_{x}u\left(x,\,t\right)\right)^{2}\right)^{2}}+\frac{2\left(n-1\right)}{1-\left(\frac{u\left(x,\,t\right)}{x}\right)^{2}}\right)\right|_{x=rx_{*},\,t=t_{*}+\iota x_{*}^{2}}
\]
 
\[
\tilde{c}\left(r,\,\iota\right)=\left.\frac{1}{r^{2}}\left(\frac{4\left(n-1\right)\left(\frac{u\left(x,\,t\right)}{x}\right)\,\partial_{x}u\left(x,\,t\right)}{\left(1-\left(\frac{u\left(x,\,t\right)}{x}\right)^{2}\right)^{2}}\right)\right|_{x=rx_{*},\,t=t_{*}+\iota x_{*}^{2}}
\]
 
\[
\tilde{f}\left(r,\,\iota\right)=\left.\frac{1}{r^{2}}\left(\frac{-4\left(n-1\right)}{\left(1-\left(\frac{u\left(x,\,t\right)}{x}\right)^{2}\right)^{2}}\left(\frac{u\left(x,\,t\right)}{x}\right)\right)\right|_{x=rx_{*},\,t=t_{*}+\iota x_{*}^{2}}
\]
By (\ref{C^infty u1}), (\ref{C^infty u2}) and Krylov-Safonov H$\ddot{o}$lder
estimates, we may assume that for the same exponent $\gamma$, there
holds 
\[
\left[\tilde{h}\right]_{\gamma;Q\left(1,0;\,\frac{\delta}{2}\right)}\leq C\left(n,\,\Lambda,\,\delta\right)\left(\left\Vert \tilde{h}\right\Vert _{L^{\infty}\left(Q\left(1,0;\,\delta\right)\right)}+\left\Vert \tilde{f}\right\Vert _{L^{\infty}\left(Q\left(1,0;\,\delta\right)\right)}\right)
\]
\[
\leq C\left(n,\,\Lambda,\,\delta\right)x_{*}^{2\lambda_{2}}
\]
which implies 
\begin{equation}
x_{*}^{\gamma}\left[\partial_{x}u\right]_{\gamma;\,Q\left(x_{*},\,t_{*};\,\frac{\delta}{2}x_{*}\right)}\leq C\left(n,\,\Lambda,\,\delta\right)x_{*}^{2\lambda_{2}}\label{Holder Du}
\end{equation}
Thus, by (\ref{C^infty u1}), (\ref{C^infty u2}), (\ref{Holder u}),
(\ref{Holder Du}), applying Schauder $C^{2,\gamma}$ estimates to
(\ref{rescaled eq u}) yields
\[
\left[\partial_{rr}^{2}h\right]_{\gamma;Q\left(1,\,0;\,\frac{\delta}{3}\right)}\leq C\left(n,\,\Lambda,\,\delta\right)\left\Vert h\right\Vert _{L^{\infty}\left(Q\left(1,\,0;\,\frac{\delta}{2}\right)\right)}\leq C\left(n,\,\Lambda,\,\delta\right)x_{*}^{2\lambda_{2}+1}
\]
which implies
\begin{equation}
x_{*}^{2+\gamma}\left[\partial_{xx}^{2}u\right]_{\gamma;\,Q\left(x_{*},\,t_{*};\,\frac{\delta}{3}x_{*}\right)}\leq C\left(n,\,\Lambda,\,\delta\right)x_{*}^{2\lambda_{2}+1}\label{Holder DDu}
\end{equation}
By the bootstrap and rescaling argument, one could show that for any
$m\in\mathbb{Z}_{+}$, there holds 
\[
x_{*}^{m}\left\Vert \partial_{x}^{m}u\right\Vert _{L^{\infty}\left(Q\left(x_{*},\,t_{*};\,\frac{\delta}{m+1}x_{*}\right)\right)}\,+\,x_{*}^{m+\gamma}\left[\partial_{x}^{m}u\right]_{\gamma;\,Q\left(x_{*},\,t_{*};\,\frac{\delta}{m+1}x_{*}\right)}
\]
\begin{equation}
\leq C\left(n,\,\Lambda,\,\delta,\,m\right)x_{*}^{2\lambda_{2}+1}\label{C^infty bound u}
\end{equation}
It follows, by (\ref{eq u}) and (\ref{C^infty bound u}), that 
\[
x_{*}^{m+2}\left\Vert \partial_{x}^{m}\partial_{t}u\right\Vert _{L^{\infty}\left(Q\left(x_{*},\,t_{*};\,\frac{\delta}{m+3}x_{*}\right)\right)}\,+\,x_{*}^{m+2+\gamma}\left[\partial_{x}^{m}\partial_{t}u\right]_{\gamma;\,Q\left(x_{*},\,t_{*};\,\frac{\delta}{m+3}x_{*}\right)}
\]
\[
\leq C\left(n,\,\Lambda,\,\delta,\,m\right)x_{*}^{2\lambda_{2}+1}
\]
for any $m\in\mathbb{Z}_{+}$. Then differentiate (\ref{eq u}) with
respect to $t$ and use the above estimates to get 
\[
x_{*}^{m+4}\left\Vert \partial_{x}^{m}\partial_{t}^{2}u\right\Vert _{L^{\infty}\left(Q\left(x_{*},\,t_{*};\,\frac{\delta}{m+5}x_{*}\right)\right)}+x_{*}^{m+4+\gamma}\left[\partial_{x}^{m}\partial_{t}^{2}u\right]_{\gamma;\,Q\left(x_{*},\,t_{*};\,\frac{\delta}{m+5}x_{*}\right)}
\]
\[
\leq C\left(n,\,\Lambda,\,\delta,\,m\right)x_{*}^{2\lambda_{2}+1}
\]
Continuing this process and using induction yields 
\[
x_{*}^{m+2l}\left\Vert \partial_{x}^{m}\partial_{t}^{l}u\right\Vert _{L^{\infty}\left(Q\left(x_{*},\,t_{*};\,\frac{\delta}{m+2l+1}x_{*}\right)\right)}+x_{*}^{m+2l+\gamma}\left[\partial_{x}^{m}\partial_{t}^{l}u\right]_{\gamma;\,Q\left(x_{*},\,t_{*};\,\frac{\delta}{m+2l+1}x_{*}\right)}
\]
\begin{equation}
\leq C\left(n,\,\Lambda,\,\delta,\,m\right)x_{*}^{2\lambda_{2}+1}\label{smooth bound u}
\end{equation}
for any $m,\,l\in\mathbb{Z}_{+}$.

On the other hand, by Proposition \ref{linear operator}, there holds
\[
\left(\partial_{s}+\mathcal{L}\right)\left(ke^{-\lambda_{2}s}\varphi_{2}\left(y\right)\right)=0
\]
By a rescaling argument, we get 
\begin{equation}
\left(\partial_{t}-\partial_{xx}^{2}-\frac{2\left(n-1\right)}{x}\partial_{x}-\frac{2\left(n-1\right)}{x^{2}}\right)\left(k\left(-t\right)^{\lambda_{2}+\frac{1}{2}}\varphi_{2}\left(\frac{x}{\sqrt{-t}}\right)\right)=0\label{eigenfunction u}
\end{equation}
In addition, by (\ref{eq u}) we have 
\begin{equation}
\left(\partial_{t}-\partial_{xx}^{2}-\frac{2\left(n-1\right)}{x}\partial_{x}-\frac{2\left(n-1\right)}{x^{2}}\right)u\left(x,\,t\right)\,=\,\frac{f\left(x,\,t\right)}{x^{2}}\label{linearized eq u}
\end{equation}
where
\[
f\left(x,\,t\right)=-\frac{\left(\partial_{x}u\right)^{2}}{1+\left(\partial_{x}u\right)^{2}}\left(x^{2}\,\partial_{xx}^{2}u\right)+\frac{2\left(n-1\right)\left(\frac{u}{x}\right)^{2}}{1-\left(\frac{u}{x}\right)^{2}}\left(x\,\partial_{x}u\right)+\frac{2\left(n-1\right)\left(\frac{u}{x}\right)^{2}}{1-\left(\frac{u}{x}\right)^{2}}u
\]
Note that by (\ref{C^infty u1}) and (\ref{smooth bound u}) we have
\[
x_{*}^{m+2l}\left\Vert \partial_{x}^{m}\partial_{t}^{l}f\left(x,\,t\right)\right\Vert _{L^{\infty}\left(Q\left(x_{*},\,t_{*};\,\frac{\delta}{m+2l+1}x_{*}\right)\right)}\,+\,x_{*}^{m+2l+\gamma}\left[\partial_{x}^{m}\partial_{t}^{l}f\left(x,\,t\right)\right]_{\gamma;\,Q\left(x_{*},\,t_{*};\,\frac{\delta}{m+2l+1}x_{*}\right)}
\]
\begin{equation}
\leq C\left(n,\,\Lambda,\,\delta,\,m,\,l\right)x_{*}^{4\lambda_{2}}\,x_{*}^{2\lambda_{2}+1}\label{smooth bound source u}
\end{equation}
for any $m,\,l\in\mathbb{Z}_{+}$. Subtract (\ref{eigenfunction u})
from (\ref{linearized eq u}) to get 
\[
\left(\partial_{t}-\partial_{xx}^{2}-\frac{2\left(n-1\right)}{x}\partial_{x}-\frac{2\left(n-1\right)}{x^{2}}\right)\left(u\left(x,\,t\right)-k\left(-t\right)^{\lambda_{2}+\frac{1}{2}}\varphi_{2}\left(\frac{x}{\sqrt{-t}}\right)\right)\,=\,\frac{f\left(x,\,t\right)}{x^{2}}
\]
Then by the rescaling argument, together with (\ref{smooth bound source u})
and Schauder estimates, we get
\[
x_{*}^{m+2l}\left\Vert \partial_{x}^{m}\partial_{t}^{l}\left(u\left(x,\,t\right)-k\left(-t\right)^{\lambda_{2}+\frac{1}{2}}\varphi_{2}\left(\frac{x}{\sqrt{-t}}\right)\right)\right\Vert _{L^{\infty}\left(Q\left(x_{*},\,t_{*};\,\frac{\delta}{m+2l+2}x_{*}\right)\right)}
\]
\[
\leq C\left(n,\,\Lambda,\,\delta,\,m,\,l\right)\left\Vert u\left(x,\,t\right)-k\left(-t\right)^{\lambda_{2}+\frac{1}{2}}\varphi_{2}\left(\frac{x}{\sqrt{-t}}\right)\right\Vert _{L^{\infty}\left(Q\left(x_{*},\,t_{*};\,\frac{\delta}{m+2l+1}x_{*}\right)\right)}
\]
\[
+C\left(n,\,\Lambda,\,\delta,\,m,\,l\right)\,\sum_{i=0}^{m}\sum_{j=0}^{l}x_{*}^{i+2j}\left\Vert \partial_{x}^{i}\partial_{t}^{j}f\left(x,\,t\right)\right\Vert _{L^{\infty}\left(Q\left(x_{*},\,t_{*};\,\frac{\delta}{m+2l+1}x_{*}\right)\right)}
\]
\[
+C\left(n,\,\Lambda,\,\delta,\,m,\,l\right)\,\sum_{i=0}^{m}\sum_{j=0}^{l}x_{*}^{i+2j+\gamma}\left[\partial_{x}^{i}\partial_{t}^{j}f\left(x,\,t\right)\right]_{\gamma;\,Q\left(x_{*},\,t_{*};\,\frac{\delta}{m+2l+1}x_{*}\right)}
\]
\[
\leq C\left(n,\,\Lambda,\,\delta,\,m,\,l\right)\left(\left(-t_{0}\right)^{\varsigma\lambda_{2}}+x_{*}^{4\lambda_{2}}\right)x_{*}^{2\lambda_{2}+1}
\]
for any $m,\,l\in\mathbb{Z}_{+}$.
\end{proof}
Below we use (\ref{eq v}), (\ref{a priori bound v}), (\ref{C^0 v bound intermediate}),
(\ref{C^0 v bound tip}) and the regularity theory to show (\ref{C^infty v bound intermediate})
and (\ref{C^infty v bound tip}).
\begin{prop}
\label{C^infty v}If $\beta\gg1$ (depending on $n$, $\Lambda$),
$s_{0}\gg1$ (depending on $n$, $\Lambda$, $\beta$), there hold
(\ref{C^infty v bound intermediate}) and (\ref{C^infty v bound tip}).
\end{prop}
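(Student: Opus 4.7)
The plan is to mimic the strategy of Proposition \ref{C^infty u}: first upgrade the $C^{0}$ bounds from Proposition \ref{C^0 v} to smooth estimates on $v(y,s)$ itself via parabolic rescaling and the standard regularity theory, and then derive a linear equation for the difference $v-\text{(asymptotic profile)}$ and feed the already-known smallness of this difference back through Schauder estimates. In both sub-regions the key point is to pick a rescaling matched to the natural length scale, so that the rescaled function is of order one with a uniformly parabolic equation on a unit-size parabolic cylinder.

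For the intermediate sub-region $e^{-\vartheta\sigma s}\leq y\leq 2$, I fix $(y_{*},s_{*})$ in the region and set
\[
h(r,\iota)\,=\,e^{\lambda_{2}s_{*}}\,y_{*}^{-\alpha}\,v\bigl(y_{*}r,\,s_{*}+y_{*}^{2}\iota\bigr)
\]
for $r\in[\tfrac{2}{3},\tfrac{4}{3}]$, $\iota\in[-\delta^{2},0]$. By (\ref{a priori bound v}), $h$ and its $r$-derivatives up to order two are bounded by $C(n,\Lambda)$ on this cylinder, and from (\ref{eq v}) the function $h$ satisfies a uniformly parabolic quasilinear equation whose coefficients are controlled by (\ref{a priori bound v}) (the terms $\tfrac{1}{2}(-y\partial_{y}v+v)$ and $2(n-1)(y\partial_{y}v+v)/(y^{2}-v^{2})$ become lower order and bounded after this rescaling). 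Krylov--Safonov H\"{o}lder estimates then give a uniform $C^{\gamma}$ bound on $h$ with $\gamma=\gamma(n,\Lambda)\in(0,1)$, and the bootstrap argument via Schauder $C^{2,\gamma}$ estimates, iterated exactly as in Proposition \ref{C^infty u}, yields
\[
y_{*}^{m+2l}\,\bigl|\partial_{y}^{m}\partial_{s}^{l}v(y_{*},s_{*})\bigr|\,\leq\,C(n,\Lambda,\delta,m,l)\,e^{-\lambda_{2}s_{*}}\,y_{*}^{\alpha}
\]
for all $m,l\in\mathbb{Z}_{+}$. To obtain (\ref{C^infty v bound intermediate}) I then subtract (\ref{linearize eq v}) by the fact that $(\partial_{s}+\mathcal{L})(\tfrac{k}{c_{2}}e^{-\lambda_{2}s}\varphi_{2})=0$ (Proposition \ref{linear operator}), so that the difference $\tilde{V}(y,s)\equiv v(y,s)-\tfrac{k}{c_{2}}e^{-\lambda_{2}s}\varphi_{2}(y)$ satisfies $(\partial_{s}+\mathcal{L})\tilde{V}=\mathcal{Q}v$. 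The quadratic term $\mathcal{Q}v$, by the smooth bounds just derived, is controlled in the above rescaled cylinder with an extra factor $e^{-\lambda_{2}s_{*}}y_{*}^{\alpha}$ (one gains $e^{-\lambda_{2}s}y^{\alpha}$ per factor of $v$ or $y\partial_{y}v$). Schauder estimates applied to the rescaled linear equation for $\tilde{V}$, together with the $C^{0}$ bound (\ref{C^0 v bound intermediate}), deliver (\ref{C^infty v bound intermediate}).

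For the tip sub-region $\tfrac{3}{2}\beta e^{-\sigma s}\leq y\leq e^{-\vartheta\sigma s}$ the relevant scale is $y_{*}$ itself but the asymptotic profile is $V_{*}(y,s)\equiv e^{-\sigma s}\psi_{k}(e^{\sigma s}y)$. I use the same rescaling $h(r,\iota)=e^{\lambda_{2}s_{*}}y_{*}^{-\alpha}v(y_{*}r,\,s_{*}+y_{*}^{2}\iota)$ (the parabolic scaling is still the intrinsic one coming from (\ref{eq v})). Because $\tfrac{3}{2}\beta e^{-\sigma s}\leq y\leq e^{-\vartheta\sigma s}$ forces $e^{\sigma s}y\gg 1$, the bounds of Lemma \ref{order psi} guarantee $V_{*}$ and all its derivatives are of order $e^{-\lambda_{2}s}y^{\alpha}$ with decay factors matched to $(e^{\sigma s}y)^{-2(1-\alpha)}$, so $V_{*}$ satisfies the intermediate-region a priori bounds with these sharper rates. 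Direct computation (using that $\psi_{k}$ solves (\ref{eq psi}) and using the rescaling relation already appearing in the proof of Proposition \ref{blow-up rate intermediate}) shows that the linearization of (\ref{eq v}) at $V_{*}$ has $V_{*}$ failing to be an exact solution only through the drift term $\tfrac{1}{2}(-y\partial_{y}v+v)$, producing an error of size $e^{-\lambda_{2}s}y^{\alpha}\cdot(e^{\sigma s}y)^{-2(1-\alpha)}$. Thus $\tilde{W}\equiv v-V_{*}$ satisfies a linear parabolic equation with source and coefficients controlled uniformly after our rescaling. Combining this with the $C^{0}$ bound (\ref{C^0 v bound tip}) and running Krylov--Safonov plus Schauder bootstraps gives (\ref{C^infty v bound tip}).

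The main technical difficulty will be bookkeeping the two different decay factors in the tip case: both the $e^{-\lambda_{2}s}y^{\alpha}$ weight (intrinsic size of $v$) and the $e^{-2\varrho\sigma(s-s_{0})}$ factor coming from the $C^{0}$ bound must be preserved through every Schauder iteration. Concretely one must verify that, after the rescaling to a unit cylinder, the source term for the $\tilde{W}$ equation carries both factors, which in turn requires using the sharper derivative estimates on $V_{*}$ from Lemma \ref{order psi} and the decay of $\psi_{k}(z)-z$ from Lemma \ref{asymptotic psi}. Once this is organized, the proof reduces to the same bootstrap pattern already recorded in Proposition \ref{C^infty u}, with no new PDE-level ingredients, and the conditions $\beta\gg 1$, $s_{0}\gg 1$ enter precisely to absorb the constants produced when passing from the rescaled cylinder back to the original variables.
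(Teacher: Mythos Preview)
Your overall strategy matches the paper's: first bootstrap smooth estimates on $v$ itself via parabolic rescaling and Krylov--Safonov/Schauder, then pass to the difference $v-\text{(profile)}$ and use the $C^{0}$ bounds (\ref{C^0 v bound intermediate}), (\ref{C^0 v bound tip}) together with Schauder estimates. The intermediate sub-region argument is essentially the paper's, modulo being more precise about the size of $\mathcal{Q}v$: the paper writes the source as $h/y^{2}$ with $h\sim (e^{-\lambda_{2}s}y^{\alpha-1})^{2}\cdot(e^{-\lambda_{2}s}y^{\alpha})$, and then uses $y\geq e^{-\vartheta\sigma s}$ to bound $(e^{-\lambda_{2}s}y^{\alpha-2})^{2}\leq e^{-\varkappa s}$ via the third entry in the definition (\ref{varkappa}).

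In the tip sub-region, however, you miscompute the size of the source. The drift error is $(\tfrac{1}{2}+\sigma)\bigl(-y\partial_{y}V_{*}+V_{*}\bigr)=(\tfrac{1}{2}+\sigma)\,e^{-\sigma s}\bigl(\psi_{k}(z)-z\partial_{z}\psi_{k}(z)\bigr)\big|_{z=e^{\sigma s}y}$, and by Lemma~\ref{order psi} (or (\ref{asymptotic psi'})) one has $\psi_{k}(z)-z\partial_{z}\psi_{k}(z)\sim(1-\alpha)kz^{\alpha}$, so the source is of order $e^{-\lambda_{2}s}y^{\alpha}$, \emph{not} $e^{-\lambda_{2}s}y^{\alpha}\cdot(e^{\sigma s}y)^{-2(1-\alpha)}$. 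The quantity you wrote down is the size of the higher-order correction $\psi_{k}(z)-kz^{\alpha}$ from Lemma~\ref{asymptotic psi}, which is not what appears here. With the correct source, the paper normalizes the equation for $v-V_{*}$ as in (\ref{eq Cauchy v}) so that the right-hand side is $\frac{1}{y^{2}}f$ with $f\sim e^{-\lambda_{2}s}y^{\alpha+2}$; the crucial step is then that in this sub-region $y\leq e^{-\vartheta\sigma s}$ forces $y^{2}\leq e^{-2\vartheta\sigma s}$, and since $\varrho<\vartheta$ (see (\ref{varrho})) one has $e^{-2\vartheta\sigma s_{*}}\leq \beta^{\alpha-3}e^{-2\varrho\sigma(s_{*}-s_{0})}$ once $s_{0}\gg1$ depending on $n,\beta$. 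This is exactly what allows the source to be absorbed into the $C^{0}$ bound (\ref{C^0 v bound tip}) through every Schauder iteration. Once you replace your incorrect source estimate with this computation, your proof coincides with the paper's.
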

\begin{proof}
By (\ref{a priori bound v}), we have 
\begin{equation}
y^{i}\left|\partial_{y}^{i}v\left(y,\,s\right)\right|\leq\Lambda e^{-\lambda_{2}s}\left(y^{\alpha}+y^{2\lambda_{2}+1}\right)\leq C\left(n,\,\Lambda\right)e^{-\lambda_{2}s}y^{\alpha}\label{C^infty v1}
\end{equation}
for $\beta e^{-\sigma s}\leq y\leq3$, $s_{0}\leq s\leq\mathring{s}$.
In particular, we may assume that 
\[
\max\left\{ \left|\frac{v\left(y,\,s\right)}{y}\right|,\,\left|\partial_{y}v\left(y,\,s\right)\right|\right\} \leq C\left(n,\,\Lambda\right)e^{-\lambda_{2}s}y^{\alpha-1}\leq\frac{1}{3}
\]
for $\beta e^{-\sigma s}\leq y\leq3$, $s_{0}\leq s\leq\mathring{s}$,
provided that $\beta\gg1$ (depending on $n$, $\Lambda$).

Now given $0<\delta\ll1$ and fix $\left(y_{*},\,s_{*}\right)$ so
that 
\[
\frac{3}{2}\beta e^{-\sigma s_{*}}\leq y_{*}\leq2,\qquad s_{0}+\delta^{2}y_{*}^{2}\leq s_{*}\leq\mathring{s}
\]
From (\ref{eq v}), we have
\[
\partial_{s}v\,-\,\frac{1}{1+\left(\partial_{y}v\right)^{2}}\,\partial_{yy}^{2}v\,-\,\frac{1}{y}\left(\frac{2\left(n-1\right)}{1-\left(\frac{v}{y}\right)^{2}}-\frac{y^{2}}{2}\right)\partial_{y}v\,-\,\frac{1}{y^{2}}\left(\frac{2\left(n-1\right)}{1-\left(\frac{v}{y}\right)^{2}}+\frac{y^{2}}{2}\right)v\,=\,0
\]
By (\ref{C^infty v1}) and Krylov-Safonov H$\ddot{o}$lder estimates,
there is 
\[
\gamma=\gamma\left(n,\,\Lambda\right)\in\left(0,\,1\right)
\]
so that 

\begin{equation}
y_{*}^{\gamma}\left[v\right]_{\gamma;\,Q\left(y_{*},\,s_{*};\,\frac{\delta}{2}y_{*}\right)}\leq C\left(n,\,\delta\right)\left\Vert v\right\Vert _{L^{\infty}\left(Q\left(y_{*},\,s_{*};\,\delta y_{*}\right)\right)}\leq C\left(n,\,\Lambda,\,\delta\right)e^{-\lambda_{2}s_{*}}y_{*}^{\alpha}\label{Holder v}
\end{equation}
Differentiate (\ref{eq v}) with respect to $y$ to get 
\[
\partial_{s}\left(\partial_{y}v\right)-\frac{1}{1+\left(\partial_{y}v\right)^{2}}\,\partial_{yy}^{2}\left(\partial_{y}v\right)
\]
\[
-\frac{1}{y}\left(\frac{-2\left(\partial_{y}v\right)\left(y\,\partial_{yy}^{2}v\right)}{\left(1+\left(\partial_{y}v\right)^{2}\right)^{2}}+\frac{2\left(n-1\right)}{1-\left(\frac{v}{y}\right)^{2}}-\frac{y^{2}}{2}\right)\partial_{y}\left(\partial_{y}v\right)\,-\,\frac{1}{y^{2}}\left(\frac{4\left(n-1\right)\left(\frac{v}{y}\right)\partial_{y}v}{\left(1-\left(\frac{v}{y}\right)^{2}\right)^{2}}\right)\left(\partial_{y}v\right)
\]
\[
=\frac{1}{y^{2}}\left(\frac{-4\left(n-1\right)\left(\frac{v}{y}\right)}{\left(1-\left(\frac{v}{y}\right)^{2}\right)^{2}}\right)
\]
By (\ref{C^infty v1}) and Krylov-Safonov H$\ddot{o}$lder estimates,
we may assume that for the same $\gamma$, there holds 
\[
y_{*}^{\gamma}\left[\partial_{y}v\right]_{\gamma;\,Q\left(y_{*},\,s_{*};\,\frac{\delta}{2}y_{*}\right)}\leq C\left(n,\,\Lambda,\,\delta\right)\left(\left\Vert \partial_{y}v\right\Vert _{L^{\infty}\left(Q\left(y_{*},\,s_{*};\,\delta y_{*}\right)\right)}+\left\Vert \frac{v}{y}\right\Vert _{L^{\infty}\left(Q\left(y_{*},\,s_{*};\,\delta y_{*}\right)\right)}\right)
\]
\begin{equation}
\leq C\left(n,\,\Lambda,\,\delta\right)e^{-\lambda_{2}s_{*}}y_{*}^{\alpha-1}\label{Holder Dv}
\end{equation}
By  (\ref{C^infty v1}), (\ref{Holder v}) and (\ref{Holder Dv}),
applying Schauder $C^{2,\,\gamma}$ estimates to (\ref{eq v}) yields
\begin{equation}
y_{*}^{2+\gamma}\left[\partial_{yy}^{2}v\right]_{\gamma;\,Q\left(y_{*},\,s_{*};\,\frac{\delta}{3}y_{*}\right)}\leq C\left(n,\,\Lambda,\,\delta\right)\left\Vert v\right\Vert _{L^{\infty}\left(Q\left(y_{*},\,s_{*};\,\frac{\delta}{2}y_{*}\right)\right)}\leq C\left(n,\,\Lambda,\,\delta\right)e^{-\lambda_{2}s_{*}}y_{*}^{\alpha}\label{Holder DDv}
\end{equation}
Then by the bootstrap argument, one could show that 
\[
y_{*}^{m}\left\Vert \partial_{y}^{m}v\left(y,\,s\right)\right\Vert _{L^{\infty}\left(Q\left(y_{*},\,s_{*};\,\frac{\delta}{m+1}y_{*}\right)\right)}+y_{*}^{m+\gamma}\left[\partial_{y}^{m}v\left(y,\,s\right)\right]_{\gamma;\,Q\left(y_{*},\,s_{*};\,\frac{\delta}{m+1}y_{*}\right)}
\]
\begin{equation}
\leq C\left(n,\,\Lambda,\,\delta,\,m\right)e^{-\lambda_{2}s_{*}}y_{*}^{\alpha}\label{C^infty bound v}
\end{equation}
for all $m\in\mathbb{Z}_{+}$. Furthermore, by (\ref{eq v}) and (\ref{C^infty bound v}),
we get 
\[
y_{*}^{m+2}\left\Vert \partial_{y}^{m}\partial_{s}v\left(y,\,s\right)\right\Vert _{L^{\infty}\left(Q\left(y_{*},\,s_{*};\,\frac{\delta}{m+3}y_{*}\right)\right)}+y_{*}^{m+2+\gamma}\left[\partial_{y}^{m}\partial_{s}v\left(y,\,s\right)\right]_{\gamma;\,Q\left(y_{*},\,s_{*};\,\frac{\delta}{m+3}y_{*}\right)}
\]
\[
\leq C\left(n,\,\Lambda,\,\delta,\,m\right)e^{-\lambda_{2}s_{*}}y_{*}^{\alpha}
\]
for all $m\geq0$. Diffrentiating (\ref{eq v}) with respect to $s$
and using the above estimates gives
\[
y_{*}^{m+4}\left\Vert \partial_{y}^{m}\partial_{s}^{2}v\left(y,\,s\right)\right\Vert _{L^{\infty}\left(Q\left(y_{*},\,s_{*};\,\frac{\delta}{m+5}y_{*}\right)\right)}+y_{*}^{m+4+\gamma}\left[\partial_{y}^{m}v\left(y,\,s\right)\right]_{\gamma;\,Q\left(y_{*},\,s_{*};\,\frac{\delta}{m+5}y_{*}\right)}
\]
\[
\leq C\left(n,\,\Lambda,\,\delta,\,m\right)e^{-\lambda_{2}s_{*}}y_{*}^{\alpha}
\]
Continuing this process and using induction yields 
\[
y_{*}^{m+2l}\left\Vert \partial_{y}^{m}\partial_{s}^{l}v\left(y,\,s\right)\right\Vert _{L^{\infty}\left(Q\left(y_{*},\,s_{*};\,\frac{\delta}{m+2l+1}y_{*}\right)\right)}+y_{*}^{m+2l+\gamma}\left[\partial_{y}^{m}\partial_{s}^{l}v\left(y,\,s\right)\right]_{\gamma;\,Q\left(y_{*},\,s_{*};\,\frac{\delta}{m+2l+1}y_{*}\right)}
\]
\begin{equation}
\leq C\left(n,\,\Lambda,\,\delta,\,m,\,l\right)e^{-\lambda_{2}s_{*}}y_{*}^{\alpha}\label{smooth bound v}
\end{equation}
for any $m,\,l\in\mathbb{Z}_{+}$.

If $e^{-\vartheta\sigma s_{*}}\leq y_{*}\leq2$, recall that by Proposition
\ref{linear operator}, there holds 
\[
\left(\partial_{s}+\mathcal{L}\right)\left(\frac{k}{c_{2}}e^{-\lambda_{2}s}\varphi_{2}\left(y\right)\right)=0
\]
That is, 
\begin{equation}
\left(\partial_{s}-\partial_{yy}^{2}+\frac{1}{y}\left(2\left(n-1\right)-\frac{y^{2}}{2}\right)\partial_{y}-\frac{1}{y^{2}}\left(2\left(n-1\right)+\frac{y^{2}}{2}\right)\right)\left(\frac{k}{c_{2}}e^{-\lambda_{2}s}\varphi_{2}\left(y\right)\right)=0\label{eigenfunction v}
\end{equation}
In addition, from (\ref{eq v}) we have 
\begin{equation}
\left(\partial_{s}-\partial_{yy}^{2}+\frac{1}{y}\left(2\left(n-1\right)-\frac{y^{2}}{2}\right)\partial_{y}-\frac{1}{y^{2}}\left(2\left(n-1\right)+\frac{y^{2}}{2}\right)\right)v\left(y,\,s\right)=\frac{h\left(y,\,s\right)}{y^{2}}\label{linearized eq v}
\end{equation}
where 
\[
h\left(y,\,s\right)=-\frac{\left(\partial_{y}v\right)^{2}}{1+\left(\partial_{y}v\right)^{2}}\left(y^{2}\,\partial_{yy}^{2}v\right)+\frac{2\left(n-1\right)\left(\frac{v}{y}\right)^{2}}{1-\left(\frac{v}{y}\right)^{2}}\left(y\,\partial_{y}v\right)+\frac{2\left(n-1\right)\left(\frac{v}{y}\right)^{2}}{1-\left(\frac{v}{y}\right)^{2}}\,v
\]
Notice that by (\ref{smooth bound v}), the function $h\left(y,\,s\right)$
satisfies 
\[
y_{*}^{m+2l}\left\Vert \partial_{y}^{m}\partial_{s}^{l}h\left(y,\,s\right)\right\Vert _{L^{\infty}\left(Q\left(y_{*},\,s_{*};\,\frac{\delta}{m+2l+1}y_{*}\right)\right)}\,+\,y_{*}^{m+2l+\gamma}\left[\partial_{y}^{m}\partial_{s}^{l}h\left(y,\,s\right)\right]_{\gamma;\,Q\left(y_{*},\,s_{*};\,\frac{\delta}{m+2l+1}y_{*}\right)}
\]
\[
\leq C\left(n,\,\Lambda,\,\delta,\,m,\,l\right)\left(e^{-\lambda_{2}s_{*}}y_{*}^{\alpha-1}\right)^{2}\left(e^{-\lambda_{2}s_{*}}y_{*}^{\alpha}\right)
\]
\[
=C\left(n,\,\Lambda,\,\delta,\,m,\,l\right)\left(e^{-\lambda_{2}s_{*}}y_{*}^{\alpha-2}\right)^{2}\left(e^{-\lambda_{2}s_{*}}y_{*}^{\alpha+2}\right)
\]
\begin{equation}
=C\left(n,\,\Lambda,\,\delta,\,m,\,l\right)e^{-\varkappa s_{*}}\left(e^{-\lambda_{2}s_{*}}y_{*}^{\alpha+2}\right)\label{smooth bound source v1}
\end{equation}
for any $m,\,l\in\mathbb{Z}_{+}$. Then we substract (\ref{eigenfunction v})
from (\ref{linearized eq v}) to get 
\[
\left(\partial_{s}-\partial_{yy}^{2}+\frac{1}{y}\left(2\left(n-1\right)-\frac{y^{2}}{2}\right)\partial_{y}-\frac{1}{y^{2}}\left(2\left(n-1\right)+\frac{y^{2}}{2}\right)\right)\left(v-\frac{k}{c_{2}}e^{-\lambda_{2}s}\varphi_{2}\left(y\right)\right)=\frac{h}{y^{2}}
\]
By (\ref{smooth bound source v1}) and Schauder estimates, we get
\[
y_{*}^{m+2l}\left\Vert \partial_{y}^{m}\partial_{s}^{l}\left(v\left(y,\,s\right)-\frac{k}{c_{2}}e^{-\lambda_{2}s}\varphi_{2}\left(y\right)\right)\right\Vert _{L^{\infty}\left(Q\left(y_{*},\,s_{*};\,\frac{\delta}{m+2l+2}y_{*}\right)\right)}
\]
\[
\leq C\left(n,\,\Lambda,\,\delta,\,m,\,l\right)\left\Vert v\left(y,\,s\right)-\frac{k}{c_{2}}e^{-\lambda_{2}s}\varphi_{2}\left(y\right)\right\Vert _{L^{\infty}\left(Q\left(y_{*},\,s_{*};\,\frac{\delta}{m+2l+1}y_{*}\right)\right)}
\]
 
\[
+C\left(n,\,\Lambda,\,\delta,\,m,\,l\right)\,\sum_{i=0}^{m}\sum_{j=0}^{l}y_{*}^{i+2j}\left\Vert \partial_{y}^{i}\partial_{s}^{j}h\right\Vert _{L^{\infty}\left(Q\left(y_{*},\,s_{*};\,\frac{\delta}{m+2l+1}y_{*}\right)\right)}
\]
\[
+C\left(n,\,\Lambda,\,\delta,\,m,\,l\right)\,\sum_{i=0}^{m}\sum_{j=0}^{l}y_{*}^{i+2j+\gamma}\left[\partial_{y}^{i}\partial_{s}^{j}h\right]_{\gamma;\,Q\left(y_{*},\,s_{*};\,\frac{\delta}{m+2l+1}y_{*}\right)}
\]
\[
\leq C\left(n,\,\Lambda,\,\delta,\,m,\,l\right)e^{-\varkappa s_{*}}\left(e^{-\lambda_{2}s_{*}}y_{*}^{\alpha+2}\right)
\]
for any $m,\,l\in\mathbb{Z}_{+}$. 

If $\frac{3}{2}\beta e^{-\sigma s_{*}}\leq y_{*}\leq e^{-\vartheta\sigma s_{*}}$,
notice that 
\[
\partial_{\tau}\psi_{k}\left(z\right)=0=\frac{1}{1+\left(\partial_{z}\psi_{k}\left(z\right)\right)^{2}}\,\partial_{zz}^{2}\psi_{k}\left(z\right)\,+\,2\left(n-1\right)\frac{z\,\partial_{z}\psi_{k}\left(z\right)+\psi_{k}\left(z\right)}{z^{2}-\psi_{k}^{2}\left(z\right)}
\]
Let 
\begin{equation}
\breve{v}\left(y,\,s\right)=e^{-\sigma s}\,\psi_{k}\left(e^{\sigma s}y\right)\label{v asymptote}
\end{equation}
then we have 
\[
\partial_{s}\breve{v}+\sigma\left(-y\,\partial_{y}\breve{v}+\breve{v}\right)=\frac{1}{1+\left(\partial_{y}\breve{v}\right)^{2}}\,\partial_{yy}^{2}\breve{v}\,+\,2\left(n-1\right)\frac{y\,\partial_{y}\breve{v}+\breve{v}}{y^{2}-\breve{v}^{2}}
\]
Then we subtract the above equation from (\ref{eq v}) to get 
\begin{equation}
\partial_{s}\left(v-\breve{v}\right)-a\left(y,\,s\right)\partial_{yy}^{2}\left(v-\breve{v}\right)-\frac{1}{y}\,b\left(y,\,s\right)\partial_{z}\left(v-\breve{v}\right)-\frac{1}{y^{2}}\,c\left(y,\,s\right)\left(v-\breve{v}\right)=\frac{1}{y^{2}}\,f\left(y,\,s\right)\label{eq Cauchy v}
\end{equation}
where 
\[
a\left(z,\,\tau\right)=\frac{1}{1+\left(\partial_{y}v\right)^{2}}
\]
\[
b\left(z,\,\tau\right)=\frac{-\left(y\,\partial_{yy}^{2}\breve{v}\right)\left(\partial_{y}v+\partial_{y}\breve{v}\right)}{\left(1+\left(\partial_{y}v\right)^{2}\right)\left(1+\left(\partial_{y}\breve{v}\right)^{2}\right)}+\frac{2\left(n-1\right)}{1-\left(\frac{v}{y}\right)^{2}}-\frac{y^{2}}{2}
\]
\[
c\left(z,\,\tau\right)=\frac{2\left(n-1\right)\left(\partial_{y}\breve{v}+\frac{\breve{v}}{y}\right)\left(\frac{v}{y}+\frac{\breve{v}}{y}\right)}{\left(1-\left(\frac{v}{y}\right)^{2}\right)\left(1-\left(\frac{\breve{v}}{y}\right)^{2}\right)}+\frac{2\left(n-1\right)}{1-\left(\frac{v}{y}\right)^{2}}+\frac{y^{2}}{2}
\]
\[
f\left(z,\,\tau\right)=\left(\frac{1}{2}+\sigma\right)y^{2}\left(-y\,\partial_{y}\breve{v}+\breve{v}\right)
\]
Note that by Lemma \ref{order psi} and (\ref{v asymptote}), we have
\begin{equation}
y^{m}\left|\partial_{y}^{m}\breve{v}\left(y,\,s\right)\right|\leq C\left(n,\,m\right)e^{-\lambda_{2}s}y^{\alpha}\label{bound coefficients v}
\end{equation}
for $y\geq\beta$, which yields
\[
y_{*}^{m+2l}\left\Vert \partial_{y}^{m}\partial_{s}^{l}f\left(y,\,s\right)\right\Vert _{L^{\infty}\left(Q\left(y_{*},\,s_{*};\,\frac{\delta}{m+2l+1}y_{*}\right)\right)}\,+\,y_{*}^{m+2l+\gamma}\left[\partial_{y}^{m}\partial_{s}^{l}f\left(y,\,s\right)\right]_{\gamma;\,Q\left(y_{*},\,s_{*};\,\frac{\delta}{m+2l+1}y_{*}\right)}
\]
\[
\leq C\left(n,\,\delta,\,m,\,l\right)\left(e^{-\lambda_{2}s_{*}}y_{*}^{\alpha+2}\right)
\]
\begin{equation}
\leq C\left(n,\,\delta,\,m,\,l\right)e^{-2\vartheta\sigma s_{*}}\left(e^{-\lambda_{2}s_{*}}y_{*}^{\alpha}\right)\label{smooth bound source v2}
\end{equation}
since $\frac{3}{2}\beta e^{-\sigma s_{*}}\leq y_{*}\leq e^{-\vartheta\sigma s_{*}}$.
Thus, by (\ref{smooth bound v}), (\ref{bound coefficients v}), (\ref{smooth bound source v2})
and applying Schauder estimates to (\ref{eq Cauchy v}), we get 
\[
y_{*}^{m+2l}\left\Vert \partial_{y}^{m}\partial_{s}^{l}\left(v\left(y,\,s\right)-\breve{v}\left(y,\,s\right)\right)\right\Vert _{L^{\infty}\left(Q\left(y_{*},\,s_{*};\,\frac{\delta}{m+2l+2}y_{*}\right)\right)}
\]
\[
\leq C\left(n,\,\Lambda,\,\delta,\,m,\,l\right)\left\Vert v\left(y,\,s\right)-\breve{v}\left(y,\,s\right)\right\Vert _{L^{\infty}\left(Q\left(y_{*},\,s_{*};\,\frac{\delta}{m+2l+1}y_{*}\right)\right)}
\]
 
\[
+C\left(n,\,\Lambda,\,\delta,\,m,\,l\right)\,\sum_{i=0}^{m}\sum_{j=0}^{l}y_{*}^{i+2j}\left\Vert \partial_{y}^{i}\partial_{s}^{j}f\right\Vert _{L^{\infty}\left(Q\left(y_{*},\,s_{*};\,\frac{\delta}{m+2l+1}y_{*}\right)\right)}
\]
\[
+C\left(n,\,\Lambda,\,\delta,\,m,\,l\right)\,\sum_{i=0}^{m}\sum_{j=0}^{l}y_{*}^{i+2j+\gamma}\left[\partial_{y}^{i}\partial_{s}^{j}f\right]_{\gamma;\,Q\left(y_{*},\,s_{*};\,\frac{\delta}{m+2l+1}y_{*}\right)}
\]
\[
\leq C\left(n,\,\Lambda,\,\delta,\,m,\,l\right)\left(\beta^{\alpha-3}e^{-2\varrho\sigma\left(s_{*}-s_{0}\right)}e^{-\lambda_{2}s_{*}}y_{*}^{\alpha}\,+\,e^{-2\vartheta\sigma s_{*}}\left(e^{-\lambda_{2}s_{*}}y_{*}^{\alpha}\right)\right)
\]
\[
\leq C\left(n,\,\Lambda,\,\delta,\,m,\,l\right)\beta^{\alpha-3}e^{-2\varrho\sigma\left(s_{*}-s_{0}\right)}e^{-\lambda_{2}s_{*}}y_{*}^{\alpha}
\]
provided that $s_{0}\gg1$ (depending on $n$, $\beta$). Notice that
$0<\varrho<\vartheta$.
\end{proof}
Next, we would like to prove (\ref{C^2 w' bound}). The $C^{0}$ estimate
is already shown in Proposition \ref{C^0 w'}. Below we would prove
the first and second derivatives estimates in Lemma \ref{C^1 w'}
and Lemma \ref{C^2 w'}, respectively. Before that, notice that by
(\ref{a priori bound w}) we have 
\begin{equation}
z^{i}\left|\partial_{z}^{i}w\left(z,\,\tau\right)\right|\,\leq\,\Lambda\left(z^{\alpha}+\frac{z^{2\lambda_{2}+1}}{\left(2\sigma\tau\right)^{2}}\right)\,\leq\,C\left(n,\,\Lambda\right)z{}^{\alpha},\qquad i\in\left\{ 0,\,1,\,2\right\} \label{a priori bound w intermediate}
\end{equation}
for $\beta\leq z\leq\left(2\sigma\tau\right)^{\frac{1}{2}\left(1-\vartheta\right)}$,
$\tau_{0}\leq\tau\leq\mathring{\tau}$; in particular, we have
\begin{equation}
\max\left\{ \left|\frac{w\left(z,\,\tau\right)}{z}\right|,\,\left|\partial_{z}w\left(z,\,\tau\right)\right|\right\} \,\leq\,\frac{1}{3}\label{preliminary bound w intermediate}
\end{equation}
for $\beta\leq z\leq\left(2\sigma\tau\right)^{\frac{1}{2}\left(1-\vartheta\right)}$,
$\tau_{0}\leq\tau\leq\mathring{\tau}$, provided that $\beta\gg1$
(depending on $n$, $\Lambda$). In the following lemma, we show how
to transform the above estimates for $w\left(z,\,\tau\right)$ to
$\hat{w}\left(z,\,\tau\right)$ via the projected curve $\bar{\Gamma}_{\tau}$
defined in (\ref{projected Gamma}). This lemma is useful since it
provides the ``boundary values'' for estimating $\hat{w}\left(z,\,\tau\right)$
in the rescaled tip region.
\begin{lem}
If $\beta\gg1$ (depending on $n$, $\Lambda$) and $\tau_{0}\gg1$
(depending on $n$, $\Lambda$, $\rho$, $\beta$), there hold 
\begin{equation}
\left|\partial_{z}\hat{w}\left(z,\,\tau\right)-1\right|\,\leq\,C\left(n,\,\Lambda\right)z{}^{\alpha-1}\label{Dw' intermediate}
\end{equation}
 
\begin{equation}
\left|\partial_{zz}^{2}\hat{w}\left(z,\,\tau\right)\right|\,\leq\,C\left(n,\,\Lambda\right)z{}^{\alpha-2}\label{DDw' intermediate}
\end{equation}
for $2\beta\leq z\leq\frac{1}{2}\left(2\sigma\tau\right)^{\frac{1}{2}\left(1-\vartheta\right)}$,
$\tau_{0}\leq\tau\leq\mathring{\tau}$. 
\end{lem}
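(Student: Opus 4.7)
The plan is to exploit the fact that $\bar{\Gamma}_\tau$ is a single projected curve, admitting two parametrizations on the overlap region: the tip-style graph $(z,\hat{w}(z,\tau))$ from (\ref{projected Gamma}), and the normal-graph-over-$\bar{\mathcal{C}}$ description via $w(z,\tau)$ from (\ref{w}). Setting $X(z) = (z-w(z,\tau))/\sqrt{2}$ and $Y(z) = (z+w(z,\tau))/\sqrt{2}$, these parametrizations are linked by the identity
\[
\hat{w}\bigl(X(z),\tau\bigr) \;=\; Y(z).
\]
So the whole estimate reduces to differentiating this identity and translating the known bounds on $w$ (from (\ref{a priori bound w intermediate}) and (\ref{preliminary bound w intermediate})) into bounds on $\hat{w}$.

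Differentiating once with respect to $z$ and solving gives
\[
\partial_z \hat{w}\bigl(X(z),\tau\bigr) \;=\; \frac{Y'(z)}{X'(z)} \;=\; \frac{1+\partial_z w}{1-\partial_z w}, \qquad \partial_z \hat{w}\bigl(X(z),\tau\bigr) - 1 \;=\; \frac{2\,\partial_z w}{1-\partial_z w}.
\]
Differentiating a second time and using $X''(z) = -\partial_{zz}^2 w/\sqrt{2}$, $Y''(z) = \partial_{zz}^2 w/\sqrt{2}$ yields
\[
\partial_{zz}^2 \hat{w}\bigl(X(z),\tau\bigr) \;=\; \frac{Y''(z) - \partial_z \hat{w}\cdot X''(z)}{X'(z)^2} \;=\; \frac{\sqrt{2}\,(1+\partial_z \hat{w})}{(1-\partial_z w)^2}\,\partial_{zz}^2 w.
\]
These are purely algebraic identities, so the estimates (\ref{Dw' intermediate}) and (\ref{DDw' intermediate}) follow at once. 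Indeed, by (\ref{a priori bound w intermediate}) we have $|\partial_z w|, |\partial_{zz}^2 w/z^{\alpha-2}| \le C(n,\Lambda) z^{\alpha-1}, C(n,\Lambda)$ respectively, and (\ref{preliminary bound w intermediate}) gives $|\partial_z w|\le 1/3$, so $1-\partial_z w$ stays bounded away from $0$; consequently $|\partial_z\hat{w}-1|\le C(n,\Lambda)z^{\alpha-1}$ and $|\partial_{zz}^2\hat{w}|\le C(n,\Lambda)z^{\alpha-2}$.

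The only piece of bookkeeping is matching the two parameter ranges. A Cartesian point $z_{*}\in [2\beta,\tfrac12(2\sigma\tau)^{\frac12(1-\vartheta)}]$ corresponds under $X$ to a cone-parameter $z$ satisfying $z_{*} = (z-w(z,\tau))/\sqrt{2}$; since $|w(z,\tau)/z|\le Cz^{\alpha-1}$ by (\ref{a priori bound w intermediate}), we have $z \approx \sqrt{2}\,z_{*}$, which lies safely inside $[\beta,(2\sigma\tau)^{\frac12(1-\vartheta)}]$ once $\beta\gg 1$. The bound $|\partial_z w|\le 1/3$ from (\ref{preliminary bound w intermediate}) makes $X'(z) = (1-\partial_z w)/\sqrt{2}>0$, so $X$ is a diffeomorphism of the relevant intervals and the inverse substitution is legitimate. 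There is no genuine obstacle in this lemma; the only point requiring a little care is verifying this range-matching and invertibility, which is why the proposition is stated with the slightly shrunk inner/outer endpoints $2\beta$ and $\tfrac12(2\sigma\tau)^{\frac12(1-\vartheta)}$.
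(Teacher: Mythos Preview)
Your proof is correct and follows essentially the same idea as the paper: both exploit that $\bar{\Gamma}_\tau$ has two parametrizations (graph over $\bar{\mathcal{C}}$ via $w$, Cartesian graph via $\hat{w}$) and transfer derivative bounds from one to the other. The paper phrases this transfer through the parametrization-invariant quantities $N_{\bar{\Gamma}_\tau}\cdot\mathbf{e}$ and $A_{\bar{\Gamma}_\tau}$, which forces an extra step (bounding $|\partial_z\hat{w}|$ a priori before inverting the relation between $\partial_z\hat{w}$ and $N\cdot\mathbf{e}$), whereas your direct chain-rule differentiation of $\hat{w}(X(z),\tau)=Y(z)$ yields the explicit formulas $\partial_z\hat{w}-1=2\partial_z w/(1-\partial_z w)$ and $\partial_{zz}^2\hat{w}=\sqrt{2}(1+\partial_z\hat{w})\partial_{zz}^2 w/(1-\partial_z w)^2$ and sidesteps that issue; the content is the same.
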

\begin{proof}
Let's first parametrize the projected curve $\bar{\Gamma}_{\tau}$
by 
\[
Z_{\tau}=\left(\left(z-w\left(z,\,\tau\right)\right)\frac{1}{\sqrt{2}},\,\left(z+w\left(z,\,\tau\right)\right)\frac{1}{\sqrt{2}}\right)
\]
In this parametrization, there hold 
\[
N_{\bar{\Gamma}_{\tau}}\cdot\mathbf{e}=\frac{-\partial_{z}w\left(z,\,\tau\right)}{\sqrt{1+\left(\partial_{z}w\left(z,\,\tau\right)\right)^{2}}}
\]
 
\[
A_{\bar{\Gamma}_{\tau}}=\frac{\partial_{zz}^{2}w\left(z,\,\tau\right)}{\left(1+\left(\partial_{z}w\left(z,\,\tau\right)\right)^{2}\right)^{\frac{3}{2}}}
\]
where $N_{\bar{\Gamma}_{\tau}}$ and $A_{\bar{\Gamma}_{\tau}}$ are
the (upward) unit normal vector and normal curvature of $\bar{\Gamma}_{\tau}$
at $Z_{\tau}$, respectively, and
\[
\mathbf{e}=\left(\frac{1}{\sqrt{2}},\,\frac{1}{\sqrt{2}}\right)
\]
By (\ref{a priori bound w intermediate}) and (\ref{preliminary bound w intermediate}),
we get 
\[
z\,\leq\,\left|Z_{\tau}\right|=\sqrt{z^{2}+\left(w\left(z,\,\tau\right)\right)^{2}}\,\leq\,\sqrt{\frac{10}{9}}z
\]
 
\begin{equation}
\left|N_{\bar{\Gamma}_{\tau}}\cdot\mathbf{e}\right|\leq C\left(n\right)\Lambda\left|Z_{\tau}\right|{}^{\alpha-1}\label{normal Gamma intermediate}
\end{equation}
 
\begin{equation}
\left|A_{\bar{\Gamma}_{\tau}}\right|\,\leq\,C\left(n\right)\Lambda\left|Z_{\tau}\right|{}^{\alpha-2}\label{curvature Gamma intermediate}
\end{equation}
for $\beta\leq z\leq\left(2\sigma\tau\right)^{\frac{1}{2}\left(1-\vartheta\right)}$,
$\tau_{0}\leq\tau\leq\mathring{\tau}$.

Now we reparametrize $\bar{\Gamma}_{\tau}$ as 
\[
Z_{\tau}=\left(z,\,\hat{w}\left(z,\,\tau\right)\right)
\]
In that case, we have 
\[
\left|Z_{\tau}\right|=\sqrt{z^{2}+\left(\hat{w}\left(z,\,\tau\right)\right)^{2}}
\]
 
\begin{equation}
N_{\bar{\Gamma}_{\tau}}\cdot\mathbf{e}=\frac{1-\partial_{z}\hat{w}\left(z,\,\tau\right)}{\sqrt{2\left(1+\left(\partial_{z}\hat{w}\left(z,\,\tau\right)\right)^{2}\right)}}\label{normal Gamma'}
\end{equation}
 
\begin{equation}
A_{\bar{\Gamma}_{\tau}}=\frac{\partial_{zz}^{2}\hat{w}\left(z,\,\tau\right)}{\left(1+\left(\partial_{z}\hat{w}\left(z,\,\tau\right)\right)^{2}\right)^{\frac{3}{2}}}\label{curvature Gamma'}
\end{equation}
Note that by (\ref{psi'}), (\ref{k}) and (\ref{C^0 w' bound}),
there holds 
\begin{equation}
\frac{1}{C\left(n\right)}\,\leq\,\frac{\left|Z_{\tau}\right|}{z}\,\leq\,C\left(n\right)\label{ratio w' preliminary bound}
\end{equation}
for $2\beta\leq z\leq\frac{1}{2}\left(2\sigma\tau\right)^{\frac{1}{2}\left(1-\vartheta\right)}$,
provided that $\beta\gg1$ (depending on $n$) and $\tau_{0}\gg1$
(depending on $n$, $\Lambda$, $\rho$, $\beta$). Moreover, by (\ref{normal Gamma intermediate})
we may assume
\[
\left|N_{\bar{\Gamma}_{\tau}}\cdot\mathbf{e}\right|\,\leq\,\frac{1}{100\sqrt{2}}
\]
for $2\beta\leq z\leq\frac{1}{2}\left(2\sigma\tau\right)^{\frac{1}{2}\left(1-\vartheta\right)}$.
Since
\[
\lim_{p\rightarrow\pm\infty}\,\frac{1-p}{\sqrt{2\left(1+p^{2}\right)}}=\mp\frac{1}{\sqrt{2}}
\]
it follows, by (\ref{normal Gamma'}), that 
\begin{equation}
\left|\partial_{z}\hat{w}\left(z,\,\tau\right)\right|\,\leq\,C\label{derivative w' preliminary bound}
\end{equation}
for $2\beta\leq z\leq\frac{1}{2}\sqrt{2\sigma\tau}$. The conclusion
follows by (\ref{normal Gamma intermediate}), (\ref{curvature Gamma intermediate}),
(\ref{normal Gamma'}), (\ref{curvature Gamma'}), (\ref{ratio w' preliminary bound})
and (\ref{derivative w' preliminary bound}). 
\end{proof}
\begin{rem}
Note that for the last lemma, when $\tau=\tau_{0}$, by (\ref{initial w intermediate})
we have
\[
N_{\bar{\Gamma}_{\tau_{0}}}\cdot\mathbf{e}=\frac{-\partial_{z}w\left(z,\,\tau_{0}\right)}{\sqrt{1+\left(\partial_{z}w\left(z,\,\tau_{0}\right)\right)^{2}}}>0
\]
for $\frac{1}{2}\beta\leq z\leq\left(2\sigma\tau\right)^{\frac{1}{2}\left(1-\vartheta\right)}$,
$\tau_{0}\leq\tau\leq\mathring{\tau}$. Consequently, by the same
argument and (\ref{normal Gamma'}), we can show that
\begin{equation}
0\,\leq\,1-\partial_{z}\hat{w}\left(z,\,\tau_{0}\right)\,\leq\,C\left(n,\,\Lambda\right)z{}^{\alpha-1}\label{Dw' intermediate initial}
\end{equation}
for $\frac{1}{2}\beta\leq z\leq\left(2\sigma\tau_{0}\right)^{\frac{1}{2}\left(1-\vartheta\right)}$.
\end{rem}
Below we use (\ref{eq w'}), (\ref{initial w'}), (\ref{Dw' intermediate})
and the maximum principle to show the first derivative estimate in
(\ref{C^2 w' bound}). 
\begin{lem}
\label{C^1 w'}If $\beta\gg1$ (depending on $n$, $\Lambda$), there
holds 
\begin{equation}
0\,\leq\,\partial_{z}\hat{w}\left(z,\,\tau\right)\,\leq\,1+\beta^{\alpha-2}\label{C^1 w' bound}
\end{equation}
for $0\leq z\leq\beta^{2}$, $\tau_{0}\leq\tau\leq\mathring{\tau}$. 
\end{lem}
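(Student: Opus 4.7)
My plan is to apply the parabolic maximum principle to $p\equiv\partial_{z}\hat{w}$ on the cylinder $\{0\leq z\leq\beta^{2}\}\times[\tau_{0},\mathring{\tau}]$, exploiting the favorable sign of the reaction term that comes from the $-(n-1)/\hat{w}$ contribution in (\ref{eq w'}).

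Differentiating (\ref{eq w'}) in $z$ yields an equation of the form
\[
\partial_{\tau}p\,-\,\frac{\partial_{zz}^{2}p}{1+p^{2}}\,-\,\mathbf{b}(z,\tau)\,\partial_{z}p \;=\; (n-1)\,p\cdot\frac{z^{2}-\hat{w}^{2}}{z^{2}\hat{w}^{2}},
\]
where the first-order coefficient $\mathbf{b}$ collects the terms $(n-1)/z$, $-\frac{\frac{1}{2}+\sigma}{2\sigma\tau}z$ and the quasilinear piece $-2p\,\partial_{z}p/(1+p^{2})^{2}$. The key observation is that $\hat{w}(z,\tau)>z$ holds throughout the cylinder: indeed $\hat{\psi}_{k}(z)=k^{\frac{1}{1-\alpha}}\hat{\psi}(k^{-\frac{1}{1-\alpha}}z)>z$ by the property $\hat{\psi}(r)>r$ recorded in Section \ref{minimal}, and the lower barrier in (\ref{C^0 w' bound}) together with Lemma \ref{monotonicity} then gives $\hat{w}(z,\tau)\geq\hat{\psi}_{(1-\beta^{\alpha-3}(\tau/\tau_{0})^{-\varrho})k}(z)>z$ for $\beta,\tau_{0}\gg1$. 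Consequently the reaction coefficient $(n-1)(z^{2}-\hat{w}^{2})/(z^{2}\hat{w}^{2})$ is strictly negative on $\{z>0\}$.

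Next I would collect the parabolic boundary data for $p$ on $[0,\beta^{2}]\times[\tau_{0},\mathring{\tau}]$. At $z=0$, the $O(n)\times O(n)$ symmetry forces $p(0,\tau)=\partial_{z}\hat{w}(0,\tau)=0$. At $z=\beta^{2}$, estimate (\ref{Dw' intermediate}) yields $|p(\beta^{2},\tau)-1|\leq C(n,\Lambda)\beta^{2(\alpha-1)}\leq\beta^{\alpha-2}$ as soon as $\beta\gg1$, since $2(\alpha-1)-(\alpha-2)=\alpha<0$. At $\tau=\tau_{0}$, (\ref{initial w'}) and (\ref{Dw' intermediate initial}) together supply $0\leq p(z,\tau_{0})\leq1$ on all of $[0,\beta^{2}]$, provided $\tau_{0}$ is so large that $\beta^{2}\leq(2\sigma\tau_{0})^{\frac{1}{2}(1-\vartheta)}$.

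The upper bound then follows by contradiction: were $p$ to exceed $1+\beta^{\alpha-2}$ somewhere, the boundary data above would force the maximum to be achieved at an interior point $(z_{\ast},\tau_{\ast})$, where $\partial_{\tau}p\geq0$, $\partial_{z}p=0$, $\partial_{zz}^{2}p\leq0$; substituting into the equation makes the left-hand side non-negative while the right-hand side is strictly negative (because $p(z_{\ast},\tau_{\ast})>1>0$ and $\hat{w}(z_{\ast},\tau_{\ast})>z_{\ast}$), a contradiction. The lower bound $p\geq0$ is obtained by the mirror argument applied at an interior minimum, using $p<0$ and $z^{2}-\hat{w}^{2}<0$. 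The main technical hurdle will be the degeneracy of $\mathbf{b}$ at $z=0$; I would handle this by running the maximum principle first on $[\varepsilon,\beta^{2}]\times[\tau_{0},\mathring{\tau}]$, exploiting the $z$-parity $\hat{w}(-z,\tau)=\hat{w}(z,\tau)$ to write $p(\varepsilon,\tau)=\partial_{zz}^{2}\hat{w}(0,\tau)\,\varepsilon+O(\varepsilon^{3})$, so that the auxiliary boundary contribution vanishes as $\varepsilon\searrow0$.
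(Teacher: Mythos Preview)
Your proposal is correct and matches the paper's proof: both differentiate (\ref{eq w'}), use $\hat{w}>z$ on the cylinder (via (\ref{C^0 w' bound})) to obtain the favorable sign of the zero-order term, and run the parabolic maximum principle with the same boundary and initial data from $\partial_{z}\hat{w}(0,\tau)=0$, (\ref{Dw' intermediate}), (\ref{initial w'}) and (\ref{Dw' intermediate initial}). Your $\varepsilon$-regularization at $z=0$ is unnecessary, since $\partial_{z}\hat{w}(0,\tau)=0$ already serves as exact Dirichlet data and any extremum violating the claimed bounds is forced into the open interior $(0,\beta^{2})$, where all coefficients are smooth.
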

\begin{proof}
By differentiating (\ref{eq w'}), we get 
\begin{equation}
\partial_{\tau}\left(\partial_{z}\hat{w}\right)=\frac{1}{1+\left(\partial_{z}\hat{w}\right)^{2}}\,\partial_{zz}^{2}\left(\partial_{z}\hat{w}\right)\label{eq Dw'}
\end{equation}
\[
+\left(\frac{n-1}{z}-\frac{2\,\partial_{z}\hat{w}\,\partial_{zz}^{2}\hat{w}}{\left(1+\left(\partial_{z}\hat{w}\right)^{2}\right)^{2}}-\frac{\frac{1}{2}+\sigma}{2\sigma\tau}z\right)\partial_{z}\left(\partial_{z}\hat{w}\right)+\left(n-1\right)\left(\frac{1}{\hat{w}^{2}}-\frac{1}{z^{2}}\right)\left(\partial_{z}\hat{w}\right)
\]
Notice that for the last term on the RHS of (\ref{eq Dw'}), by (\ref{eq psi'})
and (\ref{C^0 w' bound}), there holds 
\begin{equation}
\hat{w}\left(z,\,\tau\right)\,>\,z\quad\Leftrightarrow\quad\frac{1}{\hat{w}^{2}\left(z,\,\tau\right)}-\frac{1}{z^{2}}\,<\,0\label{C^1 w' positive}
\end{equation}
for $0\leq z\leq\beta^{2}$, $\tau_{0}\leq\tau\leq\mathring{\tau}$. 

Let 
\[
\left(\partial_{z}\hat{w}\right)_{\min}\left(\tau\right)\,=\,\min_{0\leq z\leq\beta^{2}}\partial_{z}\hat{w}\left(z,\,\tau\right)
\]
Then $\left(\partial_{z}\hat{w}\right)_{\min}\left(\tau_{0}\right)\geq0$
by (\ref{initial w'}). We claim that 
\begin{equation}
\left(\partial_{z}\hat{w}\right)_{\min}\left(\tau\right)\geq0\label{C^1 w' claim}
\end{equation}
for $\tau_{0}\leq\tau\leq\mathring{\tau}$. To prove that, we use
a contradiction argument. Suppose that there is $\tau_{1}^{*}>\tau_{0}$
so that 
\[
\left(\partial_{z}\hat{w}\right)_{\min}\left(\tau_{1}^{*}\right)<0
\]
Let $\tau_{0}^{*}>\tau_{0}$ be the first time after which $\left(\partial_{z}\hat{w}\right)_{\min}$
stays negative all the way up to $\tau_{1}^{*}$. By continuity, we
have 
\[
\left(\partial_{z}\hat{w}\right)_{\min}\left(\tau_{0}^{*}\right)\geq0
\]
Note that by (\ref{eq w'}) and (\ref{Dw' intermediate}), the negative
minimum of $\partial_{z}\hat{w}\left(z,\,\tau\right)$ for each time-slice
must be attained in $\left(0,\,\beta^{2}\right)$, provided that $\beta\gg1$
(depending on $n$, $\Lambda$). Applying the maximum principle to
(\ref{eq Dw'}) (and noting (\ref{C^1 w' positive})) yields
\[
\partial_{\tau}\left(\partial_{z}\hat{w}\right)_{\min}\,\geq\,\left(n-1\right)\left(\frac{1}{\hat{w}^{2}}-\frac{1}{z^{2}}\right)\left(\partial_{z}\hat{w}\right)_{\min}\,\geq\,0
\]
for $\tau_{0}^{*}\leq\tau<\tau_{1}^{*}$. It follows that 
\[
\left(\partial_{z}\hat{w}\right)_{\min}\left(\tau_{0}^{*}\right)\,\leq\,\left(\partial_{z}\hat{w}\right)_{\min}\left(\tau_{1}^{*}\right)<0
\]
which is a contradiction.

Next, let 
\[
\left(\partial_{z}\hat{w}\right)_{\max}\left(\tau\right)\,=\,\max_{0\leq z\leq\beta^{2}}\partial_{z}\hat{w}\left(z,\,\tau\right)
\]
Then 
\[
\left(\partial_{z}\hat{w}\right)_{\max}\left(\tau_{0}\right)\,\leq\,1
\]
by (\ref{initial w'}) and (\ref{Dw' intermediate initial}). We claim
that 
\[
\left(\partial_{z}\hat{w}\right)_{\max}\left(\tau\right)\,\leq\,1+\beta^{\alpha-2}
\]
for $\tau_{0}\leq\tau\leq\mathring{\tau}$. Suppose the contrary,
then there is $\tau_{1}^{*}>\tau_{0}$ so that 
\[
\left(\partial_{z}\hat{w}\right)_{\max}\left(\tau_{1}^{*}\right)\,>\,1+\beta^{\alpha-2}
\]
Let $\tau_{0}^{*}>\tau_{0}$ be the first time after which $\left(\partial_{z}\hat{w}\right)_{\max}$
is greater than $1+\beta^{\alpha-2}$ all the way up to $\tau_{1}^{*}$.
By continuity, we have 
\[
\left(\partial_{z}\hat{w}\right)_{\max}\left(\tau_{0}^{*}\right)\,\leq\,1+\beta^{\alpha-2}
\]
Notice that by (\ref{Dw' intermediate}), there holds 
\[
\partial_{z}\hat{w}\left(\beta^{2},\,\tau\right)\,\leq\,1+C\left(n,\,\Lambda\right)\beta^{2\left(\alpha-1\right)}<1+\beta^{\alpha-2}
\]
provided that $\beta\gg1$(depending on $n$, $\Lambda$). Thus, the
maximum of $\partial_{z}\hat{w}\left(z,\,\tau\right)$ for each time-slice
which is greater than $1+\beta^{\alpha-2}$ must be attained in $\left(0,\,\beta^{2}\right)$,
provided that $\beta\gg1$ (depending on $n$, $\Lambda$). Applying
the maximum principle to (\ref{eq Dw'}) (and using (\ref{C^1 w' positive})
and (\ref{C^1 w' claim})) yields
\[
\partial_{\tau}\left(\partial_{z}\hat{w}\right)_{\max}\leq0
\]
for $\tau_{0}^{*}\leq\tau<\tau_{1}^{*}$. It follows that 
\[
\left(\partial_{z}\hat{w}\right)_{\max}\left(\tau_{0}^{*}\right)\geq\left(\partial_{z}\hat{w}\right)_{\max}\left(\tau_{1}^{*}\right)>1+\beta^{\alpha-2}
\]
which is a contradiction. 
\end{proof}
Then we start to show the second derivative estimate in (\ref{C^2 w' bound}).
Note that the second fundamental form of $\Gamma_{\tau}$ (in the
parametrization of (\ref{w'})) is given by 
\begin{equation}
A_{\Gamma_{\tau}}=\frac{1}{\sqrt{1+\left|\partial_{z}\hat{w}\right|^{2}}}\left(\begin{array}{ccc}
\frac{\partial_{zz}^{2}\hat{w}}{1+\left|\partial_{z}\hat{w}\right|^{2}}\\
 & \frac{\partial_{z}\hat{w}}{z}\,I_{n-1}\\
 &  & \frac{-1}{\hat{w}}\,I_{n-1}
\end{array}\right)\label{second fundamental form Gamma}
\end{equation}
By (\ref{C^0 w' bound}) and (\ref{C^1 w' bound}), to estimate $\partial_{zz}^{2}\hat{w}\left(z,\,\tau\right)$
is equivalent to estimate $A_{\Gamma_{\tau}}$. In the following lemma,
we derive an evolution equation of $A_{\Gamma_{\tau}}$ and use that,
together with (\ref{initial w'}), (\ref{DDw' intermediate}) and
the maximum principle, to show that $A_{\Gamma_{\tau}}$ can be estimated
for a short period of time. 
\begin{lem}
\label{short time C^2 w'}If $\beta\gg1$ (depending on $n$, $\Lambda$),
then there is $\delta>0$ (depending on $n$) so that the second fundamental
form of $\Gamma_{\tau}$ satisfies
\[
\max_{\Gamma_{\tau}\cap B\left(O;\,3\beta\right)}\,\left|A_{\Gamma_{\tau}}\right|\,\leq\,C\left(n\right)
\]
for $\tau_{0}\leq\tau\leq\min\left\{ \tau_{0}+\delta,\,\mathring{\tau}\right\} $.
In particular, there holds 
\[
\left|\partial_{zz}^{2}\hat{w}\left(z,\,\tau\right)\right|\leq C\left(n\right)
\]
for $0\leq z\leq3\beta$, $\tau_{0}\leq\tau\leq\min\left\{ \tau_{0}+\delta,\,\mathring{\tau}\right\} $.
\end{lem}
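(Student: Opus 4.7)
The plan is to derive an evolution equation for $|A_{\Gamma_\tau}|^2$ and run a cut-off localized maximum-principle argument, exploiting the fact that at $\tau=\tau_0$ the curvature is already bounded by (\ref{initial w'}), while in the annular boundary region near $z=3\beta$ we already have the intermediate-region estimate (\ref{DDw' intermediate}).

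First, I would observe that the rescaled flow $\Gamma_\tau$ governed by (\ref{eq w'}) is a MCF perturbed by the ``dilation'' drift $\frac{1/2+\sigma}{2\sigma\tau}(-z\partial_z\hat w+\hat w)$. A direct computation on the position vector, combined with Simons' identity, yields an evolution inequality of the form
\[
\partial_\tau |A_{\Gamma_\tau}|^2 \,\leq\, \Delta_{\Gamma_\tau} |A_{\Gamma_\tau}|^2 \,-\, 2|\nabla A_{\Gamma_\tau}|^2 \,+\, 2|A_{\Gamma_\tau}|^4 \,+\, \frac{C(n)}{\tau_0}\,|A_{\Gamma_\tau}|^2,
\]
where the last term absorbs all lower-order contributions coming from the rescaling drift (which is bounded for $\tau\geq\tau_0\gg 1$). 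The Codazzi-type correction terms are controlled using (\ref{C^0 w' bound}) and Lemma \ref{C^1 w'}, which ensure $0<\tfrac{1}{C(n)}\leq\hat w\leq C(n)$ and $|\partial_z\hat w|\leq 2$ on $\{0\leq z\leq 4\beta\}$.

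Next, I introduce a smooth spatial cut-off $\eta=\eta(z)$ with $\chi_{[0,3\beta]}\leq\eta\leq\chi_{[0,4\beta]}$, vanishing to infinite order at $z=4\beta$ in the sense of Lemma \ref{localization}. Working with $f(z,\tau)=\eta^2(z)\,|A_{\Gamma_\tau}|^2$ (or, following Ecker--Huisken \cite{EH}, with $f/(1-Kf)$ for a small $K>0$), I compute its evolution and absorb the $|\nabla\eta|^2$ terms via $\nabla A$ using Cauchy--Schwarz and Lemma \ref{localization}. The result is an inequality
\[
\partial_\tau f \,\leq\, \Delta_{\Gamma_\tau} f \,+\, b\cdot\nabla f \,+\, \big(2|A_{\Gamma_\tau}|^2 + C(n,\beta)\big)\,f,
\]
valid on the parabolic region $\{0\leq z\leq 4\beta\}\times[\tau_0,\mathring\tau]$. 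At $\tau=\tau_0$ the bound $|A_{\Gamma_{\tau_0}}|^2\leq C(n)$ on $\{0\leq z\leq 5\beta\}$ follows from (\ref{initial w'}), Lemma \ref{C^1 w'}, the formula (\ref{second fundamental form Gamma}), and the fact that $\partial_z\hat w(0,\tau_0)=0$ forces $\partial_z\hat w(z,\tau_0)/z$ to stay bounded at the tip by $\partial_{zz}^2\hat w(0,\tau_0)$. On the lateral boundary $\{z=4\beta\}$, estimate (\ref{DDw' intermediate}) together with (\ref{Dw' intermediate}) and (\ref{second fundamental form Gamma}) yields $|A_{\Gamma_\tau}|\leq C(n,\Lambda)\beta^{\alpha-2}\leq 1$, provided $\beta\gg 1$ depending on $n,\Lambda$. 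The tip boundary $\{z=0\}$ is internal (closed hemisphere), and by $O(n)\times O(n)$ symmetry all terms extend smoothly across it.

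Finally, I run a continuity argument: let $M=2C(n)$ and define $\tau_*\in(\tau_0,\mathring\tau]$ to be the maximal time for which $|A_{\Gamma_\tau}|^2\leq M$ throughout $\Gamma_\tau\cap B(O;4\beta)$. For $\tau\in[\tau_0,\tau_*]$ the evolution reduces to $\partial_\tau f\leq \Delta f+b\cdot\nabla f+(2M+C(n,\beta))f$, so the maximum principle applied to $f\,e^{-(2M+C)(\tau-\tau_0)}$ gives
\[
\max_{z\in[0,4\beta]} f(z,\tau) \,\leq\, e^{(2M+C)(\tau-\tau_0)}\max\Big\{\max_{z} f(z,\tau_0),\;\sup_{\tau\in[\tau_0,\tau_*]}f(4\beta,\tau)\Big\}\,\leq\, C(n)\,e^{(2M+C(n,\beta))(\tau-\tau_0)}.
\]
Choosing $\delta=\delta(n)>0$ small enough (and absorbing the $\beta$-dependence since the boundary term is negligible once $\beta\gg 1$) forces $f(z,\tau)\leq M/2$ on the support of $\eta$ for $\tau\leq\tau_0+\delta$, contradicting maximality unless $\tau_*\geq\min\{\tau_0+\delta,\mathring\tau\}$. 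The main obstacle is the correct bookkeeping of the rescaling correction and the boundary contribution in this continuity argument; both are handled by the fact that at $\tau=\tau_0$ we have actual $C^2$ data on a region strictly larger than $[0,3\beta]$ (namely $[0,5\beta]$ from (\ref{initial w'})), which provides the needed room for the cut-off to absorb all error terms.
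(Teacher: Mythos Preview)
Your approach is correct and shares the paper's core idea---control the evolution of $|A_{\Gamma_\tau}|^2$ by the maximum principle for a short time, using the initial data (\ref{initial w'}) and the intermediate-region bounds (\ref{Dw' intermediate}), (\ref{DDw' intermediate})---but the execution differs in two ways worth noting.

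First, the paper avoids any cutoff.  Since both the initial values (at $\tau=\tau_0$) and the boundary values (on $\partial B(O;3\beta)$) of $|A_{\Gamma_\tau}|^2$ are already bounded by a constant $\mathfrak C=C(n)$, the maximum principle can be applied directly on the compact region $\Gamma_\tau\cap B(O;3\beta)$: whenever the spatial maximum $h(\tau)$ exceeds $\mathfrak C$ it must be attained in the interior, and there the evolution equation (with the drift term $-\tfrac{1+2\sigma}{2\sigma\tau}|A|^2$ having the \emph{favorable} sign, not your $+C(n)/\tau_0$) yields the clean Riccati inequality $\partial_\tau h\le 2h^2$.  This integrates explicitly to $h(\tau)\le h(\tau_0^*)/(1-2(\tau-\tau_0^*)h(\tau_0^*))$, so $\delta$ depends only on $\mathfrak C=C(n)$ with no continuity or bootstrap step.

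Second, your localization with $\eta(z)$ is not wrong, but it creates the very $\beta$-dependence you then have to argue away.  Since your cutoff transitions on an interval of width $\beta$, its derivatives are $O(1/\beta)$ and the resulting error terms are uniformly bounded in $\beta$; this is what makes your final sentence work, but it would be cleaner to say so explicitly rather than to speak of ``absorbing the $\beta$-dependence.''  In short: your route is a standard Ecker--Huisken localization and succeeds, while the paper exploits the available boundary control to skip the cutoff and the linearization altogether.
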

\begin{proof}
By (\ref{initial w'}), (\ref{C^0 w' bound}), (\ref{Dw' intermediate}),
(\ref{DDw' intermediate}) and (\ref{second fundamental form Gamma}),
the second fundamental form of $\Gamma_{\tau}$ satisfies
\begin{equation}
\mathfrak{C}\,\,\equiv\,\,\left|A_{\Gamma_{\tau}}\right|_{\max}^{2}\left(\tau_{0}\right)\,+\,\max_{Z_{\tau}\in\Gamma_{\tau},\,\left|Z_{\tau}\right|=3\beta}\,\left|A_{\Gamma_{\tau}}\left(Z_{\tau}\right)\right|^{2}\,\,\leq\,\,C\left(n\right)\label{short time C^2 w'1}
\end{equation}
provided that $\beta\gg1$ (depending on $n$, $\Lambda$). By reparametrization
of the flow, we may derive an evolution equation for $A_{\Gamma_{\tau}}$
as follows: 
\begin{equation}
\left(\partial_{\tau}-\triangle_{\Gamma_{\tau}}\right)\left|A_{\Gamma_{\tau}}\right|^{2}\,=\,-2\left|\nabla_{\Gamma_{\tau}}A_{\Gamma_{\tau}}\right|^{2}\,+\,2\left|A_{\Gamma_{\tau}}\right|^{4}\,-\,\frac{1+2\sigma}{2\sigma\tau}\left|A_{\Gamma_{\tau}}\right|^{2}\label{eq A(Z)}
\end{equation}
Let
\[
h\left(\tau\right)=\max_{\Gamma_{\tau}\cap B\left(O;\,3\beta\right)}\,\left|A_{\Gamma_{\tau}}\right|^{2}
\]
If $h\left(\tau\right)\,\leq\,\mathfrak{C}$ for $\tau_{0}\leq\tau\leq\mathring{\tau}$,
then we are done. Otherwise, there is $\tau_{1}^{*}>\tau_{0}$ so
that 
\[
h\left(\tau_{1}^{*}\right)\,>\,\mathfrak{C}
\]
Let $\tau_{0}^{*}>\tau_{0}$ be the first time after which $h$ is
greater than $\mathfrak{C}$ all the way up to $\tau_{1}^{*}$. By
continuity, we have 
\begin{equation}
h\left(\tau_{0}^{*}\right)\,\leq\,\mathfrak{C}\label{short time C^2 w'2}
\end{equation}
Note that the maximum for each time-slice must be attained in the
interior of $\Gamma_{\tau}\cap B\left(O;\,3\beta\right)$. By applying
the maximum principle to (\ref{eq A(Z)}), we get 
\[
\partial_{\tau}h\left(\tau\right)\,\leq\,2\,h^{2}\left(\tau\right)
\]
for $\tau_{0}^{*}\leq\tau\leq\tau_{1}^{*}$, which implies 
\begin{equation}
h\left(\tau_{1}^{*}\right)\,\leq\,\frac{h\left(\tau_{0}^{*}\right)}{1-2\left(\tau_{1}^{*}-\tau_{0}^{*}\right)h\left(\tau_{0}^{*}\right)}\label{short time C^2 w'3}
\end{equation}
Thus, by (\ref{short time C^2 w'1}), (\ref{short time C^2 w'2})
and (\ref{short time C^2 w'3}), there is $\delta=\delta\left(n\right)$
so that 
\[
h\left(\tau\right)\leq2\mathfrak{C}
\]
for $\tau_{0}^{*}\leq\tau\leq\min\left\{ \tau_{0}^{*}+\delta,\,\tau_{1}^{*}\right\} $.
For this choice of $\delta>0$, we claim that
\[
h\left(\tau\right)\leq2\mathfrak{C}
\]
for $\tau_{0}\leq\tau\leq\min\left\{ \tau_{0}+\delta,\,\mathring{\tau}\right\} $;
otherwise, we may get a contradiction by the above argument. Then
the conclusion follows immediately by (\ref{C^0 w' bound}), (\ref{C^1 w' bound})
and (\ref{second fundamental form Gamma}).
\end{proof}
In the following lemma, we use Ecker-Huisken interior estimate for
MCF to estimate $A_{\Gamma_{\tau}}$ for $\tau_{0}+\delta\leq\tau\leq\mathring{\tau}$.
Combining with Lemma \ref{short time C^2 w'}, we then get the second
derivative estimate in (\ref{C^2 w' bound}).
\begin{lem}
\label{C^2 w'}If $\beta\gg1$ (depending on $n$, $\Lambda$), there
holds 
\[
\left|\partial_{zz}^{2}\hat{w}\left(z,\,\tau\right)\right|\leq C\left(n\right)
\]
for $0\leq z\leq3\beta$, $\tau_{0}\leq\tau\leq\mathring{\tau}$.
\end{lem}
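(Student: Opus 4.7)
The plan is to combine Lemma \ref{short time C^2 w'}, which already gives the desired bound on the short interval $\tau_{0} \le \tau \le \min\{\tau_{0}+\delta,\mathring\tau\}$, with Ecker--Huisken's interior curvature estimate for MCF applied to the ambient flow $\{\Sigma_{t}\}$ on the remaining interval $\tau_{0}+\delta \le \tau \le \mathring\tau$. Since $\Gamma_{\tau}$ is obtained from $\Sigma_{t}$ by the type $\mathrm{II}$ rescaling and
\[
A_{\Gamma_{\tau}}(z) \;=\; (-t)^{\frac{1}{2}+\sigma}\, A_{\Sigma_{t}}\!\bigl((-t)^{\frac{1}{2}+\sigma} z\bigr)\Big|_{t=-(2\sigma\tau)^{-1/2\sigma}},
\]
it will suffice to establish $|A_{\Sigma_{t}}(P)| \le C(n)\,(-t)^{-\frac{1}{2}-\sigma}$ for each $P \in \Sigma_{t} \cap \overline{B\bigl(O;\,3\beta(-t)^{\frac{1}{2}+\sigma}\bigr)}$ and each $t$ corresponding to $\tau \in [\tau_{0}+\delta,\mathring\tau]$. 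Indeed, once this bound is in hand, the formula (\ref{second fundamental form Gamma}) for $A_{\Gamma_{\tau}}$, combined with the positive lower bound on $\hat w$ coming from (\ref{C^0 w' bound}) and the first derivative bound (\ref{C^1 w' bound}), translates directly into the claimed estimate on $\partial_{zz}^{2}\hat w$.

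For a fixed such $P$ and $t$, I would choose an earlier time $t' < t$ with $t - t' = c(n)(-t)^{1+2\sigma}$, so that the corresponding step in rescaled time $\tau$ is a fixed constant (which, by shrinking $\delta$ from Lemma \ref{short time C^2 w'} if necessary, can be arranged so that $\tau' \ge \tau_{0}$ always), and then represent $\Sigma_{t''}$ for all $t'' \in [t', t]$ as a local graph over a hyperplane, on a ball of radius $r = c'(n)(-t)^{\frac{1}{2}+\sigma}$ centered at $P$. The key point is that the $O(n) \times O(n)$ symmetry allows rotations putting $P$ into a standard form; together with Lemma \ref{C^1 w'} and the uniform lower bound $\hat w \ge \hat\psi_{1-2\beta^{\alpha-3}}(0) > 0$ inherited from (\ref{C^0 w' bound}), this produces a unit vector $e$ such that $(N_{\Sigma_{t''}} \cdot e)^{-1}$ is bounded on the chosen parabolic region by a constant depending only on $n$. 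With this graphical picture in place, Ecker--Huisken's interior gradient and curvature estimates in \cite{EH} yield
\[
|A_{\Sigma_{t}}(P)|^{2} \;\le\; \frac{C(n)}{r^{2}-2n(t-t')} \;\le\; C(n)\,(-t)^{-1-2\sigma},
\]
provided $c(n)$ is small enough that $2n(t-t') < r^{2}/2$. Undoing the type $\mathrm{II}$ rescaling gives $|A_{\Gamma_{\tau}}| \le C(n)$ on $\Gamma_{\tau} \cap B(O;3\beta)$, as desired.

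The main obstacle will be the geometric preparation: the unit vector $e$ that makes $\Sigma_{t''}$ graphical near $P$ depends on the $O(n) \times O(n)$ position of $P$, because at $(z_{0}\nu_{0},\,\hat w(z_{0},\tau)\omega_{0})$ the unit normal is aligned with $\bigl(-\partial_{z}\hat w\,\nu_{0},\,\omega_{0}\bigr)/\sqrt{1+(\partial_{z}\hat w)^{2}}$ and hence changes as one moves along either the $\nu$ or the $\omega$ factor. One has to verify that after rotating so that $\nu_{0}$ and $\omega_{0}$ take fixed values, the graphical ball of radius $r$ stays within a region where $\nu$ and $\omega$ remain close to $\nu_{0},\omega_{0}$ so that $N \cdot e$ stays bounded away from zero; the bound $0 \le \partial_{z}\hat w \le 1 + \beta^{\alpha-2}$ from Lemma \ref{C^1 w'}, together with the lower bound on $\hat w$, makes this quantitative, with constants depending only on $n$. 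Once this uniform local graph representation is secured, the application of \cite{EH} is routine.
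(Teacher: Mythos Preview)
Your proposal is correct and follows essentially the same route as the paper. The only cosmetic difference is that the paper, for each fixed $\tau_{*}$, introduces the auxiliary flow $\varXi_{\iota}=(2\sigma\tau_{*})^{\frac{1}{2}+\frac{1}{4\sigma}}\,\Sigma_{-(2\sigma\tau_{*})^{-1/2\sigma}(1-\iota/2\sigma\tau_{*})}$, a fixed dilation of $\{\Sigma_{t}\}$ with $\varXi_{0}=\Gamma_{\tau_{*}}$, and applies Ecker--Huisken to $\varXi_{\iota}$ on $-\delta/2\le\iota\le 0$; you instead keep $\{\Sigma_{t}\}$ and build in the same scaling by choosing $r\sim(-t)^{\frac{1}{2}+\sigma}$ and $t-t'\sim(-t)^{1+2\sigma}$, which is equivalent by the scale invariance of the interior estimate. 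The paper's choice of $e=(0,\dots,0,1)\in\mathbb{R}^{2n}$ (the last $\omega$-coordinate after rotating $\omega_{0}$ to this position) and its verification that $\hat h\ge\hat\psi(0)/2$ forces $\omega\cdot\omega_{0}\ge 1/\sqrt{2}$ on the ball of radius $\hat\psi(0)/(2\sqrt{2})$ is exactly the geometric preparation you outline.
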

\begin{proof}
By Lemma \ref{short time C^2 w'}, there is $\delta=\delta\left(n\right)$
so that 
\[
\left|\partial_{zz}^{2}\hat{w}\left(z,\,\tau\right)\right|\leq C\left(n\right)
\]
for $0\leq z\leq3\beta$, $\tau_{0}\leq\tau\leq\min\left\{ \tau_{0}+\delta,\,\mathring{\tau}\right\} $.
Hence, to prove the lemma, we have to consider the case when $\mathring{\tau}-\tau_{0}>\delta$. 

Fix $\tau_{0}+\delta\leq\tau_{*}\leq\mathring{\tau}$ and let 
\[
\varXi_{\iota}=\left(2\sigma\tau_{*}\right)^{\frac{1}{2}+\frac{1}{4\sigma}}\,\Sigma_{-\left(2\sigma\tau_{*}\right)^{\frac{-1}{2\sigma}}\left(1-\frac{\iota}{2\sigma\tau_{*}}\right)}
\]
\[
=\left\{ \left(r\nu,\,\hat{h}\left(r,\,\iota\right)\omega\right)\biggr|\,r\geq0,\,\nu\in\mathbb{S}^{n-1},\,\omega\in\mathbb{S}^{n-1}\right\} 
\]
where
\[
\hat{h}\left(r,\,\iota\right)=\left(2\sigma\tau_{*}\right)^{\frac{1}{2}+\frac{1}{4\sigma}}\,\hat{u}\left(\frac{r}{\left(2\sigma\tau_{*}\right)^{\frac{1}{2}+\frac{1}{4\sigma}}},\,-\left(2\sigma\tau_{*}\right)^{\frac{-1}{2\sigma}}\left(1-\frac{\iota}{2\sigma\tau_{*}}\right)\right)
\]
Then $\left\{ \varXi_{\iota}\right\} $ defines a MCF for $-\left(2\sigma\tau_{*}\right)\left(\left(\frac{\tau_{*}}{\tau_{0}}\right)^{\frac{1}{2\sigma}}-1\right)\leq\iota\leq0$.
Note that 
\[
\varXi_{0}=\left(2\sigma\tau_{*}\right)^{\frac{1}{2}+\frac{1}{4\sigma}}\,\Sigma_{-\left(2\sigma\tau_{*}\right)^{\frac{-1}{2\sigma}}}=\Gamma_{\tau_{*}}
\]
and 
\[
\left(2\sigma\tau_{*}\right)\left(\left(\frac{\tau_{*}}{\tau_{0}}\right)^{\frac{1}{2\sigma}}-1\right)\geq\frac{\delta}{2}
\]
provided that $\tau_{0}\gg1$(depending on $n$). By (\ref{u'w'}),
we may rewrite $\hat{h}\left(r,\,\iota\right)$ as 
\[
\hat{h}\left(r,\,\iota\right)=\left(1-\frac{\iota}{2\sigma\tau_{*}}\right)^{\frac{1}{2}+\sigma}\,\hat{w}\left(\frac{r}{\left(1-\frac{\iota}{2\sigma\tau_{*}}\right)^{\frac{1}{2}+\sigma}},\,\frac{\tau_{*}}{\left(1-\frac{\iota}{2\sigma\tau_{*}}\right)^{2\sigma}}\right)
\]
By (\ref{C^0 w' bound}) and (\ref{C^1 w' bound}), we have 
\begin{equation}
\hat{h}\left(r,\,\iota\right)\,\geq\,\frac{\hat{\psi}\left(0\right)}{2}\label{C^2  bound w'1}
\end{equation}
 
\begin{equation}
\left|\partial_{r}\hat{h}\left(r,\,\iota\right)\right|=\left|\partial_{z}\hat{w}\,\left(\frac{r}{\left(1-\frac{\iota}{2\sigma\tau_{*}}\right)^{\frac{1}{2}+\sigma}},\,\frac{\tau_{*}}{\left(1-\frac{\iota}{2\sigma\tau_{*}}\right)^{2\sigma}}\right)\right|\,\leq\,\frac{4}{3}\label{C^2  bound w'2}
\end{equation}
for $0\leq r\leq4\beta$, $-\frac{\delta}{2}\leq\iota\leq0$, provided
that $\tau_{0}\gg1$ (depending on $n$). Note that the unit normal
vector of $\varXi_{\iota}$ at $\mathcal{X}_{\iota}\left(r,\,\nu,\,\omega\right)=\left(r\nu,\,\hat{h}\left(r,\,\iota\right)\omega\right)$
is given by

\[
N_{\varXi_{\iota}}\left(r,\,\nu,\,\omega\right)=\frac{\left(-\partial_{r}\hat{h}\left(r,\,\iota\right)\,\nu,\,\,\omega\right)}{\sqrt{1+\left(\partial_{r}\hat{h}\left(r,\,\iota\right)\right)^{2}}}
\]
which satisfies
\begin{equation}
\left(N_{\varXi_{\iota}}\left(r,\,\nu,\,\omega\right)\cdot\mathbf{e}\right)^{-1}=\frac{\sqrt{1+\left(\partial_{r}\hat{h}\left(r,\,\iota\right)\right)^{2}}}{\left(\vec{0},\,\omega\right)\cdot\mathbf{e}}\label{C^2  bound w'3}
\end{equation}
where 
\[
\mathbf{e}=\left(\overset{\left(\textrm{2n-1}\right)\,\textrm{copies}}{\overbrace{0,\cdots,\,0}},\,1\right),\qquad\vec{0}=\left(\overset{n\,\textrm{copies}}{\overbrace{0,\cdots,\,0}}\right)
\]
Now fix $0\leq z_{*}\leq3\beta$ and let 
\[
\mathcal{X}_{*}=\left(z_{*}\nu_{*},\,\hat{h}\left(z_{*},\,0\right)\,\omega_{*}\right)=\left(z_{*}\nu_{*},\,\hat{w}\left(z_{*},\,\tau_{*}\right)\,\omega_{*}\right)
\]
where $\nu_{*}=\omega_{*}=\left(\overset{\left(\textrm{n-1}\right)\,\textrm{copies}}{\overbrace{0,\cdots,\,0}},\,1\right)$,
we claim that 
\begin{equation}
\left(N_{\varXi_{\iota}}\left(r,\,\nu,\,\omega\right)\cdot\mathbf{e}\right)^{-1}\leq\frac{5\sqrt{2}}{3}\label{gradient claim}
\end{equation}
for $\mathcal{X}_{\iota}\in\varXi_{\iota}\cap B^{2n}\left(\mathcal{X}_{*},\,\frac{\hat{\psi}\left(0\right)}{2\sqrt{2}}\right)$,
$-\frac{\delta}{2}\leq\iota\leq0$. Then by the curvature estimate
in \cite{EH}, the second fundamental form of $\Gamma_{\tau_{*}}$
at $\mathcal{X}_{*}$ satisfies 
\[
\left|A_{\Gamma_{\tau_{*}}}\left(\mathcal{X}_{*}\right)\right|\,=\,\left|A_{\varXi_{0}}\left(\mathcal{X}_{*}\right)\right|\,\leq\,C\left(n\right)\left(\frac{2\sqrt{2}}{\hat{\psi}\left(0\right)}+\sqrt{\frac{2}{\delta}}\right)=C\left(n\right)
\]
It follows that 
\[
\frac{\left|\partial_{zz}^{2}\hat{w}\left(z_{*},\,\tau_{*}\right)\right|}{\left(1+\left(\partial_{z}\hat{w}\left(z_{*},\,\tau_{*}\right)\right)^{2}\right)^{\frac{3}{2}}}\,\leq\,\left|A_{\Gamma_{\tau}}\left(\mathcal{X}_{*}\right)\right|\,\leq\,C\left(n\right)
\]
Now let's come back to (\ref{gradient claim}). First notice that
for each 
\[
\mathcal{X}_{\iota}\left(r,\,\nu,\,\omega\right)\,\in\,\varXi_{\iota}\cap B^{2n}\left(\mathcal{X}_{*},\,\frac{\hat{\psi}\left(0\right)}{2\sqrt{2}}\right),\qquad-\frac{\delta}{2}\leq\iota\leq0
\]
there holds 
\[
\hat{h}\left(r,\,\iota\right)\,\sqrt{1-\left(\left(\vec{0},\,\omega\right)\cdot\mathbf{e}\right)^{2}}\,\leq\,\left|\mathcal{X}_{\iota}\left(r,\,\nu,\,\omega\right)-\mathcal{X}_{*}\right|\,\leq\,\frac{\hat{\psi}\left(0\right)}{2\sqrt{2}}
\]
which, together with (\ref{C^2  bound w'1}), implies 
\begin{equation}
\left(\vec{0},\,\omega\right)\cdot\mathbf{e}\,\geq\,\frac{1}{\sqrt{2}}\label{C^2  bound w'4}
\end{equation}
Then (\ref{gradient claim}) follows by (\ref{C^2  bound w'2}), (\ref{C^2  bound w'3})
and (\ref{C^2  bound w'4}).
\end{proof}
Below we use (\ref{eq psi'}), (\ref{eq w'}), (\ref{C^2 w' bound}),
(\ref{C^0 w' bound}) and the standard regularity theory for parabolic
equations to prove (\ref{C^infty w' bound}).
\begin{prop}
\label{C^infty w'} If $\beta\gg1$ (depending on $n$) and $\tau_{0}\gg1$
(depending on $n$, $\beta$), there holds (\ref{C^infty w' bound}).
\end{prop}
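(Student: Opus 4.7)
The plan follows the bootstrap template used in Proposition \ref{C^infty v}, with the additional input of the asymptote equation (\ref{eq psi'}) for $\hat\psi_{k}$. First I would derive uniform smooth interior bounds on $\hat{w}$ itself on $[0,\frac{5}{2}\beta]\times[\tau_{0}+\frac{\delta^{2}}{2},\mathring\tau]$. The inputs are the $C^{2}$ estimate (\ref{C^2 w' bound}), the positivity $\hat{w}\geq C(n)^{-1}$, and $|\partial_{z}\hat{w}|\leq 1+\beta^{\alpha-2}$, which make (\ref{eq w'}) uniformly parabolic on this domain. Away from the axis $z=0$ I would apply Krylov--Safonov H\"older estimates to (\ref{eq w'}) and then iteratively apply Schauder estimates to the equations obtained by differentiating (\ref{eq w'}) in $z$ and $\tau$, exactly as in the proof of Proposition \ref{C^infty v}. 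Near the axis, where $(n-1)/z$ is singular, I would instead work with the smooth $O(n)\times O(n)$-symmetric hypersurface $\Gamma_{\tau}$ and apply Ecker--Huisken's interior curvature estimate, as in Lemma \ref{C^2 w'}, to bound $|A_{\Gamma_{\tau}}|$ and $|\nabla_{\Gamma_{\tau}}^{m}A_{\Gamma_{\tau}}|$, then translate back to bounds on $\partial_{z}^{m}\hat{w}$ using $\partial_{z}\hat{w}(0,\tau)=0$ and the symmetry. The outcome is
\[
\delta^{m+2l}\,\left\Vert\partial_{z}^{m}\partial_{\tau}^{l}\hat{w}\right\Vert_{L^{\infty}([0,\frac{5}{2}\beta]\times[\tau_{0}+\delta^{2}/2,\mathring\tau])}\leq C(n,m,l)
\]
for all $m,l\in\mathbb{Z}_{+}$; Lemma \ref{order psi'} combined with (\ref{k}) gives the corresponding $\tau$-independent smooth bounds on $\hat\psi_{k}$.

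Next I would write down the linear parabolic equation satisfied by $\eta\equiv\hat{w}-\hat\psi_{k}$. Subtracting (\ref{eq psi'}) from (\ref{eq w'}) and linearizing along the segment between the derivatives of $\hat{w}$ and those of $\hat\psi_{k}$, as was done in (\ref{eq Cauchy w'-}) and (\ref{eq Cauchy w'+}), gives
\[
\partial_{\tau}\eta-a(z,\tau)\,\partial_{zz}^{2}\eta-b(z,\tau)\,\partial_{z}\eta-c(z,\tau)\,\eta=f(z,\tau),
\]
where $a,b,c$ are smooth in $(z,\tau)$ with $C^{m,l}$-norms uniformly bounded by the estimates of the first step, and where the source
\[
f(z,\tau)=\frac{\frac{1}{2}+\sigma}{2\sigma\tau}\left(-z\,\partial_{z}\hat\psi_{k}(z)+\hat\psi_{k}(z)\right)
\]
satisfies $\delta^{m+2l}|\partial_{z}^{m}\partial_{\tau}^{l}f|\leq C(n,m,l)\,(2\sigma\tau)^{-1}$ on the same domain.

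I would then feed the $C^{0}$ bound (\ref{C^0 w' bound}) into a standard parabolic Schauder bootstrap for this linear equation on the slightly shrunk domain $[0,2\beta]\times[\tau_{0}+\delta^{2},\mathring\tau]$, with the axis again handled through the symmetric geometric formulation, where the linearization around $\hat\psi_{k}$ becomes a linear parabolic equation on the smooth stationary hypersurface $\mathcal{M}_{k}$ that has no coordinate singularity. Each Schauder step preserves the $L^{\infty}$ decay rate of the $C^{0}$ input up to a multiplicative constant, plus a contribution from $f$ of size $C(n,m,l)\tau^{-1}$; since $\varrho<1$ and $\tau^{-1}\leq\tau_{0}^{-1}(\tau/\tau_{0})^{-\varrho}$ on $[\tau_{0},\mathring\tau]$, this forcing contribution is absorbed into the principal term $\beta^{\alpha-3}(\tau/\tau_{0})^{-\varrho}$ once $\tau_{0}\gg 1$ depending on $n$ and $\beta$. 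The factor $\delta^{m+2l}$ in (\ref{C^infty w' bound}) comes from the usual rescaling in parabolic Schauder on a parabolic cylinder of radius $\sim\delta$.

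The main obstacle I anticipate is the axis $z=0$, where both (\ref{eq w'}) and the equation for $\eta$ contain the singular coefficient $(n-1)/z$, so one-dimensional parabolic Schauder cannot be applied directly there. This is resolved, as throughout Sections \ref{degree C^0} and \ref{degree C^infty}, by lifting the problem to the smooth $O(n)\times O(n)$-symmetric hypersurface $\Gamma_{\tau}$, using Ecker--Huisken interior curvature estimates to control $A_{\Gamma_{\tau}}$ and its covariant derivatives, and converting these geometric bounds back to bounds on $\partial_{z}^{m}(\hat{w}-\hat\psi_{k})$ via the symmetric parametrization together with the axis conditions $\partial_{z}\hat{w}(0,\tau)=\partial_{z}\hat\psi_{k}(0)=0$.
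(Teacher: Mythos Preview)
Your overall strategy is correct and parallels the paper's: establish uniform $C^{2}$ control, derive the linear parabolic equation for $\hat{w}-\hat{\psi}_{k}$ with source $f(z,\tau)=\frac{\frac{1}{2}+\sigma}{2\sigma\tau}\bigl(-z\,\partial_{z}\hat{\psi}_{k}+\hat{\psi}_{k}\bigr)$, and run a Krylov--Safonov/Schauder bootstrap. You also correctly observe that the forcing is $O(\tau^{-1})$ and is absorbed into $\beta^{\alpha-3}(\tau/\tau_{0})^{-\varrho}$ once $\tau_{0}\gg1$ depending on $n,\beta$.

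The genuine difference is in how the axis $z=0$ is handled. You propose to pass to the full hypersurface $\Gamma_{\tau}\subset\mathbb{R}^{2n}$, use Ecker--Huisken to control $A_{\Gamma_{\tau}}$ and its covariant derivatives, and then, for the difference, work with the linearized (Jacobi-type) equation on the smooth minimal hypersurface $\mathcal{M}_{k}$. This is viable, but it requires setting up the normal-graph parametrization of $\Gamma_{\tau}$ over $\mathcal{M}_{k}$, checking that the change of variables between that graph function and $\hat{w}-\hat{\psi}_{k}$ is uniformly smooth near the tip, and then translating the Schauder bounds back. The paper instead uses a much more elementary device: it lifts the radial function $\hat{w}(z,\tau)$ to a rotationally symmetric function $\hat{\boldsymbol{w}}(\boldsymbol{z},\tau)$ on $\mathbb{R}^{n}$, so that the singular term $(n-1)\partial_{z}\hat{w}/z$ becomes part of the ordinary Laplacian and (\ref{eq w'}) becomes a regular quasilinear parabolic equation in $n$ space variables (equation (\ref{eq w'(z)})). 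The same lift is applied to $\hat{\psi}_{k}$, and the difference equation (\ref{eq Cauchy w'(z)}) is a linear uniformly parabolic equation on $B(O;3\beta)\subset\mathbb{R}^{n}$ with smooth coefficients, to which Krylov--Safonov and Schauder apply directly with no special treatment of the origin. Your route buys a more geometric picture and reuses the Ecker--Huisken machinery already invoked for Lemma \ref{C^2 w'}; the paper's route buys a shorter, purely PDE argument that avoids any coordinate conversion near the axis.
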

\begin{proof}
Firstly, let $\hat{\boldsymbol{w}}\left(\boldsymbol{z},\,\tau\right)$
and $\hat{\boldsymbol{\psi}}_{k}\left(\boldsymbol{z}\right)$ be radially
symmetric functions so that 
\[
\hat{\boldsymbol{w}}\left(\boldsymbol{z},\,\tau\right)=\hat{w}\left(z,\,\tau\right)\Bigr|_{z=\left|\boldsymbol{z}\right|},\qquad\hat{\boldsymbol{\psi}}_{k}\left(\boldsymbol{z}\right)=\left.\hat{\psi}_{k}\left(z\right)\right|_{z=\left|\boldsymbol{z}\right|}
\]
where $\boldsymbol{z}=\left(\boldsymbol{z}_{1},\cdots,\,\boldsymbol{z}_{n}\right)$.
Note that 
\[
\partial_{\boldsymbol{z}_{i}}\hat{\boldsymbol{w}}\,=\,\partial_{z}\hat{w}\,\frac{\boldsymbol{z}_{i}}{\left|\boldsymbol{z}\right|}
\]
\[
\partial_{\boldsymbol{z}_{i}\boldsymbol{z}_{j}}^{2}\hat{\boldsymbol{w}}\,=\,\partial_{zz}^{2}\hat{w}\,\frac{\boldsymbol{z}_{i}\,\boldsymbol{z}_{j}}{\left|\boldsymbol{z}\right|^{2}}\,+\,\partial_{z}\hat{w}\,\frac{\left|\boldsymbol{z}\right|^{2}\delta_{ij}\,-\,\boldsymbol{z}_{i}\,\boldsymbol{z}_{j}}{\left|\boldsymbol{z}\right|^{3}}
\]
Then by (\ref{C^0 w' bound tip}), (\ref{C^1 w' bound}) and (\ref{C^2 w' bound}),
there hold 
\begin{equation}
\left\{ \begin{array}{c}
\left|\hat{\boldsymbol{w}}\left(\boldsymbol{z},\,\tau\right)-\hat{\boldsymbol{\psi}}_{k}\left(\boldsymbol{z}\right)\right|\leq C\left(n\right)\beta^{\alpha-3}\left(\frac{\tau}{\tau_{0}}\right)^{-\varrho}\\
\\
\left|\nabla\hat{\boldsymbol{w}}\left(\boldsymbol{z},\,\tau\right)\right|\leq1+\beta^{\alpha-2}\\
\\
\left|\nabla^{2}\hat{\boldsymbol{w}}\left(\boldsymbol{z},\,\tau\right)\right|\leq C\left(n\right)
\end{array}\right.\label{C^infty w'1}
\end{equation}
for $\boldsymbol{z}\in B\left(O;\,3\beta\right)$, $\tau_{0}\leq\tau\leq\mathring{\tau}$,
$m\in\mathbb{Z}_{+}$, where 
\[
\nabla=\left(\partial_{\boldsymbol{z}_{1}},\cdots,\,\partial_{\boldsymbol{z}_{n}}\right)
\]
Also, by (\ref{eq psi'}) and Lemma \ref{order psi'}, we get
\begin{equation}
\left\Vert \nabla^{m}\hat{\boldsymbol{\psi}}_{k}\right\Vert _{L^{\infty}}\,\leq\,C\left(n,\,m\right)\label{C^infty w'2}
\end{equation}
for all $m\geq1$. In addition, from (\ref{eq psi'}) and (\ref{eq w'}),
we have 
\[
\partial_{\tau}\hat{w}\,=\,\frac{\sqrt{1+\left(\partial_{z}\hat{w}\right)^{2}}}{z^{n-1}}\,\partial_{z}\left(\frac{z^{n-1}}{\sqrt{1+\left(\partial_{z}w\right)^{2}}}\,\partial_{z}\hat{w}\right)\,-\,\frac{n-1}{\hat{w}}\,+\,\frac{\frac{1}{2}+\sigma}{2\sigma\tau}\left(-z\,\partial_{z}\hat{w}+\hat{w}\right)
\]
and
\[
\partial_{\tau}\hat{\psi}_{k}\,=0\,=\,\frac{\sqrt{1+\left(\partial_{z}\hat{\psi}_{k}\right)^{2}}}{z^{n-1}}\,\partial_{z}\left(\frac{z^{n-1}}{\sqrt{1+\left(\partial_{z}\hat{\psi}_{k}\right)^{2}}}\,\partial_{z}\hat{\psi}_{k}\right)\,-\,\frac{n-1}{\hat{\psi}_{k}}
\]
which yield 
\[
\partial_{\tau}\hat{\boldsymbol{w}}=\sqrt{1+\left|\nabla\hat{\boldsymbol{w}}\right|^{2}}\,\,\nabla\cdot\frac{\nabla\hat{\boldsymbol{w}}}{\sqrt{1+\left|\nabla\hat{\boldsymbol{w}}\right|^{2}}}\,-\,\frac{n-1}{\hat{\boldsymbol{w}}}\,+\,\frac{\frac{1}{2}+\sigma}{2\sigma\tau}\left(-\boldsymbol{z}\cdot\nabla\hat{\boldsymbol{w}}+\hat{\boldsymbol{w}}\right)
\]
\begin{equation}
=\sum_{i,\,j=1}^{n}\left(\delta_{ij}-\frac{\partial_{\boldsymbol{z}_{i}}\hat{\boldsymbol{w}}\,\partial_{\boldsymbol{z}_{j}}\hat{\boldsymbol{w}}}{1+\left|\nabla\hat{\boldsymbol{w}}\right|^{2}}\right)\partial_{\boldsymbol{z}_{i}\boldsymbol{z}_{j}}^{2}\hat{\boldsymbol{w}}\,-\,\sum_{i=1}^{n}\left(\frac{\frac{1}{2}+\sigma}{2\sigma\tau}\boldsymbol{z}_{i}\right)\partial_{\boldsymbol{z}_{i}}\hat{\boldsymbol{w}}\,+\,\left(\frac{\frac{1}{2}+\sigma}{2\sigma\tau}\right)\hat{\boldsymbol{w}}\,-\,\frac{n-1}{\hat{\boldsymbol{w}}}\label{eq w'(z)}
\end{equation}
and 
\[
\partial_{\tau}\hat{\boldsymbol{\psi}}_{k}=0=\sqrt{1+\left|\nabla\hat{\boldsymbol{\psi}}_{k}\right|^{2}}\,\,\nabla\cdot\frac{\nabla\hat{\boldsymbol{\psi}}_{k}}{\sqrt{1+\left|\nabla\hat{\boldsymbol{\psi}}_{k}\right|^{2}}}\,-\,\frac{\left(n-1\right)}{\hat{\boldsymbol{\psi}}_{k}}
\]
\begin{equation}
=\sum_{i,\,j=1}^{n}\left(\delta_{ij}-\frac{\partial_{\boldsymbol{z}_{i}}\hat{\boldsymbol{\psi}}_{k}\,\partial_{\boldsymbol{z}_{j}}\hat{\boldsymbol{\psi}}_{k}}{1+\left|\nabla\hat{\boldsymbol{\psi}}_{k}\right|^{2}}\right)\partial_{\boldsymbol{z}_{i}\boldsymbol{z}_{j}}^{2}\hat{\boldsymbol{\psi}}_{k}\,-\,\frac{n-1}{\hat{\boldsymbol{\psi}}_{k}}\label{eq psi(z)}
\end{equation}
Then we subtract (\ref{eq psi(z)}) from (\ref{eq w'(z)}) to get
\[
\partial_{\tau}\left(\hat{\boldsymbol{w}}-\hat{\boldsymbol{\psi}}_{k}\right)-\sum_{i,\,j=1}^{n}\left(\delta_{ij}-\frac{\partial_{\boldsymbol{z}_{i}}\hat{\boldsymbol{w}}\,\partial_{\boldsymbol{z}_{j}}\hat{\boldsymbol{w}}}{1+\left|\nabla\hat{\boldsymbol{w}}\right|^{2}}\right)\partial_{\boldsymbol{z}_{i}\boldsymbol{z}_{j}}^{2}\left(\hat{\boldsymbol{w}}-\hat{\boldsymbol{\psi}}_{k}\right)
\]
 
\[
-\sum_{q=1}^{n}\left(\frac{\sum_{i,\,j=1}^{n}\partial_{\boldsymbol{z}_{i}}\hat{\boldsymbol{\psi}}_{k}\,\partial_{\boldsymbol{z}_{j}}\hat{\boldsymbol{\psi}}_{k}\,\partial_{\boldsymbol{z}_{i}\boldsymbol{z}_{j}}^{2}\hat{\boldsymbol{\psi}}_{k}\,\left(\partial_{\boldsymbol{z}_{q}}\hat{\boldsymbol{w}}+\partial_{\boldsymbol{z}_{q}}\hat{\boldsymbol{\psi}}_{k}\right)}{\left(1+\left|\nabla\hat{\boldsymbol{w}}\right|^{2}\right)\left(1+\left|\nabla\hat{\boldsymbol{\psi}}_{k}\right|^{2}\right)}\right)\partial_{\boldsymbol{z}_{q}}\left(\hat{\boldsymbol{w}}-\hat{\boldsymbol{\psi}}_{k}\right)
\]
\[
+\sum_{q=1}^{n}\left(\frac{\sum_{i=1}^{n}\partial_{\boldsymbol{z}_{i}\boldsymbol{z}_{q}}^{2}\hat{\boldsymbol{\psi}}_{k}\left(\partial_{\boldsymbol{z}_{i}}\hat{\boldsymbol{w}}+\partial_{\boldsymbol{z}_{i}}\hat{\boldsymbol{\psi}}_{k}\right)}{1+\left|\nabla\hat{\boldsymbol{w}}\right|^{2}}+\frac{\frac{1}{2}+\sigma}{2\sigma\tau}\boldsymbol{z}_{q}\right)\partial_{\boldsymbol{z}_{q}}\left(\hat{\boldsymbol{w}}-\hat{\boldsymbol{\psi}}_{k}\right)
\]
\[
-\left(\frac{n-1}{\hat{\boldsymbol{w}}\,\hat{\boldsymbol{\psi}}_{k}}\,+\,\frac{\frac{1}{2}+\sigma}{2\sigma\tau}\right)\left(\hat{\boldsymbol{w}}-\hat{\boldsymbol{\psi}}_{k}\right)
\]
\begin{equation}
=\frac{\frac{1}{2}+\sigma}{2\sigma\tau}\left(-\boldsymbol{z}\cdot\nabla\hat{\boldsymbol{\psi}}_{k}\left(\boldsymbol{z}\right)+\hat{\boldsymbol{\psi}}_{k}\left(\boldsymbol{z}\right)\right)\,\equiv\,\boldsymbol{f}\left(\boldsymbol{z},\,\tau\right)\label{eq Cauchy w'(z)}
\end{equation}
Note that by (\ref{asymptotic psi'}), we have 
\begin{equation}
\left|\nabla^{m}\partial_{\tau}^{l}\boldsymbol{f}\left(\boldsymbol{z},\,\tau\right)\right|\,\leq\,C\left(n,\,m,\,l\right)\tau^{-1}\label{C^infty w'3}
\end{equation}
for $\boldsymbol{z}\in\mathbb{R}^{n}$, $\tau_{0}\leq\tau\leq\mathring{\tau}$,
$m\geq0$.

Now fix $0<\delta\ll1$ and $\boldsymbol{z}_{*}\in B\left(O;\,2\beta\right)$,
$\tau_{0}+\delta^{2}\leq\tau_{*}\leq\mathring{\tau}$. By (\ref{C^infty w'1}),
(\ref{C^infty w'2}) and Krylov-Safonov H$\ddot{o}$lder estimate
(applying to (\ref{eq Cauchy w'(z)})), there is 
\[
\gamma=\gamma\left(n\right)\in\left(0,\,1\right)
\]
so that 
\begin{equation}
\delta^{\gamma}\left[\hat{\boldsymbol{w}}-\hat{\boldsymbol{\psi}}_{k}\right]_{\gamma;\,Q\left(\boldsymbol{z}_{*},\,\tau_{*};\,\frac{1}{2}\delta\right)}\label{Holder w'}
\end{equation}
\[
\leq C\left(n\right)\left(\left\Vert \hat{\boldsymbol{w}}-\hat{\boldsymbol{\psi}}_{k}\right\Vert _{L^{\infty}\left(Q\left(\boldsymbol{z}_{*},\,\tau_{*};\,\delta\right)\right)}+\delta^{2}\left\Vert \boldsymbol{f}\right\Vert _{L^{\infty}\left(Q\left(\boldsymbol{z}_{*},\,\tau_{*};\,\delta\right)\right)}\right)\,\leq\,C\left(n\right)
\]
provided that $\beta\gg1$ (depending on $n$) and $\tau_{0}\gg1$
(depending on $n$, $\beta$). Next, for each $p\in\left\{ 1,\cdots,\,n\right\} $,
differentiate (\ref{eq w'(z)}) with respect to $\boldsymbol{z}_{p}$
to get 
\[
\partial_{\tau}\left(\partial_{\boldsymbol{z}_{p}}\hat{\boldsymbol{w}}\right)=\triangle\left(\partial_{\boldsymbol{z}_{p}}\hat{\boldsymbol{w}}\right)-\frac{\nabla^{2}\left(\partial_{\boldsymbol{z}_{p}}\hat{\boldsymbol{w}}\right)\left(\nabla\hat{\boldsymbol{w}},\,\nabla\hat{\boldsymbol{w}}\right)}{1+\left|\nabla\hat{\boldsymbol{w}}\right|^{2}}
\]
\[
+\,\frac{1}{2}\,\biggl\langle\left(\frac{\biggl\langle\nabla\ln\left(1+\left|\nabla\hat{\boldsymbol{w}}\right|^{2}\right),\,\nabla\hat{\boldsymbol{w}}\biggr\rangle}{1+\left|\nabla\hat{\boldsymbol{w}}\right|^{2}}\,\nabla\hat{\boldsymbol{w}}\,-\,\nabla\ln\left(1+\left|\nabla\hat{\boldsymbol{w}}\right|^{2}\right)\right),\;\,\nabla\left(\partial_{\boldsymbol{z}_{p}}\hat{\boldsymbol{w}}\right)\biggr\rangle
\]
\[
-\left\langle \frac{\frac{1}{2}+\sigma}{2\sigma\tau}\,\boldsymbol{z},\;\,\nabla\left(\partial_{\boldsymbol{z}_{p}}\hat{\boldsymbol{w}}\right)\right\rangle +\frac{n-1}{\hat{\boldsymbol{w}}^{2}}\left(\partial_{\boldsymbol{z}_{p}}\hat{\boldsymbol{w}}\right)
\]
 
\[
=\sum_{i,\,j=1}^{n}\left(\delta_{ij}-\frac{\partial_{\boldsymbol{z}_{i}}\hat{\boldsymbol{w}}\,\partial_{\boldsymbol{z}_{j}}\hat{\boldsymbol{w}}}{1+\left|\nabla\hat{\boldsymbol{w}}\right|^{2}}\right)\partial_{\boldsymbol{z}_{i}\boldsymbol{z}_{j}}^{2}\left(\partial_{\boldsymbol{z}_{p}}\hat{\boldsymbol{w}}\right)
\]
\[
+\sum_{q=1}^{n}\left(\frac{\sum_{i,\,j=1}^{n}\partial_{\boldsymbol{z}_{i}}\hat{\boldsymbol{w}}\,\partial_{\boldsymbol{z}_{j}}\hat{\boldsymbol{w}}\,\partial_{\boldsymbol{z}_{q}}\hat{\boldsymbol{w}}\,\partial_{\boldsymbol{z}_{i}\boldsymbol{z}_{j}}^{2}\hat{\boldsymbol{w}}\,-\,\sum_{i=1}^{n}\left(1+\left|\nabla\hat{\boldsymbol{w}}\right|^{2}\right)\partial_{\boldsymbol{z}_{i}}\hat{\boldsymbol{w}}\,\partial_{\boldsymbol{z}_{i}\boldsymbol{z}_{q}}^{2}\hat{\boldsymbol{w}}}{\left(1+\left|\nabla\hat{\boldsymbol{w}}\right|^{2}\right)^{2}}\right)\partial_{\boldsymbol{z}_{q}}\left(\partial_{\boldsymbol{z}_{p}}\hat{\boldsymbol{w}}\right)
\]
\[
-\sum_{q=1}^{n}\frac{\frac{1}{2}+\sigma}{2\sigma\tau}\,\boldsymbol{z}_{q}\,\partial_{\boldsymbol{z}_{q}}\left(\partial_{\boldsymbol{z}_{p}}\hat{\boldsymbol{w}}\right)\,+\,\frac{n-1}{\hat{\boldsymbol{w}}^{2}}\left(\partial_{\boldsymbol{z}_{p}}\hat{\boldsymbol{w}}\right)
\]
Then by (\ref{C^infty w'1}) and Krylov-Safonov H$\ddot{o}$lder estimates,
we may assume that for the same exponent $\gamma$, there holds 
\begin{equation}
\delta^{1+\gamma}\left[\nabla\hat{\boldsymbol{w}}\right]_{\gamma;\,Q\left(\boldsymbol{z}_{*},\,\tau_{*};\,\frac{1}{2}\delta\right)}\,\leq\,C\left(n\right)\delta\left\Vert \nabla\hat{\boldsymbol{w}}\right\Vert _{L^{\infty}\left(Q\left(\boldsymbol{z}_{*},\,\tau_{*};\,\delta\right)\right)}\,\leq\,C\left(n\right)\label{Holder Dw'}
\end{equation}
Therefore, by (\ref{C^infty w'1}), (\ref{C^infty w'2}), (\ref{Holder w'})
and (\ref{Holder Dw'}), we can apply Schauder $C^{2,\,\gamma}$ estimates
to (\ref{eq Cauchy w'(z)}) to get 
\[
\delta\left\Vert \nabla\left(\hat{\boldsymbol{w}}-\hat{\boldsymbol{\psi}}_{k}\right)\right\Vert _{L^{\infty}\left(Q\left(\boldsymbol{z}_{*},\,\tau_{*};\,\frac{1}{3}\delta\right)\right)}\,+\,\delta^{2}\left\Vert \nabla^{2}\left(\hat{\boldsymbol{w}}-\hat{\boldsymbol{\psi}}_{k}\right)\right\Vert _{L^{\infty}\left(Q\left(\boldsymbol{z}_{*},\,\tau_{*};\,\frac{1}{3}\delta\right)\right)}
\]
\[
+\,\delta^{2+\gamma}\left[\nabla^{2}\left(\hat{\boldsymbol{w}}-\hat{\boldsymbol{\psi}}_{k}\right)\right]_{\gamma;\,Q\left(\boldsymbol{z}_{*},\,\tau_{*};\,\frac{1}{3}\delta\right)}
\]
\[
\leq C\left(n\right)\left(\left\Vert \hat{\boldsymbol{w}}-\hat{\boldsymbol{\psi}}_{k}\right\Vert _{L^{\infty}\left(Q\left(\boldsymbol{z}_{*},\,\tau_{*};\,\frac{1}{2}\delta\right)\right)}+\delta^{2}\left\Vert \boldsymbol{f}\right\Vert _{L^{\infty}\left(Q\left(\boldsymbol{z}_{*},\,\tau_{*};\,\frac{1}{2}\delta\right)\right)}+\delta^{2+\gamma}\Bigl[\boldsymbol{f}\Bigr]_{\gamma;\,Q\left(\boldsymbol{z}_{*},\,\tau_{*};\,\frac{1}{2}\delta\right)}\right)
\]
\begin{equation}
\leq C\left(n\right)\left(\beta^{\alpha-3}\left(\frac{\tau_{*}}{\tau_{0}}\right)^{-\varrho}+\tau_{*}^{-1}\right)\,\leq\,C\left(n\right)\beta^{2\left(\alpha-1\right)}\left(\frac{\tau_{*}}{\tau_{0}}\right)^{-\varrho}\label{Holder DDw'}
\end{equation}
provided that $\tau_{0}\gg1$ (depending on $n$, $\beta$).

The conclusion follows by using the bootstrap argument on (\ref{eq Cauchy w'(z)})
and repeatedly differentiating equations with respect to $\tau$. 
\end{proof}

\section{\label{degree Lambda}Determining the constant $\Lambda$ }

In this section, we would finish the proof of Proposition \ref{degree}
and Proposition \ref{uniform estimates}. What's left is to show (\ref{convexity})
and choose $\Lambda=\Lambda\left(n\right)\gg1$ so that (\ref{smaller a priori bound u})
holds. To this end, it suffices to show that 
\begin{enumerate}
\item In the $\mathbf{outer}$ $\mathbf{region}$, the function $u\left(x,\,t\right)$
defined in (\ref{u}) satisfies 
\begin{equation}
x^{i}\left|\partial_{x}^{i}u\left(x,\,t\right)\right|\leq C\left(n\right)x^{2\lambda_{2}+1}\qquad\forall\;\,i\in\left\{ 0,\,1,\,2\right\} \label{Lambda outer}
\end{equation}
\begin{equation}
\partial_{xx}^{2}u\left(x,\,t\right)\geq0\label{convexity outer}
\end{equation}
for $\sqrt{-t}\leq x\leq\rho$, $t_{0}\leq t\leq\mathring{t}$;
\item In the $\mathbf{intermediate}$ $\mathbf{region}$, if we perform
the type $\mathrm{I}$ rescaling, the type $\mathrm{I}$ rescaled
function $v\left(y,\,s\right)$ defined in (\ref{v}) satisfies 
\begin{equation}
y^{i}\left|\partial_{y}^{i}v\left(y,\,s\right)\right|\leq C\left(n\right)e^{-\lambda_{2}s}y^{\alpha}\qquad\forall\;\,i\in\left\{ 0,\,1,\,2\right\} \label{Lambda intermediate}
\end{equation}
\begin{equation}
\partial_{yy}^{2}v\left(y,\,s\right)\geq0\label{convexity intermediate}
\end{equation}
for $2\beta e^{-\sigma s}\leq y\leq1$, $s_{0}<s\leq\mathring{s}$;
\item Near the $\mathbf{tip}$ $\mathbf{region}$, if we perform the type
$\mathrm{II}$ rescaling, the type $\mathrm{II}$ rescaled function
$w\left(z,\,\tau\right)$ defined in (\ref{w}) satisfies 
\begin{equation}
z^{i}\left|\partial_{z}^{i}w\left(z,\,\tau\right)\right|\leq C\left(n\right)z^{\alpha}\qquad\forall\;\,i\in\left\{ 0,\,1,\,2\right\} \label{Lambda tip}
\end{equation}
for $\beta\leq z\leq2\beta$, $\tau_{0}\leq\tau\leq\mathring{\tau}$.
In addition, the type $\mathrm{II}$ rescaled function $\hat{w}\left(z,\,\tau\right)$
defined in (\ref{w'}) satisfies
\begin{equation}
\partial_{zz}^{2}\hat{w}\left(z,\,\tau\right)\geq0\label{convexity tip}
\end{equation}
for $0\leq z\leq5\beta$, $\tau_{0}\leq\tau\leq\mathring{\tau}$.
\end{enumerate}
Note that (\ref{Lambda intermediate}) is equivalent to 
\[
x^{i}\left|\partial_{x}^{i}u\left(x,\,t\right)\right|\leq C\left(n\right)\left(-t\right)^{2}x^{\alpha}\qquad\forall\;\,i\in\left\{ 0,\,1,\,2\right\} 
\]
for $2\beta\left(-t\right)^{\frac{1}{2}+\sigma}\leq x\leq\sqrt{-t}$,
$t_{0}\leq t\leq\mathring{t}$ (see (\ref{uv}) and (\ref{eigenvalues})).
Also, (\ref{Lambda tip}) is equivalent to
\[
x^{i}\left|\partial_{x}^{i}u\left(x,\,t\right)\right|\leq C\left(n\right)\left(-t\right)^{2}x^{\alpha}\qquad\forall\;\,i\in\left\{ 0,\,1,\,2\right\} 
\]
for $\beta\left(-t\right)^{\frac{1}{2}+\sigma}\leq x\leq2\beta\left(-t\right)^{\frac{1}{2}+\sigma}$,
$t_{0}\leq t\leq\mathring{t}$ (see (\ref{sigma}) and (\ref{uw})).
Moreover, by (\ref{convexity outer}), (\ref{convexity intermediate}),
(\ref{convexity tip}) and rescaling, we can show (\ref{convexity}),
i.e. the projected curve $\bar{\Sigma}_{t}$ is convex in $B\left(O;\,\rho\right)$
for $t_{0}\leq t\leq\mathring{t}$.

Recall that in Remark \ref{Lambda C^0 }, we already show the $C^{0}$
estimates in (\ref{Lambda outer}) and (\ref{Lambda intermediate}).
As for the derivatives, notice that the smooth estimates in Proposition
\ref{degree} does not imply (\ref{Lambda outer}), (\ref{Lambda intermediate})
and (\ref{Lambda tip}), since those estimates doest not extend to
the initial time. Therefore, in this section we compensate that by
showing how to estimate the quantities in (\ref{Lambda outer}), (\ref{Lambda intermediate})
and (\ref{Lambda tip}) from the initial time to some extent. The
idea is to derive evolution equations for these quantities and use
the following lemma (see Lemma \ref{short-time continuity}), together
with (\ref{initial v}), (\ref{initial u intermediate}) and (\ref{initial w intermediate}),
to show that they can be bounded in terms of $n$ for a short period
of time. Below is the lemma which we would use to prove the derivatives
estimates in (\ref{Lambda outer}) and (\ref{Lambda intermediate}).
\begin{lem}
\label{short-time continuity}Suppose that $h\left(r,\,\iota\right)$
is a function which satisfies 
\[
\partial_{\iota}h\,-\,a\left(r,\,\iota\right)\partial_{rr}^{2}h\,-\,b\left(r,\,\iota\right)\partial_{r}h\,=\,f\left(r,\,\iota\right)
\]
for $\frac{1}{2}\leq r\leq\frac{3}{2}$, $0\leq\iota\leq\mathcal{T}$,
with 
\[
a\left(r,\,\iota\right)>0
\]
\[
\max\Bigl\{\left|a\left(r,\,\iota\right)\right|,\,\left|b\left(r,\,\iota\right)\right|\Bigr\}\,\leq\,M
\]
for $\frac{1}{2}\leq r\leq\frac{3}{2}$, $0\leq\iota\leq\mathcal{T}$,
where $\mathcal{T},\,M>0$ are constants. Then there hold
\[
h\left(r,\,\iota\right)\,\leq\,\max_{\frac{1}{2}\leq\mathrm{r}\leq\frac{3}{2}}h\left(\mathrm{r},\,0\right)\,+\,C\left(M\right)\iota\left(\left\Vert h\right\Vert _{L^{\infty}\left(\left[\frac{1}{2},\,\frac{3}{2}\right]\times\left[0,\,\mathcal{T}\right]\right)}+\left\Vert f\right\Vert _{L^{\infty}\left(\left[\frac{1}{2},\,\frac{3}{2}\right]\times\left[0,\,\mathcal{T}\right]\right)}\right)
\]
 
\[
h\left(r,\,\iota\right)\,\geq\,\min_{\frac{1}{2}\leq\mathrm{r}\leq\frac{3}{2}}h\left(\mathrm{r},\,0\right)\,-\,C\left(M\right)\iota\left(\left\Vert h\right\Vert _{L^{\infty}\left(\left[\frac{1}{2},\,\frac{3}{2}\right]\times\left[0,\,\mathcal{T}\right]\right)}+\left\Vert f\right\Vert _{L^{\infty}\left(\left[\frac{1}{2},\,\frac{3}{2}\right]\times\left[0,\,\mathcal{T}\right]\right)}\right)
\]
for $\frac{3}{4}\leq r\leq\frac{5}{4}$, $0\leq\iota\leq\mathcal{T}$.
\end{lem}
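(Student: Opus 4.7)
The plan is to construct an explicit super-solution barrier by combining a linear-in-$\iota$ term with a spatial cut-off that absorbs whatever values $h$ may have at $r=\tfrac12$ and $r=\tfrac32$, and then invoke the parabolic maximum principle on the rectangle $\left[\tfrac12,\tfrac32\right]\times[0,\mathcal{T}]$. For the upper bound, set $M_0 = \max_{\mathrm{r}\in[1/2,3/2]} h(\mathrm{r},0)$ and $H = \|h\|_{L^\infty} + \|f\|_{L^\infty}$, where the norms are taken over the rectangle. Fix once and for all a smooth function $\psi:[\tfrac12,\tfrac32]\to[0,\infty)$ that satisfies $\psi\equiv 0$ on $[\tfrac34,\tfrac54]$, $\psi(\tfrac12)=\psi(\tfrac32)\ge 2$, and $\|\psi\|_{C^2}\le C_0$ for an absolute constant $C_0$. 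Then define the barrier
\[
\phi(r,\iota)\;=\;M_0\,+\,K\iota\,+\,H\,\psi(r),
\]
where $K=K(M)$ is to be chosen.

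First I would verify the differential inequality $\partial_\iota\phi - a\,\partial_{rr}^2\phi - b\,\partial_r\phi \ge f$. A direct computation gives $\partial_\iota\phi - a\,\partial_{rr}^2\phi - b\,\partial_r\phi = K - H\bigl(a\,\psi''+b\,\psi'\bigr)$, so using $|a|,|b|\le M$ and $\|\psi\|_{C^2}\le C_0$, the choice $K = \|f\|_{L^\infty} + 2C_0 M\,H$ ensures the RHS is $\ge\|f\|_{L^\infty}\ge f$. Next, on the parabolic boundary of $[\tfrac12,\tfrac32]\times[0,\mathcal{T}]$ one checks $\phi\ge h$: at $\iota=0$, $\phi(r,0)=M_0+H\psi(r)\ge M_0\ge h(r,0)$; at $r=\tfrac12$ or $r=\tfrac32$, $\phi\ge M_0+2H \ge -\|h\|_{L^\infty}+2\|h\|_{L^\infty}\ge h$ using $M_0\ge -\|h\|_{L^\infty}$ and $H\ge\|h\|_{L^\infty}$. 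Applying the standard parabolic maximum principle to $h-\phi$ on the rectangle then gives $h\le\phi$ throughout. Restricting to $r\in[\tfrac34,\tfrac54]$, where $\psi\equiv 0$, yields
\[
h(r,\iota)\;\le\;M_0\,+\,\bigl(\|f\|_{L^\infty}+2C_0 M\,H\bigr)\iota\;\le\;M_0 + C(M)\,\iota\,\bigl(\|h\|_{L^\infty}+\|f\|_{L^\infty}\bigr),
\]
which is the claimed upper bound. The lower bound follows by applying the same argument to $-h$, which satisfies the same type of equation with $-f$ in place of $f$ (the coefficients $a,b$ are unchanged) and with initial maximum $-\min_{\mathrm{r}} h(\mathrm{r},0)$.

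There is essentially no obstacle here: the lemma is a textbook localized maximum principle, and the only mild subtlety is choosing $\psi$ large enough at the endpoints $r=\tfrac12,\tfrac32$ so that the barrier dominates $h$ there without needing any assumed boundary values for $h$ — this is handled by the condition $\psi(\tfrac12)=\psi(\tfrac32)\ge 2$, which converts the trivial $L^\infty$ bound on $h$ into a genuine boundary comparison. The dependence $C=C(M)$ is explicit from the construction (proportional to $C_0 M$), and the lemma's form is exactly what is needed in the subsequent barrier arguments for $u$ and $v$ in the outer and intermediate regions, where one linearizes the equations of motion and reads off bounded coefficients $a,b$ on the relevant rescaled domain.
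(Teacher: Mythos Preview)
Your proof is correct and takes a genuinely different route from the paper's. The paper localizes \emph{multiplicatively}: it picks a cut-off $\eta$ with $\chi_{[3/4,5/4]}\le\eta\le\chi_{[1/2,3/2]}$ vanishing to infinite order at the endpoints (so that, by the earlier Lemma~\ref{localization}, $(\partial_r\eta)^2/\eta$ stays bounded), writes down the evolution equation for $\eta h$, and then tracks $\partial_\iota(\eta h)_{\max}$ via the maximum principle on each time slice; the key technical step is rewriting the cross term $-2a\,\partial_r\eta\,\partial_r h$ at a running maximum point using $\partial_r(\eta h)=0$. You instead localize \emph{additively} with the barrier $\phi=M_0+K\iota+H\psi(r)$ and invoke the standard weak parabolic maximum principle for $h-\phi$ on the whole rectangle. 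Your approach is more elementary---it sidesteps the infinite-order-vanishing cut-off and the analysis of the cross term---and gives the stated bound directly (in particular it works cleanly even when $M_0<0$, whereas the multiplicative argument yields $(\eta h)_{\max}(0)=0$ in that case and one must then absorb the discrepancy). The paper's technique, on the other hand, is the same machinery reused in Lemmas~\ref{outside Du} and~\ref{outside DDu}, so it is stylistically consistent with the surrounding arguments.
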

\begin{proof}
Let $\eta\left(r\right)$ be a smooth function so that 
\[
\chi_{\left[\frac{3}{4},\,\frac{5}{4}\right]}\leq\eta\leq\chi_{\left[\frac{1}{2},\,\frac{3}{2}\right]}
\]
and $\eta\left(r\right)$ vanishes at $\frac{1}{2}$ and $\frac{3}{2}$
to infinite order. Note that by Lemma \ref{localization}, we may
assume that
\[
\frac{\left(\partial_{r}\eta\left(r\right)\right)^{2}}{\eta\left(r\right)}\,+\,\left|\partial_{r}\eta\left(r\right)\right|\,+\,\left|\partial_{rr}^{2}\eta\left(r\right)\right|\,\lesssim\,1
\]
It follows that
\begin{equation}
\partial_{\iota}\left(\eta h\right)-a\left(r,\,\iota\right)\partial_{rr}^{2}\left(\eta h\right)-b\left(r,\,\iota\right)\partial_{r}\left(\eta h\right)\label{eq short-time continuity}
\end{equation}
\[
=\eta\,f\left(r,\,\iota\right)\,-\,\left(a\left(r,\,\iota\right)\partial_{rr}^{2}\eta\,+\,b\left(r,\,\iota\right)\partial_{r}\eta\right)h-2\,a\left(r,\,\iota\right)\,\partial_{r}\eta\,\partial_{r}h
\]
For the last term on RHS of (\ref{eq short-time continuity}), if
we evaluate it at any maximum point of $\eta\left(r\right)h\left(r,\,\iota\right)$
for each time-slice, either $\eta=0$ and hence
\begin{equation}
\partial_{r}\eta=0\,\Rightarrow\,-2\,a\left(r,\,\iota\right)\,\partial_{r}\eta\,\partial_{r}h=0\label{short-time continuity1}
\end{equation}
or $0<\eta\leq1$, in which case we have 
\[
\partial_{r}\left(\eta h\right)=0\,\Leftrightarrow\,\eta\,\partial_{r}h+h\,\partial_{r}\eta=0
\]
which yields 
\begin{equation}
-2\,a\left(r,\,\iota\right)\,\partial_{r}\eta\,\partial_{r}h=2\,a\left(r,\,\iota\right)\,\frac{\left(\partial_{r}\eta\right)^{2}}{\eta}h\label{short-time continuity2}
\end{equation}
Now let 
\[
\left(\eta h\right)_{\max}\left(\iota\right)=\max_{r}\,\left(\eta\left(r\right)h\left(r,\,\iota\right)\right)
\]
By (\ref{short-time continuity1}) and (\ref{short-time continuity2}),
if we apply the maximum principle to (\ref{eq short-time continuity}),
we get 
\[
\partial_{\iota}\left(\eta h\right)_{\max}\leq C\left(M\right)\left(\left\Vert h\right\Vert _{L^{\infty}\left(\left[\frac{1}{2},\,\frac{3}{2}\right]\times\left[0,\,\mathcal{T}\right]\right)}\,+\,\left\Vert f\right\Vert _{L^{\infty}\left(\left[\frac{1}{2},\,\frac{3}{2}\right]\times\left[0,\,\mathcal{T}\right]\right)}\right)
\]
which implies 
\[
\left(\eta h\right)_{\max}\left(\iota\right)\,\leq\,\left(\eta h\right)_{\max}\left(0\right)\,+\,C\left(M\right)\iota\left(\left\Vert h\right\Vert _{L^{\infty}\left(\left[\frac{1}{2},\,\frac{3}{2}\right]\times\left[0,\,\mathcal{T}\right]\right)}\,+\,\left\Vert f\right\Vert _{L^{\infty}\left(\left[\frac{1}{2},\,\frac{3}{2}\right]\times\left[0,\,\mathcal{T}\right]\right)}\right)
\]
Similarly, if we define 
\[
\left(\eta h\right)_{\min}\left(\iota\right)=\min_{r}\,\left(\eta\left(r\right)h\left(r,\,\iota\right)\right)
\]
then we have 
\[
\left(\eta h\right)_{\min}\left(\iota\right)\,\geq\,\left(\eta h\right)_{\min}\left(0\right)\,-\,C\left(M\right)\iota\left(\left\Vert h\right\Vert _{L^{\infty}\left(\left[\frac{1}{2},\,\frac{3}{2}\right]\times\left[0,\,\mathcal{T}\right]\right)}\,+\,\left\Vert f\right\Vert _{L^{\infty}\left(\left[\frac{1}{2},\,\frac{3}{2}\right]\times\left[0,\,\mathcal{T}\right]\right)}\right)
\]
\end{proof}
To prove the derivatives estimates in (\ref{Lambda outer}), we divide
the region into two parts: $\frac{3}{4}\rho\leq x\leq\rho$ and $\sqrt{-t}\leq x\leq\frac{3}{4}\rho$.
In the following proposition, we show (\ref{Lambda outer}) for $\frac{3}{4}\rho\leq x\leq\rho$
by using (\ref{eq u}), (\ref{initial u intermediate}), (\ref{C^2 outside u bound})
and Lemma \ref{short-time continuity}.
\begin{prop}
If $\left|t_{0}\right|\ll1$ (depending on $n$, $\Lambda$, $\rho$,
$\beta$), then there hold
\begin{equation}
\frac{1}{2}\varUpsilon_{2}\left(2\lambda_{2}+1\right)x^{2\lambda_{2}}\,\leq\,\partial_{x}u\left(x,\,t\right)\,\leq\,\frac{3}{2}\varUpsilon_{2}\left(2\lambda_{2}+1\right)x^{2\lambda_{2}}\label{Lambda outer Du}
\end{equation}
 
\begin{equation}
\partial_{xx}^{2}u\left(x,\,t\right)\,\leq\,\frac{3}{2}\varUpsilon_{2}\left(2\lambda_{2}+1\right)\left(2\lambda_{2}\right)x^{2\lambda_{2}-1}\label{Lambda outer DDu}
\end{equation}
 
\begin{equation}
\partial_{xx}^{2}u\left(x,\,t\right)\,\geq\,\frac{1}{2}\varUpsilon_{2}\left(2\lambda_{2}+1\right)\left(2\lambda_{2}\right)x^{2\lambda_{2}-1}\,>0\label{convexity outer u}
\end{equation}
 for $\frac{3}{4}\rho\leq x\leq\frac{5}{4}\rho$, $t_{0}\leq t\leq\mathring{t}$.
\end{prop}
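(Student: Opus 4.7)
The plan is to view $\partial_x u$ and $\partial_{xx}^2 u$ as solutions of linear-in-top-order parabolic equations obtained by differentiating (\ref{eq u}) once and twice in $x$, and then to apply Lemma \ref{short-time continuity} after the trivial rescaling $r = x/\rho$, $\iota = t - t_0$. The hypothesis $\mathring{t}\in[t_1, e^{-1}t_1]$ together with $t_1 \geq t_0$ forces $\mathring{t} - t_0 \leq (1-e^{-1})|t_0|$, so the time window has length of order $|t_0|\ll 1$. This shortness is the whole mechanism: the estimates hold at $t_0$ with a large built-in margin, and the argument just needs to show a short-time continuity argument preserves this margin.

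For the initial values, I use (\ref{initial u intermediate}) directly. Since $|t_0|/\rho^2\ll 1$, for $x\in[\rho/2, 3\rho/2]$,
\begin{equation*}
\partial_x u(x, t_0) = \Upsilon_2(2\lambda_2+1)\, x^{2\lambda_2} + O\!\left(\tfrac{|t_0|}{\rho^2}\right)x^{2\lambda_2},\quad \partial_{xx}^2 u(x, t_0) = \Upsilon_2(2\lambda_2+1)(2\lambda_2)\, x^{2\lambda_2-1} + O\!\left(\tfrac{|t_0|}{\rho^2}\right)x^{2\lambda_2-1}.
\end{equation*}
Hence (\ref{Lambda outer Du})--(\ref{convexity outer u}) hold at $t=t_0$ with constants much tighter than $\tfrac{1}{2}$ and $\tfrac{3}{2}$.

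Next, I differentiate (\ref{eq u}) in $x$ to obtain, schematically,
\begin{equation*}
\partial_t(\partial_x u) - \frac{1}{1+(\partial_x u)^2}\,\partial_{xx}^2(\partial_x u) - b_1(x,t)\,\partial_x(\partial_x u) = F_1(x,t),
\end{equation*}
and an analogous equation for $\partial_{xx}^2 u$. By (\ref{C^2 outside u bound}), the leading coefficient lies in $[1/2, 1]$ and $|b_1|, |F_1|$, as well as the analogous coefficients for $\partial_{xx}^2 u$, are bounded by $C(n,\rho)$ on $[\rho/3, 5\rho/3]\times[t_0,\mathring{t}]$. After rescaling $r = x/\rho$, $\iota = t - t_0$, Lemma \ref{short-time continuity} applied on $r\in[1/2,3/2]$ yields, for $x\in[3\rho/4, 5\rho/4]$ and $t\in[t_0,\mathring{t}]$,
\begin{equation*}
\bigl|\partial_x u(x,t) - \partial_x u(x, t_0)\bigr| + \bigl|\partial_{xx}^2 u(x,t) - \partial_{xx}^2 u(x, t_0)\bigr| \leq C(n,\rho)\,(t-t_0) \leq C(n,\rho)\,|t_0|.
\end{equation*}
Choosing $|t_0|$ small enough (depending on $n,\rho,\Lambda,\beta$) so that $C(n,\rho)|t_0|$ is much smaller than $\tfrac{1}{4}\Upsilon_2(2\lambda_2+1)(2\lambda_2)\rho^{2\lambda_2-1}$, the initial bounds propagate with loss absorbed into the $\tfrac{1}{2}$--$\tfrac{3}{2}$ margin, giving (\ref{Lambda outer Du})--(\ref{convexity outer u}); strict positivity of $\partial_{xx}^2 u$ then yields the convexity statement.

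The main subtlety is confirming that the source $F_1$ (and its counterpart for $\partial_{xx}^2 u$) is bounded by a constant independent of $\Lambda$ on the relevant region. This is exactly what (\ref{C^2 outside u bound}) from Proposition \ref{degree} delivers: it gives $|u|+|\partial_x u|+|\partial_{xx}^2 u|\leq C(n,\rho)$ on $x\geq\rho/3$, turning all the nonlinear terms generated by differentiating (\ref{eq u}) into bounded functions whose $L^\infty$ norms depend only on $n$ and $\rho$. Without that input the argument would be circular; with it, the entire proof collapses to the elementary short-time continuity provided by Lemma \ref{short-time continuity}.
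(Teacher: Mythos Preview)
There is a genuine gap: Lemma \ref{short-time continuity} does \emph{not} give the pointwise-in-$x$ time-continuity estimate you invoke. It only bounds $h(r,\iota)$ between $\min_{\mathrm r}h(\mathrm r,0)-C\iota$ and $\max_{\mathrm r}h(\mathrm r,0)+C\iota$, where the extrema run over the whole spatial interval $[\tfrac12,\tfrac32]$. Applied to $h=\partial_x u(r\rho,\cdot)$ this yields, at best,
\[
\partial_x u(x,t)\ \le\ \max_{\rho/2\le x'\le 3\rho/2}\partial_x u(x',t_0)+C(n,\rho)\,|t_0|\ \approx\ \varUpsilon_2(2\lambda_2+1)\Bigl(\tfrac{3\rho}{2}\Bigr)^{2\lambda_2},
\]
and at the left endpoint $x=3\rho/4$ this exceeds the target $\tfrac32\varUpsilon_2(2\lambda_2+1)(3\rho/4)^{2\lambda_2}$ by the factor $2^{2\lambda_2}/\tfrac32\ge\tfrac43>1$ (since $2\lambda_2\ge 1$). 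The lower bound fails symmetrically at the right endpoint. So the unweighted application cannot close, no matter how small $|t_0|$ is.

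The paper repairs exactly this by applying Lemma \ref{short-time continuity} to the \emph{weighted} quantities $h=x^{-2\lambda_2}\partial_x u$ and $h=x^{-2\lambda_2+1}\partial_{xx}^2 u$ (with the rescaling $x=r\rho$, $t=t_0+\iota\rho^2$). At $t_0$ these are approximately the constants $\varUpsilon_2(2\lambda_2+1)$ and $\varUpsilon_2(2\lambda_2+1)(2\lambda_2)$, so the spatial max and min over $[\rho/2,3\rho/2]$ are both within $O(|t_0|/\rho^2)$ of the desired value, and the $\tfrac12$--$\tfrac32$ margin survives. Your overall plan (differentiate, use (\ref{C^2 outside u bound}) to bound coefficients, apply the short-time lemma) is correct, but the weighting by the monomial power is not cosmetic: it is what makes the max/min comparison usable.
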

\begin{proof}
Let 
\[
h\left(r,\,\iota\right)=\left.x^{-2\lambda_{2}}\partial_{x}u\left(x,\,t\right)\right|_{x=r\rho,\,t=t_{0}+\iota\rho^{2}}
\]
From (\ref{eq u}), we derive 
\[
\partial_{\iota}h-a\left(r,\,\iota\right)\partial_{rr}^{2}h-b\left(r,\,\iota\right)\partial_{r}h=f\left(r,\,\iota\right)
\]
where 
\[
a\left(r,\,\iota\right)=\left.\frac{1}{1+\left(\partial_{x}u\left(x,\,t\right)\right)^{2}}\right|_{x=r\rho,\,t=t_{0}+\iota\rho^{2}}
\]
 
\[
b\left(r,\,\iota\right)=\left.\frac{1}{r}\left(\frac{-2x\left(\partial_{x}u\left(x,\,t\right)\right)\left(\partial_{xx}^{2}u\left(x,\,t\right)\right)}{\left(1+\left(\partial_{x}u\left(x,\,t\right)\right)^{2}\right)^{2}}+\frac{2\left(n-1\right)}{1-\left(\frac{u\left(x,\,t\right)}{x}\right)^{2}}\right)\right|_{x=r\rho,\,t=t_{0}+\iota\rho^{2}}
\]
 
\[
f\left(r,\,\iota\right)=\left.\frac{\rho^{-2\lambda_{2}+1}}{r^{2\lambda_{2}+1}}\left(\left(\frac{2\lambda_{2}}{1+\left(\partial_{x}u\left(x,\,t\right)\right)^{2}}\right)\left(\partial_{xx}^{2}u\left(x,\,t\right)\right)\right)\right|_{x=r\rho,\,t=t_{0}+\iota\rho^{2}}
\]
\[
+\left.\frac{\rho^{-2\lambda_{2}}}{r^{2\lambda_{2}+2}}\left(2\lambda_{2}\left(\frac{-2x\left(\partial_{x}u\left(x,\,t\right)\right)\left(\partial_{xx}^{2}u\left(x,\,t\right)\right)}{\left(1+\left(\partial_{x}u\left(x,\,t\right)\right)^{2}\right)^{2}}+\frac{2\left(n-1\right)}{1-\left(\frac{u\left(x,\,t\right)}{x}\right)^{2}}\right)\right)\left(\partial_{x}u\left(x,\,t\right)\right)\right|_{x=r\rho,\,t=t_{0}+\iota\rho^{2}}
\]
\[
+\left.\frac{\rho^{-2\lambda_{2}}}{r^{2\lambda_{2}+2}}\left(-\frac{2\lambda_{2}\left(2\lambda_{2}+1\right)}{1+\left(\partial_{x}u\left(x,\,t\right)\right)^{2}}\right)\left(\partial_{x}u\left(x,\,t\right)\right)\right|_{x=r\rho,\,t=t_{0}+\iota\rho^{2}}
\]
\[
+\left.\frac{\rho^{-2\lambda_{2}-1}}{r^{2\lambda_{2}+3}}\left(\left(\frac{4\left(n-1\right)\left(\left(\partial_{x}u\left(x,\,t\right)\right)^{2}-1\right)}{\left(1-\left(\frac{u\left(x,\,t\right)}{x}\right)^{2}\right)^{2}}\right)\left(u\left(x,\,t\right)\right)\right)\right|_{x=r\rho,\,t=t_{0}+\iota\rho^{2}}
\]
It follows, by (\ref{C^2 outside u bound}) and Lemma \ref{short-time continuity},
that
\[
\min_{\frac{1}{2}\leq\mathrm{r}\leq\frac{3}{2}}h\left(\mathrm{r},\,0\right)\,-\,C\left(n,\,\rho\right)\iota\,\leq\,h\left(r,\,\iota\right)\,\leq\,\max_{\frac{1}{2}\leq\mathrm{r}\leq\frac{3}{2}}h\left(\mathrm{r},\,0\right)\,+\,C\left(n,\,\rho\right)\iota
\]
for $\frac{3}{4}\leq r\leq\frac{5}{4}$. Undoing the change of variables,
we get 
\[
x_{*}^{-2\lambda_{2}}\partial_{x}u\left(x_{*},\,t\right)\,\leq\,\max_{\frac{1}{2}\rho\leq x\leq\frac{3}{2}\rho}\left(x^{-2\lambda_{2}}\partial_{x}u\left(x,\,t_{0}\right)\right)\,+\,C\left(n,\,\rho\right)\frac{t-t_{0}}{\rho^{2}}
\]
 
\[
x_{*}^{-2\lambda_{2}}\partial_{x}u\left(x_{*},\,t\right)\,\geq\,\min_{\frac{1}{2}\rho\leq x\leq\frac{3}{2}\rho}\left(x^{-2\lambda_{2}}\partial_{x}u\left(x,\,t_{0}\right)\right)\,-\,C\left(n,\,\rho\right)\frac{t-t_{0}}{\rho^{2}}
\]
for $\frac{3}{4}\rho\leq x_{*}\leq\frac{5}{4}\rho$, $t_{0}\leq t\leq\mathring{t}$.
Therefore, if $\left|t_{0}\right|\ll1$ (depending on $n$, $\Lambda$,
$\rho$, $\beta$), then (\ref{Lambda outer Du}) follows immediately
from the above, (\ref{initial u intermediate}) and (\ref{coefficients}).

For the second derivative, note that we have the following evolution
equation:
\[
\partial_{t}\left(x^{-2\lambda_{2}+1}\partial_{xx}^{2}u\right)-\frac{1}{1+\left(\partial_{x}u\right)^{2}}\,\partial_{xx}^{2}\left(x^{-2\lambda_{2}+1}\partial_{xx}^{2}u\right)
\]
\[
-\frac{1}{x}\left(\frac{-6x\left(\partial_{x}u\right)\left(\partial_{xx}^{2}u\right)}{\left(1+\left(\partial_{x}u\right)^{2}\right)^{2}}+\frac{2\left(n-1\right)}{1-\left(\frac{u}{x}\right)^{2}}+\frac{2\left(2\lambda_{2}-1\right)}{1+\left(\partial_{x}u\right)^{2}}\right)\partial_{x}\left(x^{-2\lambda_{2}+1}\partial_{xx}^{2}u\right)
\]
\[
=\frac{1}{x^{2\lambda_{2}+1}}\left(\frac{-2x^{2}\left(\partial_{xx}^{2}u\right)^{2}\left(1-3\left(\partial_{x}u\right)^{2}\right)}{\left(1+\left(\partial_{x}u\right)^{2}\right)^{3}}+\frac{12\left(n-1\right)\left(\frac{u}{x}\right)\partial_{x}u}{\left(1-\left(\frac{u}{x}\right)^{2}\right)^{2}}-\frac{2\left(n-1\right)\left(1+\left(\frac{u}{x}\right)^{2}\right)}{\left(1-\left(\frac{u}{x}\right)^{2}\right)^{2}}\right)\left(\partial_{xx}^{2}u\right)
\]
\[
+\frac{2\lambda_{2}-1}{x^{2\lambda_{2}+1}}\left(\left(\frac{-6x\left(\partial_{x}u\right)\left(\partial_{xx}^{2}u\right)}{\left(1+\left(\partial_{x}u\right)^{2}\right)^{2}}+\frac{2\left(n-1\right)}{1-\left(\frac{u}{x}\right)^{2}}\right)+\frac{2\lambda_{2}-2}{1+\left(\partial_{x}u\right)^{2}}\right)\left(\partial_{xx}^{2}u\right)
\]
\[
+\frac{1}{x^{2\lambda_{2}+2}}\left(\frac{4\left(n-1\right)\left(\left(\partial_{x}u\right)^{2}-1\right)\left(1+3\left(\frac{u}{x}\right)^{2}\right)}{\left(1-\left(\frac{u}{x}\right)^{2}\right)^{3}}\right)\left(\partial_{x}u\right)
\]
\[
+\frac{1}{x^{2\lambda_{2}+3}}\left(\frac{4\left(n-1\right)\left(1-\left(\partial_{x}u\right)^{2}\right)\left(\left(\frac{u}{x}\right)^{2}+3\right)}{\left(1-\left(\frac{u}{x}\right)^{2}\right)^{3}}\right)\left(u\right)
\]
By the same argument (as for the first derivative), we can show (\ref{Lambda outer DDu})
and (\ref{convexity outer u}).
\end{proof}
Now we show the derivatives estimates in (\ref{Lambda outer}) for
$\sqrt{-t}\leq x\leq\frac{3}{4}\rho$ by using (\ref{eq u}), (\ref{a priori bound u}),
(\ref{initial u intermediate}), (\ref{C^infty u bound}) and Lemma
\ref{short-time continuity}.
\begin{prop}
If $0<\rho\ll1$ (depending on $n$, $\Lambda$) and $\left|t_{0}\right|\ll1$
(depending on $n$, $\Lambda$, $\rho$, $\beta$), then there hold
\begin{equation}
2\left(\alpha+2\varUpsilon_{1}\left(\alpha+2\right)+\varUpsilon_{2}\left(2\lambda_{2}+1\right)\right)\,x^{2\lambda_{2}}\,\,\leq\,\,\partial_{x}u\left(x,\,t\right)\,\,\leq\,\,2\varUpsilon_{2}\left(2\lambda_{2}+1\right)\,x^{2\lambda_{2}}\label{Lambda Du}
\end{equation}
 
\begin{equation}
\partial_{xx}^{2}u\left(x,\,t\right)\,\,\leq\,\,2\left(\alpha\left(\alpha-1\right)+2\varUpsilon_{1}\left(\alpha+2\right)\left(\alpha+1\right)+\varUpsilon_{2}\left(2\lambda_{2}+1\right)\left(2\lambda_{2}\right)\right)\,x^{2\lambda_{2}-1}\label{Lambda DDu}
\end{equation}
 
\begin{equation}
\partial_{xx}^{2}u\left(x,\,t\right)\,\geq\,\frac{1}{2}\varUpsilon_{2}\left(2\lambda_{2}+1\right)\left(2\lambda_{2}\right)x^{2\lambda_{2}-1}\,>0\label{convexity u}
\end{equation}
for $\sqrt{-t}\leq x\leq\frac{3}{4}\rho$, $t_{0}\leq t\leq\mathring{t}$.
\end{prop}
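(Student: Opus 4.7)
Fix $(x_*,t_*)$ with $\sqrt{-t_*}\leq x_*\leq\tfrac{3}{4}\rho$ and $t_0\leq t_*\leq\mathring{t}$. My plan is to split into two regimes at the parabolic threshold $t_*-t_0=\delta^2 x_*^2$, where $0<\delta\ll1$ is a constant depending only on $n$. This bifurcation is forced by the adaptive nature of the scale: unlike the fixed scale $\rho$ used in the preceding proposition, the rescaled time interval $[0,(t_*-t_0)/x_*^2]$ for the adaptive scale $x_*$ can be $O(1)$ or larger, so the short-time argument alone does not suffice.

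In the regime $t_*-t_0\geq\delta^2x_*^2$ (away from the initial time), I invoke the smooth estimate (\ref{C^infty u bound}) of Proposition \ref{uniform estimates} with $m\in\{1,2\}$, $l=0$, which compares $\partial_xu$ and $\partial_{xx}^2u$ to the derivatives of the asymptotic profile
\[A(x,t):=\tfrac{k}{c_2}(-t)^{\lambda_2+\frac{1}{2}}\varphi_2\!\left(\tfrac{x}{\sqrt{-t}}\right)=kx^{2\lambda_2+1}\!\Bigl(\Upsilon_2+2\Upsilon_1\tfrac{-t}{x^2}+\bigl(\tfrac{-t}{x^2}\bigr)^2\Bigr),\]
up to an error of order $\rho^{4\lambda_2}x_*^{2\lambda_2}$ and $\rho^{4\lambda_2}x_*^{2\lambda_2-1}$, respectively. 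Direct differentiation gives
\[\partial_xA(x_*,t_*)=kx_*^{2\lambda_2}\!\Bigl(\Upsilon_2(2\lambda_2+1)+2\Upsilon_1(\alpha+2)\tfrac{-t_*}{x_*^2}+\alpha\bigl(\tfrac{-t_*}{x_*^2}\bigr)^2\Bigr),\]
and an analogous expression for $\partial_{xx}^2A(x_*,t_*)$, whose coefficients match those on the right-hand sides of (\ref{Lambda DDu}) and (\ref{convexity u}) via Remark \ref{parameters}. Since $-t_*/x_*^2\in[0,1]$ by the assumption $\sqrt{-t_*}\leq x_*$, and $k=1+O((-t_0)^{\varsigma\lambda_2})$ by (\ref{k}), both $\partial_xA$ and $\partial_{xx}^2A$ land strictly inside the intervals claimed in (\ref{Lambda Du})--(\ref{convexity u}), with room to spare; the slack $\rho^{4\lambda_2}$ error is then absorbed by shrinking $\rho$ depending on $n$, and the positivity $\partial_{yy}^2\varphi_2>0$ from Remark \ref{parameters} supplies the strict lower bound in (\ref{convexity u}).

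In the regime $t_*-t_0<\delta^2x_*^2$ (close to the initial time), I would mirror the rescaling argument of the preceding proposition with the adaptive scale $x_*$ in place of $\rho$. Setting
\[h^{(j)}(r,\iota)=(rx_*)^{-2\lambda_2+j-1}\partial_x^ju(rx_*,\,t_0+\iota x_*^2),\qquad j=1,2,\]
for $r$ in a small interval around $1$ (contained in $[\tfrac12,\tfrac32]$ and in $\{rx_*\leq\rho\}$ since $x_*\leq\tfrac34\rho$) and $\iota\in[0,\delta^2]$, differentiating (\ref{eq u}) produces a parabolic equation $\partial_\iota h^{(j)}-a^{(j)}\partial_{rr}^2h^{(j)}-b^{(j)}\partial_rh^{(j)}=f^{(j)}$ whose coefficients and forcing are all bounded by $C(n)$ by (\ref{a priori bound u}), provided one checks that the admissibility threshold $\beta(-t)^{1/2+\sigma}$ lies well below $rx_*$, which it does since $\sigma<\tfrac12$ and $x_*\geq\sqrt{-t_*}$ once $|t_0|$ is small depending on $\beta$. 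Lemma \ref{short-time continuity} then yields $h^{(j)}(1,\iota)=h^{(j)}(1,0)+O(\delta^2)$, and the explicit initial datum (\ref{initial u intermediate}) gives, for $j=1$,
\[x_*^{-2\lambda_2}\partial_xu(x_*,t_0)=(1+a_1+a_0)\alpha\bigl(\tfrac{-t_0}{x_*^2}\bigr)^2+(2+a_1)\Upsilon_1(\alpha+2)\tfrac{-t_0}{x_*^2}+\Upsilon_2(2\lambda_2+1),\]
together with a parallel formula for $\partial_{xx}^2u(x_*,t_0)$. Because $|a_0|+|a_1|\lesssim(-t_0)^{\varsigma\lambda_2}$ by (\ref{coefficients}) and $-t_0/x_*^2\in[0,1]$, these initial values lie strictly inside the intervals of (\ref{Lambda Du})--(\ref{convexity u}); shrinking $\delta=\delta(n)$ absorbs the $O(\delta^2)$ error.

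The main obstacle is the bookkeeping of constants between the two regimes: the slack factors $2$ and $\tfrac12$ in (\ref{Lambda Du})--(\ref{convexity u}) must simultaneously swallow the $\rho^{4\lambda_2}$ error from Case A and the $O(\delta^2)$ error from Case B, while the rescaled domain in Case B must always fit inside the admissibility window. Both constraints are met by first fixing $\delta=\delta(n)$, then taking $\rho\ll1$ depending on $(n,\Lambda)$, and finally $|t_0|$ small depending on $(n,\Lambda,\rho,\beta)$. Combining with the boundary estimates (\ref{Lambda outer Du})--(\ref{convexity outer u}) from the preceding proposition in $[\tfrac34\rho,\tfrac54\rho]$ and rescaling then yields (\ref{Lambda outer}), (\ref{Lambda intermediate}), and (\ref{convexity outer}), which via the graph change of variables from $u$ to $\hat u$ gives the pointwise convexity (\ref{convexity}).
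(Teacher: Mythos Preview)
Your approach is essentially the same as the paper's: split at the parabolic threshold $t-t_0=\delta^2 x_*^2$, use the rescaled short-time Lemma~\ref{short-time continuity} together with the initial data (\ref{initial u intermediate}) in the near-initial regime, and invoke the smooth estimate (\ref{C^infty u bound}) in the far-from-initial regime. One correction to your bookkeeping: the coefficients and forcing in the rescaled equation are bounded by $C(n,\Lambda)$, not $C(n)$, because (\ref{a priori bound u}) carries the factor $\Lambda$; consequently $\delta$ must depend on $(n,\Lambda)$ rather than on $n$ alone (as the paper has it), and your ordering ``first fix $\delta=\delta(n)$, then $\rho$ depending on $(n,\Lambda)$'' should be amended accordingly.
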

\begin{proof}
First, fix $x_{*}\in\left[\frac{2}{3}\sqrt{-t_{0}},\,\frac{3}{4}\rho\right]$
and let 
\[
h\left(r,\,\iota\right)=\left.x^{-2\lambda_{2}}\partial_{x}u\left(x,\,t\right)\right|_{x=rx_{*},\,t=t_{0}+\iota x_{*}^{2}}
\]
From (\ref{eq u}), we derive 
\[
\partial_{\iota}h-a\left(r,\,\iota\right)\partial_{rr}^{2}h-b\left(r,\,\iota\right)\partial_{r}h=f\left(r,\,\iota\right)
\]
where 
\[
a\left(r,\,\iota\right)=\left.\frac{1}{1+\left(\partial_{x}u\left(x,\,t\right)\right)^{2}}\right|_{x=rx_{*},\,t=t_{0}+\iota x_{*}^{2}}
\]
 
\[
b\left(r,\,\iota\right)=\left.\frac{1}{r}\left(\frac{-2\,\partial_{x}u\left(x,\,t\right)\left(x\,\partial_{xx}^{2}u\left(x,\,t\right)\right)}{\left(1+\left(\partial_{x}u\left(x,\,t\right)\right)^{2}\right)^{2}}+\frac{2\left(n-1\right)}{1-\left(\frac{u\left(x,\,t\right)}{x}\right)^{2}}\right)\right|_{x=rx_{*},\,t=t_{0}+\iota x_{*}^{2}}
\]
 
\[
f\left(r,\,\iota\right)=\left.\frac{1}{r^{2}}\left(\left(\frac{2\lambda_{2}}{1+\left(\partial_{x}u\left(x,\,t\right)\right)^{2}}\right)\left(x^{-2\lambda_{2}+1}\partial_{xx}^{2}u\left(x,\,t\right)\right)\right)\right|_{x=rx_{*},\,t=t_{0}+\iota x_{*}^{2}}
\]
\[
+\left.\frac{1}{r^{2}}\left(2\lambda_{2}\left(\frac{-2\,\partial_{x}u\left(x,\,t\right)\left(x\,\partial_{xx}^{2}u\left(x,\,t\right)\right)}{\left(1+\left(\partial_{x}u\left(x,\,t\right)\right)^{2}\right)^{2}}+\frac{2\left(n-1\right)}{1-\left(\frac{u\left(x,\,t\right)}{x}\right)^{2}}\right)\right)\left(x^{-2\lambda_{2}}\partial_{x}u\left(x,\,t\right)\right)\right|_{x=rx_{*},\,t=t_{0}+\iota x_{*}^{2}}
\]
\[
+\left.\frac{1}{r^{2}}\left(-\frac{2\lambda_{2}\left(2\lambda_{2}+1\right)}{1+\left(\partial_{x}u\left(x,\,t\right)\right)^{2}}\right)\left(x^{-2\lambda_{2}}\partial_{x}u\left(x,\,t\right)\right)\right|_{x=rx_{*},\,t=t_{0}+\iota x_{*}^{2}}
\]
\[
+\left.\frac{1}{r^{2}}\left(\left(\frac{4\left(n-1\right)\left(\left(\partial_{x}u\left(x,\,t\right)\right)^{2}-1\right)}{\left(1-\left(\frac{u\left(x,\,t\right)}{x}\right)^{2}\right)^{2}}\right)\left(x^{-2\lambda_{2}-1}u\left(x,\,t\right)\right)\right)\right|_{x=rx_{*},\,t=t_{0}+\iota x_{*}^{2}}
\]
Notice that by (\ref{a priori bound u}) we have
\[
\max\left\{ \left|\frac{u\left(x,\,t\right)}{x}\right|,\,\left|\partial_{x}u\left(x,\,t\right)\right|,\,\left|x\,\partial_{xx}^{2}u\left(x,\,t\right)\right|\right\} \,\leq\,C\left(n,\,\Lambda\right)x^{2\lambda_{2}}\,\leq\,\frac{1}{3}
\]
 
\[
x^{-2\lambda_{2}-1+i}\left|\partial_{x}^{i}u\left(x,\,t\right)\right|\,\leq\,C\left(n,\,\Lambda\right),\qquad i\in\left\{ 0,\,1,\,2\right\} 
\]
for $\frac{1}{2}\sqrt{-t}\leq x\leq\rho$, $t_{0}\leq t\leq\mathring{t}$,
provided that $0<\rho\ll1$ (depending on $n$, $\Lambda$) . It follows,
by Lemma \ref{short-time continuity}, that 
\[
\min_{\frac{1}{2}\leq\mathrm{r}\leq\frac{3}{2}}h\left(\mathrm{r},\,0\right)\,-\,C\left(n,\,\Lambda\right)\iota\,\leq\,h\left(r,\,\iota\right)\,\leq\,\max_{\frac{1}{2}\leq\mathrm{r}\leq\frac{3}{2}}h\left(\mathrm{r},\,0\right)\,+\,C\left(n,\,\Lambda\right)\iota
\]
which implies
\[
x_{*}^{-2\lambda_{2}}\partial_{x}u\left(x_{*},\,t\right)\,\leq\,\max_{\frac{1}{2}\sqrt{-t_{0}}\leq x\leq\rho}\left(x^{-2\lambda_{2}}\partial_{x}u\left(x,\,t_{0}\right)\right)\,+\,C\left(n,\,\Lambda\right)\frac{t-t_{0}}{\rho^{2}}
\]
 
\[
x_{*}^{-2\lambda_{2}}\partial_{x}u\left(x_{*},\,t\right)\,\geq\,\min_{\frac{1}{2}\sqrt{-t_{0}}\leq x\leq\rho}\left(x^{-2\lambda_{2}}\partial_{x}u\left(x,\,t_{0}\right)\right)\,-\,C\left(n,\,\Lambda\right)\frac{t-t_{0}}{\rho^{2}}
\]
for $t_{0}\leq t\leq t_{0}+\delta^{2}x_{*}^{2}$. Thus, by (\ref{initial u intermediate})
and (\ref{coefficients}), we can choose $0<\delta\ll1$ (depending
on $n$, $\Lambda$) so that
\begin{equation}
2\left(\alpha+2\varUpsilon_{1}\left(\alpha+2\right)+\varUpsilon_{2}\left(2\lambda_{2}+1\right)\right)x^{2\lambda_{2}}\,\leq\,\partial_{x}u\left(x,\,t\right)\,\leq\,2\varUpsilon_{2}\left(2\lambda_{2}+1\right)x^{2\lambda_{2}}\label{Lambda outer1}
\end{equation}
for $\left(x,\,t\right)$ satisfying $\sqrt{-t}\leq x\leq\frac{3}{4}\rho$,
$t_{0}\leq t\leq t_{0}+\delta^{2}x^{2}$, provided that $\left|t_{0}\right|\ll1$
(depending on $n$, $\Lambda$, $\rho$, $\beta$).

On the other hand, by this choice of $\delta=\delta\left(n,\,\Lambda\right)$,
(\ref{C^infty u bound}) implies
\[
\left|\partial_{x}\left(u\left(x,\,t\right)-\frac{k}{c_{2}}\left(-t\right)^{\lambda_{2}+\frac{1}{2}}\varphi_{2}\left(\frac{x}{\sqrt{-t}}\right)\right)\right|\,\leq\,C\left(n,\,\Lambda\right)\rho^{4\lambda_{2}}x^{2\lambda_{2}}
\]
for $\left(x,\,t\right)$ satisfying $\sqrt{-t}\leq x\leq\frac{3}{4}\rho$,
$t_{0}+\delta^{2}x^{2}\leq t\leq\mathring{t}$, where 
\[
\partial_{x}\left(\frac{k}{c_{2}}\left(-t\right)^{\lambda_{2}+\frac{1}{2}}\varphi_{2}\left(\frac{x}{\sqrt{-t}}\right)\right)\,=\,kx^{2\lambda_{2}}\left(\varUpsilon_{2}\left(2\lambda_{2}+1\right)+2\varUpsilon_{1}\left(\alpha+2\right)\left(\frac{-t}{x^{2}}\right)+\alpha\left(\frac{-t}{x^{2}}\right)^{2}\right)
\]
It follows, by (\ref{k}), that 
\begin{equation}
2\left(\alpha+2\varUpsilon_{1}\left(\alpha+2\right)+\varUpsilon_{2}\left(2\lambda_{2}+1\right)\right)x^{2\lambda_{2}}\,\leq\,\partial_{x}u\left(x,\,t\right)\,\leq\,2\varUpsilon_{2}\left(2\lambda_{2}+1\right)x^{2\lambda_{2}}\label{Lambda outer2}
\end{equation}
for $\left(x,\,t\right)$ satisfying $\sqrt{-t}\leq x\leq\frac{3}{4}\rho$,
$t_{0}\leq t\leq t_{0}+\delta^{2}x^{2}$, provided that $\left|t_{0}\right|\ll1$
(depending on $n$, $\Lambda$, $\rho$, $\beta$). Then (\ref{Lambda Du})
follows immediately from (\ref{Lambda outer1}) and (\ref{Lambda outer2}).

As for the second derivatives, we have the evolution equation: 
\[
\partial_{t}\left(x^{-2\lambda_{2}+1}\partial_{xx}^{2}u\right)-\frac{1}{1+\left(\partial_{x}u\right)^{2}}\,\partial_{xx}^{2}\left(x^{-2\lambda_{2}+1}\partial_{xx}^{2}u\right)
\]
\[
-\frac{1}{x}\left(\frac{-6\,\partial_{x}u\left(x\,\partial_{xx}^{2}u\right)}{\left(1+\left(\partial_{x}u\right)^{2}\right)^{2}}+\frac{2\left(n-1\right)}{1-\left(\frac{u}{x}\right)^{2}}+\frac{2\left(2\lambda_{2}-1\right)}{1+\left(\partial_{x}u\right)^{2}}\right)\partial_{x}\left(x^{-2\lambda_{2}+1}\partial_{xx}^{2}u\right)
\]
\[
=\frac{1}{x^{2}}\left(\frac{-2\left(x\,\partial_{xx}^{2}u\right)^{2}\left(1-3\left(\partial_{x}u\right)^{2}\right)}{\left(1+\left(\partial_{x}u\right)^{2}\right)^{3}}+\frac{12\left(n-1\right)\left(\frac{u}{x}\right)\partial_{x}u}{\left(1-\left(\frac{u}{x}\right)^{2}\right)^{2}}-\frac{2\left(n-1\right)\left(1+\left(\frac{u}{x}\right)^{2}\right)}{\left(1-\left(\frac{u}{x}\right)^{2}\right)^{2}}\right)\left(x^{-2\lambda_{2}+1}\partial_{xx}^{2}u\right)
\]
\[
+\frac{1}{x^{2}}\left(\left(2\lambda_{2}-1\right)\left(\frac{-6\,\partial_{x}u\left(x\,\partial_{xx}^{2}u\right)}{\left(1+\left(\partial_{x}u\right)^{2}\right)^{2}}+\frac{2\left(n-1\right)}{1-\left(\frac{u}{x}\right)^{2}}\right)+\frac{\left(2\lambda_{2}-1\right)\left(2\lambda_{2}-2\right)}{1+\left(\partial_{x}u\right)^{2}}\right)\left(x^{-2\lambda_{2}+1}\partial_{xx}^{2}u\right)
\]
\[
+\frac{1}{x^{2}}\left(\frac{4\left(n-1\right)\left(\left(\partial_{x}u\right)^{2}-1\right)\left(1+3\left(\frac{u}{x}\right)^{2}\right)}{\left(1-\left(\frac{u}{x}\right)^{2}\right)^{3}}\right)\left(x^{-2\lambda_{2}}\partial_{x}u\right)
\]
\[
+\frac{1}{x^{2}}\left(\frac{4\left(n-1\right)\left(1-\left(\partial_{x}u\right)^{2}\right)\left(\left(\frac{u}{x}\right)^{2}+3\right)}{\left(1-\left(\frac{u}{x}\right)^{2}\right)^{3}}\right)\left(x^{-2\lambda_{2}-1}u\right)
\]
By a similar argument, we can deduce (\ref{Lambda DDu}) and (\ref{convexity u}).
\end{proof}
In the following proposition, we prove (\ref{Lambda intermediate})
by using (\ref{eq v}), (\ref{a priori bound v}), (\ref{initial v}),
(\ref{C^infty v bound intermediate}), (\ref{C^infty v bound tip})
and Lemma \ref{short-time continuity}.
\begin{prop}
If $\beta\gg1$ (depending on $n$, $\Lambda$) and $s_{0}\gg1$ (depending
on $n$, $\Lambda$, $\rho$, $\beta$), then there hold 
\begin{equation}
2\left(\alpha+8\varUpsilon_{1}\left(\alpha+2\right)+16\varUpsilon_{2}\left(2\lambda_{2}+1\right)\right)\,e^{-\lambda_{2}s}y^{\alpha-1}\,\,\leq\,\,\partial_{y}v\left(y,\,s\right)\,\,\leq\,\,\frac{1}{2}\alpha e^{-\lambda_{2}s}y^{\alpha-1}\label{Lambda Dv}
\end{equation}
 
\begin{equation}
\partial_{yy}^{2}v\left(y,\,s\right)\,\,\leq\,\,2\left(\alpha\left(\alpha-1\right)+8\varUpsilon_{1}\left(\alpha+2\right)\left(\alpha+1\right)+16\varUpsilon_{2}\left(2\lambda_{2}+1\right)\left(2\lambda_{2}\right)\right)\,e^{-\lambda_{2}s}y^{\alpha-2}\label{Lambda DDv}
\end{equation}
 
\begin{equation}
\partial_{yy}^{2}v\left(y,\,s\right)\,\,\geq\,\,\frac{1}{2}\left(\alpha\left(\alpha-1\right)\right)e^{-\lambda_{2}s}y^{\alpha-2}\,>0\label{convexity v}
\end{equation}
for $2\beta e^{-\sigma s}\leq y\leq1$, $s_{0}<s\leq\mathring{s}$.
\end{prop}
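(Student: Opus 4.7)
The plan is to mirror, for $v(y,s)$ in the intermediate region, the two-stage argument already carried out in the outer region for $u(x,t)$: a short-time propagation from the explicit initial data via Lemma~\ref{short-time continuity}, patched to a long-time comparison against the asymptotic profile via the smooth estimates of Proposition~\ref{C^infty v}. First I would fix $y_{*}\in[2\beta e^{-\sigma s},1]$ and rescale by setting $h(r,\iota)=e^{\lambda_{2}(s_{0}+\iota y_{*}^{2})}(ry_{*})^{1-\alpha}\,\partial_{y}v(ry_{*},\,s_{0}+\iota y_{*}^{2})$. Differentiating (\ref{eq v}) in $y$ and multiplying by $e^{\lambda_{2}s}y^{1-\alpha}$ gives a linear parabolic equation for $h$ whose coefficients are uniformly bounded on $r\in[1/2,3/2]$ thanks to the admissibility bound (\ref{a priori bound v}) (which forces $|v/y|,|\partial_y v|\leq 1/3$ once $\beta\gg1$). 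Lemma~\ref{short-time continuity} then shows $h(r,\iota)$ stays within $O(\iota)$ of its initial value for $\iota\leq\delta^{2}$, so that choosing $\delta=\delta(n,\Lambda)$ small enough propagates any explicit initial bound up to $s=s_{0}+\delta^{2}y_{*}^{2}$.

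The initial values are explicit: differentiating (\ref{initial v}) gives
\[
\partial_{y}v(y,s_{0})\,=\,e^{-\lambda_{2}s_{0}}y^{\alpha-1}\Bigl[\alpha(1+a_{0}+a_{1})+(2+a_{1})\varUpsilon_{1}(\alpha+2)y^{2}+\varUpsilon_{2}(\alpha+4)y^{4}\Bigr],
\]
and since $\alpha+4=2\lambda_{2}+1$ and $|a_{0}|+|a_{1}|\leq C(-t_{0})^{\varsigma\lambda_{2}}$ by (\ref{coefficients}), for $y\in[2\beta e^{-\sigma s_{0}},2]$ this quantity lies strictly between $\frac{3}{2}[\alpha+4\varUpsilon_{1}(\alpha+2)+4\varUpsilon_{2}(2\lambda_{2}+1)]\,e^{-\lambda_{2}s_{0}}y^{\alpha-1}$ and $\frac{3}{4}\alpha e^{-\lambda_{2}s_{0}}y^{\alpha-1}$ once $s_{0}\gg1$. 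Allowing a factor of $2$ of slack from the $O(\iota)$ drift in Lemma~\ref{short-time continuity} yields the bound (\ref{Lambda Dv}) throughout the slab $s\leq s_{0}+\delta^{2}y_{*}^{2}$; the conservative factors of $8$ and $16$ in the lower constant absorb the worst case $y^{2},y^{4}\leq 2^{2},2^{4}$ that arises from the range $r\in[1/2,3/2]$ of the rescaled variable.

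For $s\geq s_{0}+\delta^{2}y_{*}^{2}$, I would use Proposition~\ref{C^infty v} (with $m=1$, $l=0$) to obtain
\[
\Bigl|\partial_{y}v(y,s)-\partial_{y}\!\bigl(\tfrac{k}{c_{2}}e^{-\lambda_{2}s}\varphi_{2}(y)\bigr)\Bigr|\leq C\,e^{-\varkappa s}\,e^{-\lambda_{2}s}y^{\alpha+1}\qquad\text{for }e^{-\vartheta\sigma s}\leq y\leq 1,
\]
and an analogous tip-region estimate against $\partial_{y}\!\bigl(e^{-\sigma s}\psi_{k}(e^{\sigma s}y)\bigr)$ with decay $\beta^{\alpha-3}(\tau/\tau_{0})^{-\varrho}$. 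Since
$\partial_{y}\!\bigl(\frac{k}{c_{2}}e^{-\lambda_{2}s}\varphi_{2}(y)\bigr)=k e^{-\lambda_{2}s}y^{\alpha-1}\bigl[\alpha+2\varUpsilon_{1}(\alpha+2)y^{2}+\varUpsilon_{2}(\alpha+4)y^{4}\bigr]$
by Remark~\ref{parameters}, and since Lemma~\ref{asymptotic psi} and the chain rule give
$\partial_{y}\!\bigl(e^{-\sigma s}\psi_{k}(e^{\sigma s}y)\bigr)=\psi_{k}'(e^{\sigma s}y)=k\alpha e^{-\lambda_{2}s}y^{\alpha-1}\bigl(1+O((e^{\sigma s}y)^{-2(1-\alpha)})\bigr)$,
the $k\approx 1$ principal term lies inside the envelope of (\ref{Lambda Dv}), while the error (at most $C\beta^{-2(1-\alpha)}$) is absorbed by the slack.

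The proof of (\ref{Lambda DDv}) and, crucially, the strict positivity (\ref{convexity v}) follows the same scheme: in the short-time piece, the rescaled quantity $e^{\lambda_{2}s}y^{2-\alpha}\partial_{yy}^{2}v$ satisfies a uniformly parabolic equation obtained by differentiating (\ref{eq v}) twice in $y$, and the initial data from (\ref{initial v}) gives $\partial_{yy}^{2}v(y,s_{0})=e^{-\lambda_{2}s_{0}}y^{\alpha-2}\bigl[\alpha(\alpha-1)(1+a_{0}+a_{1})+(2+a_{1})\varUpsilon_{1}(\alpha+2)(\alpha+1)y^{2}+\varUpsilon_{2}(\alpha+4)(\alpha+3)y^{4}\bigr]$, which is \emph{strictly positive} on $[0,2]$ by the sign analysis in Remark~\ref{parameters}. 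In the long-time piece, the strict positivity passes through Proposition~\ref{C^infty v} because $\partial_{yy}^{2}\varphi_{2}(y)>0$ for $y>0$ (Remark~\ref{parameters}) and $\partial_{yy}^{2}\!\bigl(e^{-\sigma s}\psi_{k}(e^{\sigma s}y)\bigr)=e^{\sigma s}\psi_{k}''(e^{\sigma s}y)>0$ by the convexity statement accompanying (\ref{eq psi}). The main obstacle I anticipate is bookkeeping the constants along the transition at $y=e^{-\vartheta\sigma s}$: the error estimate from the intermediate-region smoothing and the tip-region smoothing have different forms, and one must verify that at the interface the two asymptotic profiles $\tfrac{k}{c_{2}}e^{-\lambda_{2}s}\varphi_{2}$ and $e^{-\sigma s}\psi_{k}(e^{\sigma s}\cdot)$ agree to sufficient order — which, fortunately, is exactly what Lemma~\ref{asymptotic psi} guarantees through its expansion $\psi_{k}(z)=kz^{\alpha}+O(k^{3}z^{3\alpha-2})$.
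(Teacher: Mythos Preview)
Your proposal is correct and follows essentially the same two-stage strategy as the paper: a short-time propagation of the explicit initial data via the rescaled parabolic equation for $e^{\lambda_{2}s}y^{1-\alpha}\partial_{y}v$ and Lemma~\ref{short-time continuity}, followed by a long-time comparison against the profiles $\tfrac{k}{c_{2}}e^{-\lambda_{2}s}\varphi_{2}$ and $e^{-\sigma s}\psi_{k}(e^{\sigma s}\cdot)$ through the smooth estimates (\ref{C^infty v bound intermediate})--(\ref{C^infty v bound tip}), with the same treatment repeated for $e^{\lambda_{2}s}y^{2-\alpha}\partial_{yy}^{2}v$. The paper likewise centers at a fixed $y_{*}$ (it takes $y_{*}\in[\tfrac{5}{3}\beta e^{-\sigma s_{0}},1]$), derives the same rescaled equation with coefficients bounded via (\ref{a priori bound v}), reads off the initial constants from (\ref{initial v}) and (\ref{coefficients}), and in the long-time regime invokes exactly the derivative formulas for $\varphi_{2}$ and $\psi_{k}$ that you wrote down; your reading of the factors $8$ and $16$ as arising from the range $y\le 2$ in the min/max of Lemma~\ref{short-time continuity} is also what the paper's constants reflect.
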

\begin{proof}
Firstly, for each $y_{*}\in\left[\frac{5}{3}\beta e^{-\sigma s_{0}},\,1\right]$,
let 
\[
h\left(r,\,\iota\right)=\left.e^{\lambda_{2}s}y^{-\alpha+1}\partial_{y}v\left(y,\,s\right)\right|_{y=ry_{*},\,s=s_{0}+\iota y_{*}^{2}}
\]
From (\ref{eq v}), we derive 
\[
\partial_{\iota}h-a\left(r,\,\iota\right)\partial_{rr}^{2}h-b\left(r,\,\iota\right)\partial_{r}h=f\left(r,\,\iota\right)
\]
where 
\[
a\left(r,\,\iota\right)=\left.\frac{1}{1+\left(\partial_{y}v\left(y,\,s\right)\right)^{2}}\right|_{y=ry_{*},\,s=s_{0}+\iota y_{*}^{2}}
\]
 
\[
b\left(r,\,\iota\right)=\left.\frac{1}{r}\left(\frac{-2\left(\partial_{y}v\left(y,\,s\right)\right)\left(y\,\partial_{yy}^{2}v\left(y,\,s\right)\right)}{\left(1+\left(\partial_{y}v\right)^{2}\right)^{2}}+\frac{2\left(n-1\right)}{1-\left(\frac{v\left(y,\,s\right)}{y}\right)^{2}}-\frac{y^{2}}{2}\right)\right|_{y=ry_{*},\,s=s_{0}+\iota y_{*}^{2}}
\]
 
\[
f\left(r,\,\iota\right)=\left.\frac{1}{r^{2}}\left(\frac{2\left(\alpha-1\right)}{1+\left(\partial_{y}v\left(y,\,s\right)\right)^{2}}\right)\left(e^{\lambda_{2}s}y^{-\alpha+2}\partial_{yy}^{2}v\left(y,\,s\right)\right)\right|_{y=ry_{*},\,s=s_{0}+\iota y_{*}^{2}}
\]
\[
+\left.\frac{\alpha-1}{r^{2}}\left(\frac{-2\left(\partial_{y}v\left(y,\,s\right)\right)\left(y\,\partial_{yy}^{2}v\left(y,\,s\right)\right)}{\left(1+\left(\partial_{y}v\left(y,\,s\right)\right)^{2}\right)^{2}}+\frac{2\left(n-1\right)}{1-\left(\frac{v\left(y,\,s\right)}{y}\right)^{2}}\right)\left(e^{\lambda_{2}s}y^{-\alpha+1}\partial_{y}v\left(y,\,s\right)\right)\right|_{y=ry_{*},\,s=s_{0}+\iota y_{*}^{2}}
\]
\[
+\left.\frac{1}{r^{2}}\left(\frac{-\alpha\left(\alpha-1\right)}{1+\left(\partial_{y}v\left(y,\,s\right)\right)^{2}}-\frac{\alpha-1}{2}y^{2}+\lambda_{2}y^{2}\right)\left(e^{\lambda_{2}s}y^{-\alpha+1}\partial_{y}v\left(y,\,s\right)\right)\right|_{y=ry_{*},\,s=s_{0}+\iota y_{*}^{2}}
\]
\[
+\left.\frac{1}{r^{2}}\left(\frac{4\left(n-1\right)\left(\left(\partial_{y}v\left(y,\,s\right)\right)^{2}-1\right)}{\left(1-\left(\frac{v\left(y,\,s\right)}{y}\right)^{2}\right)^{2}}\right)\left(e^{\lambda_{2}s}y^{-\alpha}v\left(y,\,s\right)\right)\right|_{y=ry_{*},\,s=s_{0}+\iota y_{*}^{2}}
\]
Notice that by (\ref{a priori bound v}) we have
\[
\max\left\{ \left|\frac{v\left(y,\,s\right)}{y}\right|,\,\left|\partial_{y}v\left(y,\,s\right)\right|,\,\left|y\,\partial_{yy}^{2}v\left(y,\,s\right)\right|\right\} \,\leq\,C\left(n,\,\Lambda\right)e^{-\lambda_{2}s}y^{\alpha-1}\,\leq\,\frac{1}{3}
\]
 
\[
e^{\lambda_{2}s}y^{-\alpha+i}\left|\partial_{y}^{i}v\left(y,\,s\right)\right|\,\leq\,C\left(n,\,\Lambda\right)\qquad\forall\;\,i\in\left\{ 0,\,1,\,2\right\} 
\]
for $\frac{3}{2}\beta e^{-\sigma s}\leq y\leq2$, $s_{0}\leq s\leq\mathring{s}$,
provided that $\beta\gg1$ (depending on $n$, $\Lambda$). Then by
Lemma \ref{short-time continuity} and (\ref{a priori bound v}),
we get
\[
\min_{\frac{1}{2}\leq\mathrm{r}\leq\frac{3}{2}}h\left(\mathrm{r},\,0\right)\,-\,C\left(n,\,\Lambda\right)\iota\,\leq\,h\left(r,\,\iota\right)\,\leq\,\max_{\frac{1}{2}\leq\mathrm{r}\leq\frac{3}{2}}h\left(\mathrm{r},\,0\right)\,+\,C\left(n,\,\Lambda\right)\iota
\]
which implies
\[
e^{\lambda_{2}s}y_{*}^{-\alpha+1}\partial_{y}v\left(y_{*},\,s\right)\,\leq\,\max_{\beta e^{-\sigma s}\leq y\leq2}\left(e^{\lambda_{2}s_{0}}y^{-\alpha+1}\partial_{y}v\left(y,\,s_{0}\right)\right)\,+\,C\left(n,\,\Lambda\right)\frac{s-s_{0}}{y_{*}^{2}}
\]
 
\[
e^{\lambda_{2}s}y_{*}^{-\alpha+1}\partial_{y}v\left(y_{*},\,s\right)\,\geq\,\min_{\beta e^{-\sigma s}\leq y\leq2}\left(e^{\lambda_{2}s_{0}}y^{-\alpha+1}\partial_{y}v\left(y,\,s_{0}\right)\right)\,-\,C\left(n,\,\Lambda\right)\frac{s-s_{0}}{y_{*}^{2}}
\]
for $s_{0}\leq s\leq s_{0}+\delta^{2}y_{*}^{2}$. It follows, by (\ref{initial v})
and (\ref{coefficients}), that we can choose $0<\delta\ll1$ (depending
on $n$, $\Lambda$) so that
\begin{equation}
2\left(\alpha+8\varUpsilon_{1}\left(\alpha+2\right)+16\varUpsilon_{2}\left(2\lambda_{2}+1\right)\right)\,e^{-\lambda_{2}s}y^{\alpha-1}\,\,\leq\,\,\partial_{y}v\left(y,\,s\right)\,\,\leq\,\,\frac{1}{2}\alpha e^{-\lambda_{2}s}y^{\alpha-1}\label{Lambda intermediate1}
\end{equation}
for $\left(y,\,s\right)$ satisfying $2\beta e^{-\sigma s}\leq y\leq1$,
$s_{0}\leq s\leq s_{0}+\delta^{2}y^{2}$, provided that $s_{0}\gg1$
(depending on $n$, $\Lambda$, $\rho$, $\beta$).

On the other hand, by the above choice of $\delta=\delta\left(n,\,\Lambda\right)$,
(\ref{C^infty v bound intermediate}) and (\ref{C^infty v bound tip})
yield
\[
\left|\partial_{y}\left(v\left(y,\,s\right)-\frac{k}{c_{2}}e^{-\lambda_{2}s}\,\varphi_{2}\left(y\right)\right)\right|\,\leq\,C\left(n,\,\Lambda\right)e^{-\varkappa s}\left(e^{-\lambda_{2}s}y^{\alpha+1}\right)
\]
for $\left(y,\,s\right)$ satisfying $e^{-\vartheta\sigma s}\leq y\leq1$,
$s_{0}+\delta^{2}y^{2}\leq s\leq\mathring{s}$, and 
\[
\left|\partial_{y}\left(v\left(y,\,s\right)-e^{-\sigma s}\,\psi_{k}\left(e^{\sigma s}y\right)\right)\right|\,\leq\,C\left(n,\,\Lambda\right)\beta^{\alpha-2}e^{-2\varrho\sigma\left(s-s_{0}\right)}\left(e^{-\lambda_{2}s}y^{\alpha-1}\right)
\]
for $\left(y,\,s\right)$ satisfying $2\beta e^{-\sigma s}\leq y\leq e^{-\vartheta\sigma s}$,
$s_{0}+\delta^{2}y^{2}\leq s\leq\mathring{s}$. Note that 
\[
\partial_{y}\left(\frac{k}{c_{2}}e^{-\lambda_{2}s}\,\varphi_{2}\left(y\right)\right)=ke^{-\lambda_{2}s}y^{\alpha-1}\left(\alpha+2\varUpsilon_{1}\left(\alpha+2\right)y^{2}+\varUpsilon_{2}\left(2\lambda_{2}+1\right)y^{4}\right)
\]
 
\[
\partial_{y}\left(e^{-\sigma s}\,\psi_{k}\left(e^{\sigma s}y\right)\right)=ke^{-\lambda_{2}s}y^{\alpha-1}\left(\alpha+O\left(\left(e^{\sigma s}y\right)^{-2\left(1-\alpha\right)}\right)\right)
\]
 It follows, by (\ref{k}), that 
\begin{equation}
2\left(\alpha+8\varUpsilon_{1}\left(\alpha+2\right)+16\varUpsilon_{2}\left(2\lambda_{2}+1\right)\right)\,e^{-\lambda_{2}s}y^{\alpha-1}\,\,\leq\,\,\partial_{y}v\left(y,\,s\right)\,\,\leq\,\,\frac{1}{2}\alpha e^{-\lambda_{2}s}y^{\alpha-1}\label{Lambda intermediate2}
\end{equation}
for $\left(y,\,s\right)$ satisfying $2\beta e^{-\sigma s}\leq y\leq1$,
$s_{0}+\delta^{2}y^{2}\leq s\leq\mathring{s}$, provided that $\beta\gg1$
(depending on $n$, $\Lambda$) and $s_{0}\gg1$ (depending on $n$,
$\Lambda$). Then (\ref{Lambda Dv}) follows from (\ref{Lambda intermediate1})
and (\ref{Lambda intermediate2}).

As for the second derivative, we derive the following evolution equation:
\[
\partial_{s}\left(e^{\lambda_{2}s}y^{-\alpha+2}\partial_{yy}^{2}v\right)-\frac{1}{1+\left(\partial_{y}v\right)^{2}}\,\partial_{yy}^{2}\left(e^{\lambda_{2}s}y^{-\alpha+2}\partial_{yy}^{2}v\right)
\]
\[
-\frac{1}{y}\left(\frac{-6\left(\partial_{y}v\right)\left(y\,\partial_{yy}^{2}v\right)}{\left(1+\left(\partial_{y}v\right)^{2}\right)^{2}}+\frac{2\left(n-1\right)}{1-\left(\frac{v}{y}\right)^{2}}-\frac{y^{2}}{2}+\frac{2\left(\alpha-2\right)}{1+\left(\partial_{y}v\right)^{2}}\right)\partial_{y}\left(e^{\lambda_{2}s}y^{-\alpha+2}\partial_{yy}^{2}v\right)
\]
\[
=\frac{1}{y^{2}}\left(\frac{-2\left(y\,\partial_{yy}^{2}v\right)^{2}\left(1-3\left(\partial_{y}v\right)^{2}\right)}{\left(1+\left(\partial_{y}v\right)^{2}\right)^{3}}-\frac{y^{2}}{2}+\lambda_{2}y^{2}\right)\left(e^{\lambda_{2}s}y^{-\alpha+2}\partial_{yy}^{2}v\right)
\]
\[
+\frac{2\left(n-1\right)}{y^{2}}\left(\frac{4\left(\frac{v}{y}\right)\partial_{y}v-1-\left(\frac{v}{y}\right)^{2}}{\left(1-\left(\frac{v}{y}\right)^{2}\right)^{2}}\right)\left(e^{\lambda_{2}s}y^{-\alpha+2}\partial_{yy}^{2}v\right)
\]
\[
+\frac{\alpha-2}{y^{2}}\left(\frac{-6\left(\partial_{y}v\right)\left(y\,\partial_{yy}^{2}v\right)}{\left(1+\left(\partial_{y}v\right)^{2}\right)^{2}}+\frac{2\left(n-1\right)}{1-\left(\frac{v}{y}\right)^{2}}-\frac{y^{2}}{2}+\frac{\alpha-3}{1+\left(\partial_{y}v\right)^{2}}\right)\left(e^{\lambda_{2}s}y^{-\alpha+2}\partial_{yy}^{2}v\right)
\]
\[
+\frac{1}{y^{2}}\left(\frac{4\left(n-1\right)\left(\frac{v}{y}\right)\left(y\,\partial_{yy}^{2}v\right)}{\left(1-\left(\frac{v}{y}\right)^{2}\right)^{2}}-\frac{4\left(n-1\right)\left(1-\left(\partial_{y}v\right)^{2}\right)\left(1-3\left(\frac{v}{y}\right)^{2}\right)}{\left(1-\left(\frac{v}{y}\right)^{2}\right)^{3}}\right)\left(e^{\lambda_{2}s}y^{-\alpha+1}\partial_{y}v\right)
\]
\[
+\frac{1}{y^{2}}\left(\frac{4\left(n-1\right)\left(1-\left(\partial_{y}v\right)^{2}\right)\left(3+\left(\frac{v}{y}\right)^{2}\right)}{\left(1-\left(\frac{v}{y}\right)^{2}\right)^{3}}\right)\left(e^{\lambda_{2}s}y^{-\alpha}v\right)
\]
Using the same argument as for the first derivative, (\ref{Lambda DDv})
and (\ref{convexity v}) can be proved.
\end{proof}
Note that by (\ref{uw}) and (\ref{Lambda intermediate}), we get
\begin{equation}
z^{i}\left|\partial_{z}^{i}w\left(z,\,\tau\right)\right|\leq C\left(n\right)z^{\alpha}\qquad\forall\;\,i\in\left\{ 0,\,1,\,2\right\} \label{Lambda boundary}
\end{equation}
for $2\beta\leq z\leq\sqrt{2\sigma\tau}$, $\tau_{0}<\tau\leq\mathring{\tau}$.
Also, by (\ref{convexity outer u}), (\ref{convexity u}), (\ref{convexity v})
and rescaling, the projected curve $\bar{\Gamma}_{\tau}$ (see (\ref{projected Gamma}))
is convex in the corresponding rescaled region. More explicitly, we
have
\begin{equation}
\partial_{zz}^{2}\hat{w}\left(z,\,\tau\right)\geq0\label{convexity boundary}
\end{equation}
for $3\beta\leq z\leq\rho\left(2\sigma\tau\right)^{\frac{1}{2}+\frac{1}{4\sigma}}$,
$\tau_{0}\leq\tau\leq\mathring{\tau}$. Below we prove (\ref{convexity tip})
by using (\ref{eq psi'}), (\ref{eq w'}), (\ref{C^0 w' bound}),
(\ref{C^1 w' bound}) and (\ref{convexity boundary}).
\begin{lem}
If $\beta\gg1$ (depending on $n$, $\Lambda$) and $\tau_{0}\gg1$
(depending on $n$, $\Lambda$, $\rho$, $\beta$), there holds (\ref{convexity tip}).
\end{lem}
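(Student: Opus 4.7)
The plan is to apply a parabolic maximum principle to $K \equiv \partial_{zz}^2 \hat w$ on the parabolic cylinder $[0,5\beta] \times [\tau_0, \mathring\tau]$, mimicking the strategy of Lemma \ref{C^1 w'}. First I will derive the evolution equation for $K$ by differentiating (\ref{eq w'}) twice in $z$; schematically it has the form
\[
\partial_\tau K \,=\, \tfrac{\partial_{zz}^2 K}{1+(\partial_z\hat w)^2}\,+\,b(z,\tau)\,\partial_z K\,+\,c(z,\tau)\,K\,+\,d(z,\tau)\,K^3\,+\,S(z,\tau),
\]
where $b$ involves $\partial_z\hat w$, $K$, and the rescaling drift $-\tfrac{(\frac12+\sigma)z}{2\sigma\tau}$; the coefficient $c$ contains the critical terms $-\tfrac{2(n-1)}{z^2}+\tfrac{n-1}{\hat w^2}-\tfrac{\frac12+\sigma}{2\sigma\tau}$; and the source is $S=\tfrac{2(n-1)\partial_z\hat w}{z^3}-\tfrac{2(n-1)(\partial_z\hat w)^2}{\hat w^3}$.

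The initial value is strictly positive by (\ref{initial w'}); the lateral value at $z=5\beta$ is non-negative by (\ref{convexity boundary}). I will then argue by contradiction: assume $K$ first drops to zero (or below) at some interior point $(z_*,\tau_*)$. If $z_*\in(0,5\beta)$, the inequality $\partial_\tau K\le 0$, $\partial_z K=0$, $\partial_{zz}^2 K\ge 0$ at the minimum must be matched against the favorable terms in the evolution equation: $-\tfrac{2(n-1)K}{z_*^2}>0$, the rescaling term $-\tfrac{\frac12+\sigma}{2\sigma\tau_*}K>0$, and the cubic $\tfrac{2(3(\partial_z\hat w)^2-1)}{(1+(\partial_z\hat w)^2)^3}K^3$. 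To show these dominate $S(z_*,\tau_*)$ I will use $\partial_z\hat w\in[0,1+\beta^{\alpha-2}]$ from Lemma \ref{C^1 w'}, together with the pointwise inequality $\hat w(z,\tau)>z$ inherited from $\hat\psi_k$ via (\ref{C^0 w' bound}) and Lemma \ref{monotonicity}, which allows us to rewrite $S=\tfrac{2(n-1)\partial_z\hat w}{z^3}(1-\tfrac{\partial_z\hat w\,z^3}{\hat w^3})$ and bound the possibly negative remainder by $O(\beta^{\alpha-2}/z^3)$, absorbable by $-2(n-1)K/z_*^2$ at a quantitatively negative minimum.

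The axial boundary $z_*=0$ is the most delicate point. By $O(n)$-symmetry, $\partial_z\hat w(0,\tau)=0$ and $\partial_z K(0,\tau)=0$, and the singular ratios appearing in the equation must be interpreted via L'H\^opital as in Lemma \ref{C^1 w'}. Taylor-expanding $\hat w(z,\tau_*)=\hat w(0,\tau_*)+\tfrac12 K(0,\tau_*)z^2+\tfrac{1}{24}\partial_{zzzz}^4\hat w(0,\tau_*)z^4+\cdots$ (the expansion has only even powers), the leading $\tfrac{K(0,\tau_*)}{z^2}$-singularities in $-\tfrac{2(n-1)K}{z^2}$ and $\tfrac{2(n-1)\partial_z\hat w}{z^3}$ cancel, leaving finite contributions that sum, together with the diffusive $\tfrac{\partial_{zz}^2 K}{1+(\partial_z\hat w)^2}+\tfrac{(n-1)\partial_z K}{z}\to n\,\partial_{zzzz}^4\hat w(0,\tau_*)$, to a nonnegative expression plus the strictly positive $-2K(0,\tau_*)^3$ and $-\tfrac{\frac12+\sigma}{2\sigma\tau_*}K(0,\tau_*)$; the remaining $\tfrac{(n-1)K(0,\tau_*)}{\hat w(0,\tau_*)^2}$ is the only negative contribution and will be absorbed using (\ref{C^0 w' bound}) which bounds $\hat w(0,\tau_*)$ from below by $\tfrac12\hat\psi_k(0)>0$.

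The main obstacle I foresee is controlling the source term $S$ at interior minima where $|K|$ is very small and $z_*$ is simultaneously small: both the favorable term $-2(n-1)K/z_*^2$ and the detrimental fraction of $S$ blow up as $z_*\to 0$, and their ratio must be shown to be strictly favorable. I plan to handle this by introducing a time-dependent cutoff $\eta(z)$ supported in $[0,5\beta]$ (as in Lemma \ref{outside DDu}), and estimating $\eta K$ instead; alternatively, one may run the argument simultaneously on $[0,5\beta]\times[\tau_0,\mathring\tau]$ with the constant-in-$z$ barrier $-\varepsilon\tau^{-1}$ and let $\varepsilon\downarrow 0$ once the boundary inequalities are verified. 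Either way, the smoothness and uniform bounds already established in Section \ref{degree C^infty} ensure all limits and Taylor expansions used in the axis analysis are justified.
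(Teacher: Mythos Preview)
Your overall strategy---derive the evolution equation for $K=\partial_{zz}^2\hat w$ and run a maximum principle on $[0,5\beta]\times[\tau_0,\mathring\tau]$ with boundary control from (\ref{initial w'}) and (\ref{convexity boundary})---is exactly what the paper does. But there is a genuine gap in how you handle the source term, and your axis analysis is an unnecessary detour.

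The key point you miss is that the source term $S$ is \emph{strictly positive} on the whole cylinder, not merely nonnegative up to an $O(\beta^{\alpha-2}/z^3)$ error. The paper obtains this from the quantitative lower bound in (\ref{C^0 w' bound}): since $\hat w(z,\tau)\ge \hat\psi_{(1-2\beta^{\alpha-3})k}(z)$ and $\hat\psi_{k'}(z)/z\ge 1+2^{\frac{\alpha+1}{2}}k'z^{\alpha-1}$, one gets $(\hat w/z)^3\ge (1+c(n)\beta^{\alpha-1})^3>1+\beta^{\alpha-2}\ge \partial_z\hat w$ for $\beta\gg1$, and hence
\[
S=2(n-1)\,\partial_z\hat w\left(\frac{1}{z^3}-\frac{\partial_z\hat w}{\hat w^3}\right)>0.
\]
This is the crucial observation (equations (\ref{postive term1})--(\ref{positive term2}) in the paper). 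Your plan to ``absorb the possibly negative remainder of $S$ into $-2(n-1)K/z_*^2$'' fails precisely at the moment you need it: at the first time $K_{\min}$ touches zero, that linear term vanishes and there is nothing to absorb into. Likewise, your fallback barrier $-\varepsilon\tau^{-1}$ gives a favorable linear contribution of order $\varepsilon/(z^2\tau)$, which cannot beat an error of order $\beta^{\alpha-2}/z^3$ uniformly for $z\in(0,5\beta)$ and large $\tau$. Once you note $S>0$, the differential inequality at a negative interior minimum becomes $\partial_\tau K_{\min}\ge (\text{bounded coefficient})\cdot K_{\min}^3$, and a clean ODE comparison with the zero solution gives the contradiction.

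Separately, your treatment of $z_*=0$ is far more elaborate than needed. Since $\partial_z\hat w(0,\tau)=0$ and $\partial_z\hat w\ge 0$ by Lemma \ref{C^1 w'}, one has directly $K(0,\tau)=\lim_{z\searrow 0}\partial_z\hat w(z,\tau)/z\ge 0$; thus a negative minimum can never sit at the axis, and no Taylor expansion or L'H\^opital analysis is required.
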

\begin{proof}
From (\ref{eq w'}), we deduce that 
\begin{equation}
\partial_{\tau}\left(\partial_{zz}^{2}\hat{w}\right)=\frac{1}{1+\left(\partial_{z}\hat{w}\right)^{2}}\,\partial_{zz}^{2}\left(\partial_{zz}^{2}\hat{w}\right)\label{eq DDw'}
\end{equation}
\[
+\left(\frac{n-1}{z}-\frac{6\left(\partial_{z}\hat{w}\right)\left(\partial_{zz}^{2}\hat{w}\right)}{\left(1+\left(\partial_{z}\hat{w}\right)^{2}\right)^{2}}-\frac{\frac{1}{2}+\sigma}{2\sigma\tau}z\right)\partial_{z}\left(\partial_{zz}^{2}\hat{w}\right)-\frac{2-6\left(\partial_{z}\hat{w}\right)^{2}}{1+\left(\partial_{z}\hat{w}\right)^{2}}\left(\partial_{zz}^{2}\hat{w}\right)^{3}
\]
\[
+\left(\left(n-1\right)\left(\frac{1}{\hat{w}^{2}}-\frac{2}{z^{2}}\right)-\frac{\frac{1}{2}+\sigma}{2\sigma\tau}\right)\left(\partial_{zz}^{2}\hat{w}\right)+2\left(n-1\right)\left(\frac{1}{z^{3}}-\frac{\partial_{z}\hat{w}}{\hat{w}^{3}}\right)\partial_{z}\hat{w}
\]
Notice that the last term on the RHS is positive, i.e. 
\begin{equation}
2\left(n-1\right)\left(\frac{1}{z^{3}}-\frac{\partial_{z}\hat{w}\left(z,\,\tau\right)}{\hat{w}^{3}\left(z,\,\tau\right)}\right)\partial_{z}\hat{w}\left(z,\,\tau\right)\,>0\label{postive term1}
\end{equation}
for $0\leq z\leq5\beta$, $\tau_{0}\leq\tau\leq\mathring{\tau}$,
since by (\ref{eq psi'}), (\ref{k}), (\ref{C^0 w' bound}) and (\ref{C^1 w' bound}),
we have
\[
\left(\frac{\hat{w}\left(z,\,\tau\right)}{z}\right)^{3}\geq\left(\frac{\hat{\psi}_{1-2\beta^{\alpha-3}}\left(z\right)}{z}\right)^{3}\geq\,\left(1\,+\,2^{\frac{\alpha+1}{2}}\left(1-2\beta^{\alpha-3}\right)\left(5\beta\right)^{\alpha-1}\right)^{3}
\]
\begin{equation}
>\,1+\beta^{\alpha-2}\,\geq\,\partial_{z}\hat{w}\left(z,\,\tau\right)\label{positive term2}
\end{equation}
for $0\leq z\leq5\beta$, $\tau_{0}\leq\tau\leq\mathring{\tau}$,
provided that $\beta\gg1$ (depending on $n$, $\Lambda$) and $\tau_{0}\gg1$
(depending on $n$, $\Lambda$, $\rho$, $\beta$).

Now let
\[
\left(\partial_{zz}^{2}\hat{w}\right)_{\min}\left(\tau\right)=\min_{0\leq z\leq5\beta}\partial_{zz}^{2}\hat{w}\left(z,\,\tau\right)
\]
Note that by (\ref{initial w'}) we have
\[
\left(\partial_{zz}^{2}\hat{w}\right)_{\min}\left(\tau_{0}\right)>0
\]
Now we would like to prove 
\[
\left(\partial_{zz}^{2}\hat{w}\right)_{\min}\left(\tau\right)\geq0
\]
for $\tau_{0}\leq\tau\leq\mathring{\tau}$ by contradiction. Suppose
that $\left(\partial_{zz}^{2}\hat{w}\right)_{\min}\left(\tau\right)$
fails to be non-negative for all $\tau_{0}\leq\tau\leq\mathring{\tau}$,
there must be $\tau_{1}^{*}>\tau_{0}$ so that 
\[
\left(\partial_{zz}^{2}\hat{w}\right)_{\min}\left(\tau_{1}^{*}\right)<0
\]
Let $\tau_{0}^{*}\geq\tau_{0}$ be the first time after which $\left(\partial_{zz}^{2}\hat{w}\right)_{\min}$
is negative all the way up to $\tau_{1}^{*}$. By continuity, we have
\[
\left(\partial_{zz}^{2}\hat{w}\right)_{\min}\left(\tau_{0}^{*}\right)\geq0
\]
On the other hand, by (\ref{C^1 w' bound}) and (\ref{convexity boundary}),
there hold
\[
\partial_{zz}^{2}\hat{w}\left(0,\,\tau\right)\,=\,\lim_{z\searrow0}\frac{\partial_{z}\hat{w}\left(z,\,\tau\right)}{z}\,\geq0
\]
 
\[
\partial_{zz}^{2}\hat{w}\left(5\beta,\,\tau\right)>0
\]
for $\tau_{0}\leq\tau\leq\mathring{\tau}$. As a result, the negative
minimum of $\partial_{zz}^{2}\hat{w}\left(z,\,\tau\right)$ for each
time-slice must be achieved in $\left(0,\,5\beta\right)$. Then by
the maximum principle (applying to (\ref{eq DDw'})), (\ref{C^0 w' bound}),
(\ref{postive term1}) and (\ref{positive term2}), we get 
\[
\partial_{\tau}\left(\partial_{zz}^{2}\hat{w}\right)_{\min}\geq\left(-\frac{2-6\left(\partial_{z}\hat{w}\right)^{2}}{1+\left(\partial_{z}\hat{w}\right)^{2}}\left(\partial_{zz}^{2}\hat{w}\right)_{\min}^{2}+\left(\left(n-1\right)\left(\frac{1}{\hat{w}^{2}}-\frac{2}{z^{2}}\right)-\frac{\frac{1}{2}+\sigma}{2\sigma\tau}\right)\right)\left(\partial_{zz}^{2}\hat{w}\right)_{\min}
\]
\[
\geq\left(6\left(\partial_{z}\hat{w}\right)^{2}\left(\partial_{zz}^{2}\hat{w}\right)_{\min}^{2}\right)\left(\partial_{zz}^{2}\hat{w}\right)_{\min}\,\geq\,6\left(1+\beta^{\alpha-2}\right)^{2}\left(\partial_{zz}^{2}\hat{w}\right)_{\min}^{3}
\]
for $\tau_{0}^{*}<\tau\leq\tau_{1}^{*}$. It follows that $\left(\partial_{zz}^{2}\hat{w}\right)_{\min}\left(\tau_{0}^{*}\right)<0$,
which is a contradiction.
\end{proof}
Recall that by the admissible conditions (see Section \ref{admissible}),
the projected curve $\bar{\Gamma}_{\tau}$ (see (\ref{projected Gamma}))
is a graph over $\mathcal{\bar{C}}$ outside $B\left(O;\,\beta\right)$.
By (\ref{convexity tip}) and also the admissible conditions, we also
know that inside $B\left(O;\,\beta\right)$, $\bar{\Gamma}_{\tau}$
is a convex curve which intersects orthogonally with the vertical
ray $\left\{ \left.\left(0,\,z\right)\right|\,z>0\right\} $, i.e.
$\partial_{z}\hat{w}\left(0,\,\tau\right)=0$. Furthermore, by (\ref{eq psi'})
and (\ref{C^0 w' bound}), $\bar{\Gamma}_{\tau}$ lies above $\mathcal{\bar{C}}$
and tends to it as $z\nearrow\beta$. Therefore, we conclude that
$\bar{\Gamma}_{\tau}$ is ``entirely'' a graph over $\mathcal{\bar{C}}$
and 
\begin{equation}
\bar{\Gamma}_{\tau}=\left\{ \left.\left(z,\,\hat{w}\left(z,\,\tau\right)\right)\right|\,z\geq0\right\} \label{projected Gamma entire}
\end{equation}
\[
=\left\{ \left.\left(\left(z-w\left(z,\,\tau\right)\right)\frac{1}{\sqrt{2}},\,\left(z+w\left(z,\,\tau\right)\right)\frac{1}{\sqrt{2}}\right)\right|\,z\geq\frac{\hat{w}\left(0,\,\tau\right)}{\sqrt{2}}\right\} 
\]

\begin{rem}
For the admissible conditions in Section \ref{admissible}, we only
require the function $w\left(z,\,\tau\right)$ (see (\ref{w})) is
defined for $z\gtrsim\beta$. However, by the convexity (see (\ref{convexity tip}))
and the above argument, we find the domain of definition for $w\left(z,\,\tau\right)$
is given by 
\[
\frac{\hat{w}\left(0,\,\tau\right)}{\sqrt{2}}\leq z<\infty
\]
On the other hand, by (\ref{k}) and (\ref{C^0 w' bound}), we may
assume that inside $B\left(O;\,5\beta\right)$, $\bar{\Gamma}_{\tau}$
is bounded between $\mathcal{\bar{M}}_{\frac{1}{2}}$ and $\mathcal{\bar{M}}_{\frac{3}{2}}$,
provided that $\beta\gg1$ (depending on $n$) and $\tau_{0}\gg1$
(depending on $n$, $\Lambda$, $\rho$, $\beta$). In particular,
we have
\[
\sup_{\tau_{0}\leq\tau\leq\mathring{\tau}}\,\frac{\hat{w}\left(0,\,\tau\right)}{\sqrt{2}}\,<\,\frac{\hat{\psi}_{2}\left(0\right)}{\sqrt{2}}
\]
which means $w\left(z,\,\tau\right)$ is defined for $z\geq\frac{\hat{\psi}_{2}\left(0\right)}{\sqrt{2}}$,
$\tau_{0}\leq\tau\leq\mathring{\tau}$. In addition, since $\bar{\Gamma}_{\tau}$
is a convex curve which lies below $\mathcal{\bar{M}}_{\frac{3}{2}}$
and tends to $\mathcal{\bar{C}}$, we deduce that 
\begin{equation}
0\,\leq\,w\left(z,\,\tau\right)\,\leq\,\psi_{\frac{3}{2}}\left(z\right)\,\leq\,\frac{\psi_{\frac{3}{2}}\left(\frac{\hat{\psi}_{2}\left(0\right)}{\sqrt{2}}\right)}{\frac{\hat{\psi}_{2}\left(0\right)}{\sqrt{2}}}z\label{ratio estimate}
\end{equation}
for $\frac{\hat{\psi}_{2}\left(0\right)}{\sqrt{2}}\leq z\leq5\beta$,
$\tau_{0}\leq\tau\leq\mathring{\tau}$. Note that the slope of the
linear function on the RHS satisfies 
\[
0<\frac{\psi_{\frac{3}{2}}\left(\frac{\hat{\psi}_{2}\left(0\right)}{\sqrt{2}}\right)}{\frac{\hat{\psi}_{2}\left(0\right)}{\sqrt{2}}}<\frac{\psi_{2}\left(\frac{\hat{\psi}_{2}\left(0\right)}{\sqrt{2}}\right)}{\frac{\hat{\psi}_{2}\left(0\right)}{\sqrt{2}}}=1
\]
\end{rem}
Lastly, in order to prove (\ref{Lambda tip}), we need the following
two lemmas, which provide smooth estimates of the function $w\left(z,\,\tau\right)$
in the rescaled tip region.
\begin{lem}
If $\beta\gg1$ (depending on $n$, $\Lambda$) and $\tau_{0}\gg1$
(depending on $n$, $\Lambda$, $\rho$, $\beta$), there holds 
\begin{equation}
\left\{ \begin{array}{c}
\left|w\left(z,\,\tau\right)-\psi_{k}\left(z\right)\right|\,\leq\,C\left(n\right)\beta^{\alpha-3}\left(\frac{\tau}{\tau_{0}}\right)^{-\varrho}\\
\\
-1\,\leq\,\partial_{z}w\left(z,\,\tau\right)\,\leq\,\frac{1}{3}\\
\\
0\,\leq\,\partial_{zz}^{2}w\left(z,\,\tau\right)\,\leq\,C\left(n\right)
\end{array}\right.\label{C^2 w bound}
\end{equation}
for $\frac{\hat{\psi}_{2}\left(0\right)}{\sqrt{2}}\leq z\leq3\beta$,
$\tau_{0}\leq\tau\leq\mathring{\tau}$. 
\end{lem}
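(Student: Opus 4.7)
My plan is to transfer the already-established estimates on $\hat{w}$ from (\ref{C^0 w' bound}), (\ref{C^1 w' bound}), (\ref{C^2 w' bound}), and the convexity (\ref{convexity tip}) to the claimed estimates on $w$, using that the projected curve $\bar{\Gamma}_\tau$ can be viewed simultaneously as the graph $(x,\hat{w}(x,\tau))$ over the horizontal axis and as the graph over $\mathcal{\bar{C}}$ given by $((z-w(z,\tau))/\sqrt{2},(z+w(z,\tau))/\sqrt{2})$, as guaranteed by (\ref{projected Gamma entire}). A point on $\bar{\Gamma}_\tau$ satisfies $z=(x+\hat{w}(x,\tau))/\sqrt{2}$ and $w(z,\tau)=(\hat{w}(x,\tau)-x)/\sqrt{2}$, and since (\ref{C^1 w' bound}) gives $\partial_x\hat{w}\geq 0$ the map $x\mapsto z$ is an increasing diffeomorphism on the relevant range.

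First, I would differentiate the relation $dz=(1+\partial_x\hat{w})/\sqrt{2}\,dx$ and $dw=(\partial_x\hat{w}-1)/\sqrt{2}\,dx$ to obtain
\[
\partial_z w(z,\tau)=\frac{\partial_x\hat{w}(x,\tau)-1}{1+\partial_x\hat{w}(x,\tau)}.
\]
Combining with (\ref{C^1 w' bound}), namely $0\leq\partial_x\hat{w}\leq 1+\beta^{\alpha-2}$, immediately yields $-1\leq\partial_z w\leq \beta^{\alpha-2}/(2+\beta^{\alpha-2})<\frac{1}{3}$ provided $\beta\gg 1$. For the second derivative, I would use the parametrization invariance of the normal curvature of $\bar{\Gamma}_\tau$, which gives
\[
\frac{|\partial_{zz}^2 w(z,\tau)|}{(1+(\partial_z w)^2)^{3/2}}=\frac{|\partial_{xx}^2\hat{w}(x,\tau)|}{(1+(\partial_x\hat{w})^2)^{3/2}},
\]
so that (\ref{C^2 w' bound}) and the bounds just obtained on $\partial_z w,\partial_x\hat{w}$ give $|\partial_{zz}^2 w|\leq C(n)$. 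The sign $\partial_{zz}^2 w\geq 0$ then follows from the convexity (\ref{convexity tip}) of $\bar{\Gamma}_\tau$ together with the fact that both parametrizations orient the inward normal toward the upper-left half plane (where $\bar{\Gamma}_\tau$ lies above $\mathcal{\bar{C}}$).

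For the $C^0$ estimate on $w-\psi_k$, I would apply the same reparametrization argument simultaneously to $\bar{\Gamma}_\tau$ and $\bar{\mathcal{M}}_k$. Fixing $z\in[\hat{\psi}_2(0)/\sqrt{2},3\beta]$, let $x_\Gamma=x_\Gamma(z)$ and $x_k=x_k(z)$ be defined by $x_\Gamma+\hat{w}(x_\Gamma,\tau)=\sqrt{2}\,z=x_k+\hat{\psi}_k(x_k)$. Subtracting and applying the mean value theorem with the bound $0\leq\partial_x\hat{\psi}_k\leq 1$ (from (\ref{eq psi'})) gives
\[
(x_\Gamma-x_k)(1+\partial_x\hat{\psi}_k(\xi))=\hat{\psi}_k(x_k)-\hat{\psi}_k(x_\Gamma)+\hat{\psi}_k(x_\Gamma)-\hat{w}(x_\Gamma,\tau),
\]
so by (\ref{C^0 w' bound tip}), $|x_\Gamma-x_k|\leq C(n)\beta^{\alpha-3}(\tau/\tau_0)^{-\varrho}$. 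Then
\[
w(z,\tau)-\psi_k(z)=\tfrac{1}{\sqrt{2}}\bigl[(\hat{w}(x_\Gamma,\tau)-\hat{\psi}_k(x_\Gamma))+(\hat{\psi}_k(x_\Gamma)-\hat{\psi}_k(x_k))-(x_\Gamma-x_k)\bigr],
\]
and another application of (\ref{C^0 w' bound tip}) together with the bound on $|x_\Gamma-x_k|$ yields the desired $|w-\psi_k|\leq C(n)\beta^{\alpha-3}(\tau/\tau_0)^{-\varrho}$.

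The main obstacle is more bookkeeping than substance: one must check that the two parametrizations of $\bar{\Gamma}_\tau$ and $\bar{\mathcal{M}}_k$ are compared at consistent parameter values (the same $z$) rather than at the same underlying point, and carefully verify that the identification is well defined throughout the range $[\hat{\psi}_2(0)/\sqrt{2},3\beta]$. The boundary case occurs near the tip, where $\partial_z w\to -1$ as $x_\Gamma\to 0$; since this happens on the closed boundary of the $z$-interval it does not interfere with the interior estimates on $\partial_{zz}^2 w$, and the closedness is handled by the explicit inclusion $z\geq\hat{\psi}_2(0)/\sqrt{2}$, which by the remark preceding the lemma lies strictly above the tip of $\bar{\Gamma}_\tau$ for all $\tau_0\leq\tau\leq\mathring{\tau}$.
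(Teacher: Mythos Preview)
Your approach is correct and follows the same overall strategy as the paper---transfer the established $\hat{w}$-estimates to $w$ via the two parametrizations of $\bar{\Gamma}_\tau$---but the execution differs in two places worth noting.

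For the first derivative, the paper argues via convexity: since $\partial_{zz}^2 w\geq 0$ (from (\ref{convexity tip}), (\ref{convexity boundary})), $\partial_z w$ is monotone, so it suffices to evaluate at the endpoints, using the tip condition $\partial_z w=-1$ at $z=\hat{w}(0,\tau)/\sqrt{2}$ and the a~priori bound (\ref{a priori bound w}) at $z=3\beta$. Your explicit formula $\partial_z w=(\partial_x\hat{w}-1)/(1+\partial_x\hat{w})$ combined with (\ref{C^1 w' bound}) is a cleaner route that avoids invoking convexity or the a~priori bound for this step. For the $C^0$ estimate, the paper goes through the sandwich $\psi_{(1-\varepsilon)k}\leq w\leq\psi_{(1+\varepsilon)k}$ coming directly from (\ref{C^0 w' bound}), then expands $\psi_{\lambda k}$ in $\lambda$ as in the proof of Proposition~\ref{C^0 w'}; your direct coordinate-comparison argument using (\ref{C^0 w' bound tip}) is an equally valid alternative of comparable length.

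One algebraic slip: in your displayed equation for $(x_\Gamma-x_k)(1+\partial_x\hat{\psi}_k(\xi))$, the right-hand side as written telescopes to $\hat{\psi}_k(x_k)-\hat{w}(x_\Gamma,\tau)=x_\Gamma-x_k$, which would force $\partial_x\hat{\psi}_k(\xi)=0$. What you want is
\[
(x_\Gamma-x_k)\bigl(1+\partial_x\hat{\psi}_k(\xi)\bigr)=\hat{\psi}_k(x_\Gamma)-\hat{w}(x_\Gamma,\tau),
\]
obtained by moving the mean-value term $\hat{\psi}_k(x_k)-\hat{\psi}_k(x_\Gamma)=-\partial_x\hat{\psi}_k(\xi)(x_\Gamma-x_k)$ to the left. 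With this correction the rest of your argument goes through. You should also remark that for $z\leq 3\beta$ the corresponding $x_\Gamma$ satisfies $x_\Gamma\leq\sqrt{2}\,z<5\beta$, so (\ref{C^0 w' bound tip}) indeed applies at $x_\Gamma$.
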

\begin{proof}
By (\ref{C^0 w' bound}), inside $B\left(O;\,5\beta\right)$, the
projected curve $\bar{\Gamma}_{\tau}$ is bounded between $\mathcal{\bar{M}}_{\left(1-\beta^{\alpha-3}\left(\frac{\tau}{\tau_{0}}\right)^{-\varrho}\right)k}$
and $\mathcal{\bar{M}}_{\left(1-\beta^{\alpha-3}\left(\frac{\tau}{\tau_{0}}\right)^{-\varrho}\right)k}$,
which implies
\[
\psi_{\left(1-\beta^{\alpha-3}\left(\frac{\tau}{\tau_{0}}\right)^{-\varrho}\right)k}\left(z\right)\,\leq\,w\left(z,\,\tau\right)\,\leq\,\psi_{\left(1+\beta^{\alpha-3}\left(\frac{\tau}{\tau_{0}}\right)^{-\varrho}\right)k}\left(z\right)
\]
for $\frac{\hat{\psi}_{2}\left(0\right)}{\sqrt{2}}\leq z\leq3\beta$,
$\tau_{0}\leq\tau\leq\mathring{\tau}$. Then by (\ref{eq psi}), (\ref{k})
and using a similar argument as in the proof of Proposition \ref{C^0 w'},
we can derive the $C^{0}$ estimate of (\ref{C^2 w bound}). 

As for the first derivative, note that by (\ref{a priori bound w}),
(\ref{convexity tip}), (\ref{convexity boundary}) and the admissible
conditions in Section \ref{admissible}, $\bar{\Gamma}_{\tau}$ is
a convex curve which intersects orthogonally with the vertical ray
$\left\{ \left.\left(0,\,z\right)\right|\,z>0\right\} $. Thus, we
have 
\begin{equation}
\begin{array}{c}
\partial_{zz}^{2}w\left(z,\,\tau\right)\geq0\\
\\
\partial_{z}w\left(z,\,\tau\right)\,\geq\,\partial_{z}w\left(\frac{\hat{w}\left(0,\,\tau\right)}{\sqrt{2}},\,\tau\right)=-1\\
\\
\partial_{z}w\left(z,\,\tau\right)\,\leq\,\partial_{z}w\left(3\beta,\,\tau\right)\,\leq\,C\left(n,\,\Lambda\right)\beta^{\alpha-1}\leq\frac{1}{3}
\end{array}\label{C^2 w bound1}
\end{equation}
for $\frac{\hat{\psi}_{2}\left(0\right)}{\sqrt{2}}\leq z\leq3\beta$,
$\tau_{0}\leq\tau\leq\mathring{\tau}$, provided that $\beta\gg1$
(depending on $n$, $\Lambda$). 

Lastly, for the second derivative, notice that by (\ref{C^2 w' bound}),
the normal curvature of $\bar{\Gamma}_{\tau}$ (in terms of $\hat{w}\left(z,\,\tau\right)$)
satisfies
\begin{equation}
\left|A_{\bar{\Gamma}_{\tau}}\right|\,=\,\frac{\left|\partial_{zz}^{2}\hat{w}\left(z,\,\tau\right)\right|}{\left(1+\left(\partial_{z}\hat{w}\left(z,\,\tau\right)\right)^{2}\right)^{\frac{3}{2}}}\,\leq\,C\left(n\right)\label{C^2 w bound2}
\end{equation}
for $0\leq z\leq3\beta$, $\tau_{0}\leq\tau\leq\mathring{\tau}$.
Now if we reparametrize $\bar{\Gamma}_{\tau}$ by means of $w\left(z,\,\tau\right)$,
the normal curvature is then given by 
\begin{equation}
A_{\bar{\Gamma}_{\tau}}=\frac{\partial_{zz}^{2}w\left(z,\,\tau\right)}{\left(1+\left(\partial_{z}w\left(z,\,\tau\right)\right)^{2}\right)^{\frac{3}{2}}}\label{C^2 w bound3}
\end{equation}
The second derivative estimate in (\ref{C^2 w' bound}) follows from
(\ref{C^2 w bound1}), (\ref{C^2 w bound2}) and (\ref{C^2 w bound3}).
\end{proof}
The following lemma can be regarded as a counterpart of Proposition
\ref{C^2 w'}.
\begin{lem}
If $\beta\gg1$ (depending on $n$, $\Lambda$) and $\left|\tau_{0}\right|\gg1$
(depending on $n$, $\Lambda$, $\rho$, $\beta$), then for any $0<\delta\ll1$,
$m,\,l\in\mathbb{Z}_{+}$, there holds
\begin{equation}
\delta^{m+2l}\left|\partial_{z}^{m}\partial_{\tau}^{l}\left(w\left(z,\,\tau\right)-\psi_{k}\left(z\right)\right)\right|\,\leq\,C\left(n,\,m,\,l\right)\beta^{\alpha-3}\left(\frac{\tau}{\tau_{0}}\right)^{-\varrho}\label{C^infty w bound}
\end{equation}
for $\left(z,\,\tau\right)$ satisfying $\hat{\psi}_{2}\left(0\right)\leq z\leq2\beta$,
$\tau_{0}+\delta^{2}\leq\tau\leq\mathring{\tau}$. 
\end{lem}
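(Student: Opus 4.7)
The plan is to mirror Proposition \ref{C^infty w'} but now for the graph-over-cone function $w(z,\tau)$. Subtracting the stationary equation \eqref{eq psi} for $\psi_k$ from the evolution equation \eqref{eq w} for $w$, I would obtain a linear parabolic equation of the schematic form
\[
\partial_\tau(w-\psi_k)\,-\,a(z,\tau)\,\partial_{zz}^{2}(w-\psi_k)\,-\,b(z,\tau)\,\partial_{z}(w-\psi_k)\,-\,c(z,\tau)\,(w-\psi_k)\,=\,\boldsymbol{f}(z,\tau),
\]
where the coefficients $a,b,c$ are explicit rational expressions in $w,\partial_z w,\partial_{zz}^2 w,\psi_k,\partial_z\psi_k,\partial_{zz}^2\psi_k$ (exactly analogous to the derivation of \eqref{eq Cauchy w'(z)}), and the forcing term is $\boldsymbol{f}(z,\tau)=\tfrac{\frac12+\sigma}{2\sigma\tau}\bigl(-z\,\partial_z w+w\bigr)$ coming from the rescaling drift present in \eqref{eq w} but absent in \eqref{eq psi}.

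Next I would verify that on the region $\hat\psi_2(0)\le z\le 3\beta$, $\tau_0\le\tau\le\mathring\tau$, all quantities in play are uniformly controlled: the $C^2$ estimate \eqref{C^2 w bound} together with Lemma \ref{order psi} for $\psi_k$ bound $a,b,c$ (the denominators $1+(\partial_z w)^2$ and $z^2-w^2$ are kept away from zero thanks to $|\partial_z w|\le 1$ and $w(z,\tau)\le\psi_{3/2}(z)\le \tfrac{\psi_{3/2}(\hat\psi_2(0)/\sqrt2)}{\hat\psi_2(0)/\sqrt2}z<z$ from \eqref{ratio estimate}); and the forcing satisfies $|\partial_z^m\partial_\tau^l \boldsymbol f|\le C(n,m,l)\tau^{-1}$ on that region. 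The $C^0$ bound coming from \eqref{C^2 w bound} is exactly $|w-\psi_k|\le C(n)\beta^{\alpha-3}(\tau/\tau_0)^{-\varrho}$, which will serve as the driver of the whole estimate.

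I would then fix $z_*\in[\hat\psi_2(0),2\beta]$ and $\tau_*\in[\tau_0+\delta^2,\mathring\tau]$ and apply Krylov--Safonov on the parabolic cylinder $Q(z_*,\tau_*;\delta/2)$ to get a H\"older exponent $\gamma=\gamma(n)\in(0,1)$ with
\[
\delta^{\gamma}\,[w-\psi_k]_{\gamma;\,Q(z_*,\tau_*;\delta/2)}\le C(n)\bigl(\|w-\psi_k\|_{L^\infty(Q(z_*,\tau_*;\delta))}+\delta^{2}\|\boldsymbol f\|_{L^\infty}\bigr)\le C(n)\beta^{\alpha-3}(\tau_*/\tau_0)^{-\varrho},
\]
provided $\tau_0\gg1$ (depending on $n,\beta$) so that $\tau_*^{-1}$ is dominated by $\beta^{\alpha-3}(\tau_*/\tau_0)^{-\varrho}$. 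Differentiating the equation once in $z$ and rerunning Krylov--Safonov (as in \eqref{Holder Dw'}) gives H\"older control of $\partial_z w$; then Schauder $C^{2,\gamma}$ estimates applied to the equation itself yield the $C^{2+\gamma}$ estimate on $w-\psi_k$, with the same $\beta^{\alpha-3}(\tau_*/\tau_0)^{-\varrho}$ on the right. A standard bootstrap, differentiating the equation repeatedly in $z$ and using \eqref{eq w} to trade $\partial_\tau$ derivatives for spatial ones, produces the full smooth estimate \eqref{C^infty w bound}, with constants absorbing the $\delta^{-(m+2l)}$ scaling.

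The main technical obstacle I anticipate is purely bookkeeping: after differentiating the quotient-type nonlinearities $1/(1+(\partial_z w)^2)$ and $2(n-1)/(z^2-w^2)$ many times, one must check inductively that every higher-order coefficient and commutator that arises is bounded on the region by some $C(n,m,l)$ and that the source terms produced by differentiating $\boldsymbol f$ still obey $|\partial_z^m\partial_\tau^l\boldsymbol f|\le C(n,m,l)\tau^{-1}$. Once those uniform bounds are in place, the Schauder bootstrap proceeds mechanically and the claim follows.
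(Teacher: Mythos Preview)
Your proposal is correct and is exactly the approach the paper takes: its proof is literally the one sentence ``mimic the proof of Proposition \ref{C^infty w'} using \eqref{eq psi}, \eqref{eq w}, \eqref{ratio estimate}, \eqref{C^2 w bound} and Lemma \ref{order psi}''. One small slip: to be truly analogous to \eqref{eq Cauchy w'(z)} the drift acting on $w-\psi_k$ should go into the linear operator, leaving the forcing $\boldsymbol f(z,\tau)=\tfrac{\frac12+\sigma}{2\sigma\tau}\bigl(-z\,\partial_z\psi_k+\psi_k\bigr)$ (not with $w$), so that $|\partial_z^m\partial_\tau^l\boldsymbol f|\le C(n,m,l)\tau^{-1}$ follows directly from Lemma \ref{order psi} rather than from the bootstrap itself.
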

\begin{proof}
By mimicking the proof of Proposition \ref{C^infty w'} and using
(\ref{eq psi}), (\ref{eq w}), (\ref{ratio estimate}), (\ref{C^2 w bound})
and Lemma (\ref{order psi}), we can deduce (\ref{C^infty w bound}).
\end{proof}
Below we show that the $C^{0}$estimate of (\ref{Lambda tip}) follows
directly from the $C^{0}$ estimate of (\ref{C^2 w bound}).
\begin{prop}
If $\beta\gg1$ (depending on $n$) and $\tau_{0}\gg1$ (depending
on $n$, $\Lambda$, $\rho$, $\beta$), there holds 
\begin{equation}
\left|w\left(z,\,\tau\right)\right|\,\leq\,C\left(n\right)z^{\alpha}\label{Lambda w}
\end{equation}
for $2\hat{\psi}_{2}\left(0\right)\leq z\leq2\beta$, $\tau_{0}\leq\tau\leq\mathring{\tau}$.
\end{prop}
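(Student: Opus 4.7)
The plan is to deduce the bound directly from the $C^{0}$ estimate in (\ref{C^2 w bound}) combined with the asymptotic decay of $\psi_{k}$ from Lemma \ref{order psi}, via the triangle inequality. First I would write
\[
|w(z,\tau)| \;\leq\; |w(z,\tau) - \psi_{k}(z)| \;+\; |\psi_{k}(z)|
\]
and then estimate the two terms on the RHS separately on the region $2\hat{\psi}_{2}(0)\leq z\leq 2\beta$, $\tau_{0}\leq\tau\leq\mathring{\tau}$.

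For the second term, I would invoke Lemma \ref{order psi} with $m=0$, which gives $|\psi_{k}(z)|\leq C(n)\,k\,z^{\alpha}$ provided $z\geq\hat{\psi}_{k}(0)/\sqrt{2}$. Since (\ref{k}) ensures $k$ is close to $1$ (so in particular $k<2$), Lemma \ref{monotonicity} yields $\hat{\psi}_{k}(0)\leq\hat{\psi}_{2}(0)$, and hence the requirement $z\geq\hat{\psi}_{k}(0)/\sqrt{2}$ is clearly implied by $z\geq 2\hat{\psi}_{2}(0)$. Combined with the bound on $k$, this gives $|\psi_{k}(z)|\leq C(n)\,z^{\alpha}$.

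For the first term, the region $2\hat{\psi}_{2}(0)\leq z\leq 2\beta$ lies inside $\hat{\psi}_{2}(0)/\sqrt{2}\leq z\leq 3\beta$, so the $C^{0}$ estimate from (\ref{C^2 w bound}) applies and yields
\[
|w(z,\tau)-\psi_{k}(z)| \;\leq\; C(n)\,\beta^{\alpha-3}\left(\tfrac{\tau}{\tau_{0}}\right)^{-\varrho} \;\leq\; C(n)\,\beta^{\alpha-3}.
\]
Since $\alpha<0$, the function $z^{\alpha}$ is decreasing, so on $z\leq 2\beta$ we have $z^{\alpha}\geq (2\beta)^{\alpha}=2^{\alpha}\beta^{\alpha}$, giving
\[
\beta^{\alpha-3} \;=\; \beta^{-3}\,\beta^{\alpha} \;\leq\; 2^{-\alpha}\,\beta^{-3}\,z^{\alpha} \;\leq\; C(n)\,z^{\alpha}
\]
for $\beta\geq 1$. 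Combining the two estimates yields $|w(z,\tau)|\leq C(n)\,z^{\alpha}$, as required.

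There is no real obstacle here; the entire content is bookkeeping to check that the region $2\hat{\psi}_{2}(0)\leq z\leq 2\beta$ is compatible with the hypotheses of both (\ref{C^2 w bound}) and Lemma \ref{order psi}, and that the error $\beta^{\alpha-3}$ can be absorbed into $C(n)z^{\alpha}$ because $z\leq 2\beta$ forces $z^{\alpha}\gtrsim\beta^{\alpha}$. The only mild subtlety is that one must use $z\leq 2\beta$ (not just $z$ bounded below) in order to trade the constant-in-$z$ error $\beta^{\alpha-3}$ for the $z$-dependent bound $z^{\alpha}$; this is why the statement is restricted to $z\leq 2\beta$ rather than being global in $z$.
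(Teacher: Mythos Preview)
Your proof is correct and follows essentially the same approach as the paper: both use the triangle inequality $|w|\leq|\psi_k|+|w-\psi_k|$, bound the first term by Lemma \ref{order psi} and (\ref{k}), bound the second by the $C^0$ estimate in (\ref{C^2 w bound}), and use $z\leq 2\beta$ to absorb $\beta^{\alpha-3}$ into $C(n)z^{\alpha}$. The paper presents this by multiplying through by $z^{-\alpha}$ at the outset, but the content is identical.
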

\begin{proof}
By Lemma \ref{order psi}, (\ref{k}) and (\ref{C^2 w bound}), we
have 
\[
z^{-\alpha}\left|w\left(z,\,\tau\right)\right|\,\leq\,z^{-\alpha}\left|\psi_{k}\left(z\right)\right|\,+\,z^{-\alpha}\left|w\left(z,\,\tau\right)-\psi_{k}\left(z\right)\right|
\]
\[
\leq\,z^{-\alpha}\left|\psi_{k}\left(z\right)\right|\,+\,\left(2\beta\right)^{-\alpha}\left|w\left(z,\,\tau\right)-\psi_{k}\left(z\right)\right|
\]
\[
\leq\,C\left(n\right)\left(1+\beta^{-3}\left(\frac{\tau}{\tau_{0}}\right)^{-\varrho}\right)\,\leq\,C\left(n\right)
\]
for $2\hat{\psi}_{2}\left(0\right)\leq z\leq2\beta$, $\tau_{0}\leq\tau\leq\mathring{\tau}$,
provided that $\beta\gg1$ (depending on $n$).
\end{proof}
In the following proposition, we show the first derivative estimate
of (\ref{Lambda tip}) by using the maximum principle and (\ref{C^infty w bound}).
\begin{prop}
If $\beta\gg1$ (depending on $n$) and $\tau_{0}\gg1$ (depending
on $n$, $\Lambda$, $\rho$, $\beta$), there holds 
\begin{equation}
\left|\partial_{z}w\left(z,\,\tau\right)\right|\,\leq\,C\left(n\right)z^{\alpha-1}\label{Lambda Dw}
\end{equation}
for $2\hat{\psi}_{2}\left(0\right)\leq z\leq2\beta$, $\tau_{0}\leq\tau\leq\mathring{\tau}$.
\end{prop}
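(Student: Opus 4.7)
The plan is to mirror the strategy used for (\ref{Lambda Dv}) in the intermediate region: split into a long-time regime $\tau\geq\tau_{0}+\delta^{2}$ handled by the already-established smooth estimate (\ref{C^infty w bound}), and a short-time regime $\tau_{0}\leq\tau\leq\tau_{0}+\delta^{2}$ handled by a maximum principle applied to the renormalized quantity $h=z^{1-\alpha}\partial_{z}w$. The constant $\delta=\delta(n)>0$ will be chosen small.

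For the long-time regime, applying (\ref{C^infty w bound}) with $m=1$, $l=0$ yields
\[
|\partial_{z}w(z,\tau)-\partial_{z}\psi_{k}(z)|\leq C(n)\delta^{-1}\beta^{\alpha-3}(\tau/\tau_{0})^{-\varrho}
\]
for $\hat{\psi}_{2}(0)\leq z\leq 2\beta$ and $\tau_{0}+\delta^{2}\leq\tau\leq\mathring{\tau}$. Combining this with $|\partial_{z}\psi_{k}(z)|\leq C(n)kz^{\alpha-1}$ from Lemma \ref{order psi}, the estimate (\ref{k}) on $k$, and the observation that $\beta^{\alpha-3}=\beta^{-2}\beta^{\alpha-1}\leq C(n)z^{\alpha-1}$ for $z\leq 2\beta$ (since $\alpha-1<0$ and $\beta\gg 1$), the target bound $|\partial_{z}w|\leq C(n)z^{\alpha-1}$ follows on this regime.

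For the short-time regime, I will set $h(z,\tau)=z^{1-\alpha}\partial_{z}w(z,\tau)$ and show $|h|\leq C(n)$. Differentiating (\ref{eq w}) with respect to $z$ and substituting $\partial_{z}w=z^{\alpha-1}h$ produces a uniformly parabolic equation
\[
\partial_{\tau}h=a(z,\tau)\partial_{zz}^{2}h+b(z,\tau)\partial_{z}h+c(z,\tau)h+f(z,\tau)
\]
with $a=1/(1+(\partial_{z}w)^{2})\geq 1/2$. Thanks to the $C^{0}$ bound (\ref{Lambda w}), the $C^{2}$ control (\ref{C^2 w bound}), and the fact that $w$ is bounded away from $z$ on the compact interval $[2\hat{\psi}_{2}(0),2\beta]$, the coefficients $a,b,c$ and the forcing $f$ are bounded in terms of $n$ and $\beta$. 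The initial value $|h(z,\tau_{0})|$ is bounded by $C(n)$ via (\ref{initial w}); the lateral value at $z=2\hat{\psi}_{2}(0)$ is bounded by the derivative bound in (\ref{C^2 w bound}); and the lateral value at $z=2\beta$ is bounded by (\ref{a priori bound w}) combined with the factor $(2\beta)^{1-\alpha}$. A direct maximum principle argument, analogous to Lemma \ref{short-time continuity}, then propagates the bound $|h(z,\tau)|\leq C(n)$ over the short time interval of length $\delta^{2}$.

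The main technical obstacle is the routine but lengthy verification that the transformed equation for $h$ indeed has uniformly bounded coefficients and forcing. The apparent lower-order terms of the form $z^{\alpha-2}\partial_{z}w$ arising from differentiating $\partial_{zz}^{2}w/(1+(\partial_{z}w)^{2})$ and $2(n-1)(z\partial_{z}w+w)/(z^{2}-w^{2})$ must be carefully absorbed into the $h$-term or the forcing, and the singular-looking denominator $z^{2}-w^{2}$ must be controlled using $|w|\leq C(n)z^{\alpha}<z/2$ on the relevant range (which holds once $z\geq 2\hat{\psi}_{2}(0)$ and $\beta\gg 1$, by (\ref{Lambda w})). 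Once these pointwise bounds are in hand, the maximum principle step is standard.
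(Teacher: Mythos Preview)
Your overall two–regime strategy is the same as the paper's, and the long–time part is fine. The gap is in the short–time step: with $\delta=\delta(n)$ independent of $\beta$, the maximum principle argument cannot produce a bound depending only on $n$.

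Writing the equation for $h=z^{1-\alpha}\partial_{z}w$ explicitly (this is the paper's (\ref{eq weighted Dw})), the right–hand side contains the term
\[
z^{-\alpha}\cdot\frac{2(\alpha-1)}{1+(\partial_{z}w)^{2}}\,\partial_{zz}^{2}w .
\]
On $[2\hat{\psi}_{2}(0),2\beta]$ the only available control on $\partial_{zz}^{2}w$ is $0\le\partial_{zz}^{2}w\le C(n)$ from (\ref{C^2 w bound}) (the admissible bound (\ref{a priori bound w}) applies only for $z\ge\beta$), so this forcing term is of size $C(n)z^{-\alpha}\le C(n)\beta^{-\alpha}$, which diverges with $\beta$ since $\alpha<0$. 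Hence over a time window of fixed length $\delta(n)^{2}$ the increment in $h$ picked up by the maximum principle is $C(n)\beta^{-\alpha}\delta(n)^{2}\to\infty$, and you end with a $\beta$–dependent bound, not $C(n)$. Your sentence ``the coefficients \dots and the forcing $f$ are bounded in terms of $n$ and $\beta$ \dots then propagates the bound $|h|\le C(n)$'' is exactly where the argument breaks.

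The remedy, which the paper carries out, is to let the crossover time depend on $\beta$: run the maximum principle only for $\tau\in[\tau_{0},\tau_{0}+\varepsilon\beta^{\alpha}]$ with $\varepsilon=\varepsilon(n)$, so that the increment is $C(n)\beta^{-\alpha}\cdot\varepsilon\beta^{\alpha}=C(n)\varepsilon\le 1$. For $\tau\ge\tau_{0}+\varepsilon\beta^{\alpha}$ one then invokes (\ref{C^infty w bound}) with $\delta^{2}=\varepsilon\beta^{\alpha}$, obtaining
\[
z^{1-\alpha}\bigl|\partial_{z}(w-\psi_{k})\bigr|\le C(n)(\varepsilon\beta^{\alpha})^{-1/2}\beta^{\alpha-3}(2\beta)^{1-\alpha}=C(n)\beta^{-2-\alpha/2}\le C(n),
\]
which is still $n$–only because $-2-\alpha/2<0$. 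A minor additional point: at the lateral boundary $z=2\beta$ you should cite (\ref{Lambda boundary}) rather than (\ref{a priori bound w}); the latter only gives a $\Lambda$–dependent constant.
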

\begin{proof}
From (\ref{eq w}), we derive
\begin{equation}
\partial_{\tau}\left(z^{-\alpha+1}\partial_{z}w\right)-\frac{1}{1+\left(\partial_{z}w\right)^{2}}\,\partial_{zz}^{2}\left(z^{-\alpha+1}\partial_{z}w\right)\label{eq weighted Dw}
\end{equation}
\[
-\left(\frac{-2\,\partial_{z}w\,\partial_{zz}^{2}w}{\left(1+\left(\partial_{z}w\right)^{2}\right)^{2}}+\frac{2\left(n-1\right)}{z\left(1-\left(\frac{w}{z}\right)^{2}\right)}-\left(\frac{1}{2}+\sigma\right)\frac{z}{2\sigma\tau}\right)\partial_{z}\left(z^{-\alpha+1}\partial_{z}w\right)
\]
\[
=z^{-\alpha}\left(\frac{2\left(\alpha-1\right)}{1+\left(\partial_{z}w\right)^{2}}\,\partial_{zz}^{2}w-\frac{4\left(n-1\right)\left(1-\left(\partial_{z}w\right)^{2}\right)}{z^{2}\left(1-\left(\frac{w}{z}\right)^{2}\right)^{2}}\,w\right)
\]
\[
+\left(\alpha-1\right)z^{-\alpha}\left(\frac{-2\left(\partial_{z}w\right)\left(\partial_{zz}^{2}w\right)}{\left(1+\left(\partial_{z}w\right)^{2}\right)^{2}}+\frac{2\left(n-1\right)}{z\left(1-\left(\frac{w}{z}\right)^{2}\right)}-\left(\frac{1}{2}+\sigma\right)\frac{z}{2\sigma\tau}-\frac{\alpha}{z\left(1+\left(\partial_{z}w\right)^{2}\right)}\right)\left(\partial_{z}w\right)
\]
Let 
\[
M_{\textrm{boundary}}=\max_{\tau_{0}\leq\tau\leq\mathring{\tau}}\left\{ \left.z^{-\alpha+1}\partial_{z}w\left(z,\,\tau\right)\right|_{z=2\hat{\psi}_{2}\left(0\right)},\,\left.z^{-\alpha+1}\partial_{z}w\left(z,\,\tau\right)\right|_{z=2\beta}\right\} 
\]
\[
M_{\textrm{initial}}=\max_{2\hat{\psi}_{2}\left(0\right)\leq z\leq2\beta}z^{-\alpha+1}\partial_{z}w\left(z,\,\tau_{0}\right)
\]
Then by (\ref{Lambda boundary}) and (\ref{C^2 w bound}), we have
\[
M_{\textrm{boundary}}\,\leq\,C\left(n\right)
\]
By (\ref{initial w}), we have 
\[
M_{\textrm{initial}}\leq C\left(n\right)
\]
Let 
\[
h\left(\tau\right)=\max_{2\hat{\psi}_{2}\left(0\right)\leq z\leq2\beta}z^{-\alpha+1}\partial_{z}w\left(z,\,\tau\right)
\]
and 
\[
M=\max\left\{ M_{\textrm{boundary}},\,M_{\textrm{initial}}\right\} 
\]
If $h\left(\tau\right)\leq M$ for $\tau_{0}\leq\tau\leq\mathring{\tau}$,
then we are done. Otherwise, there is $\tau_{1}^{*}>\tau_{0}$ for
which
\[
h\left(\tau_{1}^{*}\right)>M
\]
Let $\tau_{0}^{*}$ be the first time after which $h$ is greater
than $M$ all the way upto time $\tau_{1}^{*}$. By continuity, we
have 
\[
h\left(\tau_{0}^{*}\right)\leq M
\]
Applying the maximum principle to (\ref{eq weighted Dw}) (and using
(\ref{ratio estimate}) and (\ref{C^2 w bound})) yields
\[
\partial_{\tau}h\,\leq\,C\left(n\right)\beta^{-\alpha}
\]
which implies that 
\[
h\left(\tau\right)\,\leq\,M+C\left(n\right)\beta^{-\alpha}\left(\tau-\tau_{0}^{*}\right)
\]
 for $\tau_{0}^{*}\leq\tau\leq\tau_{1}^{*}$. Now choose $0<\varepsilon\ll1$
(depending on $n$) so that 
\[
h\left(\tau\right)\,\leq\,M+1
\]
for $\tau_{0}^{*}\leq\tau\leq\tau_{0}^{*}+\varepsilon\beta^{\alpha}$.
By the above argument, we claim that
\begin{equation}
\max_{2\hat{\psi}_{2}\left(0\right)\leq z\leq2\beta}z^{-\alpha+1}\partial_{z}w\left(z,\,\tau\right)\,\leq\,M+1\label{Lambda Dw1}
\end{equation}
for $\tau_{0}\leq\tau\leq\tau_{0}+\varepsilon\beta^{\alpha}$; otherwise,
we would get a contradiction by the above argument.

On the other hand, by (\ref{C^infty w bound}) we have
\[
\left(\varepsilon\beta^{\alpha}\right)^{\frac{1}{2}}\left|\partial_{z}\left(w\left(z,\,\tau\right)-\psi_{k}\left(z\right)\right)\right|\,\leq\,C\left(n\right)\beta^{\alpha-3}\left(\frac{\tau}{\tau_{0}}\right)^{-\varrho}
\]
for $\hat{\psi}_{2}\left(0\right)\leq z\leq2\beta$, $\tau_{0}+\varepsilon\beta^{\alpha}\leq\tau\leq\mathring{\tau}$.
It follows, by (\ref{alpha}), (\ref{k}) and Lemma \ref{order psi},
that 
\[
z^{-\alpha+1}\partial_{z}w\left(z,\,\tau\right)\,\leq\,z^{-\alpha+1}\partial_{z}\psi_{k}\left(z\right)\,+\,C\left(n\right)\left(\varepsilon\beta^{\alpha}\right)^{-\frac{1}{2}}\beta^{\alpha-3}\left(\frac{\tau}{\tau_{0}}\right)^{-\varrho}z^{-\alpha+1}
\]
\begin{equation}
\leq\,z^{-\alpha+1}\partial_{z}\psi_{k}\left(z\right)\,+C\left(n\right)\beta^{-2-\frac{\alpha}{2}}\,\leq\,C\left(n\right)\label{Lambda Dw2}
\end{equation}
for $\hat{\psi}_{2}\left(0\right)\leq z\leq2\beta$, $\tau_{0}+\varepsilon\beta^{\alpha}\leq\tau\leq\mathring{\tau}$
, provided that $\beta\gg1$ (depending on $n$) and $\tau_{0}\gg1$
(depending on $n$, $\Lambda$, $\rho$, $\beta$). Note that $\varepsilon=\varepsilon\left(n\right)$.

Combining (\ref{Lambda Dw1}) with (\ref{Lambda Dw2}) yields 
\[
\partial_{z}w\left(z,\,\tau\right)\,\leq\,C\left(n\right)z^{\alpha-1}
\]
for $\hat{\psi}_{2}\left(0\right)\leq z\leq2\beta$, $\tau_{0}\leq\tau\leq\mathring{\tau}$.
By a similar argument, we can show that 
\[
\partial_{z}w\left(z,\,\tau\right)\,\geq\,-C\left(n\right)z^{\alpha-1}
\]
\end{proof}
Next, given any constant $p$, from (\ref{eq w}) we derive the following
evolution equation in order to estimate the second derivative of (\ref{Lambda tip}).
\begin{equation}
\partial_{\tau}\left(z^{-p+2}\partial_{zz}^{2}w\right)-\frac{1}{1+\left(\partial_{z}w\right)^{2}}\,\partial_{zz}^{2}\left(z^{-p+2}\partial_{zz}^{2}w\right)\label{eq weighted DDw}
\end{equation}
\[
-\left(\frac{-6\left(\partial_{z}w\right)\left(\partial_{zz}^{2}w\right)}{\left(1+\left(\partial_{z}w\right)^{2}\right)^{2}}+\frac{2\left(n-1\right)}{z\left(1-\left(\frac{w}{z}\right)^{2}\right)}-\left(\frac{1}{2}+\sigma\right)\frac{z}{2\sigma\tau}+\frac{2\left(p-2\right)}{z\left(1+\left(\partial_{z}w\right)^{2}\right)}\right)\partial_{z}\left(z^{-p+2}\partial_{zz}^{2}w\right)
\]
\[
=\left(\frac{-2\left(1-3\left(\partial_{z}w\right)^{2}\right)\left(\partial_{zz}^{2}w\right)^{2}}{\left(1+\left(\partial_{z}w\right)^{2}\right)^{3}}+\frac{12\left(n-1\right)\left(\frac{w}{z}\right)\partial_{z}w}{z^{2}\left(1-\left(\frac{w}{z}\right)^{2}\right)^{2}}\right)\left(z^{-p+2}\partial_{zz}^{2}w\right)
\]
\[
-\left(\frac{2\left(n-1\right)\left(1+\left(\frac{w}{z}\right)^{2}\right)}{z^{2}\left(1-\left(\frac{w}{z}\right)^{2}\right)^{2}}+2\left(n-1\right)\left(\frac{1}{2}+\sigma\right)\frac{1}{2\sigma\tau}\right)\left(z^{-p+2}\partial_{zz}^{2}w\right)
\]
\[
+\left(p-2\right)\left(\frac{-6\left(\partial_{z}w\right)\left(\partial_{zz}^{2}w\right)}{z\left(1+\left(\partial_{z}w\right)^{2}\right)^{2}}+\frac{2\left(n-1\right)}{z^{2}\left(1-\left(\frac{w}{z}\right)^{2}\right)}-\left(\frac{1}{2}+\sigma\right)\frac{1}{2\sigma\tau}+\frac{p-3}{z^{2}\left(1+\left(\partial_{z}w\right)^{2}\right)}\right)\left(z^{-p+2}\partial_{zz}^{2}w\right)
\]
\[
+\frac{1}{z^{2}}\left(\frac{4\left(n-1\right)}{\left(1-\left(\frac{w}{z}\right)^{2}\right)^{3}}\left(\left(\partial_{z}w\right)^{2}+3\left(\frac{w}{z}\right)^{2}\left(\partial_{z}w\right)^{2}-1-3\left(\frac{w}{z}\right)^{2}\right)\right)\left(z^{-p+1}\partial_{z}w\right)
\]
\[
+\frac{1}{z^{2}}\left(\frac{4\left(n-1\right)}{\left(1-\left(\frac{w}{z}\right)^{2}\right)^{3}}\left(1-\left(\partial_{z}w\right)^{2}\right)\left(3+\left(\frac{w}{z}\right)^{2}\right)\right)\left(z^{-p}w\right)
\]
The following lemma is essential for the derivation of the second
derivative estimates in (\ref{Lambda tip}), and its proof is very
similar to the one in the previous lemma 
\begin{lem}
If $\tau_{0}\gg1$ (depending on $n$, $\Lambda$, $\rho$, $\beta$),
there holds 
\[
\left|z\,\partial_{zz}^{2}w\left(z,\,\tau\right)\right|\leq C\left(n\right)
\]
for $2\hat{\psi}_{2}\left(0\right)\leq z\leq2\beta$, $\tau_{0}\leq\tau\leq\mathring{\tau}$.
\end{lem}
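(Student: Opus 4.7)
The plan is to mimic the derivation of (\ref{Lambda Dw}): a short-time maximum principle argument on the evolution equation (\ref{eq weighted DDw}) with $p = 1$, followed by a long-time extension via the smooth estimate (\ref{C^infty w bound}). A useful preliminary observation is that the convexity bound in (\ref{C^2 w bound}) gives $z\,\partial_{zz}^{2}w \ge 0$ on the entire region, so only an upper bound is needed. First I would collect the initial and boundary data for $h(z,\tau) \equiv z\,\partial_{zz}^{2}w(z,\tau)$: at $z = 2\hat{\psi}_2(0)$, the bound $|\partial_{zz}^{2}w| \le C(n)$ from (\ref{C^2 w bound}) yields $h \le C(n)$; at $z = 2\beta$, (\ref{Lambda boundary}) gives the much smaller $h \le C(n)\beta^{\alpha-1}$; and at $\tau = \tau_0$, (\ref{initial w}) gives $h(z,\tau_0) \le C(n)z^{\alpha-1} \le C(n)$ on the range of interest. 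Thus all initial/boundary data are controlled by some $C(n)$.

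Next, I would run a maximum principle argument on (\ref{eq weighted DDw}) with $p = 1$. Setting $M(\tau) \equiv \max_{2\hat{\psi}_2(0)\le z\le 2\beta}h(z,\tau)$, if $M$ ever exceeded a large constant $C_{*}(n)$ to be chosen, continuity together with the boundary estimates would produce a first time $\tau_0^{*}$ at which $M(\tau_0^{*}) = C_{*}(n)$ and the maximum is attained in the interior. Using the pointwise controls $|\partial_z w| \le 1$, $0\le \partial_{zz}^{2}w \le C(n)$, $|w/z| \le C(n)$ (all from (\ref{C^2 w bound})), and the fact that $z \ge 2\hat{\psi}_2(0) = C(n)$, every coefficient on the right-hand side of (\ref{eq weighted DDw}) is bounded in terms of $n$ alone. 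In particular, the cubic coefficient $-2(1-3(\partial_z w)^2)/\bigl(z^{2}(1+(\partial_z w)^2)^3\bigr)$ of $(z\,\partial_{zz}^{2}w)^3$, which carries the ``wrong'' sign when $|\partial_z w|>1/\sqrt{3}$, is nevertheless bounded uniformly. This yields an ODE inequality of the form $\partial_\tau M \le C(n)(1 + M + M^2 + M^3)$, and standard blow-up analysis for this ODE gives $M(\tau) \le 2C_{*}(n)$ on $[\tau_0,\,\tau_0 + \varepsilon(n)]$ for some $\varepsilon(n) > 0$.

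Finally, for $\tau \ge \tau_0 + \varepsilon(n)$, I would invoke (\ref{C^infty w bound}) with $\delta = \sqrt{\varepsilon(n)}$, obtaining $\bigl|\partial_{zz}^{2}(w - \psi_k)\bigr|(z,\tau) \le C(n)\beta^{\alpha-3}$ on $\hat{\psi}_2(0) \le z \le 2\beta$. Since $z\cdot\beta^{\alpha-3} \le 2\beta^{\alpha-2}$ is tiny (because $\alpha<-1$), and Lemma \ref{order psi} gives $|z\,\partial_{zz}^{2}\psi_k(z)| \le C(n)z^{\alpha-1} \le C(n)$ on the range of $z$ in question, a triangle inequality produces the desired bound $|z\,\partial_{zz}^{2}w| \le C(n)$. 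The main obstacle is managing the cubic nonlinearity in (\ref{eq weighted DDw}) during the short-time phase, since near the inner boundary $z = 2\hat{\psi}_2(0)$ the derivative $|\partial_z w|$ can approach $1$ and flip the sign of its coefficient; the resolution, as indicated above, is that $z$ is bounded below by $C(n)$ on this region, which keeps this coefficient bounded uniformly in $\beta$ and $\tau$ and prevents blow-up on the time-scale $\varepsilon(n)$.
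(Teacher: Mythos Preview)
Your proposal is correct and follows essentially the same strategy as the paper: a short-time maximum-principle bound via (\ref{eq weighted DDw}) with $p=1$, then extension past $\tau_0+\varepsilon(n)$ using (\ref{C^infty w bound}) and Lemma~\ref{order psi}. Two small remarks. First, your observation that $\partial_{zz}^2 w\ge 0$ from (\ref{C^2 w bound}) makes the lower bound immediate is a genuine simplification over the paper, which argues both directions symmetrically. Second, you treat the term $(\partial_{zz}^2 w)^2\cdot(z\,\partial_{zz}^2 w)$ as cubic in $h=z\,\partial_{zz}^2 w$ and obtain $\partial_\tau M\le C(n)(1+M+M^2+M^3)$; this works, but since you already invoke the bound $0\le\partial_{zz}^2 w\le C(n)$ from (\ref{C^2 w bound}), you may simply absorb $(\partial_{zz}^2 w)^2\le C(n)$ into the constant and get the linear ODE $\partial_\tau h\le C(n)(h+1)$, which is what the paper does and avoids the discussion of the cubic coefficient's sign near the inner boundary.
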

\begin{proof}
Let 
\[
M_{\textrm{boundary}}=\max_{\tau_{0}\leq\tau\leq\mathring{\tau}}\left\{ \left.z\,\partial_{zz}^{2}w\left(z,\,\tau\right)\right|_{z=2\hat{\psi}_{2}\left(0\right)},\,\left.z\,\partial_{zz}^{2}w\left(z,\,\tau\right)\right|_{z=2\beta}\right\} 
\]
\[
M_{\textrm{initial}}=\max_{2\hat{\psi}_{2}\left(0\right)\leq z\leq2\beta}z\,\partial_{zz}^{2}w\left(z,\,\tau_{0}\right)
\]
By (\ref{initial w}), (\ref{Lambda boundary}) and (\ref{C^2 w bound}),
we have 
\[
M=\max\left\{ M_{\textrm{boundary}},\,M_{\textrm{initial}}\right\} \,\leq\,C\left(n\right)
\]
Define
\[
h\left(\tau\right)=\max_{2\hat{\psi}_{2}\left(0\right)\leq z\leq2\beta}z\,\partial_{zz}^{2}w\left(z,\,\tau\right)
\]
If $h\left(\tau\right)\leq M$ for $\tau_{0}\leq\tau\leq\mathring{\tau}$,
then we are done. Otherwise, there is $\tau_{1}^{*}>\tau_{0}$ for
which
\[
h\left(\tau_{1}^{*}\right)>M
\]
Let $\tau_{0}^{*}$ be the first time after which $h$ is greater
than $M$ all the way upto time $\tau_{1}^{*}$. By continuity, we
have 
\[
h\left(\tau_{0}^{*}\right)\leq M
\]
Applying the maximum principle to (\ref{eq weighted DDw}) with $p=1$
(and using (\ref{ratio estimate}) and (\ref{C^2 w bound})) yields
\[
\partial_{\tau}h\left(\tau\right)\,\leq\,C\left(n\right)\left(h\left(\tau\right)+1\right)
\]
which implies that 
\[
h\left(\tau\right)\,\leq\,C\left(n\right)^{\tau-\tau_{0}^{*}}\left(M+C\left(n\right)\right)\,\leq\,2\left(M+C\left(n\right)\right)
\]
for $\tau_{0}^{*}\leq\tau\leq\tau_{0}^{*}+\varepsilon$, where $0<\varepsilon=\varepsilon\left(n\right)\ll1$.
Thus, we claim that 
\begin{equation}
\max_{2\hat{\psi}_{2}\left(0\right)\leq z\leq2\beta}z\,\partial_{zz}^{2}w\left(z,\,\tau\right)\,\leq\,2\left(M+C\left(n\right)\right)\label{Lambda DDw lemma1}
\end{equation}
for $\tau_{0}\leq\tau\leq\tau_{0}+\varepsilon$; otherwise, we would
get a contradiction by the above argument.

On the other hand, by (\ref{C^infty w bound}) we have
\[
\varepsilon\left|\partial_{zz}^{2}\left(w\left(z,\,\tau\right)-\psi_{k}\left(z\right)\right)\right|\,\leq\,C\left(n\right)\beta^{\alpha-3}\left(\frac{\tau}{\tau_{0}}\right)^{-\varrho}
\]
for $2\hat{\psi}_{2}\left(0\right)\leq z\leq2\beta$, $\tau_{0}+\varepsilon\leq\tau\leq\mathring{\tau}$,
which, together with (\ref{alpha}), (\ref{k}) and Lemma \ref{order psi},
implies 
\begin{equation}
z\,\partial_{zz}^{2}w\left(z,\,\tau\right)\,\leq\,z\,\partial_{zz}^{2}\psi_{k}\left(z\right)\,+\,C\left(n\right)\varepsilon^{-1}\beta^{\alpha-3}\left(\frac{\tau}{\tau_{0}}\right)^{-\varrho}z\,\leq\,C\left(n\right)\label{Lambda DDw lemma2}
\end{equation}
for $2\hat{\psi}_{2}\left(0\right)\leq z\leq2\beta$, $\tau_{0}+\varepsilon\leq\tau\leq\mathring{\tau}$
(since $\varepsilon=\varepsilon\left(n\right)$).

By (\ref{Lambda DDw lemma1}) and (\ref{Lambda DDw lemma2}), we get
\[
z\,\partial_{zz}^{2}w\left(z,\,\tau\right)\,\leq\,C\left(n\right)
\]
for $2\hat{\psi}_{2}\left(0\right)\leq z\leq2\beta$, $\tau_{0}\leq\tau\leq\mathring{\tau}$.
Similarly, by a similar argument, we can show that 
\[
z\,\partial_{zz}^{2}w\left(z,\,\tau\right)\,\geq\,-C\left(n\right)
\]
\end{proof}
Now we are ready to show the second derivative estimate of (\ref{Lambda tip})
with the help of the previous lemma.
\begin{prop}
If $\tau_{0}\gg1$ (depending on $n$), there holds 
\[
\left|\partial_{zz}^{2}w\left(z,\,\tau\right)\right|\,\leq\,C\left(n\right)z^{\alpha-2}
\]
for $2\hat{\psi}_{2}\left(0\right)\leq z\leq2\beta$, $\tau_{0}\leq\tau\leq\mathring{\tau}$.
\end{prop}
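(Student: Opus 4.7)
The plan is to follow the template of the preceding two lemmas essentially verbatim, but with the correct power weight. Specifically, let
\[
h\left(\tau\right)=\max_{2\hat{\psi}_{2}\left(0\right)\leq z\leq2\beta}z^{-\alpha+2}\,\partial_{zz}^{2}w\left(z,\,\tau\right),
\]
and compare it against the boundary and initial maximum
\[
M=\max\left\{M_{\mathrm{boundary}},\,M_{\mathrm{initial}}\right\},
\]
where $M_{\mathrm{boundary}}$ is the supremum in $\tau\in[\tau_{0},\mathring{\tau}]$ of $z^{-\alpha+2}\partial_{zz}^{2}w(z,\tau)$ at $z=2\hat{\psi}_{2}(0)$ and $z=2\beta$, and $M_{\mathrm{initial}}$ is the maximum of $z^{-\alpha+2}\partial_{zz}^{2}w(z,\tau_{0})$ on $[2\hat{\psi}_{2}(0),2\beta]$. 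The point $z=2\hat{\psi}_{2}(0)$ is $O(1)$, so the preceding lemma gives $|\partial_{zz}^{2}w(2\hat{\psi}_{2}(0),\tau)|\leq C(n)$; at $z=2\beta$ the bound (\ref{Lambda boundary}) yields $|\partial_{zz}^{2}w(2\beta,\tau)|\leq C(n)\beta^{\alpha-2}$; while the initial value bound (\ref{initial w}) provides the required control at $\tau=\tau_{0}$. Hence $M\leq C(n)$.

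Next, I apply the evolution equation (\ref{eq weighted DDw}) with $p=\alpha$ to the quantity $z^{-\alpha+2}\partial_{zz}^{2}w$. Using the $C^{2}$ bounds (\ref{C^2 w bound}) together with (\ref{ratio estimate}) to control the coefficients, the maximum principle argument from the preceding lemma shows that if $h(\tau_{1}^{*})>M$ for some $\tau_{1}^{*}>\tau_{0}$ and $\tau_{0}^{*}$ is the first time after which $h>M$ persists up to $\tau_{1}^{*}$, then on $[\tau_{0}^{*},\tau_{1}^{*}]$
\[
\partial_{\tau}h\leq C\left(n\right)\left(h+1\right),
\]
which yields $h(\tau)\leq 2(M+C(n))$ for $\tau_{0}^{*}\leq\tau\leq\tau_{0}^{*}+\varepsilon$ for some small $\varepsilon=\varepsilon(n)>0$. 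By the same bootstrap contradiction used twice already, this propagates to the uniform bound
\[
z^{-\alpha+2}\,\partial_{zz}^{2}w\left(z,\,\tau\right)\,\leq\,2\left(M+C\left(n\right)\right)\quad\text{for}\;\,\tau_{0}\leq\tau\leq\tau_{0}+\varepsilon.
\]

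For $\tau\geq\tau_{0}+\varepsilon$, I invoke the smooth estimate (\ref{C^infty w bound}) with $\delta=\varepsilon^{1/2}=\varepsilon(n)^{1/2}$, which gives
\[
\left|\partial_{zz}^{2}\left(w\left(z,\,\tau\right)-\psi_{k}\left(z\right)\right)\right|\,\leq\,C\left(n\right)\beta^{\alpha-3}\left(\frac{\tau}{\tau_{0}}\right)^{-\varrho}
\]
on the same $z$-range. Combining this with the decay estimate $|\partial_{zz}^{2}\psi_{k}(z)|\leq C(n)z^{\alpha-2}$ from Lemma \ref{order psi} and (\ref{k}) yields $z^{-\alpha+2}\partial_{zz}^{2}w(z,\tau)\leq C(n)$ on $[2\hat{\psi}_{2}(0),2\beta]\times[\tau_{0}+\varepsilon,\mathring{\tau}]$, provided $\tau_{0}\gg1$ depending on $n$. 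A symmetric argument applied to the minimum gives the matching lower bound, completing the proof.

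The main (essentially the only) delicate point is the consistency check between the two time regimes: one must verify that the constant $\varepsilon=\varepsilon(n)$ produced by the short-time maximum principle is compatible with the choice of $\delta$ in (\ref{C^infty w bound}) so that the Schauder-type decay takes over before the ODE bound deteriorates. Since $\varepsilon$ depends only on $n$, this poses no real obstacle: taking $\tau_{0}\gg 1$ large enough forces $\beta^{\alpha-3}(\tau/\tau_{0})^{-\varrho}\ll 1$ uniformly, and the two regimes glue cleanly, just as in the preceding lemma.
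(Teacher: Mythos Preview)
Your outline matches the paper's proof almost exactly, but there is one genuine omission. When you apply the maximum principle to (\ref{eq weighted DDw}) with $p=\alpha$, the right-hand side contains the source terms
\[
\frac{1}{z^{2}}\cdot(\text{bounded})\cdot z^{-\alpha+1}\partial_{z}w
\qquad\text{and}\qquad
\frac{1}{z^{2}}\cdot(\text{bounded})\cdot z^{-\alpha}w.
\]
To obtain $\partial_{\tau}h\leq C(n)(h+1)$ with a constant \emph{independent of $\beta$}, you must bound $z^{-\alpha+1}|\partial_{z}w|$ and $z^{-\alpha}|w|$ by $C(n)$ uniformly on $[2\hat\psi_{2}(0),2\beta]$. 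The bounds (\ref{C^2 w bound}) and (\ref{ratio estimate}) alone give only $|\partial_{z}w|\leq 1$ and $|w|\leq C(n)$, so that $z^{-\alpha+1}|\partial_{z}w|\lesssim\beta^{1-\alpha}$, which \emph{does} depend on $\beta$. The paper therefore invokes, in addition, the just-proved estimates (\ref{Lambda w}) and (\ref{Lambda Dw}), which give $z^{-\alpha}|w|\leq C(n)$ and $z^{-\alpha+1}|\partial_{z}w|\leq C(n)$ directly. Since the whole purpose of this section is to produce bounds with constants depending only on $n$ (so that $\Lambda$, and hence $\beta$, can be chosen afterwards), this is not merely cosmetic: without (\ref{Lambda w}) and (\ref{Lambda Dw}) your short-time ODE bound would carry a $\beta$-dependent constant and the argument would be circular. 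Once you add these two citations at the maximum-principle step, your proof coincides with the paper's.

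A minor remark on your last paragraph: in the large-time regime the smallness of $z^{-\alpha+2}\,\partial_{zz}^{2}(w-\psi_{k})$ comes from $\beta^{\alpha-3}\cdot z^{-\alpha+2}\leq C(n)\beta^{-1}$ (using $z\leq 2\beta$), not from taking $\tau_{0}$ large; the factor $(\tau/\tau_{0})^{-\varrho}\leq 1$ contributes nothing there.
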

\begin{proof}
Let 
\[
M_{\textrm{boundary}}=\max_{\tau_{0}\leq\tau\leq\mathring{\tau}}\left\{ \left.z^{-\alpha+2}\partial_{zz}^{2}w\left(z,\,\tau\right)\right|_{z=2\hat{\psi}_{2}\left(0\right)},\,\left.z^{-\alpha+2}\partial_{zz}^{2}w\left(z,\,\tau\right)\right|_{z=2\beta}\right\} 
\]
\[
M_{\textrm{initial}}=\max_{2\hat{\psi}_{2}\left(0\right)\leq z\leq2\beta}z^{-\alpha+2}\partial_{zz}^{2}w\left(z,\,\tau_{0}\right)
\]
By (\ref{initial w}), (\ref{Lambda boundary}) and (\ref{C^2 w bound}),
we have
\[
M=\max\left\{ M_{\textrm{boundary}},\,M_{\textrm{initial}}\right\} \leq C\left(n\right)
\]
 Define 
\[
h\left(\tau\right)=\max_{2\hat{\psi}_{2}\left(0\right)\leq z\leq2\beta}z^{-\alpha+2}\partial_{zz}^{2}w\left(z,\,\tau\right)
\]
If $h\left(\tau\right)\leq M$ for $\tau_{0}\leq\tau\leq\mathring{\tau}$,
then we are done. Otherwise, there is $\tau_{1}^{*}>\tau_{0}$ for
which
\[
h\left(\tau_{1}^{*}\right)>M
\]
Let $\tau_{0}^{*}$ be the first time after which $h$ is greater
than $M$ all the way upto time $\tau_{1}^{*}$. By continuity, we
have 
\[
h\left(\tau_{0}^{*}\right)\leq M
\]
By applying the maximum principle to (\ref{eq weighted DDw}) with
$p=\alpha$ and using (\ref{ratio estimate}), (\ref{C^2 w bound}),
(\ref{Lambda w}) and (\ref{Lambda Dw}), we get
\[
\partial_{\tau}h\left(\tau\right)\,\leq\,C\left(n\right)\left(h\left(\tau\right)+1\right)
\]
which implies that 
\[
h\left(\tau\right)\,\leq\,C\left(n\right)^{\tau-\tau_{0}}\left(M+C\left(n\right)\right)\,\leq\,2\left(M+C\left(n\right)\right)
\]
for $\tau_{0}^{*}\leq\tau\leq\tau_{0}^{*}+\varepsilon$, where $0<\varepsilon=\varepsilon\left(n\right)\ll1$.
Thus, we infer that 
\begin{equation}
\max_{2\hat{\psi}_{2}\left(0\right)\leq z\leq2\beta}z^{-\alpha+2}\partial_{zz}^{2}w\left(z,\,\tau\right)\,\leq\,2\left(M+C\left(n\right)\right)\label{Lambda DDw1}
\end{equation}
for $\tau_{0}\leq\tau\leq\tau_{0}+\varepsilon$, since otherwise,
we would get a contradiction by the above argument.

On the other hand, by (\ref{C^infty w bound}) we have
\[
\varepsilon\left|\partial_{zz}^{2}\left(w\left(z,\,\tau\right)-\psi_{k}\left(z\right)\right)\right|\,\leq\,C\left(n\right)\beta^{\alpha-3}\left(\frac{\tau}{\tau_{0}}\right)^{-\varrho}
\]
for $2\hat{\psi}_{2}\left(0\right)\leq z\leq2\beta$, $\tau_{0}+\varepsilon\leq\tau\leq\mathring{\tau}$,
which, together with (\ref{k}) and Lemma \ref{order psi}, implies
\[
z^{-\alpha+2}\partial_{zz}^{2}w\left(z,\,\tau\right)\,\leq\,z^{-\alpha+2}\partial_{zz}^{2}\psi_{k}\left(z\right)\,+\,C\left(n\right)\beta^{\alpha-3}\left(\frac{\tau}{\tau_{0}}\right)^{-\varrho}z^{-\alpha+2}
\]
\begin{equation}
\leq\,z^{-\alpha+2}\partial_{zz}^{2}\psi_{k}\left(z\right)\,+\,C\left(n\right)\beta^{-1}\,\leq\,C\left(n\right)\label{Lambda DDw2}
\end{equation}
for $2\hat{\psi}_{2}\left(0\right)\leq z\leq2\beta$, $\tau_{0}+\varepsilon\leq\tau\leq\mathring{\tau}$,
provided that $\beta\gg1$ (depending on $n$). Notice that $\varepsilon=\varepsilon\left(n\right)$.

Combining (\ref{Lambda DDw1}) with (\ref{Lambda DDw2}) yields 
\[
\partial_{zz}^{2}w\left(z,\,\tau\right)\,\leq\,C\left(n\right)z^{\alpha-2}
\]
for $2\hat{\psi}_{2}\left(0\right)\leq z\leq2\beta$, $\tau_{0}\leq\tau\leq\mathring{\tau}$.
Likewise, by a similar argument, we can show 
\[
\partial_{zz}^{2}w\left(z,\,\tau\right)\,\geq\,-C\left(n\right)z^{\alpha-2}
\]
for $2\hat{\psi}_{2}\left(0\right)\leq z\leq2\beta$, $\tau_{0}\leq\tau\leq\mathring{\tau}$.
\end{proof}

\vspace{0.3in}
\email{
\noindent Department of Mathematics, Rutgers University - Hill Center for the Mathematical Sciences 
110 Frelinghuysen Rd., Piscataway, NJ 08854-8019\\\\
E-mail addresses: \textsf{showhow@math.rutgers.edu}\\ 
\indent \hspace{.91in} \textsf{natasas@math.rutgers.edu}
}

\end{document}